\numberwithin{equation}{section}
\newtheorem{theorem}{Theorem}[section]
\newtheorem{lemma}[theorem]{Lemma}
\newtheorem{proposition}[theorem]{Proposition}
\newtheorem{corollary}[theorem]{Corollary}
\newtheorem{definition}[theorem]{Definition}
\theoremstyle{remark}
\newtheorem{remark}[theorem]{Remark}
\renewcommand\arg{\mathop{\mathrm{arg}}\nolimits}
\newcommand\diam{\mathop{\mathrm{diam}}\nolimits}
\renewcommand{\Im}{\operatorname{Im}}
\renewcommand{\Re}{\operatorname{Re}}
\newcommand\sn[2]{\operatorname{sn}(#1\,|\,#2)}
\newcommand\cn[2]{\operatorname{cn}(#1\,|\,#2)}
\newcommand\dn[2]{\operatorname{dn}(#1\,|\,#2)}
\renewcommand\sc[2]{\operatorname{sc}(#1\,|\,#2)}
\newcommand\sd[2]{\operatorname{sd}(#1\,|\,#2)}
\newcommand\cd[2]{\operatorname{cd}(#1\,|\,#2)}
\newcommand\C{\mathbb{C}}
\newcommand\R{\mathbb{R}}
\newcommand\T{\mathbb{T}}
\newcommand\Z{\mathbb{Z}}
\newcommand\cA{\mathcal{A}}
\newcommand\cB{\mathcal{B}}
\newcommand\cD{\mathcal{D}}
\newcommand\cF{\mathcal{F}}
\newcommand\cL{\mathcal{L}}
\newcommand\cQ{\mathcal{Q}}
\newcommand\cR{\mathcal{R}}
\newcommand\cS{\mathcal{S}}
\newcommand\cX{\mathcal{X}}
\newcommand\cG{\mathcal{G}}
\newcommand\rG{\mathrm{G}}
\newcommand\E{\mathbb{E}}
\renewcommand\P{\mathbb{P}}
\newcommand\thetaa{\widehat{\theta}}
\renewcommand\thetag{\overline{\theta}}
\newcommand\thetae{\check{\theta}}
\newcommand\alphae{\check{\alpha}}
\newcommand\rr{1}
\newcommand\ri{i}
\newcommand\free{{\,\mathrm{f}}}
\newcommand\wired{\mathrm{w}}
\newcommand\BAP{\textsc{BAP}(\theta_0)}
\newcommand\scc[1]{{\scriptscriptstyle #1}}
\newcommand\ccc[1]{c_{\scc{#1}}}
\newcommand\dexp[3]{{\bm{\mathrm{e}}}(#1\,;#2,#3)}
\newcommand\dexpk[4]{{\bm{\mathrm{e}}}(#2\,|\,#1\,;#3,#4)}
\newcommand\tempskipped[1]{}
\newcommand\qtoem{q=\frac12m\delta}
\newcommand\magnet{(k^*)^\frac 14}
\newcommand\sccorr{(k^*)^\frac 12}
\begin{document}

 \global\long\def\re{\Re}
 \global\long\def\im{\Im}
\global\long\def\eps{\varepsilon}

\global\long\def\pa{\partial}
\global\long\def\ccor#1{\langle#1\rangle}
\global\long\def\sn#1#2{\mathrm{sn}\left(#1\,|\,#2\right)}
\global\long\def\sc#1#2{\mathrm{sc}\left(#1\,|\,#2\right)}
\global\long\def\sd#1#2{\mathrm{sd}\left(#1\,|\,#2\right)}
\global\long\def\nd#1#2{\mathrm{nd}\left(#1\,|\,#2\right)}
\global\long\def\cn#1#2{\mathrm{cn}\left(#1\,|\,#2\right)}
\global\long\def\cd#1#2{\mathrm{cd}\left(#1\,|\,#2\right)}
\global\long\def\dn#1#2{\mathrm{dn}\left(#1\,|\,#2\right)}
\global\long\def\nc#1#2{\mathrm{nc}\left(#1\,|\,#2\right)}
\global\long\def\dc#1#2{\mathrm{dc}\left(#1\,|\,#2\right)}
\global\long\def\ns#1#2{\mathrm{ns}\left(#1\,|\,#2\right)}

\global\long\def\corvz#1#2{\mathrm{G}_{[#1]}(#2)}
\global\long\def\coruz#1#2{\mathrm{G}_{[#1]}(#2)}
\global\long\def\corzz#1#2{\mathrm{G}_{(#1)}(#2)}
\global\long\def\fctr{\Gamma}
\global\long\def\Erruz{R_{\chi\sigma}}
\global\long\def\Errvz{R_{\chi\mu}}
\global\long\def\Errzz{R_{\chi\chi}}
\global\long\def\ea#1{#1}
\global\long\def\mup{\nu}
\global\long\def\ang#1{\alpha_{#1}}
\global\long\def\eang#1{\alpha_{#1}}
\global\long\def\lamb{\bar{\varsigma}}
\global\long\def\lambb{\varsigma}
\global\long\def\ellip#1{\check{#1}}
\global\long\def\ellipa#1{\check{\alpha}_{#1}}
\global\long\def\ang#1{\alpha_{#1}}
\global\long\def\MCon#1{\cX_{#1}^{\delta}}

\title[Spin correlations in the Ising model on isoradial graphs]{Universality of spin correlations in the Ising model on isoradial graphs}

\author[Dmitry Chelkak]{Dmitry Chelkak$^\mathrm{a,b}$}
\author[Konstantin Izyurov]{Konstantin Izyurov$^\mathrm{c}$}
\author[R\'emy Mahfouf]{R\'emy Mahfouf$^\mathrm{a}$}

\thanks{\textsc{${}^\mathrm{A}$ D\'epartement de math\'ematiques et applications, \'Ecole Normale Sup\'erieure, CNRS, PSL University, 45 rue d'Ulm, 75005 Paris, France.}}

\thanks{\textsc{${}^\mathrm{B}$ Holder of the ENS--MHI chair funded by MHI. On leave from St.~Petersburg Dept. of Steklov Mathematical Institute RAS, Fontanka 27, 191023 St.~Petersburg, Russia.}}

\thanks{\textsc{${}^\mathrm{C}$ Department of Mathematics and Statistics, University of Helsinki,
P.O. Box 68 (Pietari Kalmin katu 5), 00014 Finland.}}

\thanks{\emph{E-mail:} \texttt{dmitry.chelkak@ens.fr}, \texttt{konstantin.izyurov@helsinki.fi}, \texttt{remy.mahfouf@ens.fr}}

\begin{abstract}
We prove universality of spin correlations in the scaling limit of
the planar Ising model on isoradial graphs with uniformly bounded
angles and Z\textendash invariant weights. Specifically, we show that
in the massive scaling limit, i.\,e., as the mesh size $\delta$ tends
to zero at the same rate as the inverse temperature goes to the critical
one, the two-point spin correlations in the full plane behave as
\[
\delta^{-\frac{1}{4}}\E\left[\sigma_{u_{1}}\sigma_{u_{2}}\right]\ \to\ C_{\sigma}^{2}\cdot\Xi\left(|u_{1}-u_{2}|,m\right)\quad\text{as\ensuremath{\quad\delta\to0}},
\]
where the universal constant $C_{\sigma}$ and the function $\Xi(|u_{1}-u_{2}|,m)$
are independent of the lattice. The mass $m$ is defined by the relation
$k'-1\sim4m\delta$, where $k'$ is the Baxter elliptic parameter.
This includes $m$ of both signs as well as the critical case when
$\Xi(r,0)=r^{-1/4}${\normalsize{}$.$}

These results, together with techniques developed to obtain them,
are sufficient to extend to isoradial graphs the convergence of multi-point
spin correlations in finite planar domains on the square grid, which
was established in a joint work of the first two authors and C.\,Hongler at criticality, and by S.\,C.\,Park in the sub-critical
massive regime. We also give a simple proof of the fact that the infinite-volume
magnetization in the Z\textendash invariant model is independent of
the site and of the lattice.

As compared to techniques already existing in the literature, we streamline
the analysis of discrete (massive) holomorphic spinors near their
ramification points, relying only upon discrete analogues of the kernel
$z^{-1/2}$ for $m=0$ and of $z^{-1/2}e^{\pm2m|z|}$ for $m\ne0$.
Enabling the generalization to isoradial graphs and providing a solid ground for further generalizations, our approach also
considerably simplifies the proofs in the square lattice setup.
\end{abstract}

\keywords{planar Ising model, universality, Z-invariant weights, spin correlations}

\subjclass[2010]{82B20, 30G25, 81T40}

\maketitle


\section{Introduction}

\subsection{General context} Isoradial graphs, or, equivalently, rhombi tilings, were introduced
by Duffin \cite{duffin1968potential} as a natural family of embedded
planar graphs admitting a nice discretization of complex analysis
and potential theory. They latter attracted considerable attention
both in the physics and the mathematics communities in connection
with lattice models of two-dimensional statistical mechanics. Although
the latter live on abstract planar graphs with some additional structure,
e.\,g., weights on edges, 
it often turns out that embedding the graph isoradially sheds light on the behavior of the model at
or near criticality. By now, there is an extensive literature on statistical
mechanics on isoradial graphs; e.\,g., see \cite{boutillier2010Periodic_case,boutillier2011Locality,boutillier2012statistical, duminil2018universality,duminil2020rotational,cimasoni2012discrete,cimasoni2012critical,DubedatDimersCR,grimmett2013universality, kenyon2002laplacian,li2017conformal,mercat2001discrete}
and references therein. Apart from being a natural framework for establishing
universality, an additional motivation to study models on isoradial
graph is that they form a flexible family to approximate arbitrary
Riemann surfaces \cite{mercat2001discrete,cimasoni2012critical,cimasoni2012discrete},
for which regular lattices are too rigid.

A relevance of rhombi tilings for the two-dimensional statistical mechanics can be described
as follows. As uncovered by Baxter \cite{baxter1978solvable,baxter1986free,baxter2016exactly},
many models of statistical mechanics with local interactions have
\emph{Z-invariant weights.} The ``invariance'' here refers to the
fact that the $\mathrm{Y}$\textendash$\Delta$ transform of the graph on which the model is defined
leaves unchanged the partition function and many (if not all) observables of interest.
As Baxter has shown, the flexibility this entails allows one to solve
many such models exactly. Although these algebraic techniques does not directly rely upon embedding of graphs into the complex plane, it turns out that representing them as isoradial grids allows one to give a direct geometrical meaning to the parameters of the corresponding R-matrices.
This is more than a numerical
coincidence as the small mesh size limits of \emph{critical} 2D lattice models are expected to have more symmetries than just the symmetries
of the lattice: e.\,g., in this setup one expects the scaling, rotational and even \emph{conformal} invariance.
To exhibit such a symmetry, one needs to pick embeddings of graphs and the isoradial/rhombic lattices are known to be the correct choice provided that one can reformulate the lattice model under consideration using this geometric framework.
For rhombic lattices, the $\mathrm{Y}$\textendash$\Delta$ moves are nothing but the so-called \emph{cube flips}, which allow one to transform (big pieces of) different lattices into each other without affecting quantities of interest. Apart from being at the heart of physicists' predictions of the universality for Z-invariant lattice models, this idea also recently led to a rigorous proof of the rotational invariance of several Z-invariant lattice models; we refer the interested reader to~\cite{duminil2020rotational} for further details.

In this paper we consider the critical and near-critical (aka massive) \emph{Ising model} on isoradial graphs with Z-invariant weights. Before discussing our main results, let us first briefly mention the results available in the \emph{square grid} setup.
For the \emph{critical} model (see also~\cite{Chelkak2016ECM,chelkak2017ICM} for more details), the convergence and conformal invariance was proven by Smirnov~\cite{Smirnov_Ising} for basic fermionic observables; by Hongler and Smirnov~\cite{HonglerSmirnov,hongler_thesis} for the energy density correlations; and by the first two authors and Hongler~\cite{ChelkakHonglerIzyurov} for spin correlations. Gheissari, Hongler, Park, Viklund and Kyt\"ol\"a studied more general
local fields in~\cite{gheissari2019ising,HonglerViklundKytolaCFT} and a unified framework to treat mixed correlations of all primary fields (i.\,e., fermions, energy densities, spins and disorders) was recently developed in~\cite{CHI_Mixed}.
Another aspect of conformal invariance is the convergence of interfaces
and loop ensembles in the domain walls representation of the model to SLE$_{3}$
and CLE$_{3}$; see~\cite{ChelkakSmirnov_et_al} and~\cite{BenoistHongler}, respectively.
Beyond the critical case, the \emph{massive} limit of spin correlations in smooth domains and of fermionic observables in general (i\,.e., not necessarily smooth) ones was recently treated by S.\,C.\,Park in~\cite{park2018massive} and~\cite{Park2021Fermionic}, respectively.

The \emph{universality} -- within the isoradial family -- of basic fermionic observables was shown in~\cite{ChelkakSmirnov2} for the critical model and, very recently, in~\cite{Park2021Fermionic} for the massive one. These results admit a direct extension 
to the energy density correlations as the energy density field -- in sharp contrast to spins themselves -- can be directly expressed via fermions. However, the analysis of spin correlations is considerably more subtle.
Of the aforementioned work, the proofs given in~\cite{ChelkakHonglerIzyurov,park2018massive} relied
especially heavily on the properties of the square lattice, and hence
they did not admit a simple generalization to the isoradial case.
In this paper, we provide the missing ingredients and prove the universality
for spin correlations on isoradial graphs, both in the critical and
in the massive setup. This also paves a way to a proof of the universality
of correlations of all primary fields \cite{CHI_Mixed} on isoradial graphs and possibly
beyond, although we do not discuss it here.

\subsection{Main results} We work with the Z-invariant Ising model defined on (subsets of) an infinite planar
isoradial grid~$\Gamma^{\circ,\delta}$; note that the dual grid $\Gamma^{\bullet,\delta}$
is also isoradial with the same radii, and $\Lambda^{\delta}:=\Gamma^{\circ,\delta}\cup \Gamma^{\bullet,\delta}$
forms a rhombi tiling of mesh size~$\delta$. Throughout
the paper, we assume that all rhombi tilings satisfy the following bounded angle property for some (fixed)~$\theta_0>0$:
\[
\BAP:\ \text{no rhombus has an angle smaller than $2\theta_{0}$.}
\]
The Z-invariant weights { (per edge in the low-temperature -- aka the \emph{domain walls} -- expansion of the
model on~$\Gamma^{\circ,\delta}$ or, equivalently, in the high-temperature expansion of the dual model on~$\Gamma^{\bullet,\delta}$)}
are given by
\begin{equation}
\label{eq: Z-inv}
{ x_e=\exp[-2\beta^\circ J_e^\circ]=\tanh[\beta^\bullet J_e^\bullet]\ =\ \tan\tfrac{1}{2}\thetaa_{e},}
\qquad \sin\thetaa_{e}\ =\ \sn{\tfrac{2K}{\pi}\thetag_{e}}k,
\end{equation}
where $\thetag_{e}$ is half of the angle of the rhombus containing~$e$ adjacent to a vertex of $\Gamma^{\bullet,\delta}$ (see Fig.~\ref{fig:notation}),
and \mbox{$K=K(k)$} is the complete elliptic integral of the first kind; see
\cite[Eq.~19.2.8]{NIST:DLMF} (we routinely refer to the Digital Library of Mathematical Functions
\cite{NIST:DLMF} in the above format).  Thus, the edge weights
are determined by the isoradial embedding of the graph plus a single
temperature-like parameter, the \emph{elliptic modulus} $k\in (0,1)\cup i\R$,
or equivalently the \emph{nome} $q\in\R$; see \cite[Eq.~22.2.1]{NIST:DLMF}. We refer the reader to a recent work of Boutillier, de~Tili\`ere and Raschel~\cite{boutillier2017z,boutillier2019z,deTiliere-Zinv} for an extensive discussion of thus defined Ising model and its links with dimers and uniform spanning trees. { Note that~\eqref{eq: Z-inv} agrees with the parametrization used in~\cite{boutillier2019z} for the interaction parameters~$\beta^\bullet J_e^\bullet$ while we will typically work with the dual model defined on~$\Gamma^{\circ,\delta}$ (i.\,e., we usually assign spins to \emph{faces} of~$\Gamma^\delta=\Gamma^{\bullet,\delta}$).}

The critical point is $k=q=0$, in which case $\thetaa_{e}=\thetag_{e}$. The case~$q>0$ and $k\in (0,1)$ gives rise to a sub-critical model on~$\Gamma^{\bullet,\delta}$ and a super-critical model on~$\Gamma^{\circ,\delta}$ while $q<0$ and $k\in i\R$ correspond to the opposite situation. (Thus, the nome~$q$ has the same monotonicity as the temperature in the model on~$\Gamma^{\circ,\delta}$.) { Also, note that the parametrization~\eqref{eq: Z-inv} is symmetric with respect to exchanging the roles of~$\Gamma^{\bullet,\delta}$ and~$\Gamma^{\circ,\delta}$ in the following sense: one also has
\[
\tanh[\beta^\circ J^\circ_e]=\exp[-2\beta^\bullet J^\bullet_e]\ =\ \tan\tfrac{1}{2}\thetaa{}_e^*,\qquad \sin\thetaa{}_e^*\ =\ \sn{\tfrac{2K}{\pi}\thetag{}_{e}^*}{k^*},
\]
where $\thetaa_e+\thetaa{}_e^*=\frac{\pi}{2}=\thetag_e+\thetag{}_e^*$ and the dual elliptic parameter~$k^*$ is given by
\begin{equation}
\label{eq: KW-duality-kq}
k^*=\frac{ik}{\sqrt{1-k^2}}\quad \Leftrightarrow\quad  q^*=-q\,.
\end{equation}
This transform is nothing but a simple way of writing the \emph{Kramers--Wannier duality} (e.\,g., see~\cite[Section~7.5]{duminil-parafermion} and~\cite[Section~4.5]{boutillier2019z}) in the elliptic context.}
We study the \emph{massive scaling limit}, as~$\delta\to0$ and simultaneously $q\to0,$ with the relation
\begin{equation}
\label{eq:qtoem}
\textstyle \qtoem
\end{equation}
between the two, where the parameter $m\in\R$ is called \emph{mass},
alluding to a massive field theory conjecturally describing this limit. As a particular case, we study the scaling limit at criticality, $m=0$.
Our results are uniform with respect to lattices satisfying the uniformly bounded angles assumption $\BAP$
for a fixed $\theta_{0}>0$, hence we do not assume that~$\Lambda^\delta$ at
each scale are related to each other in any way.

Throughout our paper, we use the notation~$\E^{(m)}$ to denote the expectation in the Z-invariant Ising model defined on the isoradial grid of mesh size~$\delta$ with the elliptic parameter obtained from~$\delta$ via~\eqref{eq:qtoem}. { We write $\E^{(m),\wired}$ and $\E^{(m),\free}$ for expectations considered in finite domains in order to specify the \emph{wired} and \emph{free} boundary conditions, respectively. Also, in Proposition~\ref{prop:intro-sub} we use the notation~$\E^{+}$ for the infinite volume limit of the sub-critical model with \emph{plus} boundary conditions.}
\begin{theorem}
\label{thm:intro-mass} For each~$m\in\R$ there exists a function~$\Xi(\cdot,m):\R_+\to\R_+$ such that
the following holds in the massive scaling limit~$\qtoem\to 0$:
\begin{equation}
\delta^{-\frac{1}{4}}\E_{\Gamma^{\circ,\delta}}^{(m)}[\sigma_{u_{1}}\sigma_{u_{2}}]\ \to\ \mathcal{C}_{\sigma}^{2}\cdot\Xi(|u_{2}-u_{1}|,m)\ \ \text{as}\ \ \delta\to0\,,\label{eq: thm_1_intro}
\end{equation}
where the constant~$\mathcal{C}_{\sigma}=2^{\frac{1}{6}}e^{\frac{3}{2}\zeta'(-1)}$ is independent of both the rhombic lattice~$\Lambda^\delta$ and the mass~$m$. One has~$\Xi(r,0)\equiv r^{-\frac{1}{4}}$ and~$\Xi(r,m)\sim r^{-\frac{1}{4}}$ as~$r\to0$ for all~$m$.
\end{theorem}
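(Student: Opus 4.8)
The plan is to reduce Theorem~\ref{thm:intro-mass} to the convergence of a single discrete object — the two-point fermionic (spinor) observable — and then to a precise analysis of this observable near its two ramification points. To the configuration $(u_1,u_2)$ of faces of $\Gamma^{\circ,\delta}$ one attaches the massive s-holomorphic spinor $F^{\delta}=F^{\delta}_{[u_1,u_2]}$ living on the corners of the double cover of $\Lambda^{\delta}$ ramified over $u_1$ and $u_2$; it is built from disorder--fermion correlations of the Z-invariant model, and the Z-invariance of the weights~\eqref{eq: Z-inv} together with~$\BAP$ are exactly what force $F^{\delta}$ to satisfy discrete massive Cauchy--Riemann equations with constants uniform over all admissible lattices. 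The combinatorial input is a discrete logarithmic-derivative identity in the spirit of~\cite{ChelkakHonglerIzyurov,park2018massive}: for neighbouring faces $u_2,u_2'$ the ratio $\E^{(m)}_{\Gamma^{\circ,\delta}}[\sigma_{u_1}\sigma_{u_2'}]\big/\E^{(m)}_{\Gamma^{\circ,\delta}}[\sigma_{u_1}\sigma_{u_2}]$ is an explicit function of the values of $F^{\delta}$ near the ramification point over $u_2$, equivalently of the ratio of the sub-leading (``regular'') to the leading (``singular'') coefficient of $F^{\delta}$ in the local chart there. Summing such ratios along a path from $u_2$ to a reference point, Theorem~\ref{thm:intro-mass} becomes equivalent to three assertions: (i) as~$\qtoem\to0$ the full-plane observables $F^{\delta}$ converge to the unique continuous massive holomorphic spinor $f=f_{[u_1,u_2]}$ on $\C\setminus\{u_1,u_2\}$ with $z^{-1/2}$-type ramification at $u_1,u_2$ and the appropriate decay at infinity; (ii) this convergence is strong enough near the ramification points to pass the regular coefficient to the limit; and (iii) the resulting continuum logarithmic derivative integrates to the function $\Xi$.

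For (i) I would run the by-now-standard precompactness/identification scheme — following~\cite{ChelkakSmirnov2} at criticality and~\cite{Park2021Fermionic} in the massive case for the part away from $u_1,u_2$ — adapted to the present full-plane situation. Uniform boundedness and equicontinuity of $F^{\delta}$ on compact subsets of $\C\setminus\{u_1,u_2\}$ follow from the fact that a discrete primitive of $(F^{\delta})^{2}$ is a real (massive) s-subharmonic function, combined with maximum-principle and Harnack estimates that hold uniformly under~$\BAP$; one also needs uniform a~priori bounds $|F^{\delta}(\cdot)|\lesssim\dist(\cdot,u_j)^{-1/2}$ near the ramification points. Any subsequential limit is then a continuous massive holomorphic spinor, and the control at infinity — exponential decay when $m\ne0$, polynomial decay when $m=0$ — together with the prescribed $z^{-1/2}$-type singular behaviour at $u_1,u_2$ pins it down to coincide with $f$, whose uniqueness is elementary. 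This reduces everything to the local analysis near the two branch points.

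That local analysis is the genuinely new ingredient, and the step I expect to be the main obstacle; it is also what makes the whole argument uniform and lattice-independent. On the isoradial double cover I would construct explicit discrete ``free'' full-plane spinor kernels — the discrete analogues of $z^{-1/2}$ when $m=0$ and of $z^{-1/2}e^{\pm 2m|z|}$ when $m\ne0$ — and establish their sharp asymptotics, including the universal multiplicative constant relating their behaviour near the ramification point to their behaviour far from it. One then shows that any massive s-holomorphic spinor which is $O(\dist(\cdot,u_j)^{-1/2})$ near $u_j$ coincides, up to a controlled error, with a linear combination of these kernels and their ``regular companions''. This is what makes the discrete regular coefficient well-defined, proves that it converges (since the discrete kernels converge to the continuum ones), and simultaneously produces the normalizing constant. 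The difficulty is that on a general isoradial graph one cannot fall back on Fourier analysis or exact product formulas as on $\Z^{2}$: the kernels and their asymptotics must be built using only~$\BAP$ and Z-invariance, and the massive exponential factor $e^{\pm 2m|z|}$, which ultimately enters the constants, has to be tracked throughout.

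Finally, for (iii) I would define $\Xi(\cdot,m)$ by integrating the continuum logarithmic derivative $r\mapsto\partial_r\log\Xi(r,m)$ read off from the local expansion of $f_{[u_1,u_2]}$ at $u_2$, fixing the integration constant by requiring $\Xi(r,m)\sim r^{-1/4}$ as $r\to0$; this is consistent because, since $z^{-1/2}e^{\pm 2m|z|}=z^{-1/2}(1\pm2m|z|+\dots)$, the mass is a lower-order perturbation near the singularities, so $f$ locally resembles its $m=0$ counterpart and scale invariance of the massless problem forces the leading short-distance behaviour $r^{-1/4}$, while at $m=0$ the continuum spinor is explicitly scale- and rotation-covariant, giving $\Xi(r,0)\equiv r^{-1/4}$ exactly. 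It then remains to recognize the prefactor in~\eqref{eq: thm_1_intro} as a universal constant $\mathcal C_{\sigma}^{2}$: its independence of the rhombic lattice $\Lambda^{\delta}$ is built into the uniformity of all the estimates above, whereas its explicit value $2^{1/6}e^{\frac32\zeta'(-1)}$ comes out of the (universal) asymptotics of the discrete $z^{-1/2}$-kernel — equivalently, by matching the exactly solvable square lattice, in agreement with~\cite{ChelkakHonglerIzyurov}. As a consistency check, the lattice-independence of the infinite-volume magnetization (Proposition~\ref{prop:intro-sub}), proved by the same spinor methods in the ordered (sub-critical) regime, where the square of the magnetization equals $\lim_{r\to\infty}$ of the two-point function, gives an alternative handle on this normalization.
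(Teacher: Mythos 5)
Your skeleton (convergence of the spinor observable, extraction of the regular coefficient at the branch point, integration of the logarithmic derivative) is indeed the right one, but there are two genuine gaps. The more serious one is your claim that the lattice-independence of the constant~$\mathcal{C}_\sigma$ is ``built into the uniformity of all the estimates above.'' It is not. Uniform convergence of discrete logarithmic derivatives pins down only the \emph{ratios} $\E[\sigma_{u_1'}\sigma_{u_2'}]/\E[\sigma_{u_1}\sigma_{u_2}]$; it says nothing about the overall multiplicative normalization, which a priori could still depend on the lattice. The paper states this explicitly and resolves it with a separate \emph{gluing} argument (the ``star extension'' of Section~\ref{sub:star-ext}, used in Theorem~\ref{thm:spin-univ-crit} and Proposition~\ref{prop:E[sigma-sigma]=cst}): one embeds a large box of the given irregular lattice into a new rhombic tiling that also contains a comparably large piece of a rectangular grid at comparable distance, uses uniform RSW bounds to compare finite- and infinite-volume correlations, transports the pair of spins step by step from the irregular region to the rectangular one via the very log-derivative convergence (so that the correlation changes by a factor $1+o(1)$), and then invokes the explicit Onsager--Yang/Wu computation on $\mathbb{Z}^2$. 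Your appeal to ``matching the exactly solvable square lattice'' is otherwise circular: there is no way to compare two different infinite lattices unless some rhombic tiling contains large pieces of both, and producing it is precisely what the gluing construction does.

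The second gap is in your local analysis near the branch points. You propose building both the singular kernels ($z^{-1/2}e^{\pm 2m|z|}$) \emph{and} their ``regular companions'' ($z^{1/2}$-type), and showing that a general spinor with the right growth is approximated by a linear combination of these. This is exactly the reconstruction used in~\cite{ChelkakHonglerIzyurov,park2018massive}, and the paper explicitly flags that the accompanying symmetrization/discrete Beurling step is ``unavailable in the isoradial setup due to the lack of symmetries of the lattice.'' The paper's actual mechanism is different and much lighter: Lemma~\ref{lem:XG-monodromy} gives an exact discrete Cauchy-type identity expressing the value $X(c)$ of any propagation-equation solution at a corner adjacent to the branch point as a discrete contour integral $\frac{1}{2}\oint\Im[F(z)\cG_{[w]}(z)\,dz]$ along a macroscopic circle, using \emph{only} the singular kernel; the regular coefficient is then read off by substituting the asymptotics of $\cG_{[w]}$ from Theorem~\ref{thm:G-asymp-mass}. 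You should replace the ``regular companions plus approximate decomposition'' step by this Cauchy formula, both because the former is likely to fail on irregular isoradial graphs and because the latter is the paper's announced technical innovation. A third, more minor point: you work directly with a full-plane observable, whereas the paper works in finite domains with wired/free boundary conditions and passes to the full plane via RSW (Corollary~\ref{cor:spin-univ-mass}); for $m\ne 0$ this sidesteps the delicate existence/uniqueness questions for a full-plane spinor with the required growth at infinity in the super-critical direction.
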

\begin{proof}
See Section~\ref{sub:univ-crit} for the case~$m=0$ and Section~\ref{sub:conv-mass} for~$m\ne 0$.
\end{proof}
\begin{remark}\label{rem:PainleveIII}
The explicit expression for $\Xi(r,m)$ in terms of Painlev\'e~III transcendents is given by a celebrated formula
of Wu, McCoy, Tracy and Barouch~\cite{wu1976spin,SMJ_1_4,kadanoff1980smj} for the massive model on the \emph{square grid}; see also~\cite[Corollary~1.2 and Section~4.2]{park2018massive}. The main content of Theorem~\ref{thm:intro-mass} is that this result holds universally within the class of isoradial
graphs; note in particular that the constant~$C_{\sigma}$ does not depend on the local geometry of~$\Lambda^\delta$ near $u_{1,2}$. A similar result was hinted
by Dub\'edat~\cite[Proposition 27]{DubedatDimersCR} for magnetic correlators
of the Gaussian free field; presumably, \emph{at criticality} the convergence~\eqref{eq: thm_1_intro}
can be alternatively derived therefrom via the combinatorial bosonization correspondence~\cite{dubedat2011exact}.
\end{remark}

The left-hand side of (\ref{eq: thm_1_intro}) is a correlation in the infinite-volume thermodynamic limit, i.\,e., the limit
of correlations in increasing finite domains $\Omega_{1}^{\delta}\subset\Omega_{2}^{\delta}\subset\ldots\subset\Gamma^{\delta}$
for a fixed temperature parameter $q$. The existence of such a limit can be shown by standard monotonicity arguments and RSW bounds
at criticality; see Section~\ref{sub:RSW} for details. Theorem \ref{thm:intro-mass}
then concerns another limit as one lets both~$q,\delta\to0$ so that $\qtoem$.
In fact, to prove Theorem~\ref{thm:intro-mass} we rely upon the fact that the RSW bounds are uniform with respect to~$\delta$ and~$q$, which allows us to work in (sufficiently large) finite domains $\Omega^\delta$ instead of~$\Gamma^\delta$. In particular, along the way we prove the convergence and universality for spin-spin correlations in smooth simply connected domains with appropriate boundary conditions: wired if~$m\le 0$ and free if~$m\ge 0$. Our analysis also implies the following:

As in \cite{ChelkakHonglerIzyurov} and \cite{park2018massive}, one of the key ingredients of our proof in the case~$m\le 0$ is a (uniform) convergence result for the ``discrete logarithmic derivatives''
\begin{equation}
\label{eq: intro_log_der}
\log\frac{\E_{\Omega^{\delta}}^{(m),\wired}[\sigma_{u'_{1}}\sigma_{u_{2}}]}{\E_{\Omega^{\delta}}^{(m),\wired}[\sigma_{u_{1}}\sigma_{u_{2}}]}\ =\ \re\big[(u'_{1}-u_{1})\mathcal{A}_{\Omega}^{(m)}(u_{1},u_{2})\big]+o(\delta)
\end{equation}
where $u'_{1}\sim u_{1}$ are nearest neighbors on the lattice~$\Gamma^{\circ,\delta}$ and the quantity~$\mathcal{A}_{\Omega}^{(m)}(u_{1},u_{2})$ is expressed via the scaling limit of spinor fermionic observables; see Section~\ref{sub:intro-techniques} for a more detailed discussion. This
result extends without any effort to multi-spin correlation, leading
to convergence results for ratios of such correlations:
\begin{equation}
\label{eq:intro-multipoint}
\frac{\E_{\Omega^{\delta}}^{(m),\wired}[\sigma_{u'_{1}}\sigma_{u'_{2}}\ldots\sigma_{u'_{n}}]}
{\E_{\Omega^{\delta}}^{(m),\wired}[\sigma_{u_{1}}\sigma_{u_{2}}\ldots\sigma_{u_{n}}]}\ \to\ \frac{\langle\sigma_{u'_1}\sigma_{u'_2}\ldots\sigma_{u'_n}\rangle_\Omega^{(m),\wired}} {\langle\sigma_{u_1}\sigma_{u_2}\ldots\sigma_{u_n}\rangle_\Omega^{(m),\wired}}\ \ \text{as}\ \ \delta\to 0\,,
\end{equation}
where the ``continuum correlation functions'' in the right-hand side are defined as
\[
\textstyle \langle\sigma_{u_1}\sigma_{u_2}\ldots\sigma_{u_n}\rangle_\Omega^{(m),\wired}\ :=\ \exp\int\Re\big[\sum_{s=1}^n\cA_\Omega^{(m)}(u_s,u_1,\ldots,u_{s-1},u_{s+1},\ldots,u_n)du_s\big],
\]
with an appropriate multiplicative normalization. Once the convergence~\eqref{eq:intro-multipoint} is established, the asymptotics (\ref{eq: thm_1_intro}) together with usual RSW-type arguments are enough to fix the { explicit} multiplicative normalization of the correlation functions. Thus, our analysis also implies the following:

\begin{theorem}\label{thm:intro-multi}
The results of~\cite[Theorem~1.2]{ChelkakHonglerIzyurov} for~$m=0$ and~\cite[Theorem~1.1]{park2018massive} for~$m<0$ asserting convergence
\[
\delta^{-\frac n8}\E_{\Omega^{\delta}}^{(m),\wired}[\sigma_{u_{1}}\sigma_{u_{2}}\ldots\sigma_{u_{n}}]\ \to\ \mathcal{C}_\sigma^n\cdot \langle\sigma_{u_1}\sigma_{u_2}\ldots\sigma_{u_n}\rangle_\Omega^{(m),\wired}\ \ \text{as}\ \ \delta\to 0
\]
of multi-point spin correlations in discrete approximations~$\Omega^\delta$ of bounded simply connected domains~$\Omega$ (with smooth boundaries if~$m<0$), hold true, without any change, for the massive Ising model on isoradial grids with Z-invariant weights.
\end{theorem}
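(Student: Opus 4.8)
The plan is to deduce Theorem~\ref{thm:intro-multi} from the convergence of logarithmic derivatives~\eqref{eq: intro_log_der}--\eqref{eq:intro-multipoint} together with the two-point asymptotics~\eqref{eq: thm_1_intro} of Theorem~\ref{thm:intro-mass} and RSW-type a priori bounds. Concretely, one first checks that the analysis leading to~\eqref{eq: intro_log_der} is \emph{local} in the spin being moved: the discrete logarithmic derivative of $\E_{\Omega^\delta}^{(m),\wired}[\sigma_{u_1}\ldots\sigma_{u_n}]$ with respect to $u_1\mapsto u_1'$ is governed, exactly as in the two-point case, by a discrete massive holomorphic spinor with ramification points at all of $u_1,\ldots,u_n$, whose continuum limit is the function $\cA_\Omega^{(m)}(u_1;u_2,\ldots,u_n)$. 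This is the step where one uses that nothing in the argument of Section~\ref{sub:intro-techniques} privileges the two-point setting beyond the number of branch points; the spinor estimates near a ramification point are one-point-local and the bounded-angle property $\BAP$ makes them uniform. Summing over the $n$ spins and integrating gives the convergence~\eqref{eq:intro-multipoint} of ratios, with the continuum object defined by the displayed exponential-of-integral formula; one must verify closedness of the relevant one-form, which follows from the symmetry/commutation properties of $\cA_\Omega^{(m)}$ (themselves inherited from the corresponding properties of the continuous spinor observables).

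Next I would \emph{fix the overall normalisation}. The ratio convergence~\eqref{eq:intro-multipoint} pins down $\delta^{-n/8}\E_{\Omega^\delta}^{(m),\wired}[\sigma_{u_1}\ldots\sigma_{u_n}]$ up to a multiplicative constant that a priori could depend on $\delta$; to eliminate this one compares with a configuration whose asymptotics is already known. The cleanest route is to bring two of the insertion points, say $u_{n-1}$ and $u_n$, close together and well inside $\Omega$: by the ratio result~\eqref{eq:intro-multipoint} (applied to merge the remaining $n-2$ points into a reference position) combined with RSW bounds, the $n$-point correlation factorises, as $u_{n-1}\to u_n$, as the $(n-2)$-point correlation times the \emph{full-plane} two-point correlation $\E_{\Gamma^{\circ,\delta}}^{(m)}[\sigma_{u_{n-1}}\sigma_{u_n}]$ up to $1+o(1)$; the latter is controlled by Theorem~\ref{thm:intro-mass}, which supplies both the power $\delta^{-1/4}$ and the constant $\mathcal{C}_\sigma^2$. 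Iterating, one reduces the normalisation of the $n$-point function to that of the $0$- or $1$-point function, i.e.\ to the magnetisation statement also proved in the paper. This is exactly the mechanism already used in~\cite{ChelkakHonglerIzyurov} and~\cite{park2018massive} on the square grid, so here it is a matter of checking that each ingredient it relies on—ratio convergence, RSW uniformity in $\delta$ and $q$, and the two-point asymptotics—has now been established isoradially and uniformly.

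The main obstacle is making the passage ``from ratios to the correlation itself'' work \emph{uniformly} in the massive regime $\qtoem$, in particular controlling the boundary behaviour: for $m<0$ one needs smooth boundaries and wired boundary conditions so that the continuum spinor observables $\cA_\Omega^{(m)}$ and the RSW crossing estimates behave well up to $\partial\Omega$, and one must ensure the factorisation-near-a-point argument is not spoiled by boundary effects (this is why the merging pair $u_{n-1},u_n$ is taken in the bulk and why the error terms in~\eqref{eq: intro_log_der} are genuinely $o(\delta)$ rather than merely $o(1)$, so that summing $O(\delta^{-1})$ of them along a macroscopic path still gives $o(1)$). A secondary technical point is the independence of $\mathcal{C}_\sigma$ from the lattice \emph{and} from the local geometry near the $u_i$: this is not re-proved here but imported from Theorem~\ref{thm:intro-mass}, and one simply has to note that the factorisation argument transfers it to every $n$. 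Since all of these—ratio convergence, uniform RSW, two-point universality, magnetisation—are either established in the body of the paper or quoted, the proof of Theorem~\ref{thm:intro-multi} is essentially a bookkeeping assembly; accordingly, in the paper we will indicate the (minor) modifications to the arguments of~\cite{ChelkakHonglerIzyurov,park2018massive} rather than repeat them in full.
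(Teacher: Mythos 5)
Your proposal matches the paper's proof in essence: extend the $n=2$ analysis of discrete logarithmic derivatives (Corollaries~\ref{cor:conv-to-A-crit} and~\ref{cor:conv-to-A-mass}) to $n>2$ by the locality of the near-branch-point argument, then import the normalization-fixing machinery of~\cite{ChelkakHonglerIzyurov,park2018massive} (merging points pairwise or sending them to $\partial\Omega$), with the full-plane two-point correlation of Theorem~\ref{thm:intro-mass} supplying the one lattice-specific ingredient. The only imprecision is the final sentence of your second paragraph: the anchor for the normalization is the full-plane two-point asymptotics itself, not Proposition~\ref{prop:intro-sub}, which concerns the infinite-volume magnetization at fixed $q<0$ (mesh $\delta=1$) rather than the massive scaling limit.
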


We stress once again here that not only 
the continuous correlation functions
are universal within this class of the lattices/weights, but also the constant $\mathcal{C}_{\sigma}$
in front.

\begin{proof} The proof of \eqref{eq:intro-multipoint} in the case $n=2$ is given in Corollary \ref{cor:conv-to-A-crit} { for $m=0$} and in Corollary \ref{cor:conv-to-A-mass} { for $m<0$.} Since the analysis is local, it extends without any change to the case $n>2$. Given \eqref{eq:intro-multipoint}, the rest of the proofs in~\cite{ChelkakHonglerIzyurov} and~\cite{park2018massive} amount to fixing the overall normalization by sending { the points $u_s$} pairwise to each other or to the boundary { of~$\Omega$,} and the only part of that argument that relied on the specific properties of the square lattice was the computation of the full-plane two-point correlation. This ingredient is now supplied by Theorem \ref{thm:intro-mass}.
\end{proof}

\begin{remark} In order to prove Theorem~\ref{thm:intro-mass} for~$m>0$ we rely upon an analogue of~\cite[Theorem~1.7]{ChelkakHonglerIzyurov} which gives the convergence of the ratio of the two-point correlations in dual Ising models. Thus, similarly to a work of S.\,C.\,Park~\cite{park2018massive}, our methods do~\emph{not} directly imply an analogue of Theorem~\ref{thm:intro-multi} for~$m>0$. However, we believe that the techniques developed in this paper for the analysis of spinor fermionic observables near their branching points allow to prove such a convergence, at least in smooth simply connected domains, following the framework of~\cite{CHI_Mixed}.
\end{remark}

A sharp control of the discrete logarithmic derivative (\ref{eq: intro_log_der}),
in principle, suffices to recover the scaling function $\Xi(r,m)$
in the statement of Theorem \ref{thm:intro-mass}, but not the fact
that the constant $C_{\sigma}$ is lattice-independent. In order to
complete the proof, we use an additional \emph{gluing argument} (see Section~\ref{sub:star-ext} for details), showing
that a finite piece of an arbitrary isoradial grid can be glued, staying within the same family, to a piece of a regular (rectangular) lattice, so that the sizes of these
pieces and the distance between them are comparable. Then, we can
move the spins from the irregular part to the regular one, controlling
how the correlation changes in the process, and establishing universality.

A similar argument can be applied to analyze the \emph{magnetization in the infinite-volume limit} of the sub-critical model on a fixed isoradial grid~{ $\Gamma^\circ$, say, with~$\delta=1$.} As a by-product of our analysis of the massive model, we also get the following result. We are not aware
of its detailed proof in full generality
in the literature, although it was probably known, at least { for some particular lattices,} in the folklore.
\begin{proposition}[{\bf  Baxter's formula}]
\label{prop:intro-sub} For~$q<0$, the infinite-volume magnetization in the sub-critical Ising model on~$\Gamma^\circ$ with Z-invariant weights is universal:
\[
\E_{\Omega_{n}}^{+}[\sigma_{u}]\to\magnet\ \ \text{as}\ \ \Omega_n\uparrow\Gamma^{\circ}\quad  \text{and}\quad \E_{\Gamma^\circ}[\sigma_{u_{1}}\sigma_{u_{2}}]\to \sccorr\ \ \text{as}\ \ |u_{1}-u_{2}|\to\infty.
\]
{ For~$q>0$, similar results hold for the model on~$\Gamma^{\bullet}$ with $k^*$ replaced by~$k$.}
\end{proposition}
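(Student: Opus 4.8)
The plan is to reduce the statement to the main asymptotics of Theorem~\ref{thm:intro-mass} together with the \emph{gluing argument} of Section~\ref{sub:star-ext}. First I would observe that the two assertions of the Proposition are equivalent: by standard GKS/FKG monotonicity and the spin-flip symmetry of free boundary conditions, the full-plane correlation $\E_{\Gamma^\circ}[\sigma_{u_1}\sigma_{u_2}]$ is squeezed between $(\E^{+}_{\Omega_n}[\sigma_u])^2$-type quantities, so that $\lim_{|u_1-u_2|\to\infty}\E_{\Gamma^\circ}[\sigma_{u_1}\sigma_{u_2}] = (\lim_{\Omega_n\uparrow\Gamma^\circ}\E^{+}_{\Omega_n}[\sigma_u])^2$, and both limits exist by subadditivity/RSW-type arguments recalled in Section~\ref{sub:RSW}. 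Hence it suffices to identify $\lim_{|u_1-u_2|\to\infty}\E_{\Gamma^\circ}[\sigma_{u_1}\sigma_{u_2}]$ with $\sccorr=(k^*)^{1/2}$.

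The key point is that on a \emph{fixed} isoradial grid $\Gamma^\circ$ with $\delta=1$ and fixed $q<0$, we may run the massive scaling limit ``in reverse''. Concretely, rescale the lattice by a small factor $\delta$; the rescaled grid $\Gamma^{\circ,\delta}$ is isoradial with mesh $\delta$ and still satisfies $\BAP$ with the same $\theta_0$. Its elliptic parameter is unchanged and equal to the original $k$ with $q<0$, so in the notation of~\eqref{eq:qtoem} this corresponds to the \emph{massive} regime with mass $m=m(\delta)=q/(2\delta)\to-\infty$ as $\delta\to0$. For two points $u_1,u_2$ at fixed lattice distance $R=|u_1-u_2|_{\mathrm{lattice}}$ in the original grid, their rescaled distance is $r=\delta R\to 0$, and Theorem~\ref{thm:intro-mass} (more precisely, the uniform convergence toward $\Xi$ and the fact that, together with the RSW bounds, it controls the behaviour as $m\to-\infty$) forces the ratio $\E_{\Gamma^\circ}[\sigma_{u_1}\sigma_{u_2}]/\E_{\Gamma^\circ}[\sigma_{u_1'}\sigma_{u_2}]$ along lattice neighbours to converge to the corresponding ratio of $\Xi(\cdot,m)$. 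Letting $R\to\infty$ and using that $\Xi(r,m)\to(\text{const}\cdot(k^*)^{1/2})$-type plateau as $r$ is large compared to the correlation length $|m|^{-1}$ (this is exactly where the massive Painlevé~III asymptotics, universal by Theorem~\ref{thm:intro-mass}, enters), pins down the limiting value of $\E_{\Gamma^\circ}[\sigma_{u_1}\sigma_{u_2}]$; the universal constant $\mathcal{C}_\sigma$ and the $\delta^{-1/4}$ prefactor cancel between numerator and denominator so only the lattice-independent plateau survives. The statement for $q>0$ then follows from the Kramers--Wannier duality~\eqref{eq: KW-duality-kq}, which interchanges $\Gamma^\circ$ and $\Gamma^\bullet$ and replaces $k$ by $k^*$.

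The main obstacle is making the exchange of the two limits $\delta\to0$ (i.e.\ $m\to-\infty$) and $|u_1-u_2|\to\infty$ rigorous: Theorem~\ref{thm:intro-mass} is a statement for fixed $m$, so I would instead argue directly from the convergence of the discrete logarithmic derivative~\eqref{eq: intro_log_der} and the fact that the coefficients $\cA^{(m)}_\Omega$ are controlled \emph{uniformly in $m\le 0$ and in $\delta$} by the RSW estimates, which allows one to take $|u_1-u_2|$ of order $|m|^{-1}$ or larger while keeping everything bounded. The gluing argument of Section~\ref{sub:star-ext} is then what upgrades this to a statement with a lattice-independent constant: one glues a large piece of the irregular grid $\Gamma^\circ$ to a piece of the square lattice, transports the two spins onto the square-lattice part, and invokes the known value $(k^*)^{1/2}$ of the square-lattice long-range order (equivalently, Baxter's/Yang's formula) to conclude. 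I expect the only genuinely new work beyond what is already developed in the paper to be the bookkeeping that, in the regime $r\asymp|m|^{-1}$, the error terms in~\eqref{eq: intro_log_der} and in the gluing estimate remain $o(1)$ after summation over the $O(R)$ lattice steps separating $u_1$ from $u_2$.
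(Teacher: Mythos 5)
Your overall goal (reduce to the square lattice via the gluing construction and invoke the Onsager--Yang/Baxter formula) is correct, but the route you propose to get there diverges from the paper's in an important way and has a genuine gap in its central step.

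The paper proves Proposition~\ref{prop:intro-sub} in Section~\ref{sub:spin-sub} \emph{entirely at fixed mesh $\delta=1$ and fixed $q<0$}, without ever passing through the scaling limit. The mechanism is: (a) construct, at fixed $q<0$, the full-plane branching kernel $\rG_{[v]}$ and prove that it decays \emph{exponentially fast} at lattice scale, uniformly over isoradial grids satisfying $\BAP$ --- this is estimate~\eqref{eq:G-exp-decay-sub}, whose proof only uses a maximum-principle bound~\eqref{eq:scsup} on the modulus of the elliptic $\operatorname{sc}$ factor along a minimal path; (b) use the additive-monodromy/Cauchy formula idea (cf.\ Lemma~\ref{lem:XG-monodromy}) to show that the magnetization $\E^+_\Lambda[\sigma_u]$ is independent of the vertex $u$ (Lemma~\ref{lem:E[sigma]=cst}), and then, in a refined version, that moving one spin by one lattice step costs only an exponentially small error when the other spin is far away (estimate~\eqref{eq:sub-step-on-glued}); (c) combine this with the star extension and the exact rectangular-lattice value. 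None of this involves Theorem~\ref{thm:intro-mass}, and indeed Section~\ref{sub:spin-sub} is placed before the proof of that theorem.

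Your proposal instead wants to rescale the fixed lattice by $\delta\to 0$ and recognize a massive scaling limit with $m(\delta)=q/(2\delta)\to-\infty$. This is where the gap is. Theorem~\ref{thm:intro-mass}, and the convergence of the discrete logarithmic derivative~\eqref{eq: intro_log_der}, are proved for \emph{fixed} $m$; the $o(\delta)$ and $o(1)$ error terms there are not claimed to be, and a priori are not, uniform in $m$ as $m\to-\infty$. Your suggested fix --- that the coefficients $\cA^{(m)}_\Omega$ are controlled ``uniformly in $m\le 0$ and $\delta$'' by RSW bounds --- is precisely the quantitative input that is missing: RSW gives boundedness and exponential tails of $\Xi(\cdot,m)$, but it does not give a uniform-in-$m$ rate of convergence of the discrete observables to their continuum limits, which is what the exchange of $\delta\to0$ and $m\to-\infty$ would require. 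The paper sidesteps this entirely by never going to the scaling limit: the exponential decay of $\rG_{[v]}$ at fixed $q$ replaces the missing uniformity, and the ``moving spins step by step'' argument bounds the error directly at lattice scale.

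Separately, your opening reduction (``squeeze between $(\E^+_{\Omega_n}[\sigma_u])^2$-type quantities'') silently uses that $\E^+_\Lambda[\sigma_u]$ does not depend on $u$ on the fixed irregular grid; that is exactly Lemma~\ref{lem:E[sigma]=cst}, which is a nontrivial ingredient proved via the kernel/monodromy argument, not just FKG. The final Corollary in Section~\ref{sub:spin-sub} also needs more than a plain squeeze: the reverse inequality $M(\Lambda)\ge (k^*)^{1/4}$ uses the star extension to relocate one of the two points into a translation-invariant rectangular region, where the magnetization can be identified. I would recommend re-reading Section~\ref{sub:spin-sub} and the exponential-decay estimate in Section~\ref{sub:exp-asymp}: they give a shorter, lattice-level proof that avoids the massive scaling limit altogether.
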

\begin{proof} See Section~\ref{sub:spin-sub}. As in the case of Theorem~\ref{thm:intro-mass} and the constant $\mathcal{C}_{\sigma}$,
we do not compute these limits explicitly, but rather show
that they are universal. 
To this end, we glue a large enough piece of a given isoradial grid to a piece
of a rectangular lattice, on which we can apply the celebrated Onsager--Yang
result~\cite{yang1952spontaneous,mccoy2014two} in the form given
by Baxter \cite[Eq.~7.10.50]{baxter2016exactly}; see also~\cite[Section~3]{chelkak_Hongler_Mahfouf} for a simplified derivation.
{ (Note that the elliptic parameter~$k$ in \cite{boutillier2019z} corresponds to~$k'$ of Baxter; see footnote in \cite[Section 2.2.2]{boutillier2019z}.)}
\end{proof}

\subsection{Techniques and related projects}\label{sub:intro-techniques} The general strategy of our proof of the key convergence result~\eqref{eq: intro_log_der} follows that of \cite{ChelkakHonglerIzyurov,park2018massive}.
One introduces an observable, a properly normalized spin-fermion-disorder correlator, which, as a function of the position of the fermion, is
a \emph{massive s-holomorphic spinor} living on a double cover of the original discrete domain ramified at the positions of the spin and the
disorder. (The { notion} of massive s-holomorphic functions on isoradial graphs { and the regularity theory thereof were} independently developed in a recent work of S\,.C\,.Park~\cite{Park2021Fermionic}; see Section~\ref{sub:mass-s-hol} for more details.) We then prove the convergence of this observable to a massive holomorphic (i.\,e., satisfying the { Dirac} equation~$\overline{\partial}f+im\overline{f}=0$) limit as~$\delta\to 0$; uniformly away from the boundary and from the branching points.

After this is done, one uses the fact that both spin-spin correlations in (\ref{eq: intro_log_der})
can be recovered from the values of the observable by placing
the fermion next to the disorder. We thus need a way to express a
value of a massive s-holomorphic spinor next to its ramification point
in terms of its values at a definite distance therefrom. This is where
our main \emph{technical innovation} comes into play: we introduce a very
simple version of the Cauchy integral formula for spinors that allows
one to do such a reconstruction using an explicit 
full-plane kernel, the discrete analogue of a spinor $e^{\mp i\frac\pi 4}\cdot z^{-\frac{1}{2}}e^{\pm 2m|z|}$.
Such kernels were essentially constructed by Dub\'edat in~\cite{DubedatDimersCR} for the critical case~$m=0$; we extend his construction
to the massive setup using the theory of massive discrete exponentials, recently developed by Boutillier, de Tili\`ere and Raschel
in~\cite{boutillier2017z,boutillier2019z}. The construction and the asymptotic analysis of the required branching kernels are presented in Section~\ref{sec:asymptotics}.

Recall that in~\cite{ChelkakHonglerIzyurov,park2018massive} a considerably more complicated reconstruction procedure was employed. In particular, it also required an explicit construction of discrete analogs of the kernels $e^{\mp i\frac\pi 4}z^{\frac{1}{2}}$ (or their massive modifications) and an argument based on the symmetrization procedure and the discrete Beurling inequality; this technique is unavailable in the isoradial setup due to the lack of symmetries of the lattice. Thus, not only our new argument enables generalization of the results to the isoradial setup, it also leads to a much simpler proof in the square lattice case. In particular, the companion paper~\cite{CHI_Mixed} devoted to a unified treatment of mixed correlations of primary fields, borrowed the arguments of the present paper in what concerns the analysis near the branching points. Moreover, this Cauchy formula can be written in a purely abstract form without any assumption on the embedding or weights of the model under consideration (see Lemma~\ref{lem:XG-monodromy}), which paves the way for further generalizations of the convergence results for spin correlations \emph{beyond} the Z-invariant setup; cf. recent results on the convergence of fermionic observables on the so-called s-embeddings of planar weighted graphs~\cite{chelkak2020ising}.

Another -- though not strictly necessary for our analysis -- new idea implemented in this paper is a {re-embedding} of the massive Z-invariant Ising model { on~$\Lambda^\delta$} into the complex plane using the aforementioned \emph{s-embeddings}~$\cS^\delta$; see Section~\ref{sub:s-emb} for more details. This allows us to benefit from a general regularity theory developed for s-holomorphic functions on s-embeddings in~\cite[Section~2]{chelkak2020ising} and ~\cite[Section~6]{chelkak2020dimer}. Under this procedure, the original massive s-holomorphic observables on~$\Lambda^\delta$ and new s-holomorphic observables on~$\cS^\delta$ are linked by a simple explicit formula given in Proposition~\ref{prop:Fm=FS}, which immediately allows us to deduce the a priori regularity of massive s-holomorphic functions on~$\Lambda^\delta$ from the results of~\cite{chelkak2020ising,chelkak2020dimer}. Also, this provides a concrete illustration of a general phenomenology, which says that the mass in a planar Ising model manifests itself as the mean curvature of an s-embedding of the model into the Minkowski space~$\mathbb{R}^{2+1}$; see~\cite[Section~2.7]{chelkak2020ising} for a discussion. Though, as already mentioned above, these re-embedding techniques are not necessary for the analysis of the Z-invariant model (e.\,g., see~\cite{Park2021Fermionic}, where the relevant a priori regularity estimates developed directly on~$\Lambda^\delta$), we believe that they are flexible enough to be applied { to} less rigid setups.

{ Recall that throughout this paper we assume that all isoradial grids satisfy the uniform bounded angles property~$\BAP$, which plays an essential role in several places of our analysis as we frequently use the fact that the graph distances on rhombic lattices are comparable with Euclidean ones: e.\,g., when passing from~\eqref{eq: intro_log_der} to~\eqref{eq:intro-multipoint}. However, there are certain indications that this assumption is not strictly relevant, at least at criticality. Notably, similar techniques can be applied to a 2D graphical expansion (e.,g., see~\cite{Ioffe-quantumIsing,JHLi} and references therein) of the \emph{quantum 1D Ising model}, leading to similar convergence results for correlation functions;~we refer the interested reader to a forthcoming paper~\cite{JHLi-Mahfouf} for more details. This expansion can be thought of as a 2D Ising model on a rectangular grid with an infinitesimally small aspect ratio, a limit that obviously cannot be achieved under~$\BAP$.}

Finally, in terms of a framework developed in the companion paper~\cite{CHI_Mixed} for the analysis of mixed correlations of primary fields in (possibly) multiply connected domains, 
this paper provides the following ``building blocks'':
\begin{itemize}
\item an explicit construction and asymptotic analysis of required infinite-volume kernels~$\cG_{[u]}$, $\cG_{[v]}$ and~$\cG_{(a)}$ (see Section~\ref{sec:asymptotics} for more details);
\item analysis of the two-point spin correlation in the full plane (Theorem~\ref{thm:intro-mass}).
\end{itemize}
Note that a work~\cite{Park2021Fermionic} of S.\,C.\,Park contains two more such ``building blocks'', namely
\begin{itemize}
\item a ``quantitative convergence'' of basic fermionic FK-Ising observables;
\item uniform RSW-type estimates for the massive FK-Ising model; cf.~\cite{DuGaPe-near-crit}.
\end{itemize}
Thus, the only remaining input required to extend the results of~\cite{CHI_Mixed} to the massive Z-invariant model on isoradial grids is the analysis of solutions to certain Riemann-type boundary value problems for massive holomorphic functions \emph{in continuum} (which is considerably more complicated for~$m\ne 0$ than at criticality, cf.~\cite{Park2021Fermionic}).

\subsection*{Organization of the paper} We start Section \ref{sec: prelim} by fixing the notation and recalling the definition of the Z-invariant Ising model. 
We then recall the construction of fermionic observables via Kadanoff--Ceva order-disorder formalism, the propagation equation, and Smirnov's ``integrating the square'' procedure. { We also} recall a construction of the infinite volume limit of the model and describe the ``star extension'' procedure, which allows to glue a big piece of a given rhombic lattice to a regular one. In Section~\ref{sec: Discr_complex_analysis}, we review the ``massive discrete complex analysis'' techniques, namely, the a priori regularity of ``massive s-holomorphic functions'' constructed from solutions to the spinor propagation equation, and the fact that subsequential limits of such functions satisfy the Dirac equation. We also state the properties of the massive discrete holomorphic full-plane ``kernels'' used in our proofs, and introduce the aforementioned Cauchy integral formula. In Section~\ref{sec: Proofs}, we prove the main results of the paper. For convenience of the reader { we start with the critical case and then refer to it when discussing the massive setup.} In Section \ref{sec:asymptotics}, we construct the required full-plane kernels and perform the asymptotic analysis of thereof, using the ``massive discrete exponentials'' of Boutillier, de Tili\`ere, and Raschel~\cite{boutillier2017z}.

\subsection*{Acknowledgements} We are grateful to S.\,C.\,Park for many fruitful discussions of the massive Ising model. We also would like to thank C\'edric Boutillier, B\'eatrice de Tili\`ere, Jhih-Huang Li, Ioan Manolescu, Paul Melotti and Yijun Wan for helpful comments and remarks. D.\,C. is a holder of the ENS--MHI chair funded by the MHI, whose support is gratefully acknowledged.  The work of K.\,I. was supported by Academy of Finland via academy project ``Critical phenomena in dimension two: analytic and probabilistic methods''. The work of D.\,C. and R.\,M. was also partially supported by the ANR-18-CE40-0033 project DIMERS.

\section{Preliminaries and basic facts}
\label{sec: prelim}
\subsection{Notation: graphs, double covers, spin-disorder correlations}\label{sub:notation} We rely upon the spin-disorder formalism of Kadanoff and Ceva~\cite{KadanoffCeva}; e.\,g. see~\cite[Section~2]{chelkak2017ICM} or~\cite[Section~2]{CHI_Mixed} for more details and~\cite{ChelkakCimasoniKassel} for links of this approach with other combinatorial formalisms used to study the planar Ising model. For a planar graph~$G=G^\bullet$, let
\begin{itemize}
\item $G^\circ$ be the { graph dual} to~$G^\bullet$ 
(note that we typically work with the planar Ising model defined on~$G^\circ$ and not on~$G^\bullet$);
\smallskip
\item $\Lambda(G)$ be a planar graph whose set of vertices is the union of $G^\bullet$ and~$G^\circ$, with edges connecting adjacent vertices~$v\in G^\bullet$ and faces~$u\in G^\circ$ of~$G$;
\smallskip
\item $\diamondsuit(G)$ be the { graph dual} to~$\Lambda(G)$, we often call its vertices~$z\in\Lambda(G)$ \emph{quads} referring to this duality (note that all faces of~$\Lambda(G)$ have degree four);
\smallskip
\item $\Upsilon(G)$ be the medial graph of~$\Lambda(G)$ (i.\,e., vertices of~$\Upsilon(G)$ are in a bijective correspondence with edges~$(uv)$ of~$\Lambda(G)$), we often call its vertices~$c\in\Upsilon(G)$ \emph{corners} referring to the fact that they are in a bijective correspondence with corners of faces of~$G^\circ$ (or of~$G=G^\bullet$, note that faces of~$\Upsilon(G)$ correspond either to~$v\in G^\bullet$ or to~$u\in G^\circ$ or to~$z\in\diamondsuit(G)$);
\smallskip
\item $\Upsilon^\times(G)$ be a double cover of~$\Upsilon(G)$ that branches around \emph{each} of its faces
{ (e.\,g., see~\cite[Fig.~27]{mercat2001discrete} or~\cite[Fig.~6]{ChelkakSmirnov2} or~\cite[Fig.~3]{chelkak2020ising});}
\smallskip
\item more generally { (e.\,g., see Fig.~\ref{fig:double-covers} for an example with~$\varpi=[v,w]$)}, given a subset~$\varpi$ of vertices of~$\Lambda(G)=G^\bullet\cup G^\circ$ let
\begin{itemize}
\item $\Upsilon_\varpi(G)$ be a double cover of~$\Upsilon(G)$ branching \emph{only} around~$w\in \varpi$,
\item $\Upsilon^\times_\varpi(G)$ be a double cover ramified at all faces of~$\Upsilon(G)$ \emph{except}~$\varpi$.
\end{itemize}
\end{itemize}
In our paper the graph~$G$ is usually a discrete domain on an isoradial grid \mbox{$\Gamma^\delta=\Gamma^{\bullet,\delta}$} of mesh size~$\delta\to 0$, which approximates a planar bounded simply connected domain \mbox{$\Omega\subset\C$}. We use the notation~$\Omega^{\delta}\subset\Gamma^\delta$ for such approximations. We will often view vertices (dual vertices, etc.) as complex numbers giving their position in~$\C$.

\begin{figure}
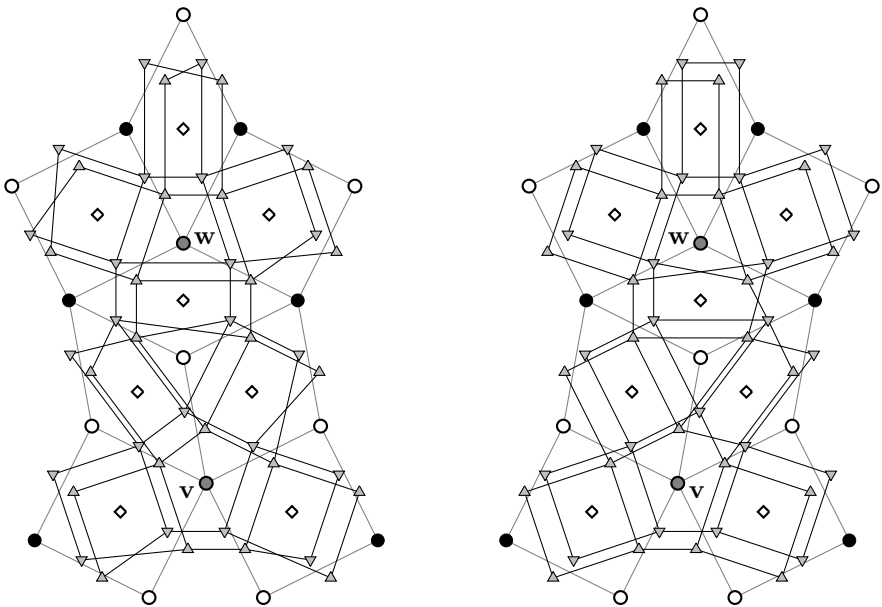

\begin{center}\begin{tikzpicture}[scale=0.17]
\input{BranchingTimesVW.txt}
\end{tikzpicture}
\hskip 40pt
\begin{tikzpicture}[scale=0.17]
\input{BranchingVW.txt}
\end{tikzpicture}\end{center}
\caption{{ \textsc{Left:} local structure of a double cover~$\Upsilon^\times_{[v,w]}$ of the medial graph~$\Upsilon$; vertices~$v\in \Gamma^\bullet$ and~$w\in\Gamma^\circ$ are marked in gray. \textsc{Right:} local structure of the corresponding double cover~$\Upsilon_{[v,w]}$.}}
\label{fig:double-covers}
\end{figure}

We often speak about \emph{spinors} defined on double-covers of graphs, which are functions on the double cover in question whose values at two lifts of the same vertex differ by the sign. Provided that an embedding of all these graphs into the complex plane is fixed, an important example of a spinor on~$\Upsilon^\times(G)$ is given by
\begin{equation}
\label{eq:eta-c-def}
\eta_c\ :=\ \varsigma\cdot\exp\big[-\tfrac{i}{2}\arg(v(c)-u(c))\big],\qquad \varsigma:=e^{i\frac{\pi}{4}},
\end{equation}
where $u(c)\in G^\circ$ and~$v(c)\in G^\bullet$ are endpoints of the edge~$(u(c)v(c))$ corresponding to the corner~$c$. A particular choice of the prefactor~$\varsigma$ is unimportant but influences the notation in what follows; we choose the value $e^{i\frac\pi 4}$ in order to keep the presentation consistent with~\cite{ChelkakSmirnov2} and~\cite{Park2021Fermionic} (note however that~\cite{ChelkakHonglerIzyurov} and~\cite{park2018massive} use another convention~$\varsigma=i$).

The (ferromagnetic) \emph{nearest-neighbor} Lenz--Ising model on the \emph{dual} graph~$G^\circ$ is a random assignment of spins~$\sigma_u\in\{\pm 1\}$ to the~\emph{faces} $u$ of~$G$ such that the probability of a spin configuration~$\sigma=(\sigma_u)$ is proportional to
\begin{equation}\label{eq:P[sigma]-def}
\textstyle \mathbb{P}_G[\,\sigma\,]\propto\exp\,[\,{ \beta^\circ} \sum_{u\sim w} { J^\circ_e}\sigma_u\sigma_w\,]\,,\qquad e=(uw)^*,
\end{equation}
where a positive parameter~${ \beta^\circ}=1/kT$ is called the \emph{inverse temperature}, the sum is taken over all pairs of adjacent faces~$u,w$ of~$G$ (equivalently, edges~$e$ of~$G$ or quads~$z\in\Lambda(G)$), and~${ J^\circ=(J^\circ_e)}$ is a collection of positive interaction constants indexed by edges of~$G$. Below we use the following \emph{parametrization} of $ \beta^\circ J^\circ_e$:
\begin{equation}\label{eq:parametrization-model}
\textstyle x_e\ =\ \tan\frac{1}{2}\thetaa_e\ :=\ \exp[\,-2{\beta^\circ J^\circ_e}\,].
\end{equation}
Note that the quantities~$x_e\in (0,1)$ and~$\thetaa_e:=2\arctan x_e\in (0,\frac{1}{2}\pi)$ have the same monotonicity as the temperature~$ (\beta^\circ)^{-1}$. We also often write~$\thetaa_z$ instead of~$\thetaa_e$ if a quad~$z\in\Lambda(G)$ corresponds to an edge~$e$ of~$G$. On isoradial graphs, we depart from arbitrary parameters $\thetaa_{z}$ and restrict to \emph{Z-invariant weights} given in terms of a global elliptic parameter~$k$ and geometric angles~$\thetag_e$ of the embedding by (\ref{eq: Z-inv}).

Note that~$\tan\thetaa_{z}=(1+4q)\cdot\tan\thetag_{z}+O(q^{2})$ as $q\to 0$,
which implies that
\begin{equation}
\thetaa_{z}-\thetag_{z}\ =\ 4q\cdot\sin\thetag_{z}\cos\thetag_{z}+O(q^{2})\ =\ 2\delta m\sin\thetag_{z}\cos\thetag_{z}+O(\delta^{2})
\label{eq:thetaa-thetag}
\end{equation}
in the massive limit $\qtoem\to 0$. In particular, $\thetaa_{z}=\thetag_{z}$ at criticality.

We denote by~$\E^{(m),\wired}_{\Omega^\delta}$ the expectation under the measure~\eqref{eq:P[sigma]-def} with Z-invariant weights given by (\ref{eq: Z-inv}), with $\qtoem$. We will omit $\delta$ when we consider a fixed lattice of mesh $\delta=1$. The superscript~$\wired$ stands for \emph{wired} boundary conditions: since we use the convention that spins~$\sigma_u$ are assigned to faces of~$\Omega^\delta$, instead of considering a single outer face~$u_\mathrm{out}$ of~$\Omega^\delta$ one can think about all boundary points~$u\in\Gamma^{\circ,\delta}$ as being wired to each other. Sometimes, we also fix the spin of $u_\mathrm{out}$ to be~$+1$ (i.\,e., impose `$+$' boundary conditions) and write~$\E^+_{\Omega^\delta}$ instead of~$\E^\wired_{\Omega^\delta}$.

The \emph{Kramers--Wannier duality} (e.\,g., see~\cite[Section~7.5]{duminil-parafermion}) provides a link between the Ising model (on faces of~$\Omega^\delta$) described above and another nearest-neighbor Ising model defined on vertices of~$\Omega^\delta$ with interaction parameters~{ $\beta^\bullet J^\bullet_e$ such that~$\tanh[\beta^\bullet J^\bullet_e]=\exp[-2\beta^\circ J^\circ_e]$ or, equivalently, $\sinh[2\beta^\circ J^\circ_e]\sinh[2\beta^\bullet J^\bullet_e]=1$.} In terms of the parametrization~\eqref{eq:parametrization-model} this duality reads as
    \[
    \exp[-2\beta^\circ J^\circ_e]=\tan\tfrac12\thetaa_z,\qquad \exp[-2\beta^\bullet J^\bullet_e]=\tan\tfrac12\thetaa^*_z,\qquad \thetaa_z+\thetaa_z^*=\tfrac{1}{2}\pi.
    \]
The angles $\thetaa^*_z$ also { admit parametrization}~\eqref{eq: Z-inv} with a { dual} elliptic parameter~$k^*$; the relation is simplest in terms of the nome { $q^*=-q$; see~\eqref{eq: KW-duality-kq}.} We denote the expectation in this dual model (defined on vertices of \mbox{$\Omega^\delta$}) as~$\E^{(-m),\free}_{\Omega^{*,\delta}}$, where the superscript~$\free$ stands for \emph{free} boundary conditions and emphasizes that no restrictions on the boundary spins are imposed.

We use the Kadanoff--Ceva \emph{disorder variables}: for a subset $\gamma$ of edges of $G^\bullet$, put
\[
\textstyle \mu_{\gamma}\ :=\ \prod_{u\sim w:(uw)\cap \gamma\neq \emptyset} e^{-2 \beta^\circ J^\circ_{(uw)^*}\sigma_u\sigma_w}.
\]
Up to the sign, the correlations of these variables with spins only depend on $\pa \gamma$ viewed as a chain modulo $2$. Hence, we will simply write
\[
 \mu_\gamma=\mu_{v_1}\dots\mu_{v_m},\ \ \text{where}\ \ \pa\gamma=\{v_1,\dots,v_m\}
\]
inside such correlations. We have the following identity:
\begin{equation}
\label{eq:KW-duality}
\E_{\Omega^\delta}^{\wired}[\,\mu_{v_1}\ldots\mu_{v_m}\sigma_{u_1}\ldots\sigma_{u_n}\,]\ =\ \E_{\Omega^{*,\delta}}^\free[\,\mu_{u_1}\ldots\mu_{u_n}\sigma_{v_1}\ldots\sigma_{v_m}\,],
\end{equation}
where $u_1,\ldots,u_n\in\Omega^{\circ,\delta}$,~$v_1,\ldots,v_m\in\Omega^{\bullet,\delta}$, and both~$n,m$ are assumed to be even (otherwise, these correlators do not make sense); { in fact, both sides of~\eqref{eq:KW-duality} lead to the same sums over subgraphs of~$\Omega^\delta=\Omega^{\bullet,\delta}$ if one uses the low-temperature expansion for the left-hand side and the high-temperature expansion for the right-hand one.} In particular, the disorder variables~$\mu$ are dual objects to spins~$\sigma$ under the Kramers--Wannier duality. We refer the reader to~\cite{KadanoffCeva,ChelkakCimasoniKassel,chelkak2017ICM,CHI_Mixed} for more details on the Kadanoff--Ceva formalism.

\begin{remark}
\label{rem:mu-sigma-spinor}
Recall also that under a natural rule for tracking the signs \cite[Section 2.2]{CHI_Mixed}, both sides of~\eqref{eq:KW-duality} change the sign when one of the vertices~$v_p$ makes a turn around one of~$u_q$; in other words one should view~\eqref{eq:KW-duality} as a spinor defined on an appropriate double cover of the set~$(\Omega^{\bullet,\delta})^{\times m}\times(\Omega^{\circ,\delta})^{\times n}$ with the removed diagonal ($v_p=v_{p'}$ or $u_q=u_{q'}$ for some~$p\ne p'$ or~$q\ne q'$). In full generality, the extension of~\eqref{eq:KW-duality} 
onto the diagonal requires certain technicalities in fixing the signs. However, no 
problems of that kind arise in the simplest setup $m=2$ that we only need below.
\end{remark}

\subsection{Fermionic observables and functions~$\bm{H_X}$} The Kadanoff--Ceva fermionic variable~$\chi_c$ is formally defined as
$\chi_c\ :=\ \mu_{v(c)}\sigma_{u(c)}$ and the corresponding fermionic observables read as
\[
X_\varpi(c)\ :=\ { \E_{G}^{\wired}}\big[\chi_c\,\mu_{v_1}\ldots\mu_{v_{m-1}}\sigma_{u_1}\ldots\sigma_{u_{n-1}}\big]\,,\quad c\in\Upsilon^\times_{\varpi}(G),
\]
where~$\varpi=\{v_1,\ldots,v_{m-1},u_1,\ldots,u_{n-1}\}$ and we assume that both~$n,m$ are even.
\begin{remark}
Due to Remark~\ref{rem:mu-sigma-spinor}, $X_\varpi$ is a \emph{spinor} on the double cover $\Upsilon^\times_\varpi(G)$. This implies that the product~$\eta_c X_\varpi(c)$, where~$\eta_c$ is given by~\eqref{eq:eta-c-def}, is a spinor on the double cover~$\Upsilon_\varpi(G)$, which branches \emph{only} over points from~$\varpi$; { see Fig.~\ref{fig:double-covers}.}
\end{remark}

\begin{figure}
\begin{center}\begin{tikzpicture}[scale=0.24]
\input{SpinUnivNotationG.txt}
\end{tikzpicture}
\hskip 24pt
\begin{tikzpicture}[scale=0.24]
\input{SpinUnivNotationA.txt}
\end{tikzpicture}\end{center}
\caption{{ \textsc{Left:} Local notation at a quad \mbox{$z=(v_0u_0v_1u_1)\in\diamondsuit$}. Identity~\eqref{eq:3-terms} expresses the value~$X(c_{00})$ of a Kadanoff--Ceva fermionic observable as a linear combination of~$X(c_{01})$ and~$X(c_{10})$ with coefficients~$\cos\thetaa_z=\cn{\frac{2K}{\pi}\thetag_z}k$ and~$\sin\thetaa_z=\sn{\frac{2K}{\pi}\thetag_z}k$.
\textsc{Right:} Following~\cite{park2018massive,Park2021Fermionic}, to define the value $F(z)\in\C$ of the corresponding  massive \mbox{s-holomorphic} function (aka Smirnov's fermionic observable) at~$z\in\diamondsuit$, we imagine a tilted rhombus with edges~$e^{\pm i(\thetaa_z-\thetag_z)}(v_p-u_q)$ centered at~$z$ and then apply the usual definition~\cite{ChelkakSmirnov2} of s-holomorphic functions to this `virtual' rhombus.}}
\label{fig:notation}
\end{figure}

It is well known (e.\,g., see~\cite{mercat2001discrete} or~\cite[Section~3.5]{ChelkakCimasoniKassel} and references therein) that Kadanoff--Ceva fermionic observables satisfy the so-called \emph{propagation equation}
\begin{equation}
\label{eq:3-terms}
X(\ccc{00})\ =\ X(\ccc{01})\cos\thetaa_z+X(\ccc{10})\sin\thetaa_z\,,
\end{equation}
which holds for each corner $\ccc{00}$ and its neighbors~$\ccc{01},\ccc{10}$ on the corresponding double cover~$\Upsilon^\times_\varpi(G)$ such that all three are incident to the same quad $z$; see Fig.~\ref{fig:notation} for the notation.
A consequence of this identity is that, given a fermionic observable~$X$, one can define (up to an additive constant) a function~$H_X$ on the graph~$\Lambda(G)$ by prescribing its increments between neighboring vertices~$u_q\in G^\circ$ and~$v_p\in G^\bullet$ as
\begin{equation}
\label{eq:HX-def}
H_X(v_p)-H_X(u_q)\ :=\ (X(c_{pq}))^2.
\end{equation}
(Note that for the critical Ising model on~$\mathbb{Z}^2$, this is nothing but Smirnov's definition from~\cite[Lemma~3.6]{Smirnov_Ising}; see also Section~\ref{sub:mass-s-hol} below.) More generally, if two observables~$X_1,X_2$ are (locally) defined on the same double cover~$\Upsilon^\times_\varpi(G)$, then one can extend the above definition and introduce a function
$H[X_1,X_2]$, also defined up to an additive constant, by setting
\begin{equation}
\label{eq:HX1X2-def}
H[X_1,X_2](v_p)-H[X_1,X_2](u_q)\ :=\ X_1(c_{pq})X_2(c_{pq})
\end{equation}
(this quantity does not depend on the lift of~$c_{pq}$ onto~$\Upsilon^\times_\varpi(G)$; cf. Lemma~\ref{lem:XG-monodromy}.)

\begin{remark}
\label{rem:H-Dirichlet-bc} From the combinatorial perspective, considering a discrete simply connected domain~$\Omega^\delta\subset\Gamma^\delta$ with wired boundary conditions boils down to identifying all `white' vertices along the boundary with each other. In particular, all the values of~$H_X$ (or of $H[X_1,X_2]$) at `white' boundary vertices~$u\in \partial\Omega\cap G^\circ$ are the same, i.\,e., the function~$H_X|_{G^\circ}$ always has \emph{Dirichlet boundary conditions} in this case. However, note that this is not the case for boundary values of $H_X$ on `black' boundary vertices. Nevertheless, there exists a trick (originally suggested in~\cite[Section~3.6]{ChelkakSmirnov2} in the critical setup) which allows to modify the grid~$\Lambda^\delta$ near the boundary of~$\Omega^\delta$ staying in the isoradial family and artificially define~$H_X$ on new obtained `black' vertices so that it also has Dirichlet boundary values and all required `discrete complex analysis estimates' hold true; see Remark~\ref{rem:bdry-trick} below.
\end{remark}

It is also convenient to extend the definition of~$H_X$ to the set~$\diamondsuit(G)$ as follows:
\begin{equation}
\label{eq:HX(z)-def}
\begin{array}{rcl}
H_X(v_p)-H_X(z)&:=& X(c_{p{\scriptscriptstyle 0}})X(c_{p{\scriptscriptstyle 1}})\cos\thetaa_z,\\
H_X(z)-H_X(u_q)&:=& X(c_{{\scriptscriptstyle 0}q})X(c_{{\scriptscriptstyle 1}q})\sin\thetaa_z,
\end{array}
\end{equation}
where $c_{p\scriptscriptstyle 0}$ and~$c_{p\scriptscriptstyle 1}$ are assumed to be chosen as neighbors on~$\Upsilon^\times_\varpi(G)$ to avoid the ambiguity in the sign; note that this definition is consistent with~\eqref{eq:HX-def} due to the propagation equation~\eqref{eq:3-terms}. Let~$\Lambda(G)\cup\diamondsuit(G)$ denotes a planar graph whose edges consist of all edges of~$\Lambda(G)$ and edges linking each vertex~$z\in\diamondsuit(G)$ to its four neighbors~$z\sim w\in \Lambda(G)$.

\smallskip

The following lemma holds for all planar graphs and all interaction parameters.

\begin{lemma}\label{lem:maxH-principle} Let a function~$H_X$ be (locally) constructed via~\eqref{eq:HX(z)-def} and~\eqref{eq:HX-def} from a fermionic observable~$X$ defined on a double cover~$\Upsilon^\times_\varpi(G)$. Then, $H_X$ satisfies
\begin{itemize}
\item the maximum principle on~$\Lambda(G)\cup\diamondsuit(G)$ except at points of~$\varpi\cap G^\bullet$;
\item the minimum principle on~$\Lambda(G)\cup\diamondsuit(G)$ except at points of~$\varpi\cap G^\circ$.
\end{itemize}
\end{lemma}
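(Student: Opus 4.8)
The plan is to verify the (super/sub)-harmonicity of $H_X$ locally, quad by quad and vertex by vertex, using only the propagation equation~\eqref{eq:3-terms} and the defining relations~\eqref{eq:HX-def}, \eqref{eq:HX(z)-def}. The key observation is that it suffices to check, for each vertex $w$ of $\Lambda(G)\cup\diamondsuit(G)$ not in the exceptional set, that $H_X(w)$ is a \emph{convex combination} of its neighbors' values with nonnegative weights summing to one (for the harmonic-type bound one actually gets that $H_X(w)$ lies between the min and the max of the neighbors), so that the maximum/minimum principle follows by the usual propagation-along-a-path argument. Since all the defining relations are local and spinor signs only enter through the requirement that $c_{p0},c_{p1}$ (resp.\ $c_{0q},c_{1q}$) be chosen as neighbors on $\Upsilon^\times_\varpi(G)$, the verification can be done after trivializing the double cover on a single quad.

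\textbf{Step 1: vertices $z\in\diamondsuit(G)$.} Fix a quad $z=(v_0u_0v_1u_1)$ with the four incident corners $c_{pq}$. From~\eqref{eq:HX(z)-def} one reads off
\[
H_X(v_p)-H_X(z)=X(c_{p0})X(c_{p1})\cos\thetaa_z,\qquad H_X(z)-H_X(u_q)=X(c_{0q})X(c_{1q})\sin\thetaa_z.
\]
Using the propagation equation $X(c_{00})=X(c_{01})\cos\thetaa_z+X(c_{10})\sin\thetaa_z$ and its three siblings obtained by rotating the roles of the corners, one expresses $H_X(z)$ as an explicit quadratic form in $X(c_{00}),X(c_{10})$ (say), and checks directly that
\[
H_X(z)=\cos^2\thetaa_z\cdot H_X(u_0\text{ or }u_1)+\sin^2\thetaa_z\cdot H_X(v_0\text{ or }v_1)+(\text{cross terms})\,,
\]
the upshot being that $H_X(z)$ is a convex combination of the four values $H_X(v_0),H_X(v_1),H_X(u_0),H_X(u_1)$, with weights $\cos^2\thetaa_z$ on the black ones and $\sin^2\thetaa_z$ on the white ones (distributed between the two of each colour according to the squares of the relevant $X$-values). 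In particular $H_X(z)$ is sandwiched between $\min$ and $\max$ of its neighbours, which is exactly the (strong form of the) maximum and minimum principle at $z$; note $z$ is never in the exceptional set, so no condition is needed here.

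\textbf{Step 2: vertices $v\in G^\bullet$ and $u\in G^\circ$.} For $v\in G^\bullet\setminus\varpi$, the neighbours of $v$ in $\Lambda(G)\cup\diamondsuit(G)$ are the incident white vertices $u(c)$ and the incident quads $z$. From~\eqref{eq:HX-def} we have $H_X(v)-H_X(u(c))=(X(c))^2\ge 0$, so $H_X(v)\ge H_X(u(c))$ for every incident white neighbour; similarly~\eqref{eq:HX(z)-def} gives $H_X(v)-H_X(z)=X(c_{\bullet0})X(c_{\bullet1})\cos\thetaa_z$, which need not have a sign by itself, but combining with Step~1 one sees $H_X(z)\le H_X(v)$ whenever $\cos^2\thetaa_z$-weight is attached to $v$... — more cleanly: one checks that $H_X(v)$ equals the maximum of $H_X$ over the incident quads $z$ and incident white vertices, because for each incident corner $c$, $(X(c))^2\ge0$ forces $H_X(v)$ above the white neighbours, and the quad values $H_X(z)$ are themselves convex combinations that include $H_X(v)$ with positive weight, hence cannot exceed $H_X(v)$ unless $X(c)=0$ there too. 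Thus $H_X$ attains a local \emph{max} at every $v\notin\varpi$ only in the degenerate flat case, i.e.\ it satisfies the maximum principle there; the \emph{minimum} principle at $v$ may genuinely fail, which is why $\varpi\cap G^\bullet$ is excluded only from the max statement — wait, one must be careful about which principle fails where. The correct bookkeeping: at a black vertex $v$, $H_X$ is $\ge$ all white neighbours (so it can be a local min only trivially — minimum principle holds) and the constraint from quads shows it is also $\le$ the relevant quad-neighbours' admissible range, so the maximum principle needs $v\notin\varpi$; symmetrically at a white vertex $u$, $H_X\le$ all black neighbours, the maximum principle holds unconditionally and the minimum principle requires $u\notin\varpi$. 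Assembling Steps 1–2 and propagating along paths yields the two bullet points.

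\textbf{Main obstacle.} The only genuinely delicate point is the sign/consistency bookkeeping around the exceptional set $\varpi$: one must check that the defining relations~\eqref{eq:HX-def} and~\eqref{eq:HX(z)-def} indeed glue to a single-valued function $H_X$ on $\Lambda(G)\cup\diamondsuit(G)$ (the monodromy of $X$ around a point of $\varpi$ is $-1$, so $(X)^2$ and the products $X(c_{p0})X(c_{p1})$ are single-valued, but one must confirm that going around a point of $\varpi$ the accumulated increment of $H_X$ is consistent only away from $\varpi$, and that exactly the black points of $\varpi$ obstruct the max principle and the white ones the min principle). Everything else is the routine local convexity computation of Step~1, which is a one-quad identity using~\eqref{eq:3-terms}; I expect no surprises there, and in fact this is precisely the isoradial/Z-invariant generalisation of Smirnov's original computation on $\mathbb Z^2$, the new input being only that the coefficients $\cos\thetaa_z,\sin\thetaa_z$ still lie in $(0,1)$ and square-sum to $1$, which they do by~\eqref{eq: Z-inv} (indeed $\cos^2\thetaa_z+\sin^2\thetaa_z=1$ trivially, and positivity follows from $\thetaa_z\in(0,\tfrac12\pi)$).
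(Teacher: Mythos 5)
Your Step~1 reaches the right conclusion (the quad value is sandwiched between the min and max of its four $\Lambda$-neighbors), though the explicit ``$\cos^2\thetaa_z$/$\sin^2\thetaa_z$''-weights you announce are not what the computation actually gives. The cleaner observation is that, of the two pairs of increments in~\eqref{eq:HX(z)-def}, whichever pair $\{H_X(v_p)-H_X(z), H_X(z)-H_X(u_q)\}$ has compatible signs (this always happens for at least one choice, since the relevant products $X(c_{00})X(c_{01})$, $X(c_{00})X(c_{10})$, $X(c_{10})X(c_{11})$, $X(c_{01})X(c_{11})$ cannot all be positive nor all negative simultaneously) already sandwiches $H_X(z)$.

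Step~2, however, has a genuine gap. The easy half is fine: $(X(c))^2\ge 0$ forces $H_X(v)\ge H_X(u(c))$ at every incident edge, so the \emph{minimum} principle holds at every black vertex and the \emph{maximum} principle at every white vertex, regardless of $\varpi$. What you have not proved is the nontrivial half: the maximum principle at $v\in G^\bullet\smallsetminus\varpi$ (and dually the minimum principle at $u\in G^\circ\smallsetminus\varpi$). Your attempted argument — ``the quad values $H_X(z)$ are themselves convex combinations that include $H_X(v)$ with positive weight, hence cannot exceed $H_X(v)$'' — is circular and, as stated, simply false: a convex combination including $H_X(v)$ can certainly exceed $H_X(v)$ if the other terms do. The missing ingredient is the \emph{branching} of $\Upsilon^\times_\varpi(G)$ at $v\notin\varpi$: because $X$ is a spinor on a cover that ramifies around $v$, the values $X(c_0),X(c_1),\dots$ at the corners surrounding $v$ must change sign an odd number of times as one goes around. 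Since $H_X(v)-H_X(z)$ has the sign of the product of $X$ at two consecutive corners, this sign change forces at least one incident quad $z$ with $H_X(z)\ge H_X(v)$, which is exactly what rules out a strict local maximum at $v$. Conversely, at $v\in\varpi$ the cover does \emph{not} ramify, no sign change is forced, and the maximum principle may genuinely fail — this is the only place where the exceptional set $\varpi$ enters. Your ``Main obstacle'' paragraph therefore misidentifies the crux: the single-valuedness of $H_X$ is essentially automatic from~\eqref{eq:HX-def}--\eqref{eq:HX(z)-def} (all increments are products of two $X$-values and hence independent of the chosen lift), whereas the real delicate point is the sign-change-from-branching argument you omitted. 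Note that the paper itself does not redo this computation; it cites \cite[Prop.~2.10]{chelkak2020ising} for the non-$\varpi$ part (which encapsulates exactly this branching argument in the more general s-embedding framework) and only spells out the simpler observation at points of~$\varpi$.
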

\begin{proof} See~\cite[Proposition~2.10]{chelkak2020ising}, which proves both the maximum and the minimum principles for~$H_X$ in absence of (non)branching vertices, i.\,e., if~$\varpi=\emptyset$. If~$\varpi\ne\emptyset$, note that the value at a vertex~$u\in\varpi\cap G^\circ$ is always smaller than the values at neighboring vertices from $G^\bullet$ due to~\eqref{eq:HX-def}, thus~$H_X$ cannot attain a maximum at~$u$. Similarly,~$H_X$ cannot attain a minimum at a point~$v\in \varpi\cap G^\bullet$.
\end{proof}
\begin{remark}\label{rem:maxH-principle} Let~$w\in\varpi$ be an isolated point of~$\varpi$ and assume that~$X(c)=0$ at one of the nearby corners~$w\sim c\in\Upsilon^\times_\varpi(G)$. Then, it is easy to see that the function $H_X$ satisfies \emph{both} the maximum and the minimum principle near~$w$ as its values at two neighboring vertices (one from~$G^\circ$, the other from~$G^\bullet$) of~$c$ are the same.
\end{remark}

\subsection{Infinite-volume of the Ising model with~$\bm{q\ne 0}$ and RSW estimates} \label{sub:RSW}
Given an isoradial grid~$\Lambda^\delta$ of mesh size~$\delta$  and a point~$u\in\C$ we denote by~$\Lambda^\delta_R(u)$ a discretization of the square box
\[
[\Re u-R\,,\Re u+R]\times[\Im u-R\,,\Im u+R]
\]
on~$\Lambda^\delta$ with appropriate boundary conditions which can vary depending on the context. We write~$\Lambda^\delta_R$ instead of~$\Lambda^\delta_R(0)$ if~$u=0$ and skip the superscript if~$\delta=1$.

It is well-known that spin-spin correlations $\E_G[\sigma_u\sigma_w]$ in the Ising model on a finite graph can be written as the probability that~$u$ and~$w$ are connected in the so-called Fortuin--Kasteleyn (or random cluster) representation of the model; e.\,g., see~\cite[Section~7]{duminil-parafermion} for more details.

For the \emph{critical} (i.\,e., $m=0$) Ising model on isoradial grids it is well known that the following Russo--Seymour--Welsh-type estimate holds {uniformly with respect to boundary conditions}:
\[
\P\big[\,\text{there exists a wired circuit in the annulus~$\Lambda_{3R}(u)\smallsetminus \Lambda_R(u)$}\,\big]\ \ge\ p_0\ >\ 0.
\]
(For a proof, one can, e.\,g., use~\cite[Theorem~C]{ChelkakSmirnov2} to show that crossings of rectangles with self-dual (i.\,e., wired/free/wired/free) boundary conditions have probability uniformly bounded away from zero, and then apply the proof of~\cite[Proposition~2.10]{DuGaPe-near-crit}; see also~\cite[Section~5.6]{chelkak2020ising}.) In particular, the existence of such circuits implies the uniqueness of the infinite-volume Gibbs measure in the critical FK-Ising model and allows one to speak about infinite-volume correlations~$\E_{\Lambda}[\sigma_u\sigma_w]$.

Assume now that~$\qtoem\le 0$. By monotonicity with respect to interaction parameters, the (uniform with respect to boundary conditions) existence of wired circuits in annuli~$\Lambda_{3R}(u)\smallsetminus\Lambda_R(u)$ also holds in this case. This allows one to define the \emph{infinite-volume} limit of the sub-critical FK-Ising model on~$\Lambda$ and, moreover, implies the following uniform estimate:
\begin{quote}
for each~$\varepsilon>0$ there exists $A=A(\varepsilon)\gg 1$ such that for all~$m\le 0$ and all~$D\ge 1$  one has
\begin{equation}
\label{eq:RSW-sigma-sigma}
(1-\varepsilon)\cdot \E^{(m),\wired}_{\Lambda_{AD}(u)}[\sigma_u\sigma_w]\ \le\ \E^{(m)}_{\Lambda}[\sigma_u\sigma_w]\ \le\ \E^{(m),\wired}_{\Lambda_{AD}(u)}[\sigma_u\sigma_w]
\end{equation}
provided that~$|w-u|\le D$.
\end{quote}
Indeed, the latter inequality in~\eqref{eq:RSW-sigma-sigma} is trivial due to the monotonicity with respect to boundary conditions. The former follows from the fact that
\[
\E^{(m),\free}_{\Lambda_{AD}(u)}[\sigma_u\sigma_w]\ \ge\ \P^{(m),\free}_{\Lambda_{AD}(u)\smallsetminus \Lambda_D(u)}\big[\,\text{there exists a wired circuit}\,\big] \times \E^{(m),\wired}_{\Lambda_{AD}(u)}[\sigma_u\sigma_w],
\]
which holds due to the FKG inequality and the monotonicity of the probability that~$u$ and~$w$ are connected with respect to a domain. (Note that the first factor can be made arbitrary close to~$1$ by choosing~$A\ge 3^{\lfloor\log\varepsilon/\log(1-p_0)\rfloor+1}$.)
\begin{remark}
\label{rem:RSW-free-circuits}
One can similarly use dual-wired circuits in order to define the infinite-volume limit of the off-critical FK-Ising model on~$\Lambda$ and the infinite-volume spin-spin correlations for~$m\ge 0$. It is also worth noting that in the \emph{massive} regime $\qtoem$, $\delta\to 0$, one also has uniform RSW-type estimates for \emph{both} primary and dual crossings/circuits on scales $R\asymp 1$; see~\cite{DuGaPe-near-crit} and~\cite{Park2021Fermionic} for more details. Of course, this is a much deeper property of the massive Ising model as compared to the simple monotonicity with respect to~$m$ discussed above.
\end{remark}

\subsection{`Star extension' of a finite box on an isoradial grid} \label{sub:star-ext} In order to prove the universality of spin-spin correlations with respect to a grid, i.\,e., the fact the correlations on two isoradial grids~$\Gamma^{\circ,\delta}_1$ and $\Gamma^{\circ,\delta}_2$ behave in a similar way, we typically consider a `mixed' rhombic lattice { that} contains large pieces of both~$\Lambda^\delta_1$ and~$\Lambda^\delta_2$ and analyse the Ising model on { this lattice}. This strategy relies upon a possibility to `glue together' boxes of size~$R$ cut from different isoradial grids, with an additional requirement that these two pieces are located at~$O(R)$ distance from each other.

\begin{figure}
\begin{centering}
\includegraphics[width=\textwidth]{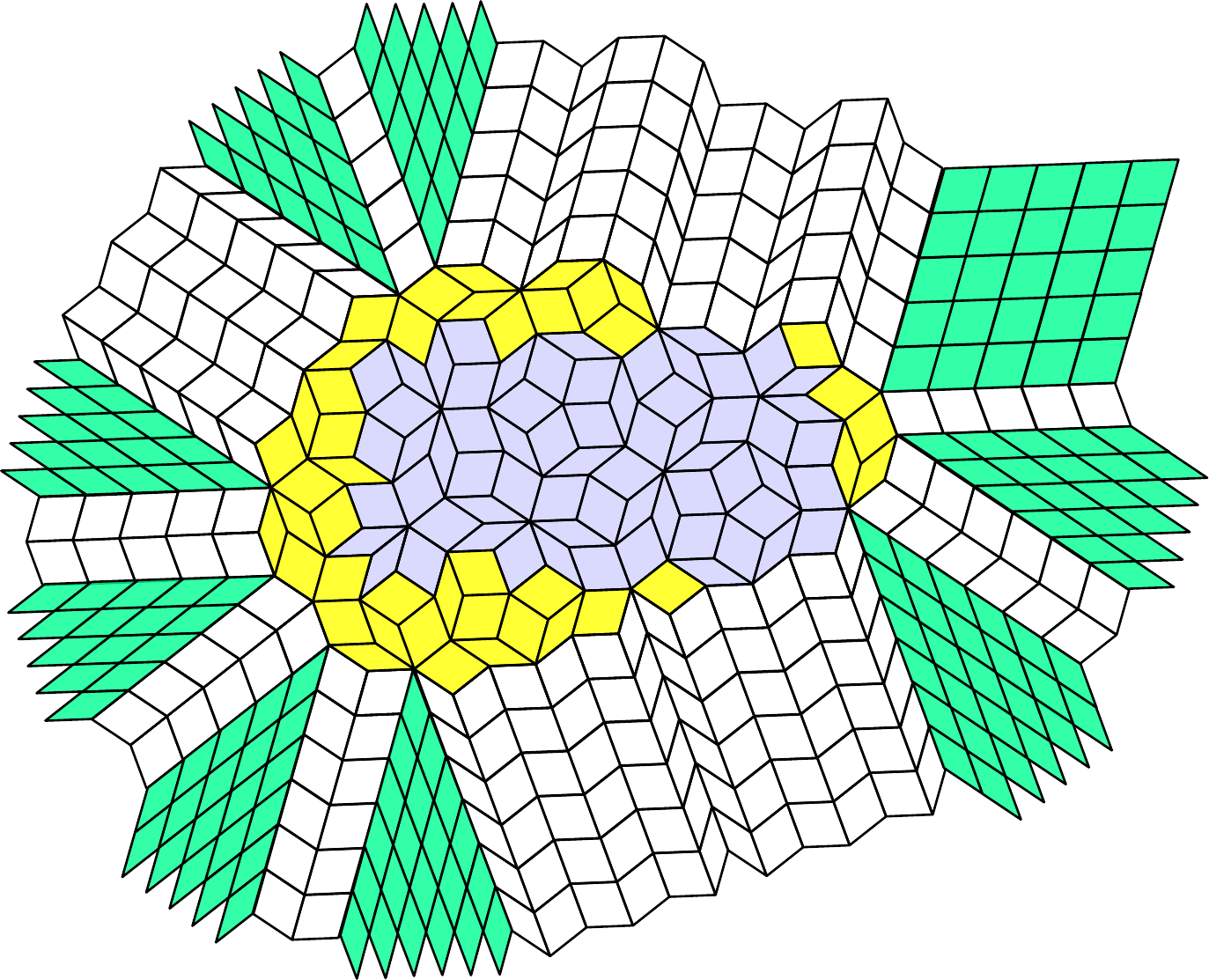}
\par\end{centering}
\bigskip{}

\begin{centering}
\includegraphics[width=0.9\textwidth]{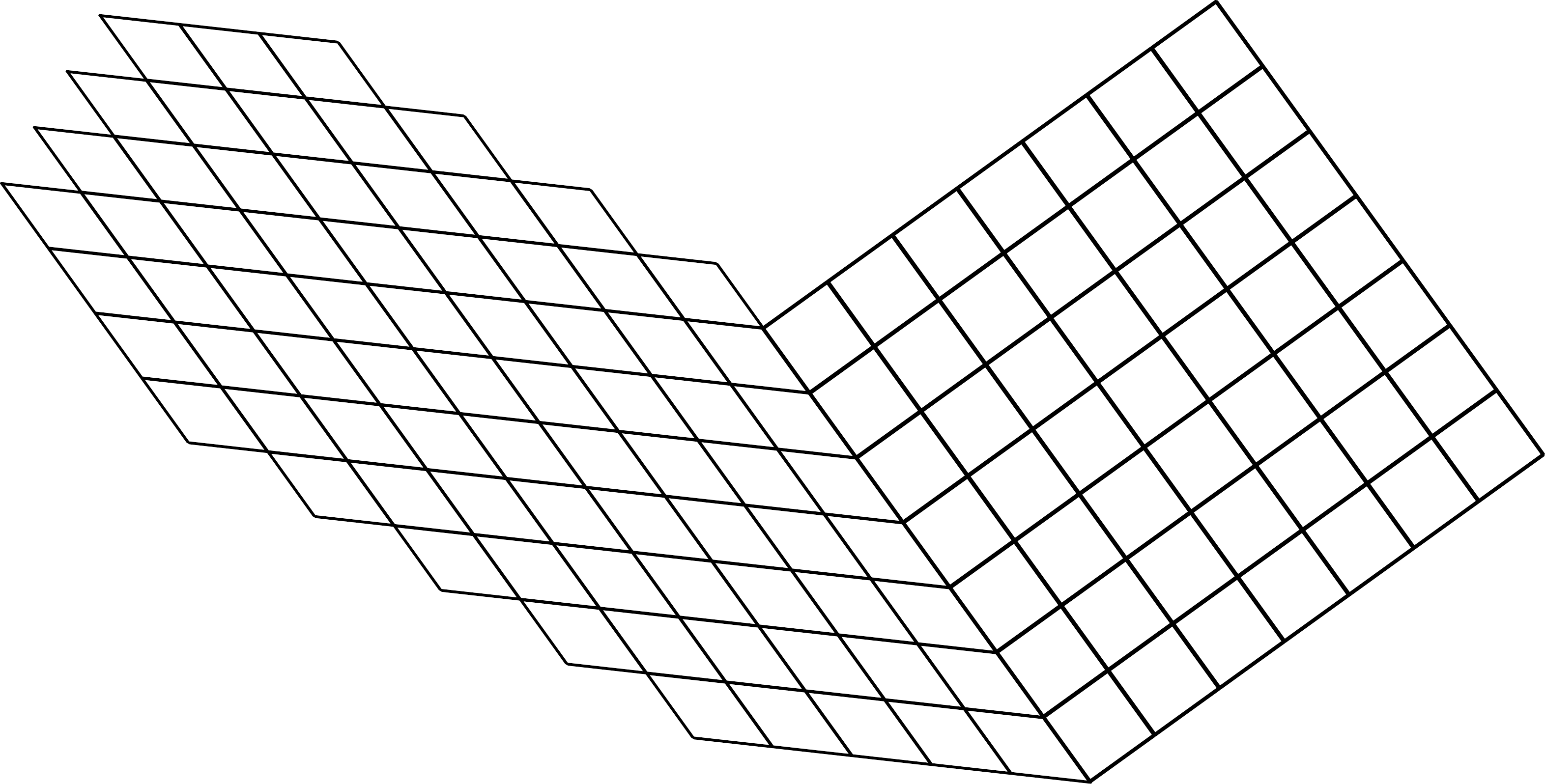}
\par\end{centering}
\caption{\textsc{Top:} the `star extension' procedure {  for a rectangular box on a Penrose rhombic tiling. The box is shown in blue and the rest of the corresponding set~$\Theta$ } is in yellow. The boundary of $\Theta$ consists of nine train-tracks, extended infinitely outwards, the wedge-shaped parts of regular lattices are in green. The opening angles of the wedges are bounded from below by $2\theta_0$ of the $\BAP$ property. { For Penrose rhombic tilings, $2\theta_0=\frac\pi{5}$; thus, in this case} the boundary of $\Theta$ could not consist of more than ten train tracks. \textsc{Bottom:} a gluing of a { regular rhombic lattice corresponding to a rectangular grid~$\Lambda^{\mathrm{rect}}$ with the square one.}}
\label{fig:star-extension}
\end{figure}

Clearly, no problem arises if~$\Lambda^\delta_1$ comes from a rectangular lattice while \mbox{$\Lambda^\delta_2=\delta\mathbb{Z}^2$} (or, more generally, if both~$\Lambda^\delta_{1,2}$ are obtained from rectangular lattices); see Fig.~\ref{fig:star-extension}. Consider now an irregular
rhombic lattice~$\Lambda$ satisfying the bounded angles property $\BAP$. In this section we show that one can construct a new rhombic lattice~$[\Lambda_R(u)]^\star$ satisfying the property~$\BAP$ such that
\begin{itemize}
\item $\Lambda$ and~$[\Lambda_R(u)]^\star$ have the same box $\Lambda_R(u)$ of size $R$ centered at $u$;
\smallskip
\item the modified lattice~$[\Lambda_R(u)]^\star$ contains infinite wedge-shaped subsets of rectangular grids located at distance at most $O(R)$ from the point~$u$; here and below the implicit constant in the estimate $O(R)$ depends on~$\theta_0$ only.
\end{itemize}
It is worth noting that these properties are scale invariant and thus the same procedure can be applied to rhombic lattices~$\Lambda^\delta$ (with mesh size~$\delta<R$) instead of~$\Lambda$.

Recall that a \emph{train-track} on~$\Lambda^\delta$ is a (infinite) sequence of adjacent rhombi such that they share a common direction of edges, called the transversal direction of the train-track { (e.\,g., see~\cite{KenyonSchlenker} for a discussion of this notion).} The construction of~$[\Lambda_R(u)]^\star$ goes in two steps, see also Fig.~\ref{fig:star-extension}:
\begin{itemize}
\item {\bf Step 1.} Let $\Theta_R(u)\supset \Lambda_R(u)$ be the connected component of the set of quads~\mbox{$z\in\diamondsuit$} such that \emph{both} train-tracks passing through~$z$ intersect $\Lambda_R(u)$.
\end{itemize}

\begin{lemma}
The set~$\Theta_R(u)$ has diameter~$O(R)$ and is train-track-convex, i.\,e., if two rhombii lying on the same train-track belong to the set~$\Theta_R(u)$, then the whole segment of the train-track between these rhombi belong to~$\Theta_R(u)$.
\end{lemma}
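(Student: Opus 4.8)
The plan is to describe $\Theta_R(u)$ as an intersection of ``combinatorial half-planes'' cut out by the train-tracks that do \emph{not} meet $\Lambda_R(u)$, and to read off both assertions from this description together with two elementary facts about train-tracks on rhombic tilings satisfying~$\BAP$. The first fact is that, under~$\BAP$, the broken line through the centres of the consecutive rhombi of a train-track is monotone and confined to a cone of half-angle $\tfrac\pi2-2\theta_0<\tfrac\pi2$: when a track with transversal direction $e^{i\alpha}$ passes through a rhombus it is translated by the opposite pair of edges, a vector making an angle in $[2\theta_0,\pi-2\theta_0]$ with $e^{i\alpha}$, hence an angle less than $\tfrac\pi2-2\theta_0$ with a fixed perpendicular direction, and the cone of such directions is convex, so the cumulative displacement stays in it. In particular each train-track is a bi-infinite simple arc both of whose ends escape to infinity, hence it separates the plane, and $\diamondsuit$, into two pieces. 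The second fact, which we take from~\cite{KenyonSchlenker}, is that two train-tracks with distinct transversal directions cross in at most one rhombus; note that two tracks meeting at a rhombus $z$ \emph{do} have distinct transversal directions, since the angle between them equals an angle of $z$, which is $\ge 2\theta_0>0$ by~$\BAP$.

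Let $\mathcal{T}=\mathcal{T}_R(u)$ be the (finite) set of train-tracks having at least one rhombus inside $\Lambda_R(u)$, and for $t\notin\mathcal{T}$ let $B_t$ be the connected component of $\diamondsuit\setminus t$ containing $\Lambda_R(u)$ (well defined, since $\Lambda_R(u)$ is connected in $\diamondsuit$ and disjoint from $t$). I claim that
\[
\Theta_R(u)\ \subseteq\ \bigcap_{t\notin\mathcal{T}}B_t\ \subseteq\ \{\,\text{quads }z:\ \text{both train-tracks through }z\text{ lie in }\mathcal{T}\,\}\,.
\]
The first inclusion holds because a quad $z\in\Theta_R(u)$ is joined to $\Lambda_R(u)$ inside $\Theta_R(u)$ by a path of quads all of whose two tracks lie in $\mathcal{T}$, and such a path cannot visit any rhombus lying on a track $t\notin\mathcal{T}$, hence stays in $B_t$. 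The second inclusion holds because $z\in B_t$ forces $z\notin t$, so a quad in $\bigcap_{t\notin\mathcal{T}}B_t$ lies on no track outside $\mathcal{T}$, i.e.\ both its tracks are in~$\mathcal{T}$.

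Given this, train-track-convexity is immediate. Suppose $z_1,z_2\in\Theta_R(u)$ lie on a common train-track $s$. For every $t\notin\mathcal{T}$ the track $s$ crosses $t$ at most once; since $z_1,z_2\in B_t$ lie on the same side of $t$, the unique crossing of $s$ with $t$ (if any) is not between them, so the whole segment of $s$ from $z_1$ to $z_2$ avoids $t$ and therefore lies in $B_t$. Intersecting over $t\notin\mathcal{T}$, this segment lies in $\bigcap_{t\notin\mathcal{T}}B_t$; by the displayed inclusion every quad on it is then ``good'' (both its tracks lie in $\mathcal{T}$), and, being a connected set of good quads containing $z_1\in\Theta_R(u)$, the segment lies entirely in the connected component $\Theta_R(u)$.

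It remains to bound the diameter, for which, by the displayed inclusions, it suffices to show that every quad $z$ with both its train-tracks $t_1,t_2$ in $\mathcal{T}$ satisfies $\dist(z,\Lambda_R(u))=O_{\theta_0}(R)$; equivalently, two train-tracks that both meet a box of size $R$ must cross each other within distance $O_{\theta_0}(R)$ of that box. The intended mechanism: $t_1$ and $t_2$ cross only at $z$, so the two sub-arcs $\gamma_1\subset t_1$, $\gamma_2\subset t_2$ joining $z$ to $\Lambda_R(u)$ are disjoint except at $z$ and, together with a boundary arc of $\Lambda_R(u)$, bound a curvilinear triangle $\Delta\subset\diamondsuit$ with one ``vertex'' at $z$; by the cone-confinement above, $\Delta$ lies in a cone of bounded opening with apex $z$, so if $\dist(z,\Lambda_R(u))$ were large then $\Delta$ would be a long thin region, and one bounds the number of its rhombi against the number of train-tracks that may cross $\gamma_1$ or $\gamma_2$ (at most once each, by the at-most-one-crossing property) or re-enter $\Lambda_R(u)$, reaching a contradiction; the implicit constant depends only on $\theta_0$ and degenerates as $\theta_0\to0$, consistent with the ``at most ten boundary tracks for Penrose tilings'' remark. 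Turning this counting into a clean quantitative statement is the main obstacle of the proof; everything else is soft.
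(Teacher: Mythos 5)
Your train-track-convexity argument is correct, but considerably more elaborate than necessary: you first re-describe $\Theta_R(u)$ as an intersection of combinatorial half-planes $B_t$ cut by the tracks $t\notin\mathcal T$, then run a Jordan-curve / at-most-one-crossing argument to show the segment of $s$ between $z_1$ and $z_2$ avoids each such $t$. The paper's proof is a one-liner straight from the definition: if $z$ on $s$ lies between $z_1,z_2\in\Theta_R(u)$, the \emph{other} track $t'$ through $z$ meets the connected set $\Theta_R(u)$, hence $t'\in\mathcal T$ and so $z\in\Theta_R(u)$. Both arguments rest on the same ingredients (connectivity of the component $\Theta_R(u)$, at most one crossing of two tracks), so your longer route is sound but buys nothing.

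The genuine gap is in the diameter bound, and you flag it yourself: ``turning this counting into a clean quantitative statement is the main obstacle of the proof.'' You try to bound $\dist(z,\Lambda_R(u))$ for every quad $z$ in the \emph{superset} $\{z:\ \text{both tracks of }z\text{ lie in }\mathcal T\}$ by a cone-confinement / thin-triangle count, and leave the key estimate unfinished. This overlooks that the convexity you just established already gives a much more direct argument for the connected component $\Theta_R(u)$ itself. For each $t\in\mathcal T$, every quad $z\in t\cap\Theta_R(u)$ determines a second track $t'(z)\ne t$ through $z$, which lies in $\mathcal T$ by the definition of $\Theta_R(u)$, and $z\mapsto t'(z)$ is injective on $t\cap\Theta_R(u)$ by the at-most-one-crossing property; hence $|t\cap\Theta_R(u)|\le|\mathcal T|$. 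Since $|\mathcal T|\le 2|\partial\Lambda_R(u)|=O(R)$, and since by convexity any $z\in\Theta_R(u)$ is joined to $\Lambda_R(u)$ inside $\Theta_R(u)$ along a segment of its own track, of at most $|\mathcal T|$ rhombi each of bounded size, one gets $\diam\Theta_R(u)=O(R)$ directly -- with no cone-confinement lemma, no curvilinear triangle $\Delta$, and no unresolved counting.
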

\begin{proof} The train-track-convexity of the set~$\Theta_R(u)$ readily follows from its definition. Indeed, denote by $\mathcal{T}$ the set of all train-tracks that intersect the box~$\Lambda_R(u)$. Let~$z_1,z_2\in \Theta_R(u)$ lie on the same train track~$t\in\mathcal T$ and let $z\in t$ be located between~$z_1$ and $z_2$. Denote by $t'$ the second train track passing through~$z$; note that it must intersect~$\Theta_R(u)$ since this set is connected. By definition of~$\Theta_R(u)$ this implies that $t'\in\mathcal T$ and hence $z\in \Theta_R(u)$.

To estimate the diameter of~$\Theta_R(u)$, note that for each~$t\in\mathcal T$ the number of quads in $t\cap \Theta_R(u)$ cannot exceed the total number~$|\mathcal T|$ of train-tracks in~$\mathcal T$ (since each of the remaining train-tracks $t'\in\mathcal T$ can intersect $t$ at most once). Thus, the estimate~$\diam \Theta_R(u)=O(R)$ follows from 
$|\mathcal T|\leq 2|\pa\Lambda_R(u)|=O(R)$.
\end{proof}

The train-track convexity of $\Theta_R(u)$ implies that it is simply connected and its boundary $\pa \Theta_R(u)$ is a simple closed broken line such that no quad adjacent to it from outside can have two sides on $\pa \Theta_R(u)$. Let us divide these adjacent quads into (maximal) arcs $\tau_k$, each of which is a part of a single-train track $t_k$. The above observation means that as we follow the boundary counterclockwise, the transversal directions of these train tracks also rotates counterclockwise, and the bounded angles property~$\BAP$ implies that it rotates at least $2\theta_0$ between two consecutive arcs; in particular, there are at most $\pi/\theta_0$ arcs. The construction of~$[\Lambda_R(u)]^\star$ concludes as follows:
\begin{itemize}
\item {\bf Step~2.} Duplicate (infinitely many times) each of the boundary arcs~$\tau_k\subset t_k$ of $\Theta_R(u)$ in the transversal direction of 
    $t_k$ and fill the remaining wedge-shaped regions (of angle at least $2\theta_0$) by regular rhombic tilings; see Fig.~\ref{fig:star-extension}.
\end{itemize}
We call $[\Lambda_R(u)]^\star$ a \emph{star extension} of $\Lambda_R(u)$. (In fact, we slightly abuse the notation since~$[\Lambda_R(u)]^\star$ is not defined only by~$\Lambda_R(u)$ and also depends on the structure of~$\Lambda$ near this box. However, this does not create any confusion in what follows.)

\section{Discrete complex analysis techniques}
\label{sec: Discr_complex_analysis}

\subsection{Massive s-holomorphic functions on isoradial grids} \label{sub:mass-s-hol}
In this section we discuss the notion of massive s-holomorphic functions on isoradial grids; see also a recent paper~\cite{Park2021Fermionic} where this notion is discussed in more detail. Similarly to the critical case (see~\cite[Section~3.2]{ChelkakSmirnov2}), given a real-valued Kadanoff--Ceva fermionic observable (i.\,e., a real-valued spinor on a subset of~$\Upsilon^\times(\Lambda^\delta)$ satisfying the propagation equation~\eqref{eq:3-terms}) one can construct a complex-valued Smirnov fermionic~$F^{(m),\delta}$ observable (defined on a subset of~$\diamondsuit^\delta$) in such a way that the propagation equation~\eqref{eq:3-terms} for~$X^{(m),\delta}$ is replaced by the identities between projections of~$F^{(m),\delta}$ onto certain directions; see Definition~\ref{def:shol-def-mass} below.

The 'abstract' definition~\eqref{eq:HX-def} of functions~$H^{(m),\delta}_X$ can be then -- again, similarly to the critical case -- thought of as considering the primitive of a discrete differential form~$\frac{1}{2}\Im[(F^{(m),\delta}(z))^2dz]$, which turns out to be closed on both~$\Gamma^{\bullet,\delta}$ and~$\Gamma^{\circ,\delta}$ up to certain local multiples that disappear in the limit~$\delta\to 0$; see Lemma~\ref{lem:H=intImF2} and Remark~\ref{rem:H=intImF2} below. In Proposition~\ref{prop:H-sub} (see also~\ref{rem:bdry-trick}) we  prove (at least, for small enough~$\delta$) that the function~$H^{(m),\delta}$ is subharmonic on~$\Gamma^{\bullet,\delta}$ provided that~$m\le 0$ and superharmonic on~$\Gamma^{\circ,\delta}$ provided that~$m\ge 0$, thus generalizing to the massive setup a (rather mysterious) observation made by Smirnov in~\cite{Smirnov_Ising} for the critical Ising model on the square grid.

Further, we briefly discuss the a priori regularity properties of massive s-holo\-morphic functions; see Proposition~\ref{prop:regularity}, this statement was obtained in~\cite{Park2021Fermionic}. We also suggest an alternative approach to this regularity theory, which relies upon a general framework developed in~\cite{chelkak2020ising} and uses an \emph{s-embedding} of the massive Ising model on~$\Lambda^\delta$ into the complex plane; see Section~\ref{sub:s-emb} for more details. Finally, this reduction to the framework of~\cite{chelkak2020ising} also directly implies that massive s-holomorphic functions on~$\Lambda^\delta$ satisfy the discrete maximum principle (up to a multiplicative constant); see Lemma~\ref{lem:max-principle} below.

\smallskip

The following definition is adopted from~\cite{Park2021Fermionic}.
\begin{definition} \label{def:shol-def-mass}
Let~$z\in\diamondsuit^\delta$ be a quad of a rhombic lattice~$\Lambda^\delta$ with mesh size~$\delta$.
Given a real-valued spinor $X^{(m),\delta}$ satisfying the three-terms identity~\eqref{eq:3-terms} on corners adjacent to~$z$ we define a complex value~$F^{(m),\delta}(z)$ by requiring that
\begin{equation}
\label{eq:shol-def-mass}
\Pr\,[\,F^{(m),\delta}(z)\,;\,\widehat{\eta}_{c,z}\R\,]\ =\ \delta^{-\frac12}\cdot \widehat{\eta}_{c,z}X(c)\ \ \text{for all}\ \  c\sim z\,,
\end{equation}
where
\[
\widehat{\eta}_{c,z}\ :=\ \eta_c\cdot \exp\big[\mp \tfrac{i}{2}(\thetaa_z-\thetag_z)\big]\ =\ \varsigma\cdot\exp\big[-\tfrac{i}{2}(\arg(v(c)-z)\pm \thetaa_z)\big]
\]
and the $\pm$ sign is chosen so that~$\arg(v(c)-u(c))=\arg(v(c)-z)\pm\thetag_z$; see Fig.~\ref{fig:notation}.
\end{definition}
\begin{remark} Functions~$F^{(m),\delta}:\diamondsuit^\delta\to\C$ satisfying the condition~\eqref{eq:shol-def-mass} were called \emph{massive \mbox{s-holomorphic}} in~\cite{Park2021Fermionic} by analogy with more common s-holomorphic functions that appear in the case~$m=0$ (and hence~$\thetaa_z=\thetag_z$); see~\cite{Smirnov_Ising,ChelkakSmirnov2}. However, note that the values~$\eta_c X^{(m),\delta}(c)$ differ from the values~$F^{(m),\delta}(c)$ used in~\cite{Smirnov_Ising,ChelkakSmirnov2,Park2021Fermionic} and other related papers by the factor~$\delta^{-\frac12}$. We adopt this convention on \emph{different} scalings of Kadanoff--Ceva (real-valued) and Smirnov (complex-valued) fermionic observables as it better fits the general framework developed in~\cite{chelkak2020ising}.
\end{remark}
\begin{lemma} \label{lem:H=intImF2}
Let~$H^{(m),\delta}_X$ and~$F^{(m),\delta}$ be constructed from the spinor~$X^{(m),\delta}$ via identities~\eqref{eq:HX-def} and~\eqref{eq:shol-def-mass}, respectively. Then (see Fig.~\ref{fig:notation} for the notation),
\begin{align*}
H^{(m),\delta}_X(v_1)-H^{(m),\delta}_X(v_0)\ &=\ \frac{\cos\thetaa_z}{\cos\thetag_z}\cdot \tfrac{1}{2}\Im\big[(F(z))^2\cdot (v_1-v_0)\big]\,,\\
H^{(m),\delta}_X(u_1)-H^{(m),\delta}_X(u_0)\ &=\ \frac{\sin\thetaa_z}{\sin\thetag_z}\cdot \tfrac{1}{2}\Im\big[(F(z))^2\cdot (u_1-u_0)\big]\,.
\end{align*}
Similar identities hold for functions~$H[X_1^{(m),\delta},X_2^{(m),\delta}]=\frac{1}{4}\big(H^{(m),\delta}_{X_1+X_2}-H^{(m),\delta}_{X_1-X_2}\big)$.
\end{lemma}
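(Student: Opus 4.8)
The plan is to unfold the definition~\eqref{eq:HX-def} of the increments of~$H^{(m),\delta}_X$ in terms of the values~$X(c_{pq})^2$ at the four corners adjacent to~$z$, and then to substitute the defining relations~\eqref{eq:shol-def-mass} that express these real values through the single complex number~$F=F^{(m),\delta}(z)$ and the rotated phases~$\widehat\eta_{c,z}$. Concretely, writing~$\widehat\eta_{pq}:=\widehat\eta_{c_{pq},z}$, the projection identity~\eqref{eq:shol-def-mass} gives $\delta^{-\frac12}X(c_{pq})=\Re[\overline{\widehat\eta_{pq}}F]=\tfrac12(\overline{\widehat\eta_{pq}}F+\widehat\eta_{pq}\overline F)$, hence
\[
\delta^{-1}X(c_{pq})^2\ =\ \tfrac14\big(\overline{\widehat\eta_{pq}}^{\,2}F^2+\widehat\eta_{pq}^{\,2}\overline F^{\,2}+2|F|^2\big)\ =\ \tfrac12\Re\big[\overline{\widehat\eta_{pq}}^{\,2}F^2\big]+\tfrac12|F|^2.
\]
First I would record the two elementary phase computations: since $\widehat\eta_{c,z}=\varsigma\exp[-\tfrac i2(\arg(v(c)-z)\pm\thetaa_z)]$ with $\varsigma^2=i$, one has $\widehat\eta_{pq}^{\,2}=i\,(v_p-z)^{-1}|v_p-z|\cdot e^{\mp i\thetaa_z}$, and for a rhombus the two half-diagonals satisfy $|v_p-z|=\ccc{}\cos\thetag_z$-type relations; more usefully, $v_1-v_0$ is parallel to $i(u_1-u_0)$ and the edge lengths are all~$\delta$, so that $(v_p-z)/|v_p-z|$ and $(u_q-z)/|u_q-z|$ are unit vectors along the two diagonals. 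The key point is that the two corners $c_{p0},c_{p1}$ sharing the black vertex $v_p$ carry phases differing only in the sign of~$\thetaa_z$, i.e. $\widehat\eta_{p0}^{\,2}$ and $\widehat\eta_{p1}^{\,2}$ are complex conjugates up to the common factor $i(v_p-z)^{-1}|v_p-z|$; consequently, when one forms $H_X(v_p)-H_X(z)$ via the first line of~\eqref{eq:HX(z)-def} — equivalently, adds the contributions along the path through~$z$ — the two~$|F|^2$ terms and the cross terms recombine, and the factor $\cos\thetaa_z$ (resp.\ $\sin\thetaa_z$) is produced precisely because $\tfrac12(e^{i\thetaa_z}+e^{-i\thetaa_z})=\cos\thetaa_z$ while the geometric diagonal lengths contribute the $\cos\thetag_z$ (resp.\ $\sin\thetag_z$) in the denominator.

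Assembling the pieces: using $H_X(v_1)-H_X(v_0)=[H_X(v_1)-H_X(z)]+[H_X(z)-H_X(v_0)]$ together with~\eqref{eq:HX(z)-def}, the $|F|^2$ contributions from $c_{10},c_{11}$ and from $c_{00},c_{01}$ cancel against each other (they enter with opposite signs because $v_1-z$ and $v_0-z$ point in opposite directions), and the surviving term is
\[
H^{(m),\delta}_X(v_1)-H^{(m),\delta}_X(v_0)\ =\ \tfrac12\Re\big[F^2\cdot\cos\thetaa_z\cdot(\text{diagonal vector})\big],
\]
where the diagonal vector works out to $-i(v_1-v_0)/\cos\thetag_z$ after inserting $|v_p-z|=\tfrac12|v_1-v_0|/\cos\thetag_z$-type identities valid for a rhombus with half-angle~$\thetag_z$; since $\Re[-iw]=\Im[w]$ this is exactly $\tfrac{\cos\thetaa_z}{\cos\thetag_z}\cdot\tfrac12\Im[(F(z))^2(v_1-v_0)]$. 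The computation for $u_1-u_0$ is identical with $\thetag_z\mapsto\tfrac\pi2-\thetag_z$ and $\thetaa_z\mapsto\tfrac\pi2-\thetaa_z$, swapping $\cos\leftrightarrow\sin$, which is consistent because $\thetag^*_z=\tfrac\pi2-\thetag_z$ parametrizes the dual quad. Finally, the statement for $H[X_1,X_2]$ follows from the polarization identity $H[X_1,X_2]=\tfrac14(H_{X_1+X_2}-H_{X_1-X_2})$, which is immediate from~\eqref{eq:HX1X2-def} versus~\eqref{eq:HX-def} since both sides are quadratic in~$X$, together with the observation that $F^{(m),\delta}$ depends linearly on the spinor~$X$ via~\eqref{eq:shol-def-mass}, so $F_{X_1+X_2}=F_{X_1}+F_{X_2}$ and the bilinear version of the above computation goes through verbatim.

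The main obstacle I anticipate is purely bookkeeping: keeping the $\pm$ signs in $\widehat\eta_{c,z}$, the choice of lifts $c_{p0},c_{p1}$ as \emph{neighbors} on $\Upsilon^\times_\varpi$ (needed, as the statement of~\eqref{eq:HX(z)-def} emphasizes, to kill the spinor sign ambiguity), and the orientation of the diagonals all mutually consistent, so that the $|F|^2$ terms genuinely cancel rather than double. It is worth checking the critical case $\thetaa_z=\thetag_z$ first, where the formulas must reduce to the standard $H_X(v_1)-H_X(v_0)=\tfrac12\Im[(F(z))^2(v_1-v_0)]$ of~\cite[Lemma~3.6]{Smirnov_Ising} and its isoradial analogue in~\cite[Section~3.2]{ChelkakSmirnov2}; once the signs are pinned down there, the massive case only inserts the scalar ratios $\cos\thetaa_z/\cos\thetag_z$ and $\sin\thetaa_z/\sin\thetag_z$ coming from the extra $e^{\mp\frac i2(\thetaa_z-\thetag_z)}$ twist, with no further subtlety. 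No use of the bounded-angle property or of the relation~\eqref{eq:qtoem} is needed here; the identity is purely local and algebraic.
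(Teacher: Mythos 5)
Your computation is correct and recovers the lemma. The paper's own proof is a one-line appeal to the ``virtual rhombus'' trick: the massive s-holomorphicity condition~\eqref{eq:shol-def-mass} coincides with the \emph{critical} s-holomorphicity condition written on a fictitious rhombus with half-angle $\thetaa_z$ in place of $\thetag_z$, so one quotes the critical identity~$H_X(v_1)-H_X(v_0)=\tfrac12\Im[F^2\,(v_1'-v_0')]$ from~\cite[Proposition~3.6]{ChelkakSmirnov2} with the virtual diagonal $v_1'-v_0'=\tfrac{\cos\thetaa_z}{\cos\thetag_z}(v_1-v_0)$, and the ratio of cosines appears at once. Your argument unfolds exactly that: you expand $X(c_{pq})^2=\tfrac\delta2\bigl(\Re[\overline{\widehat\eta_{pq}}^{\,2}F^2]+|F|^2\bigr)$, observe that the $|F|^2$ contributions cancel between the two corners, and read off $\cos\thetaa_z$ from $e^{i\thetaa_z}+e^{-i\thetaa_z}$ while the $\cos\thetag_z$ enters via $|v_p-z|=\delta\cos\thetag_z$. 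So the content is the same; the paper's version is a modular re-use of the known critical computation, yours is the direct verification of it. One small wrinkle to tidy up when writing: passing through $z$ via~\eqref{eq:HX(z)-def} and computing $X(c_{10})^2-X(c_{00})^2$ directly via~\eqref{eq:HX-def} are both valid routes but mechanically a bit different — in the first the $\cos\thetaa_z$ prefactor is already built into~\eqref{eq:HX(z)-def} and the $|F|^2$ pieces cancel with coefficient $\cos^2\thetaa_z$, while in the second the $\cos\thetaa_z$ emerges from summing the two conjugate phases. Your phrase ``equivalently'' glosses over this, but both produce the stated identity, and your suggestion to pin signs by first checking $\thetaa_z=\thetag_z$ against~\cite{ChelkakSmirnov2} is exactly the right sanity check; the polarization argument for $H[X_1,X_2]$ is also correct.
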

\begin{proof} Both formulas easily follow from a similar computation for~$m=0$ (e.\,g., see~\cite[Proposition~3.6]{ChelkakSmirnov2}) performed for a `virtual' rhombus with half-angle~$\thetaa_z$ instead of~$\thetag_z$; see Fig.~\ref{fig:notation} and~\cite[Lemma~2.5]{Park2021Fermionic}. (The additional factor~$\frac{1}{2}$ appears due to a tiny mismatch between the notation used in our paper and that in~\cite{Smirnov_Ising,ChelkakSmirnov2,Park2021Fermionic}.)
\end{proof}
\begin{remark}\label{rem:H=intImF2}
 Below we often refer to the identities from Lemma~\ref{lem:H=intImF2} by writing
\[
H^{(m),\delta}_X\ =\ \frac{1}{2}\int^{[(m),\delta]}\Im\big[\,(F^{(m),\delta}(z))^2dz\,\big]
\]
(or simply~$\int^{[\delta]}$ if $m=0$). Though this identity is not true `as is' even for discrete contour integrals due to the presence of additional factors~$\cos\thetaa_z/\cos\thetag_z$ and $\sin\thetaa_z/\sin\thetag_z$ in one-step increments, these factors (uniformly under the assumption~$\BAP$) disappear as~$\delta\to 0$. In other words, if functions~$F^{(m),\delta}$ convergence to a continuous function~$f$, then the corresponding functions~$H^{(m),\delta}$ converge to the function~$h:=\frac{1}{2}\int\Im[(f(z))^2dz]$.
\end{remark}
The following lemma and its corollary are adopted from~\cite[Section~A.1]{Park2021Fermionic}. They imply that limits of massive s-holomorphic functions on refining isoradial grids~$\Lambda^\delta$, if { they} exist, are massive holomorphic, i.\,e., satisfy the equation~\eqref{eq:mass-hol}.

\begin{lemma}\label{lem:mass-hol-discrete} Let~$u^{-}vu^{+}$ be a half-rhombus of an isoradial grid~$\Lambda^\delta$ oriented counterclockwise (i.\,e., either~$u^-=u_0$, $v=v_1$, $u^+=u_1$ or $u^-=u_1$, $v=v_0$, $u^+=u_0$ in the notation of Fig.~\ref{fig:notation}) and~$X^{(m),\delta}$ satisfies the identities~\eqref{eq:3-terms}. Then,
\begin{align*}
\frac{\sin\frac{1}{2}(\thetaa_z+\thetag_z)}{2\sin\thetag_z}\,F^{(m),\delta}(z)\,(u^-\!-u^+)\ &+\ F^{(m),\delta}(c^-)(v-u^-)\ +\ F^{(m),\delta}(c^+)(u^+\!-v)\\
&=\ \frac{\sin\frac{1}{2}(\thetaa_z-\thetag_z)}{\delta\sin\thetag_z\cos\thetag_z}\,\overline{F^{(m),\delta}(z)}\cdot\mathrm{Area}(u^-vu^+)\,,
\end{align*}
where the value~$F^{(m),\delta}(z)$ is defined by~\eqref{eq:shol-def-mass} and~$F^{(m),\delta}(c^\pm):=\eta_{c^\pm}\cdot\delta^{-\frac12}X^{(m),\delta}(c^\pm)$. A similar identity (with the prefactor~$\cos\frac{1}{2}(\thetaa_z+\thetag_z)/2\cos\thetag_z$ in the first term) holds for half-rhombi of the form~$v^-uv^+$.
\end{lemma}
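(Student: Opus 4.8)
The plan is to derive the identity by expressing everything in terms of the three corner values $X(c^-)$, $X(c^+)$, and the third corner value $X(c)$ sharing the quad $z$, and then expanding the definition~\eqref{eq:shol-def-mass} of $F^{(m),\delta}(z)$. Recall that by Definition~\ref{def:shol-def-mass}, the value $F^{(m),\delta}(z)$ is characterized by the three projection conditions $\Pr[F^{(m),\delta}(z);\widehat\eta_{c,z}\R]=\delta^{-1/2}\widehat\eta_{c,z}X(c)$ for the three corners $c\sim z$. Since any two of the three phases $\widehat\eta_{c,z}$ span $\C$ over $\R$, one can solve for $F^{(m),\delta}(z)$ explicitly as a linear combination (with coefficients depending on $\thetaa_z$) of $\delta^{-1/2}X(c^-)$ and $\delta^{-1/2}X(c^+)$; this is precisely the computation performed in~\cite[Section~3.2]{ChelkakSmirnov2} for a rhombus of half-angle $\thetaa_z$. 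First I would record this explicit formula for $F^{(m),\delta}(z)$, together with $F^{(m),\delta}(c^\pm)=\eta_{c^\pm}\delta^{-1/2}X(c^\pm)$.

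Next I would substitute these expressions into the left-hand side of the claimed identity. The key geometric inputs are: $v-u^-=\delta e^{i\arg(v-u^-)}$, $u^+-v=\delta e^{i\arg(u^+-v)}$, the edge-length relations coming from isoradiality, and the angle bookkeeping $\arg(v(c)-u(c))=\arg(v(c)-z)\pm\thetag_z$ that enters both $\eta_c$ and $\widehat\eta_{c,z}$. The difference between the ``true'' half-rhombus $u^-vu^+$ (with half-angle $\thetag_z$) and the ``virtual'' one (with half-angle $\thetaa_z$) used to define $F^{(m),\delta}(z)$ is governed entirely by the quantity $\thetaa_z-\thetag_z$. When $\thetaa_z=\thetag_z$ (the critical case $m=0$), the right-hand side vanishes and the identity reduces to the classical discrete Cauchy--Riemann/holomorphicity relation for s-holomorphic functions on isoradial graphs; this is the sanity check that fixes the normalization of all the trigonometric prefactors. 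For $\thetaa_z\ne\thetag_z$, the mismatch produces exactly one extra term. I would extract it by writing $\sin\tfrac12(\thetaa_z+\thetag_z)$, $\sin\tfrac12(\thetaa_z-\thetag_z)$ via sum-to-product, collecting the contribution proportional to $\overline{F^{(m),\delta}(z)}$, and identifying the prefactor $\mathrm{Area}(u^-vu^+)=\tfrac12\delta^2\sin(2\thetag_z)=\delta^2\sin\thetag_z\cos\thetag_z$ (which is why $\delta\sin\thetag_z\cos\thetag_z$ appears in the denominator on the right-hand side). The companion statement for half-rhombi $v^-uv^+$ follows by the same computation with the roles of $\Gamma^{\bullet,\delta}$ and $\Gamma^{\circ,\delta}$ (equivalently $\thetag_z\leftrightarrow\thetag_z^*=\tfrac\pi2-\thetag_z$) swapped, which turns the $\sin$ prefactor into a $\cos$ one.

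The main obstacle is purely bookkeeping: keeping the phase conventions $\varsigma=e^{i\pi/4}$, the $\pm$ sign choices tied to the orientation $u^-vu^+$, and the lift to the double cover $\Upsilon^\times_\varpi(\Lambda^\delta)$ mutually consistent, so that the coefficient of $\overline{F^{(m),\delta}(z)}$ comes out with the stated sign and no stray factors of $i$ or $2$. A clean way to manage this is to first verify the identity in the critical case (where it is already in the literature, e.g.~\cite[Section~3.2]{ChelkakSmirnov2}), then treat $\thetaa_z-\thetag_z$ as a perturbation and check that its linear-and-exact contribution matches~\eqref{eq:thetaa-thetag}; alternatively, since~\cite[Lemma~2.5]{Park2021Fermionic} already isolates the ``virtual rhombus'' trick, one may simply cite the corresponding computation there and verify that our $\delta^{-1/2}$-rescaling of Kadanoff--Ceva observables reproduces the factors as stated. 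Either route avoids grinding through the full trigonometric expansion by hand.
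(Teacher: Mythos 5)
Your overall plan is sound and essentially matches the paper's proof; the only difference is the direction in which you unwind Definition~\ref{def:shol-def-mass}. You propose to solve the $2\times 2$ linear system for $F^{(m),\delta}(z)$ in terms of $X(c^\pm)$ and then substitute; the paper goes the other way, writing
\[
F^{(m),\delta}(c^\pm)\ =\ \eta_{c^\pm}\delta^{-\frac12}X(c^\pm)\ =\ \tfrac12\eta_{c^\pm}\big[\overline{\widehat\eta_{c^\pm,z}}\,F^{(m),\delta}(z)+\widehat\eta_{c^\pm,z}\,\overline{F^{(m),\delta}(z)}\big]
\]
and simplifying $\eta_{c^\pm}\overline{\widehat\eta_{c^\pm,z}}=e^{\pm\frac{i}{2}(\thetaa_z-\thetag_z)}$, $\eta_{c^\pm}\widehat\eta_{c^\pm,z}=ie^{-i\phi_v\pm\frac{i}{2}(\thetaa_z+\thetag_z)}$ with $\phi_v=\arg(v-z)$. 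This is slightly cleaner because the whole left-hand side then depends only on $F^{(m),\delta}(z)$ and $\overline{F^{(m),\delta}(z)}$, and the coefficient of each is seen directly to cancel (for $F$) or to produce the right-hand side (for $\overline F$) using $u^+-u^-=2\delta i e^{i\phi_v}\sin\thetag_z$ and $\mathrm{Area}=\delta^2\sin\thetag_z\cos\thetag_z$, exactly the geometric facts you listed. So the two computations are equivalent, and your version would also work.

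One concrete caveat on your suggested shortcut: verifying the critical case $\thetaa_z=\thetag_z$ and then ``treating $\thetaa_z-\thetag_z$ as a perturbation and checking that the linear contribution matches~\eqref{eq:thetaa-thetag}'' would only establish the identity up to an $O((\thetaa_z-\thetag_z)^2)$ remainder, whereas the lemma asserts an \emph{exact} algebraic identity valid for all $\thetaa_z,\thetag_z$ (and indeed this is what the paper proves). The perturbative version would, as it happens, still be adequate for the downstream Corollary~\ref{cor:mass-hol} because the error would sum to $O(\delta)$, but it would not prove the lemma as stated. Since the full computation is short and linear in $F^{(m),\delta}(z)$, $\overline{F^{(m),\delta}(z)}$, there is no real saving in taking the perturbative route; either do the two lines of trigonometry directly, or cite the virtual-rhombus computation in~\cite[Lemma~2.5]{Park2021Fermionic}, as you also suggest.
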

\begin{proof} Let~$v-u^\pm=\delta\cdot e^{i(\phi_v\mp\thetag_z)}$, where~$\phi_v:=\arg(v-z)$. It follows from~\eqref{eq:shol-def-mass} that
    \begin{align*}
    F^{(m),\delta}(c^\pm)\ 
    =&\ \varsigma e^{-\frac{i}{2}(\phi_v\mp \thetag_z)}\cdot \tfrac{1}{2}\big[F^{(m),\delta}(z)\cdot \overline{\varsigma}e^{\frac{i}{2}(\phi_v\mp\thetaa_z)}+\overline{F^{(m),\delta}(z)}\cdot\varsigma e^{-\frac{i}{2}(\phi_v\mp\thetaa_z)}\big]\\[2pt]
    =&\ \tfrac{1}{2}\big[\,F^{(m),\delta}(z)\cdot e^{\pm\frac{i}{2}(\thetag_z-\thetaa_z)}+\overline{F^{(m),\delta}(z)}\cdot ie^{-i\phi_v\pm\frac{i}{2}(\thetaa_z+\thetag_z)}\,\big],
    \end{align*}
recall that~$\varsigma=e^{i\frac\pi4}$. The desired identity follows by a straightforward computation as
$u^+-u^-=2\delta\cdot ie^{i\phi_v}\sin\thetag_z$ and $\mathrm{Area}(u^-vu^+)=\delta^2\sin\thetag_z\cos\thetag_z$.
\end{proof}

\begin{corollary}\label{cor:mass-hol}
Let~$F^{(m),\delta}$ be massive s-holomorphic functions on isoradial grids~$\Lambda^\delta$ satisfying the uniform bounded angles condition~$\BAP$. Assume that~$F^{(m),\delta}$ { converge} (on a certain open set~$U\subset\C$) to a function~$f^{(m)}:U\to\C$ as~$\delta\to 0$. Then,~$f^{(m)}$ is differentiable and satisfies the massive holomorphicity equation
\begin{equation}
\label{eq:mass-hol}
\overline{\partial}f^{(m)}+im\overline{f^{(m)}}=0\,.
\end{equation}
\end{corollary}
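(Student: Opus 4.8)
The plan is to recognise Lemma~\ref{lem:mass-hol-discrete} as a \emph{discrete Cauchy--Pompeiu formula} and integrate it over a small region. First I would fix a rhombus $z$ of $\Lambda^\delta$ and add the two instances of Lemma~\ref{lem:mass-hol-discrete} corresponding to the two half-rhombi of the form $u^-vu^+$ contained in $z$ (that is, $u^-=u_0,v=v_1,u^+=u_1$ and $u^-=u_1,v=v_0,u^+=u_0$ in the notation of Fig.~\ref{fig:notation}). The two prefactors of $F^{(m),\delta}(z)$ coincide (they depend on $z$ only), while the corresponding increments $u^--u^+$ are opposite, so the $F^{(m),\delta}(z)$-terms cancel \emph{exactly}; what remains on the left is the discrete contour integral of $F^{(m),\delta}$ over the four $\Lambda$-edges of $\partial z$, traversed counterclockwise, which I denote $\oint^{[\delta]}_{\partial z}F^{(m),\delta}\,dw$. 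On the right, using $\mathrm{Area}(u^-vu^+)=\delta^2\sin\thetag_z\cos\thetag_z$ and summing the two half-rhombi, one is left with $2\delta\sin\tfrac12(\thetaa_z-\thetag_z)\cdot\overline{F^{(m),\delta}(z)}$; by~\eqref{eq:thetaa-thetag} and the $\BAP$ assumption this equals $m\cdot\mathrm{Area}(z)\cdot\overline{F^{(m),\delta}(z)}\cdot(1+O(\delta))$, uniformly in $z$. Thus $F^{(m),\delta}$ satisfies a discrete Cauchy--Pompeiu identity around every single quad.

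Next I would fix a small closed square $T$ with $\overline T\subset U$ and let $T^\delta$ be the union of all rhombi of $\Lambda^\delta$ contained in $T$. Summing the per-quad identity over all $z\in T^\delta$, the contributions of $\Lambda$-edges interior to $T^\delta$ occur twice with opposite orientations and cancel, so the left-hand sides telescope to $\oint^{[\delta]}_{\partial T^\delta}F^{(m),\delta}\,dw$, while the right-hand sides add up to $\sum_{z\in T^\delta}m\,\mathrm{Area}(z)\,\overline{F^{(m),\delta}(z)}+\mathcal R^\delta$ with $|\mathcal R^\delta|\le\const\cdot\delta\cdot\mathrm{Area}(T)\cdot\sup_{T}|F^{(m),\delta}|$. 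Now let $\delta\to0$. Since $F^{(m),\delta}\to f^{(m)}$ uniformly on compact subsets of $U$ (in particular $\sup_T|F^{(m),\delta}|$ stays bounded), one has $\mathcal R^\delta\to0$; the sum on the right is a Riemann sum converging to $m\iint_T\overline{f^{(m)}}\,dA$; and, because $\partial T^\delta$ lies within $O(\delta)$ of $\partial T$ (rhombi have diameter $O(\delta)$) and $f^{(m)}$ is continuous, the discrete contour integral converges to $\oint_{\partial T}f^{(m)}\,dw$ — replace $F^{(m),\delta}$ by $f^{(m)}$ at an $o(1)$ cost using uniform convergence, then recognise the usual Riemann sum for $\oint f^{(m)}\,dw$ along polygonal curves approximating $\partial T$. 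Hence
\[
\oint_{\partial T}f^{(m)}(w)\,dw\ =\ m\iint_T\overline{f^{(m)}}\,dA
\]
for every small square $T\subset U$. This is precisely the integrated form of~\eqref{eq:mass-hol}: for continuous $f^{(m)}$ the displayed identity over all small squares says, by the classical Green/Morera computation relating $\oint_{\partial T}f\,dw$ to $\iint_T\overline\partial f\,dA$, that $\overline\partial f^{(m)}+im\,\overline{f^{(m)}}=0$ in the sense of distributions. Since $\overline{f^{(m)}}$ is continuous, elliptic regularity for $\overline\partial$ — equivalently, writing $f^{(m)}$ as a holomorphic function plus the solid Cauchy transform of $-im\,\overline{f^{(m)}}$ — upgrades $f^{(m)}$ to a $C^1$ (indeed $C^\infty$) function, so that \eqref{eq:mass-hol} holds classically and $f^{(m)}$ is differentiable, as claimed.

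The only genuinely delicate point I expect is the convergence of the discrete contour integral $\oint^{[\delta]}_{\partial T^\delta}F^{(m),\delta}\,dw\to\oint_{\partial T}f^{(m)}\,dw$: the discrete boundary $\partial T^\delta$ is a priori a jagged $\delta$-dependent curve, so this step genuinely uses the uniform-on-compacts convergence together with the uniform continuity of $f^{(m)}$ and the bound $\diam(\text{rhombi})=O(\delta)$; if one is only handed pointwise convergence, the a priori regularity (Proposition~\ref{prop:regularity}) must be invoked first to promote it to local uniform convergence. The remaining ingredients — the sign/orientation bookkeeping in the telescoping and the elementary trigonometric estimate of the coefficient via~\eqref{eq:thetaa-thetag} — are routine.
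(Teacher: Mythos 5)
Your approach is in essence the paper's own: sum Lemma~\ref{lem:mass-hol-discrete} over a region, telescope, pass to the limit, and invoke a massive Morera/Cauchy-transform argument for regularity. However, there is a genuine gap that manifests as a factor-of-$2$ error, visible already in your final displayed identity: comparing $\oint_{\partial T}f^{(m)}\,dw=m\iint_T\overline{f^{(m)}}\,dA$ with Green's formula $\oint_{\partial T}f\,dw=2i\iint_T\overline\partial f\,dA$ forces $\overline\partial f^{(m)}=-\tfrac{im}{2}\overline{f^{(m)}}$, which is \emph{not}~\eqref{eq:mass-hol}. The correct integrated form of~\eqref{eq:mass-hol}, and the identity one actually obtains in the limit, is $\tfrac12\oint_{\partial T}f^{(m)}\,dw=m\iint_T\overline{f^{(m)}}\,dA$, exactly as in the paper's proof.

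The source of the missing $\tfrac12$ is the step ``replace $F^{(m),\delta}$ by $f^{(m)}$ at an $o(1)$ cost using uniform convergence.'' The hypothesis concerns convergence of $F^{(m),\delta}(z)\to f^{(m)}(z)$ at quads $z\in\diamondsuit^\delta$, whereas the summands of your discrete contour integral involve the \emph{corner} values $F^{(m),\delta}(c):=\eta_c\,\delta^{-1/2}X^{(m),\delta}(c)$ introduced in Lemma~\ref{lem:mass-hol-discrete}. By Definition~\ref{def:shol-def-mass} one has $F^{(m),\delta}(c)=\eta_c\Re\big[\,\overline{\widehat\eta}_{c,z}\,F^{(m),\delta}(z)\,\big]$, and since $\widehat\eta_{c,z}=\eta_c\cdot(1+O(\delta))$ under~$\BAP$, these corner values converge not to $f^{(m)}(c)$ but to the \emph{projection} $\Pr[\,f^{(m)}(c);\eta_c\R\,]=\tfrac12\big[f^{(m)}(c)+\eta_c^2\,\overline{f^{(m)}(c)}\big]$. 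Substituting this into $\sum_c F^{(m),\delta}(c)\,\Delta w_c$ (with $\Delta w_c$ the counterclockwise edge increments along $\partial T^\delta$): the term $\tfrac12 f^{(m)}(c)\,\Delta w_c$ produces $\tfrac12\oint_{\partial T}f^{(m)}\,dw$ in the limit, while $\eta_c^2\Delta w_c=\pm i\delta$ with signs alternating along the bipartite boundary path, so that $\sum_c\tfrac12\eta_c^2\,\overline{f^{(m)}(c)}\,\Delta w_c$ telescopes to $O(\delta)$ and vanishes. Once this $\tfrac12$ is restored, the rest of your argument — the exact cancellation of the $F^{(m),\delta}(z)$-terms within a quad, the telescoping over interior edges, the Riemann sum on the right-hand side, and the Morera/Cauchy-transform regularity upgrade — is correct and matches the paper.
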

\begin{proof} It follows from the asymptotics~\eqref{eq:thetaa-thetag} that
\[
\frac{\sin\frac{1}{2}(\thetaa_z+\thetag_z)}{2\sin\thetag_z}\to\frac{1}{2}\,,\qquad
\frac{\sin\frac{1}{2}(\thetaa_z-\thetag_z)}{\delta\sin\thetag_z\cos\thetag_z}\to m\ \ \text{as}\ \ \delta\to 0\,,
\]
uniformly with respect to~$z$ (provided that the condition~$\BAP$ holds). Therefore, summing the identity of Lemma \ref{lem:mass-hol-discrete} over any subset $V\subset U$ with smooth boundary, and passing to the limit, leads to the identity
\[
\frac{1}{2}\oint_{\partial V}f^{(m)}(z)dz\ =\ m\iint_V\overline{f^{(m)}(z)}dA(z),
\]
which is nothing but the weak form of the equation~$i\overline{\partial}f^{(m)}=m\overline{f^{(m)}}$. A massive version of Morera's theorem (or Cauchy integral formula) then implies that any weak solution is differentiable.
\end{proof}
\begin{remark}\label{rem:Delta-h} It is easy to see that, if~$f^{(m)}$ satisfies the massive holomorphicity equation~\eqref{eq:mass-hol}, then the primitive~$h^{(m)}:=\frac{1}{2}\int\Im[(f^{(m)}(z))^2dz]$ is well defined and~$\Delta h^{(m)}=-4m|f^{(m)}|^2$. In particular, the function~$h^{(m)}$ is subharmonic if~$m\le 0$.
\end{remark}

In the next proposition we show that the subharmonicity property mentioned in Remark~\ref{rem:Delta-h} also holds for \emph{discrete} primitives of massive s-holomorphic functions on isoradial grids; a property that we need in the proof of Proposition~\ref{prop:bc-are-OK}. Note that the paper~\cite{Park2021Fermionic} also contains estimates of the discrete Laplacian
\begin{equation}
\label{eq:Delta-def}
[\Delta^{\bullet,\delta} H^{(m),\delta}](v)\ :=\ (\mu^\delta(v))^{-1}\sum_{s=1}^n\,\tan\thetag_s\cdot (H^{(m),\delta}(v_s)-H^{(m),\delta}(v)),
\end{equation}
where~$v_1,\ldots,v_n\in \Gamma^{\bullet,\delta}$ are the neighbors of~$v\in \Gamma^{\bullet,\delta}$ and~$\mu^\delta(v)=\frac{1}{2}\delta^2\sum_{s=1}^n\sin 2\theta_s$ is an appropriately chosen normalizing factor, which is irrelevant in what follows.

\begin{proposition}\label{prop:H-sub} Let~$m\le 0$ and~$\delta\le\delta_0(\theta_0)$ be small enough. Then, for each isoradial grid~$\Lambda^\delta$ satisfying the condition~$\BAP$ and for each spinor~$X^{(m),\delta}$ satisfying the propagation equation~\eqref{eq:3-terms} near $v$, we have
\[
[\Delta^{\bullet,\delta}H^{(m),\delta}](v)\ \ge\ 0,\quad  v\in\Gamma^{\bullet,\delta}\,,
\]
where the function~$H^{(m),\delta}$ is constructed from~$X^{(m),\delta}$ via the identity~\eqref{eq:HX-def}. In other words, $H^{(m),\delta}$ is discrete subharmonic on~$\Gamma^{\bullet,\delta}$ if~$m\le 0$.
\end{proposition}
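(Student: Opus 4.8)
The statement is local, so I fix an interior vertex~$v\in\Gamma^{\bullet,\delta}$ with $\Gamma^{\bullet,\delta}$-neighbours $v_1,\dots,v_n$, and let $z_s\in\diamondsuit^\delta$ be the quad having $v,v_s$ as its two `black' corners. By Lemma~\ref{lem:H=intImF2},
\[
H^{(m),\delta}_X(v_s)-H^{(m),\delta}_X(v)\ =\ \tfrac12\Im\bigl[(F^{(m),\delta}(z_s))^2\,\ell_s\bigr],\qquad \ell_s:=\tfrac{\cos\thetaa_{z_s}}{\cos\thetag_{z_s}}(v_s-v),
\]
where~$\ell_s$ is exactly the `black' diagonal (of length~$2\delta\cos\thetaa_{z_s}$, same direction as~$v_s-v$) of the \emph{virtual rhombus} of half-angle~$\thetaa_{z_s}$; cf.\ Fig.~\ref{fig:notation}. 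Hence, up to the positive factor~$\mu^\delta(v)^{-1}$,
\[
[\Delta^{\bullet,\delta}H^{(m),\delta}](v)\ \propto\ \sum_{s=1}^n\tan\thetag_{z_s}\cdot\tfrac12\Im\bigl[(F^{(m),\delta}(z_s))^2\,\ell_s\bigr].
\]
Since~$X^{(m),\delta}$ obeys the propagation equation~\eqref{eq:3-terms} with the parameters~$\thetaa_{z_s}$, the values~$F^{(m),\delta}(z_s)$ satisfy the usual (critical) s-holomorphicity relations \emph{with respect to the virtual rhombi}. The plan is therefore to replay Smirnov's critical subharmonicity computation (\cite{Smirnov_Ising}; on isoradial graphs~\cite[Section~3.6]{ChelkakSmirnov2}) on the virtual rhombi, while tracking the two discrepancies with the genuinely critical picture: the weights are~$\tan\thetag_{z_s}$ rather than~$\tan\thetaa_{z_s}$, and the virtual opening angles at~$v$ do not sum to~$2\pi$. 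Both discrepancies are governed by~$\thetaa_{z_s}-\thetag_{z_s}=2\delta m\sin\thetag_{z_s}\cos\thetag_{z_s}+O(\delta^2)$, see~\eqref{eq:thetaa-thetag}; note also that~$\thetaa_{z_s}\le\thetag_{z_s}$ when~$q\le 0$.

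Concretely, I would use Lemma~\ref{lem:mass-hol-discrete} to express the~$F^{(m),\delta}(z_s)$ through the corner values~$F^{(m),\delta}(c)=\eta_c\delta^{-1/2}X^{(m),\delta}(c)$ and collect terms; this is the discrete analogue of the computation behind Remark~\ref{rem:Delta-h}. The outcome I expect is a decomposition
\[
[\Delta^{\bullet,\delta}H^{(m),\delta}](v)\ =\ \mathcal{S}^\delta(v)\ +\ (-4m)\,\mathcal{Q}^\delta(v)\ +\ \mathcal{E}^\delta(v),
\]
where~$\mathcal{S}^\delta(v)\ge 0$ is the manifest sum of squares coming out of Smirnov's computation (now with~$F=F^{(m),\delta}$), $\mathcal{Q}^\delta(v)\ge 0$ is a positive quadratic form in the values of~$F^{(m),\delta}$ near~$v$ converging to~$|f^{(m)}|^2$ (morally~$\sum_{c\sim v}(\text{positive})\cdot|F^{(m),\delta}(c)|^2$), and the remainder~$\mathcal{E}^\delta(v)$, made of terms of \emph{second} order in~$\thetaa_{z_s}-\thetag_{z_s}$, is~$O(\delta)$ relative to~$(-4m)\mathcal{Q}^\delta(v)$, uniformly under~$\BAP$. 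Granting this, the conclusion is immediate: for~$m=0$ it is Smirnov's identity~$[\Delta^{\bullet,\delta}H^{(0),\delta}](v)=\mathcal{S}^\delta(v)\ge0$, and for~$m<0$ the term~$4|m|\mathcal{Q}^\delta(v)\ge0$ dominates~$\mathcal{E}^\delta(v)$ as soon as~$\delta\le\delta_0(\theta_0)$, with no extra constraint on~$|m|$ since~$\mathcal{E}^\delta$ is a fixed fraction of~$4|m|\mathcal{Q}^\delta$.

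The main obstacle is the middle step: one must check that the genuinely sign-indefinite cross-terms produced by Lemma~\ref{lem:mass-hol-discrete} — products of corner increments of~$F^{(m),\delta}$ with~$\overline{F^{(m),\delta}(z_s)}$, including the inhomogeneous contribution~$\propto\tfrac{\sin\frac12(\thetaa_z-\thetag_z)}{\delta}\,\overline{F^{(m),\delta}(z)}\cdot\Area$ in Lemma~\ref{lem:mass-hol-discrete} — indeed land in~$\mathcal{E}^\delta(v)$ with the stated size, rather than in a term of order~$\delta\sum_c|F^{(m),\delta}(c)|^2$ of uncontrolled sign; this is precisely where the scale invariance of the construction and the uniform bounded angles property~$\BAP$ are used. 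As an alternative to this bare-hands computation, one may transport the question to the s-embedding~$\cS^\delta$ of~$\Lambda^\delta$ (cf.\ the discussion around Proposition~\ref{prop:regularity} and~\cite[Section~2]{chelkak2020ising}): there~$F^{(m),\delta}$ becomes an honest s-holomorphic function, $H^{(m),\delta}$ the associated~$H$-function, and the sub/superharmonicity of such~$H$-functions is part of the general theory of~\cite{chelkak2020ising}, with the sign of~$m$ entering via the sign of the discrete mean curvature of~$\cS^\delta$ (cf.~\cite[Section~2.7]{chelkak2020ising}).
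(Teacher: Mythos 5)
Your plan has the right overall shape — expand around criticality and let the mass term compensate the degenerate directions of Smirnov's critical quadratic form — but the step you defer as the ``main obstacle'' is not a technicality, it is the entire content of the proof, and your description of the mass contribution as a manifestly nonnegative form is not accurate. The paper works directly with the corner values $x_s = X^{(m),\delta}(c_s)$ around $v$ (no detour through $F^{(m),\delta}(z_s)$ or Lemma~\ref{lem:mass-hol-discrete}) and expands the discrete Laplacian in $q = \tfrac{1}{2}m\delta$:
\[
[\Delta^{\bullet,\delta}H^{(m),\delta}](v)\ \propto\ (1-8q)\,Q^{(n)}_{\thetag_1,\ldots,\thetag_n}(x)\ -\ 8q\,R^{(n)}_{\thetag_1,\ldots,\thetag_n}(x)\ +\ O(q^2),
\]
with $Q^{(n)}\ge 0$ the critical form and $R^{(n)}(x)=\sum_{s=1}^{n-1}x_sx_{s+1}\sin\thetag_s - x_1x_n\sin\thetag_n$. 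This $R^{(n)}$ is \emph{not} positive semi-definite on $\mathbb{R}^n$ (for $n=3$ take $x=(1,0,1)$, giving $R^{(3)}=-\sin\thetag_3<0$), so there is no manifest $\mathcal{Q}^\delta\ge0$ into which the remainder can be absorbed by a uniform factor, and the phrase ``morally $\sum_{c\sim v}(\text{positive})\cdot|F^{(m),\delta}(c)|^2$'' gives a false impression. What one must actually check — and this is precisely where your proposal stops — is that $R^{(n)}$ is strictly positive on the two-dimensional kernel of $Q^{(n)}$, which consists of vectors $x_s=\mathrm{cst}\cdot\cos\chi_s$ with $\chi_{s+1}-\chi_s=-\thetag_s$. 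The paper establishes this by a recursion that peels off $R^{(3)}$-contributions and reduces to the identity $R^{(3)}_{\thetag_1,\thetag_2,\thetag_3}(\cos\chi_1,\cos\chi_2,\cos\chi_3)=\sin\thetag_1\sin\thetag_2\sin\thetag_3>0$ when $\thetag_1+\thetag_2+\thetag_3=\pi$. Only with that fact in hand does the standard perturbation argument for quadratic forms ($Q\ge0$, $R|_{\Ker Q}>0$, hence $Q-8qR$ has smallest eigenvalue of order $|q|$ and dominates the $O(q^2)$ remainder for $\delta\le\delta_0(\theta_0)$) close the proof. The s-embedding alternative you float at the end is a genuinely different route; the paper reserves s-embeddings for Proposition~\ref{prop:regularity} and does not invoke them here.
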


\begin{remark} Similarly, the function $H^{(m),\delta}$ is superharmonic on~$\Gamma^{\circ,\delta}$ if~$m\ge 0$. If~$m=0$, then \emph{both} these properties hold (see~\cite[Lemma~3.8]{Smirnov_Ising} and~\cite[Proposition~3.6]{ChelkakSmirnov2}), which considerably simplifies the analysis at criticality.
\end{remark}

\begin{proof}

We adopt the notation used in the proof of~\cite[Propostion~3.6]{ChelkakSmirnov2}. Let~$x_s\in\R$, $s=1,\ldots,n$, be the values of the spinor~$X^{(m),\delta}$ at corners surrounding~$v$. Then,
\[
[\Delta^{\bullet,\delta} H^{(m),\delta}](v)\ =\ (\mu^\delta(v))^{-1}\sum_{s=1}^n\, \tan\thetag_s\cot^2\thetaa_s\cdot\bigl[x_s^2+x_{s+1}^2\mp 2(\cos\thetaa_s)^{-1}x_sx_{s+1}\bigr],
\]
where the sign~$\mp$ stands for~$-$ if~$s=1,\ldots,n-1$ and for~$+$ if~$s=n$ (this convention corresponds to the fact that the double cover~$\Upsilon^\times(\Lambda^\delta)$, on which the spinor~$X^{(m),\delta}$ is defined, branches around~$v$). We now expand this quadratic form in the parameter~$q\to 0$. Since~$\tan\thetaa_s=(1+4q)\tan\thetag_s+O(q^2)$ for all~$s=1,\ldots,n$, this leads to the expression
\[
(1-8q)Q^{(n)}_{\thetag_1,\ldots,\thetag_n}(x_1,\ldots,x_n)-8qR^{(n)}_{\thetag_1,\ldots,\thetag_n}(x_1,\ldots,x_n)+O(q^2),
\]
where the leading term
\[
Q^{(n)}_{\thetag_1,\ldots,\thetag_n}(x_1,\ldots,x_n)\ =\ \sum_{s=1}^n\frac{\cos\thetag_s\cdot (x_s^2+x_{s+1}^2)\mp 2x_sx_{s+1}}{\sin\thetag_s}\ \ge\ 0
\]
corresponds to the critical case~$m=0$ and
\[
R^{(n)}_{\thetag_1,\ldots,\thetag_n}(x_1,\ldots,x_n)\ =\ \sum_{s=1}^{n-1}x_sx_{s+1}\sin\thetag_s-x_1x_n\sin\thetag_n.
\]
From the proof of~\cite[Proposition~3.6]{ChelkakSmirnov2} it is easy to see that the (two-dimensional) kernel of the form~$Q^{(n)}_{\thetag_1,\ldots,\thetag_n}$ consists of vectors~$x_s=\mathrm{cst}\cdot \cos \chi_s$, where~$\chi_{s+1}=\chi_s-\thetag_s$ for all~$s=1,\ldots,n-1$. A straightforward computation shows that
\begin{align*}
  R^{(n)}_{\thetag_1,\ldots,\thetag_n}(\cos\chi_1,\ldots,\cos\chi_n)\ &=\  R^{(n-1)}_{\thetag_1,\ldots,\thetag_{n-2},\thetag_{n-1}+\thetag_n}(\cos\chi_1,\ldots,\cos\chi_{n-1})\\ &
  +\ R^{(3)}_{\pi-\thetag_{n-1}-\thetag_n,\thetag_{n-1},\thetag_n}(\cos\chi_1,\cos\chi_{n-1},\cos\chi_n)
  \end{align*}
  and that~$R^{(3)}_{\thetag_1,\thetag_2,\thetag_3}(\cos\chi_1,\cos\chi_2,\cos\chi_3)=\sin\thetag_1\sin\thetag_2\sin\thetag_3$ if $\thetag_1+\thetag_2+\thetag_3=\pi$.

   Therefore, the form $R^{(n)}_{\thetag_1;\ldots;\thetag_n}$ is strictly positive definite on the kernel of~$Q^{(n)}_{\thetag_1;\ldots;\thetag_n}$, which proves the required positivity property for small enough~$\delta$.
\end{proof}

\begin{remark}\label{rem:bdry-trick} It is worth noting that the so-called \emph{'boundary modification trick'} (used in~\cite[Section~3.6]{ChelkakSmirnov2} to control the boundary values of the function~$H^{(m),\delta}_X$) admits a straightforward generalization to the massive setup. By definition, on the wired boundary of a discrete domain~$\Omega^\delta$, the function~$H^{(m),\delta}_X$ satisfies Dirichlet boundary conditions~$H^{(m),\delta}|_{\partial\Omega^{\circ,\delta}}=0$ on 'white' boundary vertices.
Following~\cite{ChelkakSmirnov2}, in order to fit these boundary conditions and the subharmonicity of~$H^{(\delta),m}_X$ on 'black' inner vertices, one replaces each boundary half-rhombus by two rhombi with twice smaller angles and formally define~$H^{(\delta),m}_X(v):=0$ on newly constructed 'black' boundary vertices; see~\cite[Fig.~7]{ChelkakSmirnov2}. For~$m<0$, such a modification \emph{increases} the Laplacian~\eqref{eq:Delta-def} evaluated at near-to-boundary vertices since
\[
\tan\thetag_z\cot^2\thetaa_z\cdot [2x^2-2(\cos\thetaa_z)^{-1}x^2]\ =\ -2x^2\tan\thetag_z\cot\thetaa_z\tan\tfrac{1}{2}\thetaa_z\ <\ -2x^2\tan\tfrac{1}{2}\thetag_z
\]
due to~$\thetaa_z<\thetag_z$ and the monotonicity of the function~$\theta\mapsto \cot\theta\tan\frac{1}{2}\theta$ on~$(0,\frac{1}{2}\pi)$.

In particular, after the 'boundary modification trick' from~\cite[Section~3.6]{ChelkakSmirnov2} is performed, the function~$H^{(m),\delta}$ \emph{remains subharmonic} on 'black' vertices and has Dirichlet boundary conditions; similarly to the critical setup discussed in~\cite{ChelkakSmirnov2}.
\end{remark}

\begin{proposition} \label{prop:regularity}
Let refining isoradial grids~$\Lambda^\delta$ satisfy the property~$\BAP$ and~$F^{(m),\delta}$ be massive s-holomorphic functions on $\diamondsuit^\delta\cap U$. Assume that the functions~$H^{(m),\delta}=\frac{1}{2}\int^{[(m),\delta]}\Im[(F^{(m),\delta}(z))^2dz]$ remain uniformly bounded on compact subsets of~$U$ as~$\delta\to 0$. Then~$F^{(m),\delta}$ are also uniformly bounded and, moreover, (H\"older-)equicontinuous on compact subsets of~$U$ as~$\delta\to 0$.
\end{proposition}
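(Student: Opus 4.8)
The plan is to obtain the proposition as a corollary of the general a priori regularity theory for s-holomorphic functions on non-degenerate s-embeddings developed in~\cite[Section~2]{chelkak2020ising} and~\cite[Section~6]{chelkak2020dimer}, transported to the present setting via the re-embedding of the massive Z-invariant model described in Section~\ref{sub:s-emb}; this also recovers the statement of~\cite{Park2021Fermionic}, whose original argument stays on~$\Lambda^\delta$ and is indicated at the end.

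In more detail, I would proceed as follows. First, invoke Proposition~\ref{prop:Fm=FS}: it identifies the massive s-holomorphic function~$F^{(m),\delta}$ on~$\diamondsuit^\delta$ with a genuine s-holomorphic function~$F_{\cS}^\delta$ on the s-embedding~$\cS^\delta$ of the Ising model on~$\Lambda^\delta$, and, correspondingly, the discrete primitive $H^{(m),\delta}=\tfrac12\int^{[(m),\delta]}\Im[(F^{(m),\delta}(z))^2dz]$ with the canonical function~$H_{\cS}^\delta$ attached to~$F_{\cS}^\delta$ in the framework of~\cite{chelkak2020ising}, up to bounded local factors of the same kind as in Remark~\ref{rem:H=intImF2}. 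Second, using the explicit formulas of Section~\ref{sub:s-emb} together with the fact that $q=\tfrac12 m\delta\to 0$, check that for small~$\delta$ the s-embedding~$\cS^\delta$ is an $O(\delta)$-perturbation of the isoradial embedding~$\Lambda^\delta$ and is uniformly bi-Lipschitz to it; this transfers the hypothesis ``$H^{(m),\delta}$ uniformly bounded on compact subsets of~$U$'' to uniform boundedness of~$H_{\cS}^\delta$ on the corresponding regions. Third, check that the~$\cS^\delta$ satisfy, uniformly in~$\delta$ and in the lattice, the non-degeneracy requirements of~\cite[Section~2]{chelkak2020ising} (uniform lower bounds on the ``origin'' angles, comparability of edge lengths, the Lipschitzness-type condition on the s-embedding): these all follow from~$\BAP$ and $q\to 0$. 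Granting the second and third steps, the general theorem of~\cite[Section~2]{chelkak2020ising} (see also~\cite[Section~6]{chelkak2020dimer}) states precisely that uniform boundedness of~$H_{\cS}^\delta$ on a region implies uniform boundedness and (H\"older-)equicontinuity of~$F_{\cS}^\delta$ on any slightly smaller region; translating back via Proposition~\ref{prop:Fm=FS} yields the claim for~$F^{(m),\delta}$ on compact subsets of~$U$.

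I expect the second and third steps to be the main obstacle: one has to show, from~$\BAP$ and $q\to0$ alone, that the massive s-embeddings~$\cS^\delta$ are uniformly non-degenerate and uniformly close to~$\Lambda^\delta$, which is exactly where the explicit description of Section~\ref{sub:s-emb} and the specific form of the Z-invariant weights enter (abstract s-holomorphicity is not enough). Alternatively, one can bypass s-embeddings and argue directly on~$\Lambda^\delta$ as in~\cite[Section~3]{ChelkakSmirnov2} and~\cite{Park2021Fermionic}: one starts from the one-step identity $H^{(m),\delta}(v_p)-H^{(m),\delta}(u_q)=\delta\,|F^{(m),\delta}(c_{pq})|^2\ge 0$ (so the restrictions $H^{(m),\delta}|_{\Gamma^{\bullet,\delta}}$ and~$H^{(m),\delta}|_{\Gamma^{\circ,\delta}}$ are mutually comparable), combines it with the sub-/super-harmonicity supplied by Proposition~\ref{prop:H-sub} and its counterpart for~$m\ge0$, compares~$H^{(m),\delta}$ with its discrete harmonic majorant and minorant, and invokes Harnack-type estimates on isoradial grids to bound $\operatorname{osc}_{B_r(z)}H^{(m),\delta}$, hence $\delta|F^{(m),\delta}|^2$, on compact subsets; a second application of the same estimates upgrades local boundedness of~$F^{(m),\delta}$ to local H\"older-equicontinuity. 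This route is more delicate when $m\ne0$, precisely because Proposition~\ref{prop:H-sub} supplies sub-/super-harmonicity only on one of the two sublattices in that case.
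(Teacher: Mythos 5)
Your primary route --- re-embedding via Proposition~\ref{prop:Fm=FS} and invoking the regularity theory of~\cite[Section~2]{chelkak2020ising} on the s-embedding~$\cS^\delta$ --- is exactly the paper's ``second proof'' of Proposition~\ref{prop:regularity}. Two points of precision, though. First, the identification of the $H$-functions is cleaner than ``up to bounded local factors'': the function~$H^{(m),\delta}$ is built from the real-valued spinor~$X^{(m),\delta}$ via~\eqref{eq:HX-def}, and that construction is manifestly independent of the embedding, so~$H^{(m),\delta}$ literally \emph{is} the function~$H_{\cS^\delta}$ of the s-embedding framework; the bounded local factors of Remark~\ref{rem:H=intImF2} only concern relating~$H$ to its ``$\int\Im[F^2dz]$'' representation on~$\Lambda^\delta$, not a transfer between the two frameworks. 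Second, it is not true that~$\cS^\delta$ is an $O(\delta)$-perturbation of~$\Lambda^\delta$: by Theorem~\ref{thm:F1Fi-asymp}, $\cS^\delta(z)=\Re z+\tfrac{i}{4m}\sinh(4m\Im z)+O(\delta)$, which is genuinely nonlinear. What actually makes the transfer work --- and what the paper records --- is that~$z\mapsto\cS^\delta(z)$ and~$z\mapsto\cD^\delta_\pm(z)$ are uniformly Lipschitz on compacts, $|\cD^\delta_+|$ is bounded below, and~$|\cD^\delta_-/\cD^\delta_+|<1$ uniformly; plus the un-numbered proposition after Theorem~\ref{thm:F1Fi-asymp} asserting that~$\cS^\delta$ is a proper embedding satisfying~$\textsc{Unif}(\delta)$ for small~$\delta$. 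So you correctly flag the ``non-degeneracy of~$\cS^\delta$'' step as the crux; the paper in fact supplies it.

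Your proposed alternative (direct argument on~$\Lambda^\delta$) is not the one the paper cites as its ``first proof.'' The paper refers to~\cite[Section~4.2]{Park2021Fermionic}: first bound the~$L^2$ norm of~$F^{(m),\delta}$ on compacts via oscillations of~$H^{(m),\delta}$, then upgrade to a pointwise bound and Lipschitzness via a discrete massive Cauchy formula using the explicit massive Cauchy kernel~$\cG_{(a)}$ from Section~\ref{sec:asymptotics} (and it also sketches a variant via a discrete Caccioppoli inequality applied to the subharmonic~$H^{(m),\delta}|_{\Gamma^{\bullet,\delta}}$). The Harnack/majorant argument you sketch is essentially the critical-case strategy, and you rightly suspect it is shaky for~$m\ne 0$: with sub-/super-harmonicity on only one sublattice, one no longer has a two-sided harmonic sandwich for~$H^{(m),\delta}$, and bounding~$\operatorname{osc} H^{(m),\delta}$ on balls does not by itself yield an~$L^\infty$ bound on~$|F^{(m),\delta}|^2$ without a separate ingredient (which in Park's proof is precisely the Cauchy-kernel representation).
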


\begin{proof} Without loss of generality assume that~$m\le 0$, the other case follows by exchanging the roles of~$\Gamma^{\bullet,\delta}$ and~$\Gamma^{\circ,\delta}$. There are two different proofs of the required regularity estimates. For the first one -- which actually gives the uniform \emph{Lipschitzness} of functions~$F^{(m),\delta}$ -- we refer the reader to~\cite[Section~4.2]{Park2021Fermionic} and notably to ~\cite[Propositon~4.6 and Propoition~A.7]{Park2021Fermionic}. In this approach, one first estimates the $L^2$ norms of functions~$F^{(m),\delta}$ on compacts via the maximum (or oscillations) of~$H^{(\delta),m}$. Then, the pointwise estimate and the Lipschitzness of~$F^{(m),\delta}$ can be obtained by applying an appropriate discrete massive Cauchy formula and using asymptotics of the massive s-holomorphic Cauchy kernel { $\cG_{(a)}$ discussed in Section~\ref{sec:asymptotics}: see definition~\eqref{eq: def_chi_chi}, Proposition~\ref{prop: kernels_values} and asymptotics~\eqref{eq: asymp_chi_chi}.} Note also that one can give a similar proof by first estimating the $L^4$ norm of~$F^{(m),\delta}$ (i.\,e., the~$L^2$ norm of the gradient of~$H^{(\delta),m}$) via a discrete version of the Caccioppoli inequality applied to bounded subharmonic (on~$\Gamma^{\bullet,\delta}\cap U$) functions~$H^{(m),\delta}$.

The second proof relies upon an s-embeddings framework developed in~\cite{chelkak2020ising} and related to the context of this paper in Section~\ref{sub:s-emb} below. In this approach, one \emph{re-embeds} the isoradial grid~$\Lambda^\delta$ carrying the massive Ising model so as to obtain the Ising model on an appropriate s-embedding~$\cS^\delta$
While the (real-valued) Kadanoff--Ceva fermionic observables~$X^{(m),\delta}$ and the functions~$H^{(m),\delta}$ do not depend on a particular way in which the graph is embedded into~$\C$, the complex values (Smirnov's observables) $F^{(\delta),m}(z)$ \emph{change} to new values $F_{\cS^\delta}(z)$ under this procedure; the relation between the two is given by
\begin{equation}
\label{eq:x-Fm=Fs}
F^{(m),\delta}(z)\ =\ \cD_+^{\delta}(z)\cdot F_{\cS^\delta}(z)+\cD_-^{\delta}(z)\cdot \overline{F_{\cS^\delta}(z)}\,,\quad z\in\diamondsuit^\delta\,;
\end{equation}
see Proposition~\ref{prop:Fm=FS} for the definition of coefficients~$\cD^\delta_\pm(z)$.

The H\"older regularity of massive s-holomorphic functions~$F^{(m),\delta}$ on (subsets of)~$\Lambda^\delta$ now follows from the regularity of s-holomorphic functions~$F_{\cS^\delta}$ on~$\cS^\delta$ (see~\cite[Section~2.6]{chelkak2020ising}) and from the fact that the mappings~$z\mapsto\cS^\delta(z)$ and~$z\mapsto \cD_\pm^\delta(z)$ are uniformly Lipschitz on compact subsets of~$\Lambda^\delta$ due to Theorem~\ref{thm:F1Fi-asymp}.
\end{proof}
\begin{lemma}\label{lem:max-principle} Let~$\Lambda^\delta$ satisfy the property~$\BAP$,~$F^{(m),\delta}$ be a massive s-holo\-morphic function defined inside a nearest-neighbor contour~$C^\delta\subset\diamondsuit^\delta$. Then, { provided that~$\delta$ is small enough (depending on~$\diam(C^\delta)$ and~$m$ only)} we have
\begin{equation}
\label{eq:max-principle}
|F^{(m),\delta}(\cdot)|\ \le\ \mathrm{cst}(\theta_0,m,\mathrm{diam}(C^\delta))\cdot\max\nolimits_{z\in C^\delta}|F^{(m),\delta}(z)|
\end{equation}
at all points lying inside~$C^\delta$, where the constant does not depend on~$\delta$ and/or~$\Lambda^\delta$.
\end{lemma}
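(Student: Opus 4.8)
The plan is to reduce the estimate to the maximum principle for \emph{genuine} s-holomorphic functions on s-embeddings, exploiting the re-embedding of the massive model discussed in Section~\ref{sub:s-emb}. After possibly exchanging the roles of~$\Gamma^{\bullet,\delta}$ and~$\Gamma^{\circ,\delta}$ we may assume~$m\le 0$. Recall from Proposition~\ref{prop:Fm=FS} and formula~\eqref{eq:x-Fm=Fs} that
\[
F^{(m),\delta}(z)\ =\ \cD_+^\delta(z)\cdot F_{\cS^\delta}(z)\ +\ \cD_-^\delta(z)\cdot\overline{F_{\cS^\delta}(z)},
\]
where~$F_{\cS^\delta}$ is a genuine s-holomorphic function on the s-embedding~$\cS^\delta$; by Theorem~\ref{thm:F1Fi-asymp} the coefficients~$\cD_\pm^\delta$ are uniformly bounded on compact subsets and the real-linear map~$F_{\cS^\delta}\mapsto F^{(m),\delta}$ is boundedly invertible there (one has~$|\cD_+^\delta|^2-|\cD_-^\delta|^2\equiv 1$, so the invertibility constant only accumulates through the `tilt' of~$\cS^\delta$ over the region under consideration, which is controlled by~$m$ and~$\diam(C^\delta)$). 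Consequently~$|F^{(m),\delta}(z)|\asymp|F_{\cS^\delta}(z)|$ with constants depending only on~$\theta_0$, $m$, $\diam(C^\delta)$. Moreover, by the same theorem~$z\mapsto\cS^\delta(z)$ is a proper embedding which is uniformly bi-Lipschitz on the scale~$\diam(C^\delta)$; hence~$C^\delta$ is carried onto a nearest-neighbour contour~$\cS^\delta(C^\delta)$ of diameter~$O(\diam(C^\delta))$ whose interior is~$\cS^\delta(\Int(C^\delta))$.

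It thus suffices to establish the analogue of~\eqref{eq:max-principle} for~$F_{\cS^\delta}$ inside~$\cS^\delta(C^\delta)$. This is an instance of the regularity theory for s-holomorphic functions on (uniformly non-degenerate) s-embeddings: see~\cite[Section~2.6]{chelkak2020ising} and~\cite[Section~6]{chelkak2020dimer}, and cf.~\cite{ChelkakSmirnov2} in the critical isoradial case and~\cite{Park2021Fermionic} in the massive one. Concretely, the primitive~$H_{F_{\cS^\delta}}=\tfrac12\int\Im[(F_{\cS^\delta}(z))^2dz]$ is subharmonic on the `black' faces of~$\cS^\delta$ and superharmonic on the `white' ones (an analogue of Lemma~\ref{lem:maxH-principle} together with Proposition~\ref{prop:H-sub}), so the maximum/minimum principle bounds the oscillation of~$H_{F_{\cS^\delta}}$ inside~$\cS^\delta(C^\delta)$ by~$O(\diam(C^\delta))\cdot\max_{\cS^\delta(C^\delta)}|F_{\cS^\delta}|^2$; combined with the quantitative~$L^2\to L^\infty$ and discrete Cauchy-type estimates available on non-degenerate s-embeddings, this yields a pointwise bound~$|F_{\cS^\delta}(\cdot)|\le\mathrm{cst}(\theta_0,m,\diam(C^\delta))\cdot\max_{\cS^\delta(C^\delta)}|F_{\cS^\delta}|$ inside. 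Pulling this back through~$|F^{(m),\delta}|\asymp|F_{\cS^\delta}|$ (and using~$\max_{\cS^\delta(C^\delta)}|F_{\cS^\delta}|\asymp\max_{C^\delta}|F^{(m),\delta}|$) gives~\eqref{eq:max-principle}, with the threshold on~$\delta$ being the one that guarantees the non-degeneracy of~$\cS^\delta$ on the scale~$\diam(C^\delta)$.

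Everything above is bookkeeping once one has the explicit re-embedding identity~\eqref{eq:x-Fm=Fs}, Theorem~\ref{thm:F1Fi-asymp}, and the maximum principle for s-holomorphic functions on s-embeddings. The main obstacle — and the reason the admissible range of~$\delta$ depends on~$m$ and~$\diam(C^\delta)$ — is checking that~$\cS^\delta$, restricted to a neighbourhood of~$\overline{\Int(C^\delta)}$, meets the (uniform) non-degeneracy hypotheses of~\cite{chelkak2020ising,chelkak2020dimer}: heuristically the mass manifests itself as the mean curvature of~$\cS^\delta$ in~$\mathbb{R}^{2+1}$, so over a Euclidean region of size~$\diam(C^\delta)$ one accumulates only a bounded amount of `bending' (governed by~$m$ and~$\diam(C^\delta)$) and the individual quads of~$\cS^\delta$ stay uniformly non-degenerate once~$\delta$ is small enough. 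An alternative, which avoids s-embeddings altogether and stays within the isoradial picture (as in~\cite{Park2021Fermionic}), is to bound~$\|F^{(m),\delta}\|_{L^2}$ on the relevant box by the oscillation of~$H^{(m),\delta}$ — itself controlled, via Lemma~\ref{lem:maxH-principle} and Proposition~\ref{prop:H-sub}, by~$O(\diam(C^\delta))\cdot\max_{C^\delta}|F^{(m),\delta}|^2$ — and then to upgrade this to a pointwise estimate using the massive discrete Cauchy formula with the kernel~$\cG_{(a)}$ whose asymptotics are obtained in Section~\ref{sec:asymptotics}; note that a bare second difference of~$H^{(m),\delta}$ does \emph{not} suffice, so this Cauchy-kernel input is essential. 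In that route the main technical ingredient is precisely the asymptotic analysis of~$\cG_{(a)}$, already carried out in the paper.
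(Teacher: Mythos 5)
Your proposal follows essentially the same route as the paper: re-embed via~$\cS^\delta$, invoke the maximum principle for genuine s-holomorphic functions on s-embeddings (the paper cites~\cite[Remark~2.9]{chelkak2020ising} for this), and transport the estimate back through the bounds on~$\cD_\pm^\delta$ coming from Theorem~\ref{thm:F1Fi-asymp}. Two small remarks: the paper uses the maximum principle for~$|F_{\cS^\delta}|$ directly rather than re-deriving a weaker version of it from the sub/super-harmonicity of~$H_{F_{\cS^\delta}}$ together with an~$L^2\!\to\!L^\infty$ upgrade — your detour would actually require some care near the boundary contour, whereas the genuine maximum principle does not; and your claimed exact identity~$|\cD_+^\delta|^2-|\cD_-^\delta|^2\equiv 1$ only holds up to~$O(\delta)$ (one has~$\cD_+^\delta\approx\cosh(2m\Im z)$ and~$\cD_-^\delta\approx-\sinh(2m\Im z)$), though this does not harm the argument since all that is needed is~$|\cD_-^\delta|/|\cD_+^\delta|\le\mathrm{cst}<1$ and~$|\cD_+^\delta|\le\mathrm{cst}$, which the paper records explicitly.
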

\begin{proof} Without loss of generality, assume that $0$ lies inside the contour~$C^\delta$ and consider an s-embedding~$\cS^\delta$ of the massive Ising model on~$\Lambda^\delta$ (see Section~\ref{sub:s-emb}) below. Recall that the values~$F^{(m),\delta}(z)$ and~$F_{\cS^\delta}(\cS^\delta(z))$ are related to each other by the formula~\eqref{eq:x-Fm=Fs}. The s-holomorphic (on~$\cS^\delta$) function~$F_{\cS^\delta}$ satisfies the maximum principle; see~\cite[Remark~2.9]{chelkak2020ising}. Therefore, a similar statement for~$F^{(m),\delta}$ follows from the (uniform on bounded subsets) estimates
\[
|\cD^\delta_-(z)|\,/\,|\cD^\delta_+(z)|\le\mathrm{cst}<1\quad \text{and}\quad |\cD^\delta_+(z)|\le\mathrm{cst}<+\infty.
\]
In their turn, these estimates follow from the identities~$\cD_\pm^\delta=\frac{1}{2}(\cF_\rr^\delta\mp i\cF_\ri^\delta)$ and asymptotics of the functions~$\cF_\rr^\delta$, $\cF_\ri^\delta$ given in Theorem~\ref{thm:F1Fi-asymp}.
\end{proof}
\begin{remark} It is worth noting that the massive holomorphicity equation~\eqref{eq:mass-hol} implies that~$\Delta f= 4m^2f$ and thus a usual maximum principle for $|f|$. The same holds for massive s-holomorphic functions $F^{(m),\delta}$ defined on the square grid~$\delta\mathbb{Z}^2$. Thus, it seems plausible that the constant prefactor in~\eqref{eq:max-principle} is unnecessary at least as~$\delta\to 0$. However, we do not know a proof of such a statement for irregular~$\Lambda^\delta$.
\end{remark}

\subsection{Full-plane branching discrete kernels and their asymptotics}
\label{sub:kernels-def}
In our paper we very often use functions~$H[X_1,X_2]$ constructed from two spinors~$X_1,X_2$ satisfying the propagation equation~\eqref{eq:3-terms}, which are (locally) defined on slightly \emph{different} double covers~$\Upsilon^\times_{[v]}(G)$ and~$\Upsilon^\times_{[u]}(G)$, where~$v\in G^\bullet$ and~$u\in G^\circ$ are neighboring vertices. Let the corner~$c\in\Upsilon(G)$ be adjacent to both~$u$ and~$v$. Note that these two double covers (non-branching over~$u$ and non-branching over~$v$) can be naturally identified with each other everywhere except the two lifts of $c$.  Let~$z^{\pm}\in\diamondsuit(G)$ be two quads adjacent to the edge~$(uv)$ of~$\Lambda(G)$ so that~$u$ is the next vertex to~$v$ when going around~$z^+$ counterclockwise; see { Fig.~\ref{fig:Uu=Uv}}. In what follows we assume that
\begin{equation}
\label{eq:Uu=Uv-convention}
\begin{array}{l}
\text{the lifts of~$c=(uv)$ onto $\Upsilon^\times_{[v]}$ and $\Upsilon^\times_{[u]}$ are identified in such a way that}\\
\text{the structure of these double covers around the quad~$z^+$ is the same.}
\end{array}
\end{equation}

\begin{lemma} \label{lem:XG-monodromy}
In the setup described above, let us define increments of the function~$H[X_1,X_2]:\Lambda^\delta\to\R$ via the formula~\eqref{eq:HX1X2-def}. Then,~$H[X_1,X_2]$ has an additive monodromy~$2X_1(c)X_2(c)$ when going around~$c$ counterclockwise. In particular, if~$X_{1,2}=X_{1,2}^{(m),\delta}$ are defined on an isoradial grid~$\Lambda^\delta$ and the massive s-holomorphic functions~$F_{1,2}^{(m),\delta}$ are constructed from~$X^{(m),\delta}_{1,2}$ according to Definition~\ref{def:shol-def-mass}, then
\begin{equation}
\label{eq:XG-monodromy}
2X^{(m),\delta}_1(c)X^{(m),\delta}_2(c)\ =\ \frac{1}{2}\oint^{[(m),\delta]} \Im\big[F^{(m),\delta}_1(z)F^{(m),\delta}_2(z)dz\big],
\end{equation}
where the discrete integral along a closed contour surrounding~$c=(uv)$ in the right-hand side is understood in the sense of Lemma~\ref{lem:H=intImF2} and Remark~\ref{rem:H=intImF2}.
\end{lemma}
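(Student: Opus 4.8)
The plan is to isolate the one mechanism that can break the single-valuedness of $H[X_1,X_2]$ on $\Lambda^\delta$ and to locate it at the corner $c=(uv)$. Recall first why nothing goes wrong when $X_1,X_2$ are spinors on one and the same double cover $\Upsilon^\times_\varpi(G)$: applying the propagation equation~\eqref{eq:3-terms} to each of $X_1,X_2$ at a quad $z$ and using $\cos^2+\sin^2=1$ gives a bilinear identity which says exactly that the increments prescribed by~\eqref{eq:HX1X2-def} sum to zero around the face $z$ of $\Lambda^\delta$; moreover the one-step increment across an edge $(uv)$ is independent of the chosen lift of the corner $c$ since both factors $X_1(c)$ and $X_2(c)$ change sign together. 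Hence $H[X_1,X_2]$ is well defined up to an additive constant in this case, as already recorded in the text preceding the statement.

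The key step is to transport $X_2$ onto $\Upsilon^\times_{[v]}$. The covers $\Upsilon^\times_{[v]}$ and $\Upsilon^\times_{[u]}$ differ by the double cover of $\Upsilon(G)$ branching exactly at the two faces $u$ and $v$; in other words, away from the corner $c$ they are canonically identified, while the two possible identifications of the lifts of $c$ itself disagree. Using the identification pinned down by~\eqref{eq:Uu=Uv-convention} --- the one agreeing with the structure around $z^+$ --- I obtain a real-valued function $\widetilde X_2$ on $\Upsilon^\times_{[v]}$ that equals $X_2$ at every corner other than $c$ and that satisfies the propagation equation~\eqref{eq:3-terms} at every quad \emph{except} $z^-$, where the value at $c$ sits on the ``wrong'' sheet. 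Feeding $\widetilde X_2$ into the bilinear identity above shows that the increments of $H[X_1,X_2]$ sum to zero around every face of $\Lambda^\delta$ other than $z^-$; around $z^-$ they sum to the same bilinear combination with the $c$-term negated, and a short computation evaluates this to $\pm 2X_1(c)X_2(c)$.

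It then remains to convert this ``defect supported at the single face $z^-$'' into the monodromy statement. Any counterclockwise nearest-neighbor loop of $\Lambda^\delta$ around $c$ encloses both quads $z^+$ and $z^-$ (and possibly further faces), and the increment of $H[X_1,X_2]$ accumulated along it equals the sum of the enclosed face-defects, that is, $0$ (from $z^+$, by~\eqref{eq:Uu=Uv-convention}) plus the $z^-$-defect plus zeros; the counterclockwise orientation together with the position of $c$ among the four corners of $z^-$ fixes the sign, yielding the monodromy $2X_1(c)X_2(c)$. Finally, \eqref{eq:XG-monodromy} is just a rewriting: polarising $H^{(m),\delta}_X=\tfrac12\int^{[(m),\delta]}\Im[(F^{(m),\delta}(z))^2dz]$ via $H[X_1,X_2]=\tfrac14(H_{X_1+X_2}-H_{X_1-X_2})$ and $(F_1+F_2)^2-(F_1-F_2)^2=4F_1F_2$, the one-step increments of $H[X^{(m),\delta}_1,X^{(m),\delta}_2]$ coincide with the summands of $\tfrac12\Im[F^{(m),\delta}_1(z)F^{(m),\delta}_2(z)dz]$ that define $\int^{[(m),\delta]}$ (up to the harmless factors $\cos\thetaa_z/\cos\thetag_z$ and $\sin\thetaa_z/\sin\thetag_z$), so their sum along a closed contour around $c$ reproduces precisely the monodromy computed above.

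I expect the main obstacle to be the combinatorial bookkeeping of the second paragraph: making rigorous that the two covers ``differ only at $c$'', that~\eqref{eq:Uu=Uv-convention} confines the branch cut of $\widetilde X_2$ to the single quad $z^-$, and the determination of the sign $+$ rather than $-$. No analysis is involved --- it is entirely sign tracking on the double covers depicted in Fig.~\ref{fig:double-covers}.
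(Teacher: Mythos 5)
Your proposal is correct and follows the same route as the paper: localize the breakdown of single-valuedness to the quad $z^-$ using the convention~\eqref{eq:Uu=Uv-convention}, identify the face-defect there as $2X_1(c)X_2(c)$, and translate to~\eqref{eq:XG-monodromy} via the polarization identity from Lemma~\ref{lem:H=intImF2}. The paper's proof is a two-sentence compression of the same argument (it pins the value of the $z^-$-defect by observing it would vanish under the opposite identification), whereas you spell out the sign-tracking, but the mechanism is identical.
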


\begin{figure}
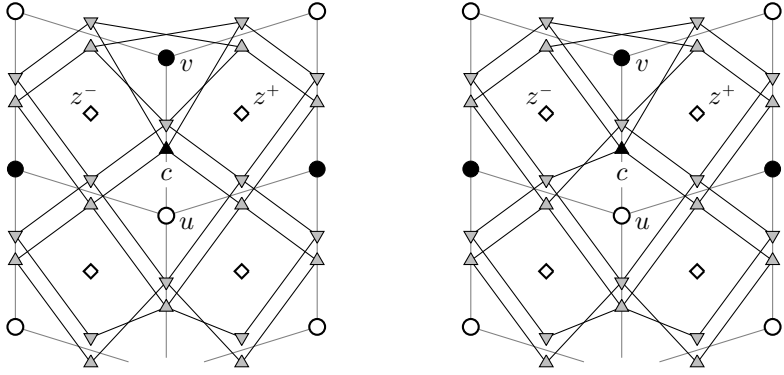

\begin{center}\begin{tikzpicture}[scale=0.21]
\input{BranchingU.txt}
\end{tikzpicture}
\hskip 48pt
\begin{tikzpicture}[scale=0.21]
\input{BranchingV.txt}
\end{tikzpicture}\end{center}
\caption{{ Local structure of the double covers~$\Upsilon^\times_{[u]}$, $u=u(c)$ (on the left) and~$\Upsilon^\times_{[v]}$, $v=v(c)$ (on the right). These double covers can be identified with each other except at lifts of~$c$. When choosing such a lift (shown as a black triangular node) on both double covers \emph{simultaneously} we use the convention~\eqref{eq:Uu=Uv-convention} that their structure around the quad~$z^+$ lying to the right of~$(uv)$ is the same.}}
\label{fig:Uu=Uv}
\end{figure}

\begin{proof} Under the convention~\eqref{eq:Uu=Uv-convention}, the increments of the function~$H[X_1,X_2]$ sum up to zero when going around all quads~$z\in\diamondsuit(G)$ except~$z=z^-$. The sum around~$z^-$ (when going counterclockwise) is~$2X_1(c)X_2(c)$ since it would vanish if we used the opposite convention on the identification of~$\Upsilon^\times_{[v]}(X_1)$ and~$\Upsilon^\times_{[u]}(G)$ in~\eqref{eq:Uu=Uv-convention}.
\end{proof}
\begin{remark}
Note that a similar statement holds if, say, the spinor~$X_1$ is (locally) defined on the double cover~$\Upsilon^\times(G)$ while~$X_2$ lives on~$\Upsilon^\times_{ [v(a),u(a)]}(G)$, { $a\in\Upsilon^\times(G)$}. { If~$X_2=\rG_{ (a)}$} is the massive Cauchy kernel on~$\Lambda^\delta$, this gives a Cauchy-type formula for massive s-holomorphic functions;
see { Fig.~\ref{fig:U=Ua}, definition~\eqref{eq: def_chi_chi} of~$\rG_{(a)}$, asymptotics~\eqref{eq: asymp_chi_chi},} and~\cite[Section~A.2]{Park2021Fermionic} for more details.
\end{remark}

In our paper we typically apply the formula~\eqref{eq:XG-monodromy} in a situation when~$F^{(m),\delta}_2$ is a concrete massive s-holomorphic function, namely an analogue of the 'discrete $z^{-\frac12}$ kernel on~$\mathbb{Z}^2$' that was used in~\cite{ChelkakHonglerIzyurov} (and of its massive analogue constructed in~\cite{park2018massive} for~$m\le 0$ and~$w\in\Gamma^{\bullet,\delta}$). Similarly to the discrete Cauchy formula mentioned above, this formula provides a tool to reconstruct the value of~$X_1$ (or~$F_1$) right near the branching point~$v$ from its values on an arbitrary contour surrounding~$v$.

We construct and analyze the aforementioned branching kernels in Section~\ref{sec:asymptotics}; the main result of this analysis is given by the following theorem. We assume that $m\le 0$ but formulate the result for \emph{both}~$w=v\in\Gamma^{\bullet,\delta}$ and $w=u\in\Gamma^{\circ,\delta}$; a similar result for~$m\ge 0$ follows by the duality.
\begin{theorem}
\label{thm:G-asymp-mass}
Let~$m\le 0$ and let an isoradial grid~$\Lambda^\delta$ satisfy the bounded angles property~$\BAP$. For each~$w\in\Lambda^\delta$ there exists a real-valued spinor~$\rG^{(m),\delta}_{[w]}$ defined on the double cover~$\Upsilon^\times_{[w]}(\Lambda^\delta)$ and satisfying the propagation equation~\eqref{eq:3-terms}, such that
\begin{equation}
\label{eq:G(c)=pi/2}
{ \rG^{(m),\delta}_{[w]}(c)\,=\,1}\ \ \text{for~all~corners}\ \ c\sim w
\end{equation}
and that the massive s-holomorphic spinor~$\cG^{(m),\delta}_{[w]}$ constructed from~$\rG^{(m),\delta}_{[w]}$ according to Definition~\ref{def:shol-def-mass} { has the following asymptotics as~$\delta\to 0$:}
\begin{align*}
\cG^{(m),\delta}_{[v]}(z)\ &=\ e^{-i\frac{\pi}{4}}\cdot { \biggl(\frac{2}{\pi}\biggr)^{\!\!\frac12}}\frac{e^{2m|z-v|}}{\sqrt{z-v}}+\delta\cdot { R^{(m)}_{\,\bullet}}(z,v)+{ O(\delta^2)}\ \ \text{if}\ \ w=v\in\Gamma^{\bullet,\delta},\\
\cG^{(m),\delta}_{[u]}(z)\ &=\ e^{i\frac{\pi}{4}}\cdot { \biggl(\frac{2}{\pi}\biggr)^{\!\!\frac12}}\frac{e^{-2m|z-u|}}{\sqrt{z-u}}+\delta\cdot  { R^{(m)}_{\,\circ}}(z,u)+{ O(\delta^2)}\ \ \text{if}\ \ w=u\in\Gamma^{\circ,\delta},
\end{align*}
where the sub-leading terms $R^{(m)}_{\,\bullet}$ and $R^{(m)}_{\,\circ}$ are uniformly bounded and uniformly Lipschitz in the second (i.\,e., $v$ or~$u$) argument provided that~$|z-w|$ and~$|z-w|^{-1}$ are uniformly bounded, and the error terms are uniform under the same assumptions.
\end{theorem}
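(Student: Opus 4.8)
The plan is to construct the spinor $\rG^{(m),\delta}_{[w]}$ as an explicit contour integral over a spectral parameter, following the strategy used by Dub\'edat~\cite{DubedatDimersCR} at criticality, and to replace the critical discrete exponentials by the \emph{massive discrete exponentials} $\dexp{\lambda}{w}{z}$ of Boutillier, de~Tili\`ere and Raschel~\cite{boutillier2017z,boutillier2019z}. Recall that for each value of the spectral parameter $\lambda$ the function $z\mapsto\dexp{\lambda}{w}{z}$ on $\Lambda^\delta$ solves the three-terms propagation equation~\eqref{eq:3-terms} with the \emph{massive} angles $\thetaa_z$ (the elliptic parameter $k$, hence the mass $m$ via~\eqref{eq: Z-inv} and $\qtoem$, being built into the edge weights of $\dexp{\lambda}{w}{\cdot}$), and that it is path-independent up to normalization. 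I would then set, for corners $c$,
\[
\rG^{(m),\delta}_{[w]}(c)\ :=\ \frac{1}{2\pi i}\oint_{\gamma_w}\dexp{\lambda}{w}{c}\cdot\frac{d\lambda}{\sqrt\lambda}\,,
\]
where $\gamma_w$ is a suitable contour wrapping the branch cut of $\lambda^{-1/2}$ from the origin (for $w=u\in\Gamma^{\circ,\delta}$ one uses the ``conjugate'' contour, reflecting the sub-/super-critical dichotomy of~\eqref{eq: KW-duality-kq}; when the resulting s-holomorphic function grows---i.\,e.\ $w\in\Gamma^{\circ,\delta}$, $m\le 0$---the integral is understood via a deformation/limiting argument avoiding the poles of $\dexp{\lambda}{w}{\cdot}$). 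Since the integrand solves~\eqref{eq:3-terms} in $c$ for every $\lambda$, so does $\rG^{(m),\delta}_{[w]}$; the half-integer weight in $\lambda$ is precisely what produces the additive $(-1)$-monodromy around $w$, turning $\rG^{(m),\delta}_{[w]}$ into a genuine spinor on $\Upsilon^\times_{[w]}(\Lambda^\delta)$, while reality is arranged by a symmetry of $\gamma_w$ and the integrand. Feeding $\rG^{(m),\delta}_{[w]}$ into Definition~\ref{def:shol-def-mass} produces the massive s-holomorphic spinor $\cG^{(m),\delta}_{[w]}$.

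The normalization~\eqref{eq:G(c)=pi/2} is then pinned down by evaluating the integral at a corner $c\sim w$: there the path from $w$ to $c$ is trivial, $\dexp{\lambda}{w}{c}$ reduces to (a power of) a single elementary factor, and the integral is computed by a residue at $\lambda=0$, fixing the prefactor $\tfrac{1}{2\pi i}$ (equivalently, the constant $(2/\pi)^{1/2}$ that appears in the leading term below). Path-independence of $\dexp{\lambda}{w}{\cdot}$ lets us then choose an almost-straight rhombic path from $w$ to $z$ in the analysis; under $\BAP$ the number of steps on such a path is comparable to $\delta^{-1}|z-w|$.

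The core of the argument is the asymptotic analysis of the above contour integral as $\delta\to 0$ with $x:=z-w$ fixed and $|x|\asymp 1$. Writing $\dexp{\lambda}{w}{z}$ as a product of $\asymp\delta^{-1}|x|$ elementary factors, each of the form $1+O(\delta)$, one obtains $\dexp{\lambda}{w}{z}=\exp[\delta^{-1}\Phi(\lambda;x)+\Psi(\lambda;x)+O(\delta)]$, where the leading phase $\Phi$ is the integral of $\log$ of the edge factor along the chosen path; crucially, the mass enters $\Phi$ only through the shift $\thetaa_z-\thetag_z=2\delta m\sin\thetag_z\cos\thetag_z+O(\delta^2)$ recorded in~\eqref{eq:thetaa-thetag}, contributing an extra term $\propto m|x|$. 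A steepest-descent analysis around the saddle $\lambda_*=\lambda_*(x)$ of $\Phi(\cdot;x)$ then yields the leading term: the Gaussian integral produces the standard stationary-phase phase $e^{\mp i\pi/4}$ and, together with the normalization above, the constant $(2/\pi)^{1/2}$, while the value of $\lambda^{-1/2}$ at $\lambda_*$ and the ``mass part'' of $\Phi(\lambda_*;x)$ combine to give $(z-w)^{-1/2}e^{\pm 2m|z-w|}$. The $\delta$-order correction $R^{(m)}_{\bullet/\circ}$ comes from two sources treated at once: the $O(\delta^2)$ terms in the per-edge expansion (including the $O(\delta^2)$ of~\eqref{eq:thetaa-thetag}) and the $O(\delta)$ cubic correction in the Watson-type expansion at the saddle; the $O(\delta^2)$ remainder requires the exponential smallness of the contribution of $\gamma_w$ away from $\lambda_*$ (uniformly under $\BAP$ and over $\arg x$) together with uniform control of the Taylor remainders. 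Uniformity in the base point, and the claimed Lipschitz continuity of $R^{(m)}_{\bullet/\circ}$ in it, follow because $\lambda_*(x)$, $\Phi$, $\Psi$ depend real-analytically on $x$, $m$ and the path angles with derivatives bounded uniformly on $c_1\le|z-w|\le c_2$; alternatively one subtracts the explicit leading term and applies interior regularity of massive s-holomorphic functions (Lemma~\ref{lem:max-principle}) to the remainder.

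The main obstacle is exactly the uniform execution of this steepest-descent analysis: controlling a product of $\asymp\delta^{-1}$ elliptic-type factors simultaneously in the direction $\arg(z-w)$, in the (bounded-angle) combinatorics of the rhombic path, and in $m$, while keeping precise enough track of the $O(\delta)$ and $O(\delta^2)$ terms to extract $R^{(m)}_{\bullet/\circ}$ and bound the error. Since in the regime $\qtoem\to 0$ the Jacobi elliptic functions entering~\eqref{eq: Z-inv} degenerate to trigonometric ones, this is effectively a controlled perturbation of Dub\'edat's critical computation (and runs parallel to the analysis of the non-branching kernel $\cG_{(a)}$ needed for Proposition~\ref{prop:regularity}), with the mass appearing at first order; the bookkeeping of which contributions land at orders $\delta^0,\delta^1,\delta^2$, carried out uniformly over all of the above parameters, is the technical heart of the proof.
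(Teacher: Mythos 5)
Your overall strategy---represent $\rG^{(m),\delta}_{[w]}$ as a contour integral against massive discrete exponentials, pin the normalization at $c\sim w$ by an explicit evaluation, and extract the asymptotics by a contour-deformation/Laplace-type argument---is the paper's plan (Section~\ref{sec:asymptotics}). But two concrete gaps prevent the proposal as written from going through.

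\textbf{The integrand is misidentified.} You write $\dexp{\lambda}{w}{c}\cdot\lambda^{-1/2}d\lambda$ with $\lambda\in\C\smallsetminus\{0\}$ and say the mass ``is built into the edge weights of $\dexp{\lambda}{w}{\cdot}$''. That is not the case: $\dexp{\lambda}{\cdot}{\cdot}$ as defined in~(\ref{eq: defecrit1})--(\ref{eq: defecrit2}) has no $k$ in it and, by Lemma~\ref{lem: prop_exp_critical}, solves only the \emph{critical} propagation equation. The spinors solving the massive equation are the elliptic exponentials $\dexpk{k}{\ellip{\mu}}{\cdot}{\cdot}$ of~(\ref{eq: defexpk})--(\ref{eq: defexpk2}), parametrized by a point on the torus $\T(k)$ (Lemma~\ref{lem: prop_exp_massive}); the branching in the spectral variable comes from the half-argument $\tfrac12(\ellip{\mu}-\ellipa{c})$ inside the Jacobi functions, not from a separate $\lambda^{-1/2}$ kernel, and the integration in~(\ref{eq: def_chi_mu})--(\ref{eq: def_chi_sigma}) is over a \emph{period segment of the torus} (vertical of length $4iK'$ for $\rG_{[u]}$, tilted through $2K+2iK'$ for $\rG_{[v]}$), not a contour wrapping a branch cut from the origin. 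In particular, the decay-vs-growth dichotomy for $u$ vs.\ $v$ is realized by the choice of period, and the pole-avoidance uses the elliptic structure (shift to $\ellip{\phi}^\Lambda_{cu}$ on the far ends, then back to $\ellip{\phi}_{cu}$ in the middle). One can relate the two pictures via $\lambda=-2e^{-i\mu}$ (Remark~\ref{rem: exp_critical_massive}), but only at $k=0$; for $k\ne 0$ the torus and its periods are essential, and your $\lambda$-plane formulation has no meaning.

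\textbf{The steepest-descent bookkeeping does not match the regime.} In the massive scaling limit the contour integral has \emph{no large parameter}: after multiplying $\asymp\delta^{-1}$ one-step factors each of size $\exp[-4q\cos(\cdot)+O(q^{2})]$, the total exponent is $-2m|z-w|\cosh t+o(1)$ with $|z-w|\asymp1$, an $O(1)$ quantity. The paper therefore does \emph{not} approximate this integral by a Gaussian with Watson corrections; it evaluates the central integral exactly, $\int_{-\infty}^{\infty}e^{-\rho\cosh t}\cosh(\tfrac12 t)\,dt=(2\pi/\rho)^{1/2}e^{-\rho}$ (NIST 10.32.9, 10.39.2), and shows the integrand is already accurate to $O(q^2)=O(\delta^2)$ uniformly on the contour (Eq.~(\ref{eq:dexpk-cu-asymp-near-R})). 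As a result, $\rG_{[w]}$ matches its continuum limit to order $O(\delta^2)$ with \emph{no} $O(\delta)$ term (Eqs.~(\ref{eq: asymp_chi_mu})--(\ref{eq: asymp_chi_sigma})); the correction $\delta\, R^{(m)}$ in the theorem arises solely from reconstructing $\cG_{[w]}$ from $\rG_{[w]}$ via Definition~\ref{def:shol-def-mass} (the $\widehat\eta_{c,z}=\eta_c+O(\delta)$ factors) and from $K^{-1/2}=(\pi/2)^{-1/2}(1-m\delta+O(\delta^2))$. Your attribution of $R^{(m)}$ to ``$O(\delta)$ cubic corrections at the saddle'' is therefore incorrect in structure; carried out literally, your expansion would not reveal the clean form (and explicit Lipschitz control) of $R^{(m)}_{\bullet/\circ}$ that the theorem asserts. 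You would need to reorganize the argument around the exact Bessel identity and recognize that the entire $O(\delta)$ term is produced by the $\rG\mapsto\cG$ reconstruction, not by the spectral integral.
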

\begin{proof}
See Section~\ref{sec:asymptotics}: { the spinors~$\rG_{[w]}$ are defined in~(\ref{eq: def_chi_mu}--\ref{eq: def_chi_sigma}), the identity~\eqref{eq:G(c)=pi/2} is checked in Proposition~\ref{prop: kernels_values}, and the asymptotics of~$\rG_{[w]}$ are given by~(\ref{eq: asymp_chi_mu}--\ref{eq: asymp_chi_sigma}). Note that these asymptotics do  \emph{not} contain corrections of order~$\delta$; in other words, the terms~$R^{(m)}_{\,\circ}$,~$R^{(m)}_{\,\bullet}$ appear only along the reconstruction of~$\cG_{[w]}$ from~$\rG_{[w]}$ via Definition~\ref{def:shol-def-mass} and thus can be written explicitly. In particular, the fact that they are bounded and Lipschitz in the second variable trivially follows from~(\ref{eq: asymp_chi_mu}--\ref{eq: asymp_chi_sigma}).}
\end{proof}
\begin{remark}\label{rem:G-asymp} (i) Informally speaking, these kernels can be thought of as properly re-scaled infinite-volume correlators~$\langle\chi_c\mu_v\sigma_\infty\rangle$ or~$\langle\chi_c\mu_\infty\sigma_u\rangle$, respectively. This interpretation can be made rigorous if~$w=v\in\Gamma^{\bullet,\delta}$ (and $m<0$) by considering a limit of finite-volume correlators; { cf. the proof of Lemma~\ref{lem:E[sigma]=cst} given below. Note that the exponential decay at infinity characterizes such a kernel uniquely up to a multiplicative normalization. { (In fact, it is not hard to deduce from Lemma~\ref{lem:maxH-principle} and Remark~\ref{rem:maxH-principle} that already the estimate $O(R^{-1/2-\varepsilon})$ at infinity implies such a uniqueness property.)} Moreover, 
the fact that~\eqref{eq:G(c)=pi/2} simultaneously holds for all~$c\sim v$} is nothing but a re-statement of the universality of the magnetization in the sub-critical model. However, a similar interpretation for~$w=u\in\Gamma^{\circ,\delta}$ (or~$w=v\in\Gamma^{\bullet,\delta}$ and~$m>0$) is less transparent since { in this case the kernel \emph{grows} when~$|z-u|\to\infty$ and thus does not admit a straightforward characterization.}

\smallskip

\noindent (ii) In the critical case~$m=0$ the asymptotics simplify to
\begin{equation}
\label{eq:G-asymp-crit}
\cG^\delta_{[w]}(z)\ =\ e^{\mp i\frac{\pi}{4}}\cdot { \big(\tfrac{2}{\pi}\big)^{\frac{1}{2}}(z-w)^{-\frac12}}\ +\ O(\delta^2\cdot |z-w|^{-\frac 52}),
\end{equation}
where the constant prefactor is~$e^{-i\frac\pi4}$ if~$w=v\in\Gamma^{\bullet,\delta}$ and~$e^{i\frac\pi4}$ if~$w=u\in\Gamma^{\circ,\delta}$; see Section~\ref{sub:kernels-crit} for details.
\end{remark}

\smallskip

\noindent (iii) In Section~\ref{sub:spin-sub} we also rely upon the following statement. Let~$\delta=1$, \mbox{$ q=\frac{1}{2}m<0$}, and~$w=v\in\Gamma^\bullet$. Then, the spinor~$\rG_{[v]}=\rG^{(\delta),m}_{[v]}$ constructed in Theorem~\ref{thm:G-asymp-mass} \mbox{satisfies} the uniform bound
\begin{equation}
\label{eq:G-exp-decay-sub}
|\rG_{[v]}(c)|\ =\ O(\exp(-\beta(q,\theta_0)\cdot|c-v|)) \ \ \text{as}\ \ |c-v|\to\infty,
\end{equation}
where a constant $\beta(q,\theta_0)>0$ does not depend on~$\Lambda$. We discuss this estimate (note that it does not directly follow from Theorem~\ref{thm:G-asymp-mass}) in Section~\ref{sub:exp-asymp}.

\subsection{S-embeddings of the massive isoradial Ising model} \label{sub:s-emb}
In this section we discuss the link of the massive Ising model on isoradial graphs considered in our paper with a general framework of \emph{s-embeddings} recently developed in~\cite{chelkak2020ising}. From a certain perspective, the material presented in this section can be viewed as an illustration of the general construction from~\cite{chelkak2020ising}. However, note that we also rely upon this link -- namely, upon Proposition~\ref{prop:Fm=FS} -- when giving an (alternative to~\cite{Park2021Fermionic}) proof of the a priori regularity of massive s-holomorphic functions on isoradial grids and of the discrete maximum principle for such functions.

Recall that an s-embedding~$\cS=\cS_\cX$ of a given planar graph carrying a nearest-neighbor Ising model is constructed out of a complex-valued solution~$\cX$ of the propagation equation~\eqref{eq:3-terms} or, equivalently, out of two (linear independent) real-valued solutions~$\cX_\rr$ and~$\cX_\ri$ of this equation such that~$\cX=\varsigma\cdot(\cX_\rr-i\cX_\ri)$; recall that~$\varsigma=e^{i\frac\pi 4}$. This construction boils down to the definition~\eqref{eq:HX-def} applied to~$\cX$. Namely, one has
\[
\mathrm{Re}\,\cS_\cX =\ 2H[\cX_\rr,\cX_\ri]\,,\quad \mathrm{Im}\,\cS_\cX =\ H_{\cX_\rr}\!- H_{\cX_\ri}\,,\quad
\cQ_\cX\ =\ H_{\cX_\rr}\!+ H_{\cX_\ri}\,,
\]
see~\cite{chelkak2020ising} for more details on the definition of the auxiliary function~$\cQ_\cX=H[\cX,\overline{\cX}]$ .

In the context of the massive Ising model on an isoradial grid~$\Lambda^\delta$, the two spinors~$\cX_\rr^\delta$, $\cX_\ri^\delta$ that one uses for a definition of an s-embedding~$\cS^\delta$ also give rise to massive s-holomorphic functions~$\cF_\rr^\delta$, $\cF_\ri^\delta$ via Definition~\ref{def:shol-def-mass}. Moreover, due to Lemma~\ref{lem:H=intImF2} (and using the notation of Remark~\ref{rem:H=intImF2}) we have
\begin{align}
\notag \mathrm{Re}\,\cS^\delta\ &=\ \textstyle \frac{1}{2}\int^{[(m),\delta]}\Im\big[\,2\cF_\rr^\delta(z)\cF_\ri^\delta(z)dz\,\big]\,,\\[2pt]
\label{eq:SembQ=int} \mathrm{Im}\,\cS^\delta &=\ \textstyle \frac{1}{2}\int^{[(m),\delta]}\Im\big[((\cF_\rr^\delta(z))^2-(\cF_\ri^\delta(z))^2)dz\big]\,,\\[2pt]
\notag \cQ^\delta\ &=\ \textstyle \frac{1}{2}\int^{[(m),\delta]}\Im\big[((\cF_\rr^\delta(z))^2+(\cF_\ri^\delta(z))^2)dz\big]\,.
\end{align}
\begin{remark}
It is worth emphasizing that there is an enormous freedom in choosing~$\cF_\rr^\delta$ and~$\cF_\ri^\delta$ { and not only those constructed in the forthcoming Theorem~\ref{thm:F1Fi-asymp}.} If these functions converge (as~$\delta\to 0$, on compact subsets of~$\C$) to certain functions $f_\rr$,~$f_\ri$, then so do~$\cS^\delta$ and~$\cQ^\delta$. Moreover, the limit of~\eqref{eq:SembQ=int} is nothing but a conformal { (or isothermal)} parametrization of a constant curvature surface in the Minkowski space~$\R^{2+1}$ by two massive holomorphic (i.\,e., satisfying the equation~\eqref{eq:mass-hol}) functions~$f_\rr$, $f_\ri$; see~\cite[Section~2.7]{chelkak2020ising} for a discussion.
\end{remark}


\begin{theorem} \label{thm:F1Fi-asymp}
Given~$m\le 0$, on each isoradial grid~$\Lambda^\delta$ satisfying the property~$\BAP$ there exist massive s-holomorphic functions~$\cF_\rr^\delta$, $\cF_\ri^\delta$ such that the following asymptotics hold uniformly on compact sets as~$\delta\to 0$:
\[
\cF_\rr^\delta(z)\ =\ \exp(-2m\Im z)+O(\delta),\qquad \cF_\ri^\delta(z)\ =\ i\exp(2m\Im z)+O(\delta).
\]
With a proper choice of additive constants in their definitions, the corresponding s-embeddings~$\cS^\delta$ and the function~$\cQ^\delta$ have the following asymptotics as~$\delta\to 0$:
\begin{equation}
\label{eq:SembQ-asymp}
\begin{array}{rl}
\cS^\delta(z)&=\ \Re z +\tfrac{i}{4m}\sinh (4m\Im z)+O(\delta),\\[4pt]
\cQ^\delta(z)&=\ \tfrac{1}{4m}(1-\cosh (4m\Im z))+O(\delta),
\end{array}
\end{equation}
also uniformly on compact subsets of~$\C$.
\end{theorem}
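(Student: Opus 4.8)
The plan is to construct the spinors $\cX^\delta_\rr$, $\cX^\delta_\ri$ whose associated massive s-holomorphic functions approximate $\exp(-2m\Im z)$ and $i\exp(2m\Im z)$, and then feed these through the machinery of Lemma~\ref{lem:H=intImF2} (in the form of Remark~\ref{rem:H=intImF2}) to obtain the asymptotics of $\cS^\delta$ and $\cQ^\delta$. The natural candidates are built from the massive discrete exponentials of Boutillier--de~Tili\`ere--Raschel: one wants real-valued spinors on $\Upsilon^\times(\Lambda^\delta)$ solving~\eqref{eq:3-terms} that, after the $\eta_c$-twist and the $\delta^{-1/2}$-rescaling of Definition~\ref{def:shol-def-mass}, converge to $e^{-2m\Im z}$ and $ie^{2m\Im z}$ respectively. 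I would first record the continuum check: the functions $f_\rr(z)=e^{-2m\Im z}$ and $f_\ri(z)=ie^{2m\Im z}$ satisfy the Dirac equation~\eqref{eq:mass-hol}, $\overline\partial f+im\overline f=0$, since $\overline\partial e^{-2m\Im z}=\overline\partial e^{im(\bar z-z)}=ime^{im(\bar z - z)}$ and $\overline{f_\rr}=e^{-2m\Im z}$ with the analogous sign bookkeeping for $f_\ri$; so these are legitimate limiting objects, consistent with Corollary~\ref{cor:mass-hol}. The explicit lattice construction of $\cF^\delta_\rr$, $\cF^\delta_\ri$ with error $O(\delta)$ I would take from the same discrete-exponential toolbox developed in Section~\ref{sec:asymptotics} (indeed these are simpler, non-branching analogues of the kernels of Theorem~\ref{thm:G-asymp-mass}), so that the pointwise $O(\delta)$ bound and the uniform-on-compacts convergence are available.

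Granting the first-line asymptotics $\cF^\delta_\rr(z)=e^{-2m\Im z}+O(\delta)$ and $\cF^\delta_\ri(z)=ie^{2m\Im z}+O(\delta)$, I would pass to the functions $\cS^\delta$ and $\cQ^\delta$ via~\eqref{eq:SembQ=int}. By Remark~\ref{rem:H=intImF2}, the correcting factors $\cos\thetaa_z/\cos\thetag_z$ and $\sin\thetaa_z/\sin\thetag_z$ differ from $1$ by $O(\delta)$ uniformly under~$\BAP$ (by~\eqref{eq:thetaa-thetag}), so the discrete primitives converge to the continuum primitives $h:=\tfrac12\int\Im[(f(z))^2\,dz]$ of the corresponding limiting functions. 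Thus one simply integrates: $\Im[2f_\rr f_\ri\,dz]=\Im[2ie^{0}\,dz]=\Im[2i\,dz]=2\,d(\Re z)$, giving $\Re\cS^\delta\to\Re z$; next $\Im[((f_\rr)^2-(f_\ri)^2)\,dz]=\Im[(e^{-4m\Im z}+e^{4m\Im z})\,dz]=\Im[2\cosh(4m\Im z)\,dz]$, and a direct antiderivative computation (using $\Im dz = d(\Im z)$ and $\int 2\cosh(4ms)\,ds = \tfrac{1}{2m}\sinh(4ms)$... wait, more carefully $\int \cosh(4ms)\,ds=\tfrac{1}{4m}\sinh(4ms)$, so $\tfrac12\int 2\cosh = \tfrac{1}{4m}\sinh(4m\Im z)$) yields $\Im\cS^\delta\to\tfrac{1}{4m}\sinh(4m\Im z)$; finally $\Im[((f_\rr)^2+(f_\ri)^2)\,dz]=\Im[(e^{-4m\Im z}-e^{4m\Im z})\,dz]=\Im[-2\sinh(4m\Im z)\,dz]$, whose primitive is $\tfrac{1}{4m}(1-\cosh(4m\Im z))$ after fixing the additive constant so $\cQ^\delta(z)\to 0$ along $\Im z = 0$. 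Choosing the additive constants in the definitions of $\cS^\delta$ and $\cQ^\delta$ to match these normalizations gives exactly~\eqref{eq:SembQ-asymp}, with the $O(\delta)$ error inherited from the $O(\delta)$ errors on $\cF^\delta_\rr$, $\cF^\delta_\ri$ together with the $O(\delta)$ discrepancy in the correcting factors (both controlled uniformly on compacts by $\BAP$).

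The main obstacle, and the part requiring real work rather than bookkeeping, is the construction of $\cF^\delta_\rr$ and $\cF^\delta_\ri$ on an \emph{arbitrary} isoradial grid satisfying $\BAP$ together with the \emph{uniform} $O(\delta)$ error estimate. One cannot just write down a discrete exponential and take its real/imaginary parts: one needs a genuinely massive, globally defined, non-branching real solution of the propagation equation~\eqref{eq:3-terms} on $\Upsilon^\times(\Lambda^\delta)$ whose $\eta_c$-twisted rescaling matches a prescribed smooth massive-holomorphic profile up to first order. I would handle this exactly as for the branching kernels in Section~\ref{sec:asymptotics}: express the candidate as an integral (over an appropriate contour in the spectral/rapidity variable) of Boutillier--de~Tili\`ere--Raschel massive discrete exponentials $\dexp{\cdot}{\cdot}{\cdot}$, identify the correct contour and weight so that the leading term is $e^{\mp 2m\Im z}$, and then perform a stationary-phase / Laplace-type expansion to extract the $O(\delta)$ remainder, all uniformly in the rhombus angles bounded below by $2\theta_0$. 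The regularity inputs needed to make the convergence of the discrete primitives rigorous (rather than merely formal) — uniform boundedness of $H^{(m),\delta}$ on compacts and hence equicontinuity of $\cF^\delta_\rr$, $\cF^\delta_\ri$ — follow from Proposition~\ref{prop:regularity} once the leading asymptotics are in hand, closing the loop; and the explicit antiderivatives above fix the additive constants, completing the proof.
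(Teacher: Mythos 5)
Your computation of the $\cS^\delta$, $\cQ^\delta$ asymptotics from the limiting profiles $f_\rr=e^{-2m\Im z}$, $f_\ri=ie^{2m\Im z}$ via~\eqref{eq:SembQ=int} and Remark~\ref{rem:H=intImF2} is exactly what the paper means by ``easily follow,'' and it is correct, including the identification of the additive constants. However, you overestimate the difficulty of constructing $\cF^\delta_\rr$, $\cF^\delta_\ri$ and consequently propose a heavier machinery than the paper uses. Where you write ``One cannot just write down a discrete exponential and take its real/imaginary parts'' and plan a contour-integral representation with a Laplace expansion (as for the branching kernels), the paper does the opposite: $\cX_\rr$ and $\cX_\ri$ are defined in~\eqref{eq:defX1}--\eqref{eq:defXi} as \emph{single} massive discrete exponentials $\dexpk k{\ellip\mu}{c}{o}$ evaluated at the two fixed spectral parameters $\ellip\mu = \tfrac{4K}{\pi}\arg\varsigma + 2iK'$ and $\ellip\mu = \tfrac{4K}{\pi}\arg\varsigma - 2K + 2iK'$. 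These are real-valued (up to taking $\Re[\cdot]$, which commutes with the propagation equation), globally defined, non-branching, and their first-line asymptotics~\eqref{eq: asymp_X1}--\eqref{eq: asymp_Xi} with error $O(\delta^2)$ drop out \emph{directly} from the one-line expansion~\eqref{eq:dexpk-cu-asymp-near-R} of the discrete exponential; no contour, no stationary phase, and no invocation of Proposition~\ref{prop:regularity} is needed (the latter would in fact risk circularity, since one of its two proofs consumes this very theorem through the s-embedding reduction). So the construction is substantially simpler than you anticipate, and the ``closing the loop'' via a priori regularity that you mention is unnecessary; otherwise your route is the paper's route.
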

\begin{proof} { See Section~\ref{sub: def_kernels} for the construction and Sections~\ref{sub:exp-asymp},~\ref{sub: massive_asymptotics_proof} for the asymptotic analysis of~$\cF_\rr^\delta$ and~$\cF_\ri^\delta$.} The asymptotics of~$\cS^\delta$ and~$\cQ^\delta$ easily follow from~\eqref{eq:SembQ=int}.
\end{proof}

In order to apply the results of~\cite[Section~2]{chelkak2020ising} we also need to check that~$\cS^\delta$ are \emph{proper} s-embeddings, i.\,e., that no edge intersections arise when we re-embed the isoradial grids~$\Lambda^\delta$ into the complex plane using~$\cS^\delta$. For the purposes of this paper it is enough to consider this re-embedding procedure on compact subsets only. Recall that we denote by~$\Lambda^\delta_R$ the discretization of the box~$[-R,R]\times [-R,R]$ on~$\Lambda^\delta$.

\begin{proposition} For each $R>0$ there exist $\delta_0=\delta_0(R,\theta_0)>0$ such that the following holds for all~$\delta\le\delta_0$ and all isoradial grids satisfying the property~$\BAP$:

\smallskip
\noindent $\cS^\delta$ is a proper embedding of the box~$\Lambda^\delta_R$ satisfying the condition~$\textsc{Unif}(\delta)$, i.\,e., all the lengths~$|\cS^\delta(v)-\cS^\delta(u)|$, $u\sim v$, of edges in~$\cS^\delta(\Lambda^\delta_R)$ are uniformly comparable to~$\delta$ and all the angles of quads in~$\cS^\delta(\Lambda^\delta_R)$ are uniformly bounded away from~$0$.
\end{proposition}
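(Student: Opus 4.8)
The plan is to compare $\cS^\delta$ with the limiting ("continuous") s-embedding $\Phi(z):=\Re z+\tfrac{i}{4m}\sinh(4m\Im z)$ from Theorem~\ref{thm:F1Fi-asymp}. Note that $\Phi$ is a real-analytic diffeomorphism of $\C$ with Jacobian $\diag(1,\cosh 4m\Im z)$, so that $|\Phi(z)-\Phi(z')|\ge|z-z'|$ everywhere and $\Phi$ is bi-Lipschitz (with constants depending on the fixed~$m$ and on~$R$) on $[-2R,2R]^2$. Fix $m\le 0$ and work on the slightly enlarged box~$\Lambda^\delta_{2R}$. The first and main step is an \emph{increment-level} refinement of Theorem~\ref{thm:F1Fi-asymp}: one shows that $\cS^\delta(b)-\cS^\delta(a)=\Phi(b)-\Phi(a)+O(\delta^2)$ for every edge $(ab)$ of $\Lambda^\delta_{2R}$, uniformly over the box and over all grids with~$\BAP$. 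Indeed, recalling $\Re\cS^\delta=2H[\cX^\delta_\rr,\cX^\delta_\ri]$ and $\Im\cS^\delta=H_{\cX^\delta_\rr}-H_{\cX^\delta_\ri}$, one uses~\eqref{eq:HX-def},~\eqref{eq:HX1X2-def},~\eqref{eq:HX(z)-def} and Lemma~\ref{lem:H=intImF2} to write the increments of $\cS^\delta$ along the edges of $\Lambda^\delta$ (and the values $\cS^\delta(z)$, $z\in\diamondsuit^\delta$) through $\cF^\delta_\rr(z),\cF^\delta_\ri(z)$; for instance along a black diagonal $(v_0v_1)$ of a quad $z$ one gets $\cS^\delta(v_1)-\cS^\delta(v_0)=\tfrac{\cos\thetaa_z}{\cos\thetag_z}\bigl(\Im[\cF^\delta_\rr(z)\cF^\delta_\ri(z)(v_1-v_0)]+\tfrac{i}{2}\Im[((\cF^\delta_\rr(z))^2-(\cF^\delta_\ri(z))^2)(v_1-v_0)]\bigr)$. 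Substituting $\cF^\delta_\rr(z)=e^{-2m\Im z}+O(\delta)$, $\cF^\delta_\ri(z)=ie^{2m\Im z}+O(\delta)$ from Theorem~\ref{thm:F1Fi-asymp}, the bound $\thetaa_z-\thetag_z=O(\delta)$ from~\eqref{eq:thetaa-thetag}, and $|b-a|=O(\delta)$, a short computation identifies the leading term with $\Re(b-a)+i\cosh(4m\Im z)\Im(b-a)=d\Phi_z(b-a)=\Phi(b)-\Phi(a)+O(\delta^2)$.

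Granting this, the $\textsc{Unif}(\delta)$ property follows. Up to a translation, each quad $\cS^\delta(v_0)\cS^\delta(u_0)\cS^\delta(v_1)\cS^\delta(u_1)$ is an $O(\delta^2)$ (per vertex) perturbation of $d\Phi_z$ applied to the rhombus with side $\delta$ and half-angle $\thetag_z\in[\theta_0,\tfrac\pi2-\theta_0]$. Since $d\Phi_z=\diag(1,\cosh 4m\Im z)$ has smallest singular value $1$ and operator norm bounded in terms of $m$ and $R$ for $z$ in the box, hence bounded condition number, its image of such a rhombus is a convex parallelogram with edge lengths in $[\delta,C(m,R)\delta]$ and all angles in $[\alpha_0,\pi-\alpha_0]$ for some $\alpha_0=\alpha_0(\theta_0,m,R)>0$. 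An $O(\delta^2)$ perturbation of its vertices keeps the quad convex with edge lengths $\asymp\delta$ and angles $\ge\tfrac12\alpha_0$ once $\delta\le\delta_0(R,\theta_0)$; this is exactly $\textsc{Unif}(\delta)$.

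It remains to upgrade this local control to global injectivity and to rule out edge crossings. For any two vertices $v,v'\in\Lambda^\delta_R$ pick a path in $\Lambda^\delta$ joining them of length $\le C(\theta_0)|v-v'|$ (here we use the comparability of graph and Euclidean distances on rhombic lattices satisfying~$\BAP$), hence with at most $C(\theta_0)|v-v'|/\delta$ edges and staying in $\Lambda^\delta_{2R}$ for $\delta$ small; summing the per-edge error from the first step gives $|\cS^\delta(v')-\cS^\delta(v)-(\Phi(v')-\Phi(v))|\le C_1(\theta_0,m,R)\,\delta\,|v-v'|$. Combined with $|\Phi(v')-\Phi(v)|\ge|v'-v|$ this yields $|\cS^\delta(v')-\cS^\delta(v)|\ge\tfrac12|v'-v|>0$ for $\delta\le\delta_0(R,\theta_0)$, so $\cS^\delta$ is injective, indeed bi-Lipschitz with uniform constants, on $\Lambda^\delta_R$. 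Since moreover each quad is embedded and, being a small perturbation of the orientation-preserving map $\Phi$, consistently oriented and non-overlapping with its neighbours, no two edges of $\cS^\delta(\Lambda^\delta_R)$ can cross: a crossing would force all four endpoints into an $O(\delta)$-neighbourhood of the crossing point, i.e. into a bounded cluster of quads on which $\cS^\delta$ is a near-affine embedding. Hence $\cS^\delta|_{\Lambda^\delta_R}$ is a proper embedding, completing the proof.

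The main obstacle is precisely the passage from the pointwise estimate $\cS^\delta(z)=\Phi(z)+O(\delta)$ of Theorem~\ref{thm:F1Fi-asymp} --- which is at the scale of the mesh itself and therefore carries no information about quad shapes or injectivity --- to the $O(\delta^2)$ per-edge estimate of the first step and its summation along paths, with the comparability of graph and Euclidean distances under~$\BAP$ ensuring that the accumulated error stays small compared to $|\Phi(v')-\Phi(v)|$. Alternatively, the last step could be replaced by an argument-principle count for the local homeomorphism $\cS^\delta$ against the simple closed curve $\cS^\delta(\partial\Lambda^\delta_R)$, a small perturbation of $\Phi(\partial\Lambda^\delta_R)$; but the path-summation is more self-contained and yields the bi-Lipschitz bound directly.
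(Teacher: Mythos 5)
Your argument is correct, and your treatment of the $\textsc{Unif}(\delta)$ property is the same as the paper's: both rest on plugging the pointwise asymptotics of $\cF^\delta_\rr,\cF^\delta_\ri$ from Theorem~\ref{thm:F1Fi-asymp} into the increment formulas for $\cS^\delta$. Where you genuinely diverge is the passage from local non-degeneracy to a global proper embedding. The paper stays at the level of the $O(\delta)$ pointwise estimate: it observes that the computation for $\textsc{Unif}(\delta)$ already shows the increments around each vertex of $\cS^\delta$ are cyclically ordered the same way as on $\Lambda^\delta$, so the map is locally orientation-preserving, and then invokes a discrete argument principle to conclude that a large contour winds once around each interior point (the winding number is computed directly from the $O(\delta)$ pointwise asymptotics \eqref{eq:SembQ-asymp}). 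You instead upgrade to an $O(\delta^2)$ per-edge increment estimate $\cS^\delta(b)-\cS^\delta(a)=\Phi(b)-\Phi(a)+O(\delta^2)$ and sum it along graph geodesics (using $\BAP$ to bound their length) to obtain the global bi-Lipschitz bound $|\cS^\delta(v')-\cS^\delta(v)|\ge\tfrac12|v'-v|$, then handle potential edge crossings by a local near-affine comparison. The paper's route is shorter because the argument principle does all the gluing for free; yours is more elementary but requires the strictly stronger per-edge bound, which does follow from Theorem~\ref{thm:F1Fi-asymp} (the $O(\delta)$ error in $\cF^\delta_\rr,\cF^\delta_\ri$ multiplies against the $O(\delta)$ edge length), and has the mild bonus of producing uniform bi-Lipschitz constants as a byproduct. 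Two cosmetic points you should make explicit if you keep this route: the geodesic between far-apart vertices of $\Lambda^\delta_R$ may need a box of size $CR$ (not merely $2R$), with $C=C(\theta_0)$, for the uniform estimates of Theorem~\ref{thm:F1Fi-asymp} to apply along the whole path; and the local non-crossing step should spell out that on an $O(1)$-cluster of quads $\cS^\delta$ is within $O(\delta^2)$ of a fixed invertible affine map composed with a translation, which preserves planarity of a $\textsc{Unif}(\delta)$ configuration under perturbations $\ll\delta$.
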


\begin{proof} Let~$u\sim v\in\Lambda^\delta_R$ and~$c\in\Upsilon(\Lambda^\delta)$ be adjacent to both~$u$ and~$v$. It follows from Definition~\ref{def:shol-def-mass} and a trivial estimate~$\widehat{\eta}_{c,z}=\eta_c+O(\delta)$ that the edge length
\begin{align*}
|\cS^\delta(v)-\cS^\delta(u)|\ &=\ |\cX^\delta(c)|^2\ =\ \big(|\cX_\rr^\delta(c)|^2+|\cX_\ri^\delta(c)|^2\big)\\
&=\ \delta\cdot\big[\big(\Pr\big[\cF_\rr^\delta(z);\eta_c\R\big]\big)^2+\big(\Pr\big[\cF_\ri^\delta(z);\eta_c\R\big]\big)^2+O(\delta)\big]
\end{align*}
is uniformly comparable to~$\delta$ provided that~$\delta\le\delta_0$ due to asymptotics of functions~$\cF_\rr^\delta$ and~$\cF_\ri^\delta$ given in Theorem~\ref{thm:F1Fi-asymp}. Similarly, if~$c$ and~$c'$ correspond to two adjacent edges of a quad~$z$ in~$\cS^\delta$, then it is easy to see that the quantity
\begin{align*}
\delta^{-1}\Im[\,\cX^\delta(c)\overline{\cX^\delta(c')}\,]\ &=\ \delta^{-1}\cdot \big[\cX_\rr^\delta(c)\cX_\ri^\delta(c')-\cX_\ri^\delta(c)\cX_\rr^\delta(c')\big]\\
&=\ |\cF_\rr^\delta(z)||\cF_\ri^\delta(z)|\cdot\Im[\,\eta_{c'}\overline{\eta}{}_c\,]+O(\delta)
\end{align*}
is uniformly bounded away from~$0$ and thus all the angles of quads in~$\cS^\delta(\Lambda^\delta_R)$ are uniformly bounded from below as required.

Thus, it remains to check that~$\cS^\delta$ is a proper embedding of~$\Lambda^\delta_R$. The computation given above also ensures that the increments~$\cS^\delta(v)-\cS^\delta(u)$ around a given vertex of~$\cS^\delta$ (or around a given quad) are cyclically ordered in the same way as the increments~$\eta_c^{-2}=-i\cdot (v-u)$ on the original isoradial grid~$\Lambda^\delta$. In other words, all quads in~$\cS^\delta$ are oriented in the same way as in~$\Lambda^\delta$ and quads surrounding a given vertex do not overlap with each other. Now note that this local property implies the discrete argument principle: the number of times that~$\cS^\delta$ covers a point in~$\C$ is equal the winding number of the image in~$\cS^\delta$ of a big contour surrounding this point in~$\Lambda^\delta$. This winding number is equal to~$1$ due to asymptotics~\eqref{eq:SembQ-asymp} provided that~$\delta$ is small enough, which completes the proof.
\end{proof}

Given an s-embedding~$\cS^\delta:\Lambda^\delta\to\C$ and a spinor~$X^\delta$ (locally) defined on $\Upsilon^\times(\Lambda^\delta)$ one can construct an \emph{s-holomorphic on~$\cS^\delta$} function~$F_{\cS^\delta}$ by requiring that
\begin{equation}
\label{eq:FS-def}
X^\delta(c)\ =\ \Re\big[\,\overline\varsigma\cX^\delta(c)\cdot F_{\cS^\delta}(z)\,]
\end{equation}
(see~\cite[Proposition~2.5]{chelkak2020ising}). The next proposition provides an explicit formula linking the function $F_{\cS^\delta}$ and the massive s-holomorphic function~$F^{(m),\delta}$ constructed from the same spinor~$X^\delta$ via Definition~\ref{def:shol-def-mass}.
\begin{proposition} \label{prop:Fm=FS} Let a spinor~$X^\delta$ locally satisfy the propagation equation~\eqref{eq:3-terms} on~$\Upsilon^{\times}(\Lambda^\delta)$, the massive s-holomorphic (on~$\Lambda^\delta$) function~$F^{(m),\delta}$ be defined according to~\eqref{eq:mass-hol}, and the function~$F_{\cS^\delta}$ be defined by~\eqref{eq:FS-def}. The following identity holds:
\begin{equation}
\label{eq:Fm=FS}
F^{(m),\delta}(z)\ = \cD^\delta_+(z)\cdot F_{\cS^\delta}(z)+\cD^\delta_-(z)\cdot \overline{F_{\cS^\delta}(z)}\,,
\end{equation}
where the coefficients are given by~$\cD^\delta_\pm(z)=\tfrac{1}{2}(\cF_\rr^\delta(z)\mp i\cF_\ri^\delta(z))$.
\end{proposition}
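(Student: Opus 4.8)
The plan is to verify that the right-hand side of~\eqref{eq:Fm=FS} satisfies the defining relation~\eqref{eq:shol-def-mass} of $F^{(m),\delta}(z)$, exploiting the $\R$-linearity of the map that sends a (real-valued) spinor satisfying the propagation equation~\eqref{eq:3-terms} around a fixed quad~$z$ to the corresponding massive s-holomorphic value at~$z$. Indeed, both the projection $\Pr[\,\cdot\,;\ell\R]$ and the map $X^\delta(c)\mapsto\delta^{-\frac12}\widehat\eta_{c,z}X^\delta(c)$ are $\R$-linear, so if $X^\delta=aX_1+bX_2$ with $a,b\in\R$ then $a F_1(z)+b F_2(z)$ satisfies the conditions~\eqref{eq:shol-def-mass} for $X^\delta$, and by uniqueness of the solution to this consistent overdetermined system one gets $F^{(m),\delta}(z)=aF_1(z)+bF_2(z)$. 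Recall also that, by construction of the s-embedding, $\cX^\delta=\varsigma\cdot(\cX_\rr^\delta-i\cX_\ri^\delta)$, and that $\cF_\rr^\delta$, $\cF_\ri^\delta$ are precisely the massive s-holomorphic functions produced by Definition~\ref{def:shol-def-mass} from the real-valued spinors $\cX_\rr^\delta$ and $\cX_\ri^\delta$, respectively.

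First I would fix a quad~$z$ and rewrite~\eqref{eq:FS-def}. Since $|\varsigma|=1$ we have $\overline\varsigma\cX^\delta(c)=\cX_\rr^\delta(c)-i\cX_\ri^\delta(c)$ with $\cX_\rr^\delta(c),\cX_\ri^\delta(c)\in\R$, so taking the real part in~\eqref{eq:FS-def} gives, for every corner $c\sim z$,
\[
X^\delta(c)\ =\ \re F_{\cS^\delta}(z)\cdot\cX_\rr^\delta(c)\ +\ \im F_{\cS^\delta}(z)\cdot\cX_\ri^\delta(c)\,.
\]
This identity holds simultaneously for all four corners adjacent to~$z$, hence it exhibits the spinor $c\mapsto X^\delta(c)$ (restricted to these corners) as the $\R$-linear combination $\re F_{\cS^\delta}(z)\cdot\cX_\rr^\delta+\im F_{\cS^\delta}(z)\cdot\cX_\ri^\delta$ with $z$-dependent but $c$-independent coefficients. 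Applying the $\R$-linearity recalled above (with $X_1=\cX_\rr^\delta$, $X_2=\cX_\ri^\delta$ and $a=\re F_{\cS^\delta}(z)$, $b=\im F_{\cS^\delta}(z)$) yields
\[
F^{(m),\delta}(z)\ =\ \re F_{\cS^\delta}(z)\cdot\cF_\rr^\delta(z)\ +\ \im F_{\cS^\delta}(z)\cdot\cF_\ri^\delta(z)\,.
\]

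Finally I would substitute $\re F_{\cS^\delta}(z)=\tfrac12\big(F_{\cS^\delta}(z)+\overline{F_{\cS^\delta}(z)}\big)$ and $\im F_{\cS^\delta}(z)=\tfrac1{2i}\big(F_{\cS^\delta}(z)-\overline{F_{\cS^\delta}(z)}\big)$ and collect the coefficients of $F_{\cS^\delta}(z)$ and $\overline{F_{\cS^\delta}(z)}$; they equal $\tfrac12\big(\cF_\rr^\delta(z)-i\cF_\ri^\delta(z)\big)=\cD_+^\delta(z)$ and $\tfrac12\big(\cF_\rr^\delta(z)+i\cF_\ri^\delta(z)\big)=\cD_-^\delta(z)$, which is exactly~\eqref{eq:Fm=FS}. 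I do not expect a genuine obstacle here: the argument is essentially bookkeeping, and the only points requiring care are keeping the $\delta^{-\frac12}$ normalization consistent between Kadanoff--Ceva and Smirnov observables, using the same constant $\varsigma=e^{i\frac\pi4}$ in~\eqref{eq:FS-def} and in Definition~\ref{def:shol-def-mass}, and noting that the rewriting of~\eqref{eq:FS-def} is valid at all corners of~$z$ at once, which is what legitimizes identifying the coefficients of the linear combination.
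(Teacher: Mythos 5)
Your argument is correct and is essentially the paper's own proof, just reorganized: where the paper substitutes the claimed formula into the defining relations $\delta^{\frac12}[\overline{\nu}_cF^{(m),\delta}(z)+\nu_c\overline{F^{(m),\delta}(z)}]=2X(c)=\overline{\varsigma}\cX^\delta(c)F_{\cS^\delta}(z)+\varsigma\overline{\cX^\delta(c)}\overline{F_{\cS^\delta}(z)}$ and verifies it using $\cX_\rr^\delta(c)=\tfrac12\delta^{\frac12}[\overline{\nu}_c\cF_\rr^\delta(z)+\nu_c\overline{\cF_\rr^\delta(z)}]$ (and the analogue for $\cX_\ri$), you observe upfront that $X^\delta(c)=\Re F_{\cS^\delta}(z)\cdot\cX_\rr^\delta(c)+\Im F_{\cS^\delta}(z)\cdot\cX_\ri^\delta(c)$ and invoke the $\R$-linearity and uniqueness of the spinor-to-value map from Definition~\ref{def:shol-def-mass}. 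Same ingredients, same computation, phrased a little more structurally.
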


\begin{proof} For shortness, denote $\nu_c:=\widehat{\eta}_{z,c}$. The formulas~\eqref{eq:mass-hol} and~\eqref{eq:FS-def} imply that
\[
\delta^{\frac12}\cdot \big[\,\overline{\nu}_cF^{(m),\delta}(z)+\nu_c\overline{F^{(m),\delta}(z)}\,\big]\ =\ 2X(c)\ =\ \overline{\varsigma}\cX^\delta(c)F_{\cS^\delta}(z)+\varsigma\overline{\cX^\delta(c)}\overline{F_{\cS^\delta}(z)}.
\]
for all~$c\sim z$; note that these equations uniquely define the value~$F^{(m),\delta}$. Therefore, the formula~\eqref{eq:Fm=FS} is equivalent to the identity
\[
\delta^{\frac12}\cdot\big[\,\overline{\nu}_c\cD^\delta_+(z)+\nu_c\overline{\cD^\delta_-(z)}\,\big]= \overline{\varsigma}\cX^\delta(c)=\cX_\rr^\delta(c)-i\cX_\ri^\delta(c),\quad c\sim z.
\]
It remains to note that
$\cX_\rr^\delta(c)=\frac12\delta^{\frac12}\cdot[\,\overline{\nu}_c\cF_\rr^\delta(z)+\nu_c\overline{\cF_\rr^\delta(z)}\,]$
and similarly for~$\cX_\ri^\delta(c)$, again due to Definition~\ref{def:shol-def-mass}.
\end{proof}

\section{Proofs of the main results}
\label{sec: Proofs}
\setcounter{equation}{0}

In this section we prove the main results of our paper. In particular, Theorem~\ref{thm:intro-mass}, i.\,e., the convergence of the re-scaled infinite-volume correlations~$\delta^{-\frac14}\E_{\Lambda^\delta}^{(m)}[\sigma_u\sigma_w]$ to a universal (i.\,e., independent of~$\Lambda^\delta$) rotationally invariant limit~$\mathcal{C}_\sigma^2\cdot \Xi(|u-w|,m)$ is proven in Theorem~\ref{thm:spin-univ-crit} for~$m=0$ and in Corollary~\ref{cor:spin-univ-mass} for~$m\ne 0$. Along the way, we also prove convergence of spin-spin correlations in discrete approximations \mbox{$\Omega^\delta\subset\Gamma^\delta$} of finite $C^1$-smooth domains~$\Omega\subset\C$: see Corollary~\ref{cor:conv-omega-crit} and Theorem~\ref{thm:conv-omega-mass} for the critical and massive cases, respectively.

We start our exposition by giving a proof of Baxter's formula (Proposition~\ref{prop:intro-sub}) for the magnetization in the infinite-volume sub-critical model on isoradial graphs; note that already this proof contains two important ideas that we also use later: gluing isoradial grids to each other via a procedure discussed in Section~\ref{sub:star-ext} and the reconstruction of values of spinor observables near their (non-)branching points via the explicit kernels from Section~\ref{sub:kernels-def}. Then, we analyze the spin-spin correlations in the critical model in Sections~\ref{sub:spin-obs-crit}--\ref{sub:univ-crit}. Let us repeat that this analysis considerably simplifies the arguments used, e.\,g., in~\cite{ChelkakHonglerIzyurov}. The massive model is discussed in Sections~\ref{sub:spin-mass-def},~\ref{sub:conv-mass} basing upon a similar strategy.

\subsection{Baxter's formula for the magnetization in the sub-critical model} \label{sub:spin-sub}
Throughout this section, $\delta=1$ and $ q=\frac{1}{2}m<0$ are fixed.
\begin{lemma}
\label{lem:E[sigma]=cst}
Let $m<0$ and~$\Lambda$ be an infinite isoradial grid of mesh $\delta=1$ satisfying the property~$\BAP$. Then, the magnetization~$\E^+_\Lambda[\sigma_u]$ does not depend on $u\in\Gamma^\circ$.
\end{lemma}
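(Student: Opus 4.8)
The plan is to use the ``star extension'' procedure of Section~\ref{sub:star-ext} together with the branching kernels of Theorem~\ref{thm:G-asymp-mass}. First I would reduce the statement to a comparison of $\E^+_\Lambda[\sigma_u]$ and $\E^+_\Lambda[\sigma_{u'}]$ for two \emph{neighboring} faces $u\sim u'$ of $\Gamma^\bullet$ (connected via a path inside $\Gamma^\circ$), since a vertex-independence statement follows by chaining such comparisons. The key combinatorial identity is that the ratio $\E^+_\Lambda[\sigma_{u'}]/\E^+_\Lambda[\sigma_u]$ can be read off from a spin-disorder correlator: placing a disorder $\mu_v$ at a vertex $v$ between $u$ and $u'$ and using the propagation equation, one expresses the difference $\E^+_\Lambda[\sigma_{u'}]-\E^+_\Lambda[\sigma_u]$ (or rather an appropriate combination of spin-disorder correlators $\E^+_\Lambda[\mu_v\sigma_u]$) in terms of the value of a fermionic observable near its branching point. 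Concretely, the infinite-volume fermionic observable $X(c)=\E^+_\Lambda[\chi_c\sigma_u]$ (with $\sigma_u$ playing the role of the single spin insertion, and the disorder at the branching point $v$) is a real-valued spinor on $\Upsilon^\times_{[v]}(\Lambda)$ satisfying the propagation equation~\eqref{eq:3-terms}, and by the exponential decay~\eqref{eq:G-exp-decay-sub} (valid for $q=\tfrac12 m<0$, $\delta=1$) it must be a \emph{scalar multiple} of the kernel $\rG_{[v]}=\rG^{(\delta),m}_{[v]}$ from Theorem~\ref{thm:G-asymp-mass} --- this is precisely the uniqueness remark in Remark~\ref{rem:G-asymp}(i), which notes that exponential (indeed $O(R^{-1/2-\varepsilon})$) decay at infinity characterizes such a kernel up to a multiplicative constant.

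The crucial point is then~\eqref{eq:G(c)=pi/2}: the kernel $\rG_{[v]}$ takes the \emph{same} value $1$ at \emph{all} corners $c\sim v$. Since $X(c)=\lambda\cdot\rG_{[v]}(c)$ for some $\lambda\in\R$ (depending on $v$ but not on $c$), the values $X(c)=\E^+_\Lambda[\chi_c\sigma_u]$ at the two corners $c_{00},c_{01}$ adjacent to $v$ and lying on the $u$-side and $u'$-side respectively differ only by the (fixed) geometric spinor phases, which in the real-valued normalization means $\E^+_\Lambda[\chi_{c_{00}}\sigma_u]=\E^+_\Lambda[\chi_{c_{01}}\sigma_u]$ up to the sign bookkeeping inherent in the double cover. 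Unwinding $\chi_c=\mu_{v(c)}\sigma_{u(c)}$ and using that next to $v$ these observables degenerate to plain spin correlators (placing the fermion right next to the disorder, as in the reconstruction strategy of Section~\ref{sub:kernels-def}), this equality of corner values translates into $\E^+_\Lambda[\sigma_u]=\E^+_\Lambda[\sigma_{u'}]$. Thus the magnetization is the same at any two neighboring faces, hence constant on $\Gamma^\circ$.

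To make the above rigorous I would need: (a) a clean statement that the infinite-volume limit $\E^+_\Lambda[\chi_c\sigma_u]$ exists and satisfies the propagation equation --- this follows from the RSW/monotonicity arguments of Section~\ref{sub:RSW} (the estimate~\eqref{eq:RSW-sigma-sigma} and its analogue for spin-disorder correlators) applied to increasing finite domains $\Omega_n\uparrow\Gamma^\circ$ with wired boundary conditions; (b) the a~priori bound placing $\E^+_\Lambda[\chi_c\sigma_u]$ in the regime where~\eqref{eq:G-exp-decay-sub} forces proportionality to $\rG_{[v]}$ --- here one uses that the disorder insertion far from $u$ makes the correlator decay, combined with the maximum principle (Lemma~\ref{lem:maxH-principle} and Remark~\ref{rem:maxH-principle}) for the associated function $H_X$ to upgrade a rough decay bound to the uniqueness input; and (c) careful tracking of signs on the double cover $\Upsilon^\times_{[v]}$ so that ``the kernel is constant on corners around $v$'' genuinely yields equality rather than a sign ambiguity, which is exactly the content of the sign convention~\eqref{eq:Uu=Uv-convention} and Remark~\ref{rem:mu-sigma-spinor}. \textbf{The main obstacle} I anticipate is step~(b): establishing that the infinite-volume fermionic observable decays \emph{fast enough} at infinity to be forced into the one-dimensional space spanned by $\rG_{[v]}$, uniformly in $\Lambda$ --- this is where one genuinely uses sub-criticality ($m<0$, equivalently $q<0$) and the exponential-decay estimate~\eqref{eq:G-exp-decay-sub}, whose proof is deferred to Section~\ref{sub:exp-asymp}; without it one only controls the observable up to an additive multiple of the (polynomially growing) dual kernel, and the argument collapses.
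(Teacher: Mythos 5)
Your plan captures the right ingredients --- the infinite-volume spinor observable branched at a vertex $v$, the explicit kernel $\rG_{[v]}$, identity~\eqref{eq:G(c)=pi/2}, and the observation that the corner values near $v$ encode the magnetization --- but the decisive step is broken, and you correctly identify where: step~(b). Your argument requires that the observable $X_{[v]}(c):=\lim_{R}\E^\wired_{\Lambda_R}[\chi_c\mu_v\sigma_{u_\mathrm{out}}]$ decays fast enough at infinity to be forced, via the uniqueness noted in Remark~\ref{rem:G-asymp}(i), into the one-dimensional span of $\rG_{[v]}$. But the exponential-decay estimate~\eqref{eq:G-exp-decay-sub} that you cite concerns the \emph{explicit kernel} $\rG_{[v]}$ (proved in Section~\ref{sub:exp-asymp} directly from its elliptic-function representation); it says nothing about the observable $X_{[v]}$, and the paper establishes only the \emph{uniform boundedness} of $X_{[v]}$ via FKG and monotone convergence, not any decay. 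The maximum principle (Lemma~\ref{lem:maxH-principle}) gives boundedness, not decay, so it cannot close this gap; turning your outline into a proof would require an independent exponential-clustering argument for the mixed correlator $\E_\Lambda[\chi_c\mu_v\sigma_\infty]$, which you do not supply. Worse, such a statement would essentially already contain the proportionality $X_{[v]}\propto\rG_{[v]}$ (and hence~\eqref{eq:G(c)=pi/2} at the branching), so without an independent proof your route is circular.

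The paper's proof sidesteps this entirely. Rather than pairing the observable with the kernel branched at the \emph{same} vertex, it pairs $X_{[v_0]}$ against $\rG_{[v_1]}$ branched at the \emph{other} black vertex of the same quad $z=(v_0u_0v_1u_1)$. The double covers $\Upsilon^\times_{[v_0]}$ and $\Upsilon^\times_{[v_1]}$ differ only near $z$, so the function $H[X_{[v_0]},\rG_{[v_1]}]$ is single-valued except for one additive monodromy $Z$ around $z$. This monodromy is computed in two ways: locally, as a bilinear combination of the four corner values of $X_{[v_0]}$ and $\rG_{[v_1]}$ around $z$, reducible via the propagation equation~\eqref{eq:3-terms} and the normalization $\rG_{[v_1]}(\ccc{10})=\rG_{[v_1]}(\ccc{11})=1$; and globally, as a discrete contour integral~\eqref{eq:XG-monodromy} which vanishes upon pushing the contour to infinity, because the product of a \emph{bounded} $X_{[v_0]}$ with an \emph{exponentially decaying} $\rG_{[v_1]}$ is itself exponentially small. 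Equating the two gives $X_{[v_0]}(\ccc{00})=X_{[v_0]}(\ccc{01})$, i.e.\ $\E^+_\Lambda[\sigma_{u_0}]=\E^+_\Lambda[\sigma_{u_1}]$. The decay burden is thus carried entirely by the explicit kernel, and the only fact needed about the probabilistic observable is boundedness --- a much weaker and readily available input.
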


\begin{proof} Let $z=(v_0u_0v_1u_1)$ be a rhombus on $\Gamma$; see Fig.~\ref{fig:notation} for the notation. Consider the pointwise limit
\[
X_{[v_0]}(c)\ :=\ \lim\nolimits_{ R\to\infty}\E^\wired_{\Lambda_R}[\chi_c\mu_{v_0}\sigma_{u_\mathrm{out}}],\quad c\in\Upsilon^\times_{[v_0]},
\]
which can also be thought of as the infinite-volume correlator $\E_\Lambda[\chi_c\mu_{v_0}\sigma_\infty]$. The existence of a limit follows from the fact that for { each} given~$c$, one can fix a disorder line $\gamma$ with $\pa \gamma=\{v(c),v_0\}$ and write
\[
\E^\wired_{\Lambda_R}[\chi_c\mu_{v_0}\sigma_{u_\mathrm{out}}] \textstyle\ =\ \E^\wired_{\Lambda_R}\left[\sigma _{u(c)}\sigma_{u_\mathrm{out}}\prod_{u\sim w:(uw)\cap \gamma\neq \emptyset}e^{-2\beta^\circ J^\circ_{(uw)^*}\sigma_{u}\sigma_{w}}\right],
\]
{ which is a finite linear combination of multipoint spin expectations since
\[
e^{-2\beta^\circ J^\circ_{(uw)^*}\sigma_{u}\sigma_{w}}\ =\ \cosh(-2\beta^\circ J^\circ_{(uw)^*})+\sigma_u\sigma_w \sinh(-2\beta^\circ J^\circ_{(uw)^*})\,.
\]
Each of these expectations is decreasing as~$R\to\infty$ due to the FKG inequality, which guarantees the convergence of $\E^\wired_{\Lambda_R}[\chi_c\mu_{v_0}\sigma_{u_\mathrm{out}}]$.}

\begin{figure}
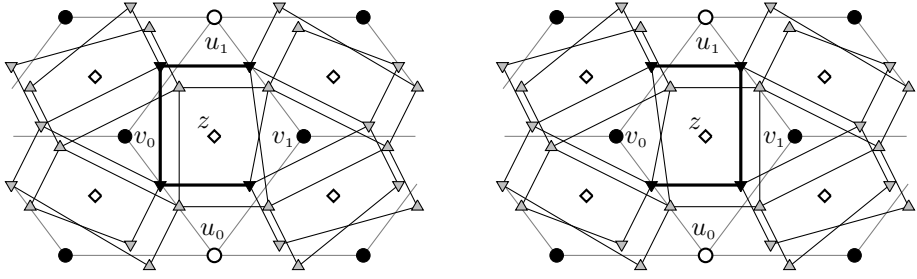

\begin{center}\begin{tikzpicture}[scale=0.18]
\input{BranchingV0.txt}
\end{tikzpicture}
\hskip 24pt
\begin{tikzpicture}[scale=0.18]
\input{BranchingV1.txt}
\end{tikzpicture}\end{center}
\caption{{ If~$v_0,v_1\in\Gamma^\bullet$ be two vertices of a quad~$z=(v_0u_0v_1u_1)$, the double covers~$\Upsilon^\times_{[v_0]}$ (shown on the left) and~$\Upsilon^\times_{[v_1]}$ (shown on the right) can be identified with each other except at lifts of corners~$c_{pq}=(v_pu_q)$,~$p=0,1$, $q=0,1$, surrounding~$z$. In the proof of Lemma~\ref{lem:E[sigma]=cst} we choose these lifts so that the incidence relations~\eqref{eq:Uv0=Uv1-choice-of-c} hold; these relations are shown by thick black lines in the figure.}}
\label{fig:Uv0=Uv1}
\end{figure}

Note that
\[
X_{[v_0]}(\ccc{00})\ =\ \lim\nolimits_{ R\to\infty}\E^\wired_{\Lambda_R}[\sigma_{v_0}\sigma_{u_\mathrm{out}}]\ =\
\E^+_\Lambda[\sigma_{u_0}]\quad \text{and}\quad X_{[v_0]}(\ccc{01})\ =\ \E^+_\Lambda[\sigma_{u_1}].
\]
Let us now consider the function~$H:=H[X_{[v_0]},\rG_{[v_1]}]$, where $\rG_{[v_1]}$ is the explicit infinite-volume kernel discussed in Theorem~\ref{thm:G-asymp-mass} and { Remark~\ref{rem:G-asymp}(iii).} Since the two spinors~$X_{[v_0]}$ and~$\rG_{[v_1]}$ are defined on slightly \emph{different} double-covers $\Upsilon^\times_{[v_0]}$ and~$\Upsilon^\times_{[v_1]}$, the function~$H$ has an additive monodromy~$Z$ around the quad~$(v_0u_0v_1u_1)$. Similarly to the proof of Lemma~\ref{lem:XG-monodromy}, it is easy to see that
\begin{align*}
\tfrac{1}{2}Z\ &=\ -X_{[v_0]}(\ccc{00})\rG_{[v_1]}(\ccc{00})+X_{[v_0]}(\ccc{10})\rG_{[v_1]}(\ccc{10})\\
& =\ -X_{[v_0]}(\ccc{11})\rG_{[v_1]}(\ccc{11})+X_{[v_0]}(\ccc{01})\rG_{[v_1]}(\ccc{01}),
\end{align*}
where we assume that the double covers $\Upsilon^\times_{[v_0]}$ and $\Upsilon^\times_{[v_1]}$ are identified with each other away from the quad $(v_0u_0v_1u_1)$ and that { (see Fig.~\ref{fig:Uv0=Uv1})}
\begin{equation}
\label{eq:Uv0=Uv1-choice-of-c}
\ccc{11}\sim\ccc{01}\sim \ccc{00}\sim \ccc{01}\ \text{on}\ \Upsilon^\times_{[v_0]}\quad \text{whilst}\quad
\ccc{00}\sim\ccc{10}\sim\ccc{11}\sim\ccc{01}\ \text{on}\ \Upsilon^\times_{[v_1]}.
\end{equation}
On the other hand, the uniform boundedness of~$X_{[v_0]}(c)$ and the exponential decay of $\rG_{[v_1]}$ (as $|c-v_1|\to\infty$; see { Remark~\ref{rem:G-asymp}(iii)} and Section~\ref{sub:exp-asymp}) imply that $Z=0$ { as we can move the integration contour in~\eqref{eq:XG-monodromy} far away from the branching. Thus, }
\[
X_{[v_0]}(\ccc{00})\rG_{[v_1]}(\ccc{00})\ =\ X_{[v_0]}(\ccc{10})\rG_{[v_1]}(\ccc{10}).
\]
Using the propagation equation~\eqref{eq:3-terms} to write the value~$\rG_{[v_1]}(\ccc{00})$ as a linear combination of~$\rG_{[v_1]}(\ccc{10})$ and~$\rG_{[v_1]}(\ccc{11})$, and the value~$X_{[v_0]}(\ccc{10})$ as a linear combination of~$X_{[v_0]}(\ccc{00})$ and~$X_{[v_0]}(\ccc{01})$, one can rewrite the last identity as
\[
X_{[v_0]}(\ccc{00})\rG_{[v_1]}(\ccc{11})\ =\ X_{[v_0]}(\ccc{01})\rG_{[v_1]}(\ccc{10}).
\]
Recall that $\rG_{[v_1]}(\ccc{11})=\rG_{[v_1]}(\ccc{10})$ due to the explicit construction of the kernel~$\rG_{[v_1]}$. This implies the desired identity~$\E^+_\Lambda[\sigma_{u_0}]=X_{[v_0]}(\ccc{00})=X_{[v_0]}(\ccc{01})=\E^+_\Lambda[\sigma_{u_1}]$.
\end{proof}

\begin{proposition} \label{prop:E[sigma-sigma]=cst}
We have $\mathbb{E}_\Lambda[\sigma_u\sigma_w]\to \sccorr$ as $|u-w|\to\infty$ uniformly over isoradial lattices~$\Lambda$ and positions of points $u,w\in\Gamma^\circ$.
\end{proposition}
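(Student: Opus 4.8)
The plan is to combine a clustering property of the spin correlations with the \emph{star extension} construction of Section~\ref{sub:star-ext} and the classical Onsager--Yang formula for the rectangular lattice. The first step is to show that $\E_\Lambda[\sigma_{u}\sigma_{w}]\to M_\Lambda^{2}$ as $|u-w|\to\infty$, \emph{uniformly} over isoradial grids satisfying $\BAP$, where $M_\Lambda:=\E^{+}_\Lambda[\sigma_{u}]$ is the spontaneous magnetization, independent of $u\in\Gamma^{\circ}$ by Lemma~\ref{lem:E[sigma]=cst}. This is the standard clustering of the low-temperature phase: in the Fortuin--Kasteleyn representation $\E_\Lambda[\sigma_{u}\sigma_{w}]=\P_{\mathrm{FK},\Lambda}[u\leftrightarrow w]$ (in the unique infinite-volume FK-Ising measure) one splits $\P[u\leftrightarrow w]=\P[u\leftrightarrow w,\ \text{finite cluster}]+\P[u\leftrightarrow\infty,\ w\leftrightarrow\infty]$ using uniqueness of the infinite cluster; the first term is at most the probability that a finite cluster has diameter $\ge|u-w|$ and tends to $0$, while the second is squeezed between $M_\Lambda^{2}$ (by FKG and $\E^{+}_\Lambda[\sigma_{u}]=\E^{+}_\Lambda[\sigma_{w}]=M_\Lambda$) and $(1+o(1))M_\Lambda^{2}$ by spatial mixing. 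The needed uniformity -- exponential smallness of finite clusters and spatial mixing of the super-critical FK measure -- follows from the RSW-type bounds recalled in Section~\ref{sub:RSW} (uniform existence of wired and dual-wired circuits in annuli).

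Second, I would show the limit $M_\Lambda^{2}$ is independent of $\Lambda$ by gluing. Fix $\varepsilon>0$, let $A=A(\varepsilon)$ be as in~\eqref{eq:RSW-sigma-sigma}, and, given $\Lambda$ (with $q<0$, hence fixed $k,k^{*}$) and a large $R$, form the star extension $\Lambda^{(R)}:=[\Lambda_{R}(v_{0})]^{\star}$: it agrees with $\Lambda$ on $\Lambda_{R}(v_{0})$ and contains an infinite wedge (opening angle $\ge2\theta_{0}$) of a regular rhombic lattice $\Lambda^{\mathrm{rect}}$ at distance $O(R)$ from $v_{0}$. Put $D:=R/(2A)$, choose a pair $u,w$ with $|u-w|=D$ and $\Lambda_{AD}(u)\subset\Lambda_{R}(v_{0})$, and a pair $u',w'$ deep inside the wedge with $|u'-w'|=D$ and $\Lambda_{AD}(u')$ entirely contained in the rectangular part (possible since the wedge angle is bounded below). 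As $\Lambda_{AD}(u)$ is the same box in $\Lambda$ and in $\Lambda^{(R)}$, and $\Lambda_{AD}(u')$ is the same box in $\Lambda^{\mathrm{rect}}$ and in $\Lambda^{(R)}$, the locality estimate~\eqref{eq:RSW-sigma-sigma} gives
\[
\E_{\Lambda^{(R)}}[\sigma_{u}\sigma_{w}]=\E_{\Lambda}[\sigma_{u}\sigma_{w}]+O(\varepsilon),\qquad\E_{\Lambda^{(R)}}[\sigma_{u'}\sigma_{w'}]=\E_{\Lambda^{\mathrm{rect}}}[\sigma_{u'}\sigma_{w'}]+O(\varepsilon).
\]
On the other hand, by the first step applied to $\Lambda^{(R)}$, both left-hand sides differ from the single number $M_{\Lambda^{(R)}}^{2}$ by $o_{R}(1)$. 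Letting $R\to\infty$ (so $D\to\infty$) and using the first step once more for $\Lambda$ and for $\Lambda^{\mathrm{rect}}$ separately, we obtain $\lim_{|u-w|\to\infty}\E_\Lambda[\sigma_{u}\sigma_{w}]=\lim_{|u'-w'|\to\infty}\E_{\Lambda^{\mathrm{rect}}}[\sigma_{u'}\sigma_{w'}]+O(\varepsilon)$, hence equality since $\varepsilon$ is arbitrary. Finally, the right-hand side equals $M_{\Lambda^{\mathrm{rect}}}^{2}$, and for the rectangular Z-invariant model $M_{\Lambda^{\mathrm{rect}}}$ is given by the Onsager--Yang formula in Baxter's form (\cite{yang1952spontaneous}, \cite[Eq.~7.10.50]{baxter2016exactly}; see also~\cite[Section~3]{chelkak_Hongler_Mahfouf}), which in the parametrization~\eqref{eq: Z-inv} is $\magnet$; so the common limit is $\sccorr$, as claimed.

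The main difficulty is the uniformity in $\Lambda$ that runs through the whole argument: both the clustering rate in the first step and the chaining in the second use that the RSW control of the super-critical Z-invariant model -- uniform existence of (dual-)wired circuits and the resulting exponential smallness of finite clusters -- does not depend on the grid. Granting these (as recalled in Section~\ref{sub:RSW}), what remains is bookkeeping with the star extension, where the property $\BAP$ is used to guarantee that there is room of size $\asymp R$ both for a box around $u$ inside $\Lambda$ and, simultaneously, for a comparable box around $u'$ inside a rectangular wedge of $\Lambda^{(R)}$.
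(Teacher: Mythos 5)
Your overall scaffolding (localize via RSW, glue to a rectangular grid via the star extension, invoke Onsager--Yang) matches the paper's, but the middle step is genuinely different and contains a gap. You replace the paper's core argument by a clustering claim: that $\E_\Lambda[\sigma_u\sigma_w]\to M_\Lambda^2$ as $|u-w|\to\infty$ \emph{uniformly over all isoradial grids satisfying $\BAP$}, deduced from ``exponential smallness of finite FK clusters and spatial mixing of the supercritical FK measure.'' That uniformity is not something Section~\ref{sub:RSW} supplies. What \eqref{eq:RSW-sigma-sigma} gives is a \emph{localization} estimate: the infinite-volume correlation is $(1+O(\varepsilon))\cdot\E^\wired_{\Lambda_{AD}(u)}[\sigma_u\sigma_w]$. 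It says nothing about the rate at which $\E^\wired_{\Lambda_{AD}(u)}[\sigma_u\sigma_w]$ approaches $M_\Lambda^2$, nor about a uniform-in-$\Lambda$ exponential decay of dual connectivities. In fact, at the point where the uniformity matters most --- applying your step 1 to the $R$-dependent family $\Lambda^{(R)}=[\Lambda_R(v_0)]^\star$ while sending $R\to\infty$ --- you need a rate that does not degenerate as the lattice changes, which is precisely a form of ``uniform sharpness of the phase transition'' on isoradial graphs. That input exists in the literature (e.g.\ \cite{duminil2018universality}), but it is a deep theorem in its own right, not a consequence of what the paper sets up, and the paper deliberately avoids relying on it.

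The paper circumvents the clustering issue entirely. After localizing to the star extension $\Lambda^\star=[\Lambda_{AD}(u)]^\star$, it does not compare $\E_{\Lambda^\star}[\sigma_u\sigma_w]$ to the squared magnetization at all. Instead, it moves the pair $(u,w)$ step by step to the pair $(u',w')$ in the rectangular part while keeping them at distance $\ge D$: each elementary move is controlled, via Lemma~\ref{lem:XG-monodromy}, by the additive monodromy of $H[X_{[v_0,w]},\rG_{[v_1]}]$, and the uniform exponential decay \eqref{eq:G-exp-decay-sub} of the explicit kernel $\rG_{[v]}$ forces each move to change the correlation by $O(D e^{-\beta D})$, with $\beta=\beta(q,\theta_0)>0$ independent of $\Lambda$. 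Chaining $O(AD)$ such moves gives the quantitative estimate \eqref{eq:Euu-Eww=O(exp)}, i.e.\ $O(AD^2 e^{-\beta D})$, and this is where the uniformity over $\Lambda$ enters --- through the explicit construction of $\rG_{[v]}$ in Section~\ref{sec:asymptotics}, not through FK sharpness. Note also that you already invoke Lemma~\ref{lem:E[sigma]=cst}, whose proof uses exactly this kernel machinery; the paper simply reuses it (in a slightly more elaborate, two-branch-point form) to get the full statement. If you want to salvage your route, you would need to either quote the uniform-sharpness result of \cite{duminil2018universality} explicitly (which changes the scope of the argument), or reintroduce the kernel-based comparison as the source of uniformity, at which point you recover the paper's proof.

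Two smaller remarks. First, you should make explicit that the aspect ratio of $\Lambda^{\mathrm{rect}}$ varies with $\varepsilon$ and $R$, so the final invocation of Onsager--Yang must be uniform in the aspect ratio; this is fine because the formula $(k^*)^{1/4}$ depends only on the elliptic parameter, but the paper flags exactly this point in Remark~\ref{rem:reduction-to-square} and offers a cleaner alternative (a second gluing of $\Lambda^{\mathrm{rect}}$ to $\mathbb{Z}^2$). Second, your sentence ``the second is squeezed between $M_\Lambda^2$ and $(1+o(1))M_\Lambda^2$ by spatial mixing'' is the other place the missing uniformity hides; without a uniform mixing rate, $o(1)$ is uncontrolled in $\Lambda$.
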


\begin{proof} Denote by $D:=|u-w|$ the (Euclidean) distance between $u$ and~$w$. The uniform RSW estimates~\eqref{eq:RSW-sigma-sigma} imply that for each $\varepsilon>0$ one can find~$A=A(\varepsilon)>1$ such that
\[
\E^\wired_{\Lambda_{AD}(u)}[\sigma_{u}\sigma_{w}]-\varepsilon\ \le\ \E_\Lambda[\sigma_u\sigma_{w}]\ \le\ \mathbb \E^\wired_{\Lambda_{AD}(u)}[\sigma_{u}\sigma_{w}]
\]
on each isoradial grid containing the box~$\Lambda_{AD}(u)$ (and satisfying~$\BAP$).

Let us now replace~$\Lambda$ by the `star extension' $[\Lambda_{AD}(u)]^\star$ of the box~$\Lambda_{AD}(u)$ constructed in Section~\ref{sub:star-ext}. For shortness, below we use the notation $\Lambda^\star:=[\Lambda_{AD}(u)]^\star$; note that the { the same reasoning applied to~$\Lambda^\star$ instead of~$\Lambda$} implies that
\begin{equation}
\label{eq:change-to-glued}
\big|\,\E_\Lambda[\sigma_u\sigma_{w}]-\E_{\Lambda^\star}[\sigma_u\sigma_{w}]\,\big|\ \le\ \varepsilon.
\end{equation}
According to the construction, on the modified lattice~$\Lambda^\star$ we can find a `white' vertex~$u'$ such that~$|u'-u|\le CAD$ and that the box~$\Lambda^\star_{AD}(u')$ is a piece of a rectangular grid; the constant~$C$ depends on~$\theta_0$ only. Denote by~$w'$ a `white' vertex of~$\Lambda^\star$ lying at distance~$D$ from~$u'$ and such that $w'-u'$ is one of the axial directions of this rectangular grid.

We claim that, { for each grid~$\Lambda^\star$ satisfying the bounded angles property~$\BAP$,}
\begin{equation}
\label{eq:Euu-Eww=O(exp)}
\big|\,\E_{\Lambda^\star}[\sigma_u\sigma_w]-\E_{\Lambda^\star}[\sigma_{u'}\sigma_{w'}]\,\big|\ =\ O(A(\varepsilon)D^2\exp(-\beta D)),
\end{equation}
where a universal constant~$\beta=\beta(q,\theta_0)>0$ comes from the uniform exponential decay of the branching kernels~$\rG_{[v]}$; see~\eqref{eq:G-exp-decay-sub} and Section~\ref{sub:exp-asymp}. The proof of~\eqref{eq:Euu-Eww=O(exp)} goes along the same lines as the proof of Lemma~\ref{lem:E[sigma]=cst}. Namely, let~$z=(v_0u_0v_1u_1)$ be a quad lying at distance~$D$ from the second `white' vertex~$w$ under consideration. As in the proof of Lemma~\ref{lem:E[sigma]=cst}, let us { consider} a pointwise limit
\[
X_{[v_0,w]}(c)\ :=\ \lim\nolimits_{ R\to\infty}\E^\wired_{\Lambda^\star_R}[\chi_c\mu_{v_0}\sigma_{w}],\quad c\in\Upsilon^\times_{[v_0,w]},
\]
and a function~$H:=H[X_{[v_0,w]},\rG_{[v_1]}]$ defined in the vicinity of~$z$ of radius~$D$. (Note that now the spinor~$X_{[v_0,w]}$ has the second branching at~$w$ while~$\rG_{[v_1]}$ does not branch there, this is why~$H$ is not defined on the whole grid~$\Lambda^\star$.) Let~$Z$ be the additive monodromy of the function~$H$ around~$z$. The uniform boundedness of the fermionic observable~$X_{[v_0,w]}$ and the uniform exponential decay of the kernel~$\rG_{[v_1]}$ imply that~$Z=O(D \exp(-\beta D))$ since we can compute this monodromy along a contour running at distance~$D$ from~$z$. Then, computing the monodromy~$Z$ similarly to the proof of Lemma~\ref{lem:E[sigma]=cst} we obtain the estimate
\begin{equation}
\label{eq:sub-step-on-glued}
\big|\,\E_{\Lambda^\star}[\sigma_{u_0}\sigma_{w}]-\E_{\Lambda^\star}[\sigma_{u_1}\sigma_{w}]\,\big|\ =\ O(D\exp(-\beta D)).
\end{equation}
Moving a pair of points $u,w$ step by step to $u',w'$ so that they remain at distance (at least)~$D$ from each other we obtain the desired estimate~\eqref{eq:Euu-Eww=O(exp)}.

Finally, let~$\Lambda^\mathrm{rect}$ be a rectangular grid that extends the rectangular part~$\Lambda^\star_{AD}(u')$ of~$\Lambda^\star$. Similarly to~\eqref{eq:change-to-glued}, one has~$\big|\,\E_{\Lambda^\star}[\sigma_{u'}\sigma_{w'}]-\E_{\Lambda^\mathrm{rect}}[\sigma_{u'}\sigma_{w'}]\,|\le \varepsilon$.
Combining~\eqref{eq:change-to-glued},~\eqref{eq:Euu-Eww=O(exp)} and this estimate, we see that
\begin{equation}
\label{eq:sub-iso-to-rect}
\big|\,\E_\Lambda[\sigma_u\sigma_w]-\E_{\Lambda^\mathrm{rect}}[\sigma_{u'}\sigma_{w'}]\,\big|\ \le\ 2\varepsilon + O(A(\varepsilon)D^2\exp(-\beta D)).
\end{equation}
We can now rely upon explicit computations of the `horizontal' spin-spin correlations~$\E_{\Lambda^\mathrm{rect}}[\sigma_{u'}\sigma_{w'}]$ on rectangular grids; see~\cite[Section~X.4]{mccoy2014two} or~\cite[Theorem~3.6]{chelkak_Hongler_Mahfouf}. In particular, it is well known (see also Remark~\ref{rem:reduction-to-square} below) that
\begin{equation}
\label{eq:sub-rect-lim}
\E_{\Lambda^\mathrm{rect}}[\sigma_{u'}\sigma_{w'}]\ \to\ \sccorr\ \ \text{as}\ \ |u'-w'|\to\infty.
\end{equation}
Choosing first~$\varepsilon$ small enough and then~$D$ big enough in~\eqref{eq:sub-iso-to-rect} allows us to conclude that~$\E_\Lambda[\sigma_u\sigma_w]\to \sccorr$ as~$|u-w|\to\infty$, uniformly with respect to isoradial grids~$\Lambda$ satisfying the bounded angles condition~$\BAP$.
\end{proof}
\begin{remark}
\label{rem:reduction-to-square} A careful reader could notice that above we used the -- a priori nontrivial -- fact that the limit in~\eqref{eq:sub-rect-lim} does not depend on the (unknown) aspect ratio of the rectangular lattice~$\Lambda^\mathrm{rect}$. This can be avoided by making another comparison of correlations on~$\Lambda^\mathrm{rect}$ and those on the \emph{square lattice}~$\mathbb Z^2$; recall that one can easily glue~$\Lambda^\mathrm{rect}$ and~$\mathbb Z^2$ staying inside the family of isoradial grids (see Fig.~\ref{fig:star-extension}). The estimate~\eqref{eq:sub-iso-to-rect} and a similar estimate for~$\Lambda^\mathrm{rect}$ and~$\mathbb{Z}^2$ imply that
\[
\big|\,\E_\Lambda[\sigma_u\sigma_w]-\E_{\mathbb Z^2}[\sigma_{u'}\sigma_{w'}]\,\big|\ \le\ 4\varepsilon + O(A(\varepsilon)D^2\exp(-\beta D)).
\]
In particular, this gives a self-contained proof of the existence of the universal (among isoradial grids) limit~$\lim_{|u-w|\to\infty}\E_\Lambda[\sigma_u\sigma_w]$. Moreover, on the square lattice one can work with `diagonal' spin-spin correlations instead of `horizontal' ones, which considerably simplifies the computation of the limit; e.\,g., see~\cite[Section~3]{Chelkak2016ECM}.
\end{remark}

\begin{corollary} For all isoradial grids~$\Lambda$ satisfying the condition~$\BAP$ and all~$u\in \Gamma^\circ$ the magnetization $\E^+_\Lambda[\sigma_u]$ is equal to~$\magnet$
\end{corollary}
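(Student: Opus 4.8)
The plan is to read off the value of the magnetization $M:=\E^+_\Lambda[\sigma_u]$ from the two facts already at our disposal: by Lemma~\ref{lem:E[sigma]=cst} the number $M$ does not depend on $u\in\Gamma^\circ$, and by Proposition~\ref{prop:E[sigma-sigma]=cst} one has $\E_\Lambda[\sigma_u\sigma_w]\to\sccorr$ as $|u-w|\to\infty$. The bridge between these is the classical long-range-order identity
\[
\lim_{|u-w|\to\infty}\E_\Lambda[\sigma_u\sigma_w]\ =\ \big(\E^+_\Lambda[\sigma_u]\big)^2\,,
\]
which is the only point requiring a short separate argument; granting it, Lemma~\ref{lem:E[sigma]=cst} and Proposition~\ref{prop:E[sigma-sigma]=cst} give $M^2=\sccorr=\magnet^2$, and since $M\ge 0$ by Griffiths' inequality we conclude $M=\magnet$, uniformly over all isoradial grids satisfying~$\BAP$.

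To justify the displayed identity I would pass to the (wired, infinite-volume) FK--Ising representation. For $q<0$ the correlation $\E_\Lambda[\sigma_u\sigma_w]$ introduced in Section~\ref{sub:RSW} as the decreasing limit of finite-volume correlations with wired boundary conditions coincides with $\E^+_\Lambda[\sigma_u\sigma_w]=\phi^1_\Lambda[u\leftrightarrow w]$, and similarly $M=\phi^1_\Lambda[u\leftrightarrow\infty]=\lim_n\E^+_{\Omega_n}[\sigma_u]$. The lower bound $\phi^1_\Lambda[u\leftrightarrow w]\ge M^2$ is the FKG inequality together with $\E^+_\Lambda[\sigma_u]=\E^+_\Lambda[\sigma_w]=M$. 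For the matching upper bound I would split the event $\{u\leftrightarrow w\}$ according to whether $u$ and $w$ are joined inside a finite cluster or both lie in the (almost surely unique, by the Burton--Keane argument applicable under~$\BAP$) infinite cluster: the first probability tends to $0$ because finite clusters are almost surely finite, while the second is at most $\phi^1_\Lambda[u\leftrightarrow\infty,\,w\leftrightarrow\infty]$, which tends to $M^2$ by the (ratio weak) mixing of the wired FK--Ising measure. The case $q>0$ then follows by exchanging the roles of $\Gamma^\bullet$ and $\Gamma^\circ$ via the Kramers--Wannier duality~\eqref{eq: KW-duality-kq}, with $k^*$ replaced by~$k$.

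The only genuinely delicate ingredient is the clustering/mixing step on a possibly non-periodic isoradial grid; on $\mathbb Z^2$ it is standard, and in this generality it can be invoked from the literature on ratio weak mixing of FK percolation, or derived from the uniqueness of the infinite FK--Ising cluster together with the exponential decay of connectivities in the dual (disordered) model that also underlies the bound~\eqref{eq:G-exp-decay-sub}. Should one prefer to avoid clustering altogether, one can instead parallel the gluing scheme of Proposition~\ref{prop:E[sigma-sigma]=cst} directly: working with the observable $X_{[v_0]}(c)=\E_\Lambda[\chi_c\mu_{v_0}\sigma_\infty]$ from the proof of Lemma~\ref{lem:E[sigma]=cst}, one passes to a star extension $[\Lambda_R(u)]^\star$ (see Section~\ref{sub:star-ext}), transports the disorder $\mu_{v_0}$ step by step into the rectangular part by the monodromy identity with the exponentially decaying kernel $\rG_{[v_1]}$, the accumulated error being $O(R^2\exp(-\beta R))$, and reads off $\E^+_\Lambda[\sigma_u]=\magnet$ from the Onsager--Yang value of the magnetization on a rectangular lattice, exactly as in the last part of the proof of Proposition~\ref{prop:intro-sub}.
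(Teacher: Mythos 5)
Your overall plan — use Lemma~\ref{lem:E[sigma]=cst} for position-independence, Proposition~\ref{prop:E[sigma-sigma]=cst} for the value of the long-range two-point function, and then a long-range-order identity to extract $M=\magnet$ — is the right skeleton, and the lower bound $\E_\Lambda[\sigma_u\sigma_w]\ge M^2$ via FKG is exactly the paper's first inequality. But the way you close the upper bound does not match what the paper does, and as stated it leans on an ingredient you do not actually have. The matching upper bound in the clustering identity $\lim_{|u-w|\to\infty}\E_\Lambda[\sigma_u\sigma_w]=M^2$ requires a decorrelation statement such as ratio weak mixing of the wired FK--Ising measure on an arbitrary (non-periodic, non-translation-invariant) isoradial grid satisfying only $\BAP$; this is standard on $\Z^2$, but you cannot simply "invoke it from the literature" in this generality, and deriving it from the exponential decay of the dual correlations that underlies~\eqref{eq:G-exp-decay-sub} would itself be a non-trivial project.

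The paper circumvents mixing entirely with a second, reversed FKG inequality: conditioning on the configuration outside two disjoint boxes $\Lambda_D(u)$ and $\Lambda_{D'}(u')$ (and using that wired boundary conditions dominate) yields $\E^\wired_{\Lambda_R}[\sigma_u\sigma_{u'}]\le\E^+_{\Lambda_D(u)}[\sigma_u]\cdot\E^+_{\Lambda_{D'}(u')}[\sigma_{u'}]$, a purely monotone decoupling that needs no quantitative decorrelation. The remaining subtlety, which your proposal does not address, is that on a non-translation-invariant $\Lambda$ the finite-box magnetization $\E^+_{\Lambda_{D'}(u')}[\sigma_{u'}]$ decreases to $M(\Lambda)$ at a rate that can depend on $u'$, so one cannot freely send $|u-u'|\to\infty$ with both $D,D'$ fixed. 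The star extension fixes this: one makes the second point $u'$ sit in a rectangular wedge of $[\Lambda_D(u)]^\star$, where the $\E^+$ in finite boxes are translation invariant and their infimum is the already-established rectangular value $\magnet$.

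Your second alternative also has a genuine gap. Transporting $\mu_{v_0}$ via the monodromy with $\rG_{[v_1]}$ inside the modified lattice $[\Lambda_R(u)]^\star$ shows only that $\E^+_{[\Lambda_R(u)]^\star}[\sigma_\cdot]$ is constant as a function of the site on $[\Lambda_R(u)]^\star$, which is simply Lemma~\ref{lem:E[sigma]=cst} applied to the star extension; it does not identify that constant with $\magnet$, because $[\Lambda_R(u)]^\star$ agrees with a rectangular grid only on a wedge and the infinite-volume magnetization (unlike the two-point function controlled via~\eqref{eq:change-to-glued}) is not a local quantity. To turn this into a proof you would still need the product bound $\E^\wired[\sigma_u\sigma_{u'}]\le\E^+_{\Lambda_D(u)}[\sigma_u]\cdot\E^+_{\Lambda_{D'}(u')}[\sigma_{u'}]$ — at which point you are doing the paper's argument.
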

\begin{proof} It follows from Lemma~\ref{lem:E[sigma]=cst} that, given~$\Lambda$, the magnetization~$\E_\Lambda^+[\sigma_u]$ does not depend on the position of~$u\in\Gamma^\circ$; let us denote this common value by~$M(\Lambda)$.

First, note that Proposition~\ref{prop:E[sigma-sigma]=cst} and the FKG inequality imply that
\[
\sccorr\ =\ \lim_{R\to\infty} \E^\wired_{\Lambda_R}[\sigma_u\sigma_{u'}]\
  \ge\ \lim_{R\to\infty} \E^+_{\Lambda_R}[\sigma_u]\cdot \E^+_{\Lambda_R}[\sigma_{u'}]\ =\ (M(\Lambda))^2
\]
and hence~$M(\Lambda)\le \magnet$ for all~$\Lambda$. To prove the inverse inequality note that, for large enough~$R$, the FKG inequality also implies that
\[\label{eq: corr_magn_magn}
\E^\wired_{\Lambda_R}[\sigma_u\sigma_{u'}]\ \le\ \E^+_{\Lambda_D(u)}[\sigma_u]\cdot \E^+_{\Lambda_{D'}(u')}[\sigma_{u'}]\quad \text{if}\ \ \Lambda_D(u)\cap \Lambda_{D'}(u')=\emptyset.
\]

Let us first consider the particular case when~$\Lambda=\Lambda^\mathrm{rect}$ is a rectangular lattice. In this case, for each~$\varepsilon>0$ one can choose~$D=D'$ big enough so that
\[
\E^+_{\Lambda_D(u)}[\sigma_u]\ =\ \E^+_{\Lambda_D(u')}[\sigma_{u'}]\ \le\ M(\Lambda^\mathrm{rect})+\varepsilon;
\]
note that we used the translation invariance of~$\Lambda^\mathrm{rect}$. By first letting~$R\to\infty$ in \eqref{eq: corr_magn_magn} and then~$\varepsilon\to 0$ one concludes that~$M(\Lambda^\mathrm{rect})=\magnet$.

Let us now consider a general case. As above, given~$\varepsilon>0$ and a vertex $u\in\Gamma^\circ$ near~$0$, one can choose~$D=D(\varepsilon)$ large enough so that
\[
\E^+_{\Lambda_D(u)}[\sigma_u])\ \le\ M(\Lambda)+\varepsilon.
\]
Let~$\Lambda^\star:=[\Lambda_D(u)]^\star$ be the star extension of~$\Lambda_D(u)$. By construction (see Fig.~\ref{fig:star-extension}), $\Lambda^\star$ contains arbitrary large pieces of a certain rectangular grid~$\Lambda^\mathrm{rect}$ (whose aspect ratio can depend on~$\Lambda^\star$, which in its turn depend on~$\varepsilon$). One can now choose $D'=D'(\varepsilon)$ large enough so that
\[
\E^+_{\Lambda^\star_{D'}(u')}[\sigma_{u'}]\ \le\ M(\Lambda^\mathrm{rect})+\varepsilon\ =\ \magnet+\varepsilon
\]
for a certain `white' vertex $u'$ of $\Lambda^\star$. Moreover, once~$D'$ is chosen, the vertex $u'$ can be taken arbitrarily far from~$u$. Passing to the limit $R\to\infty$, then $|u-u'|\to \infty$ and finally~$\varepsilon\to 0$ in the inequality
\[
\E^\wired_{\Lambda_R}[\sigma_u\sigma_{u'}]\ \le\ (M(\Lambda)+\varepsilon)(\magnet+\varepsilon)
\]
we see that~$M(\Lambda) \ge \magnet$, which completes the proof.
\end{proof}

\subsection{Critical model: convergence of normalized observables $\bm{\E^\wired_{\Omega^\delta}[\chi_c\mu_v\sigma_w]}$} \label{sub:spin-obs-crit}
In this section we consider the critical Ising model in bounded discrete approximations~$\Omega^\delta\subset\Gamma^\delta$ of a bounded simply connected domain~$\Omega\subset\C$ with a $C^1$-smooth boundary.
The convergence results discussed below can be viewed as generalizations of similar results obtained in~\cite{ChelkakHonglerIzyurov} for the square grid~$\Lambda^\delta=\delta\mathbb Z^2$; under an additional smoothness assumption on~$\partial\Omega$. It is worth noting that we use this smoothness assumption only in order to give a simple proof of Proposition~\ref{prop:bc-are-OK} following the paper~\cite{park2018massive}, and that it can be removed by using techniques from~\cite{ChelkakSmirnov2} or from~\cite{CHI_Mixed} instead; see Remark~\ref{rem:CHI-is-OK} below. Let us emphasize that the proofs given in this section do \emph{not} simply mimic those from~\cite{ChelkakHonglerIzyurov}; on the contrary, we considerably improve the strategy used in~\cite{ChelkakHonglerIzyurov} even in the case~$\Lambda^\delta=\delta\mathbb Z^2$ in what concerns the analysis of spinor observables near the branching points. In particular, we do \emph{not} rely upon  explicit \mbox{`$z^{1/2}$-type'} kernels (see~\cite[Lemma~2.17]{ChelkakHonglerIzyurov}).

Let~$\Omega\subset\C$ be a bounded simply connected domain and $\Omega^\delta$ be discrete approximation of~$\Omega$ on isoradial grids~$\Lambda^\delta$ with $\delta\to 0$. (As always in our paper we also assume that~$\Lambda^\delta$ satisfy the bounded angles property~$\BAP$.) For simplicity, we also assume that the boundary~$\partial\Omega$ of~$\Omega$ is $C^1$-smooth and that~$\Omega^\delta$ approximate~$\Omega$ in the Hausdorff sense so that~$\mathrm{dist}(\partial\Omega^\delta;\partial\Omega)=O(\delta)$. However, it is worth noting that one can drop these regularity assumption by repeating the arguments developed in~\cite{ChelkakSmirnov2} in what concerns the near-to-the-boundary analysis of fermionic observables in rough domains~$\Omega$ under the Carath\'eodory convergence~$\Omega^\delta\to\Omega$; see also Remark~\ref{rem:CHI-is-OK} below.

Let~$v,w$ be distinct inner points of~$\Omega$; for the sake of shortness we use the same notation for discrete approximations~$v=v^\delta\in \Gamma^{\bullet,\delta}$ and $w=w^\delta\in\Gamma^{\circ,\delta}$ of these points. Also, let~$u=u^\delta\in\Gamma^{\circ,\delta}$ be one of the neighboring `white' vertices of~$v$. Following~\cite{ChelkakHonglerIzyurov}, we consider the normalized real-valued fermionic observable
\begin{equation}
\label{eq:Xvw-def}
X^\delta_{[\Omega^\delta;v,w]}(c)\ :=\ \frac{\E^\wired_{\Omega^\delta}[\chi_{c}\mu_{v}\sigma_{w}]}{\E^\wired_{\Omega^\delta}[\sigma_{u}\sigma_{w}]}, \quad c\in\Upsilon^\times_{[v,w]}(\Omega^\delta),
\end{equation}
and denote by~$F^\delta_{[\Omega^\delta;v,w]}$ the complex-valued observables { constructed from~$X^\delta_{[\Omega^\delta;v,w]}$} according to~\eqref{eq:mass-hol}; recall that~$F^\delta_{[\Omega^\delta;v,w]}$ is an s-holomorphic spinor in~$\Omega^\delta$ branching over~$v$ and~$w$. Also,
\begin{equation}
\label{eq:X(cuv)=1}
X^\delta_{[\Omega^\delta;v,w]}(c_{uv})\ =\ 1,
\end{equation}
where the corner~$c_{uv}$ is adjacent to both~$u$ and~$v$.

In the forthcoming Theorem~\ref{thm:conv-Fwv-crit} (which can be viewed as a generalization to isoradial grids of a particular case~$n=2$ of~\cite[Theorem~2.16]{ChelkakHonglerIzyurov}; see also Remark~\ref{rem:CHI-is-OK}) we prove the convergence of these discrete observables as~$\delta\to 0$. The limit is a holomorphic spinor~$f_{[\Omega;v,w]}$ on the double cover of $\Omega$ ramified over~$v$ and~$w$ that is uniquely defined by the following conditions:
\begin{itemize}
\item $f_{[\Omega;v,w]}$ is continuous in~$\overline{\Omega}\smallsetminus\{v,w\}$ and satisfies the Riemann-type boundary conditions $\Im[f_{[\Omega;v,w]}(\zeta)(\tau(\zeta))^{1/2}]=0$ for all~$\zeta\in\partial\Omega$, where~$\tau(\zeta)$ denotes the tangent vector to~$\partial\Omega$ at the point~$\zeta$ oriented counterclockwise;
\smallskip
\item the following asymptotics holds:
\begin{equation}
\label{eq:f-asymp-near-v}
f_{[\Omega;v,w]}(z)\ =\ e^{-i\frac{\pi}{4}}(z-v)^{-\frac12}+O(|z-v|^{\frac12})\ \ \text{as}\ \ z\to v;
\end{equation}
and there exists a (a priori unknown) constant~$\cB_\Omega(v,w)\in\R$ such that
\begin{equation}
\label{eq:f-asymp-near-w}
f_{[\Omega;v,w]}(z)\ =\ e^{i\frac{\pi}{4}}\cB_\Omega(v,w)\cdot (z-w)^{-\frac12}+O(|z-w|^{\frac12})\ \ \text{as}\ \ z\to w.
\end{equation}
\end{itemize}

\begin{remark} \label{rem:f-uniq}
(i) Since~$f_{[\Omega;v,w]}$ branches over the point~$w$, the coefficient~$\cB_\Omega(v,w)$ is a priori defined only up to the sign. We fix this sign by requiring that~$\cB_\Omega(v,w)\ge 0$.

\smallskip

\noindent (ii) The uniqueness of the solution to this boundary value problem easily follows from considering the harmonic function~$h:=\int \Im[(f_1(z)-f_2(z))^2dz]$: if~$f_1$ and~$f_2$ were two distinct solutions, then the function~$h$ would \emph{not} have a singularity at the point \mbox{$z=v$}, would behave like~$b\log|z-w|+O(1)$ with~$b\ge 0$ as~$z\to w$, and would have negative outer normal derivative along~$\partial\Omega$, the sign which contradicts to the Green formula; see also~\cite[Section~2.5]{ChelkakHonglerIzyurov}.

\smallskip

\noindent (iii) If~$\phi:\Omega\to\Omega'$ is a conformal map, then it is easy to see that
\begin{equation}
\label{eq:fvw-conf-cov}
f_{[\Omega;v,w]}(z)=f_{[\Omega';\phi(v),\phi(v)]}(\phi(z))\cdot(\phi'(z))^{\frac 12}.
\end{equation}
The solution in the upper-half plane~$\Omega=\mathbb H$ can be written explicitly (see~\cite[Section~2.7]{ChelkakHonglerIzyurov}); in particular, this can be used to justify the existence of~$f_{[\Omega;v,w]}$. Alternatively, one can get the existence of~$f_{[\Omega;v,w]}$ directly from Theorem~\ref{thm:conv-Fwv-crit}, not relying upon explicit formulas and/or the conformal covariance property~\eqref{eq:fvw-conf-cov}.
\end{remark}

Let us denote by~$H^\delta$ the function constructed from the fermionic observable~\eqref{eq:Xvw-def} via~\eqref{eq:HX-def}, where the additive constant in its definition is chosen so that~$H^\delta$ satisfies the Dirichlet boundary conditions. Fix a small enough~$r_0>0$ and denote
\begin{equation}
\label{eq:MdOd-def}
M^\delta\ :=\ \max\nolimits_{\Omega(r_0)} |H^\delta|,\ \ \text{where}\ \ \Omega(r_0):=\Omega\smallsetminus (B(v,r_0)\cup B(w,r_0)).
\end{equation}
The proof of the following estimate considerably simplifies the strategy used in~\cite{ChelkakHonglerIzyurov}.
\begin{proposition} \label{prop:F=O(1)}
Assume that the estimate~$M^\delta=O(1)$ holds as~$\delta\to 0$. Then,
\begin{equation}
\label{eq:F=O(1)}
F^\delta_{[\Omega^\delta;v,w]}=O(1)\ \ \text{as}\ \ \delta\to 0\ \ \text{uniformly on compact subsets of}~\Omega\smallsetminus\{v,w\}.
\end{equation}
\end{proposition}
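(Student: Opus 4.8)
The plan is to deduce \eqref{eq:F=O(1)} from the a priori regularity Proposition~\ref{prop:regularity}, which directly controls the region \emph{away} from the two branching points, and then to run a short local analysis near each of $v$ and $w$ separately, where the hypothesis $M^\delta=O(1)$ carries no information by itself. \emph{Away from $v,w$.} Since $|H^\delta|\le M^\delta=O(1)$ on $\Omega(r_0)$ (see \eqref{eq:MdOd-def}), Proposition~\ref{prop:regularity} applied on the open set $\Omega\smallsetminus(\overline{B(v,r_0)}\cup\overline{B(w,r_0)})$ gives $F^\delta_{[\Omega^\delta;v,w]}=O(1)$ uniformly on compact subsets of it; equivalently, by Definition~\ref{def:shol-def-mass}, $|X^\delta_{[\Omega^\delta;v,w]}(c)|=O(\delta^{1/2})$ there. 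Fix once and for all radii $r_0<r_0''<r_0'$ such that $\overline{B(v,r_0')}$ and $\overline{B(w,r_0')}$ are disjoint and compactly contained in $\Omega$; then every circle $\partial B(v,r)$, $\partial B(w,r)$ with $r\in[r_0'',r_0']$ lies in the region just treated, and also inside $\Omega(r_0)$.

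\emph{Near $v$.} In a neighbourhood of $v$ not containing $w$ the double covers $\Upsilon^\times_{[v,w]}$ and $\Upsilon^\times_{[v]}$ coincide, so we may form the spinor $\widetilde X^\delta:=X^\delta_{[\Omega^\delta;v,w]}-\rG^\delta_{[v]}$; it satisfies the propagation equation, and by the normalization \eqref{eq:X(cuv)=1} together with \eqref{eq:G(c)=pi/2} one has $\widetilde X^\delta(c_{uv})=1-1=0$. Hence, by Remark~\ref{rem:maxH-principle}, the function $H_{\widetilde X^\delta}$ built via \eqref{eq:HX1X2-def} satisfies both the maximum and the minimum principle near $v$, i.e.\ it has no singularity there. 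Writing $H_{\widetilde X^\delta}=H^\delta-2H[X^\delta_{[\Omega^\delta;v,w]},\rG^\delta_{[v]}]+H_{\rG^\delta_{[v]}}$ on $\partial B(v,r_0')$, and using $|H^\delta|\le M^\delta$, the bound $|X^\delta_{[\Omega^\delta;v,w]}|=O(\delta^{1/2})$ from the previous step, and the explicit asymptotics of $\cG^\delta_{[v]}$ from Theorem~\ref{thm:G-asymp-mass} (see \eqref{eq:G-asymp-crit}), one gets $H_{\widetilde X^\delta}=O(1)$ on $\partial B(v,r_0')$, hence on all of $B(v,r_0')$ by the max/min principle. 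Proposition~\ref{prop:regularity} then yields that the massive s-holomorphic function $\widetilde F^\delta$ built from $\widetilde X^\delta$ is $O(1)$ on compact subsets of $B(v,r_0')\smallsetminus\{v\}$; since the construction of Definition~\ref{def:shol-def-mass} is linear, $F^\delta_{[\Omega^\delta;v,w]}=\widetilde F^\delta+\cG^\delta_{[v]}$ with $|\cG^\delta_{[v]}|=O(|z-v|^{-1/2})=O(1)$ away from $v$, which proves \eqref{eq:F=O(1)} on compact subsets of $B(v,r_0)\smallsetminus\{v\}$.

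\emph{Near $w$.} The same scheme applies, the difference being that the relevant multiple of the branching kernel is a priori unknown. Put $\lambda^\delta:=X^\delta_{[\Omega^\delta;v,w]}(c_0)$ for a fixed corner $c_0\sim w$ and $\widehat X^\delta:=X^\delta_{[\Omega^\delta;v,w]}-\lambda^\delta\rG^\delta_{[w]}$, so that $\widehat X^\delta(c_0)=0$ and $H_{\widehat X^\delta}$ is regular at $w$ by Remark~\ref{rem:maxH-principle}. Exactly as above, $H_{\widehat X^\delta}$ is controlled on $B(w,r_0')$ by its boundary values on $\partial B(w,r_0')$, which this time are of size $O((\lambda^\delta)^2+1)$, the $(\lambda^\delta)^2$ coming from the term $(\lambda^\delta)^2H_{\rG^\delta_{[w]}}$. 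The crucial point is to bound $\lambda^\delta$: expanding $H^\delta=H_{\widehat X^\delta}+2\lambda^\delta H[\widehat X^\delta,\rG^\delta_{[w]}]+(\lambda^\delta)^2H_{\rG^\delta_{[w]}}$ at a point of $\partial B(w,r_0'')$, using the just-obtained bound $H_{\widehat X^\delta}=O((\lambda^\delta)^2+1)$ and the resulting regularity of $\widehat X^\delta$ to control the cross term, noting that $H_{\rG^\delta_{[w]}}=\tfrac1\pi\log|z-w|+O(1)$ uniformly (by the asymptotics of $\cG^\delta_{[w]}$ in Theorem~\ref{thm:G-asymp-mass}), hence is negative of size $\asymp|\log r_0''|$ there, and using $|H^\delta|\le M^\delta=O(1)$, one arrives at an inequality of the form $(\lambda^\delta)^2\big(\tfrac1\pi|\log r_0''|-C\big)\le C'$ with $C,C'$ independent of $r_0''$ and of $\Lambda^\delta$. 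Choosing $r_0$ (hence $r_0''$) small enough from the outset forces $\lambda^\delta=O(1)$; then $H_{\widehat X^\delta}=O(1)$, Proposition~\ref{prop:regularity} gives that the s-holomorphic function $\widehat F^\delta$ built from $\widehat X^\delta$ is $O(1)$ on compact subsets of $B(w,r_0')\smallsetminus\{w\}$, and $F^\delta_{[\Omega^\delta;v,w]}=\widehat F^\delta+\lambda^\delta\cG^\delta_{[w]}=O(1)$ on compact subsets of $B(w,r_0)\smallsetminus\{w\}$. Together with the first two steps this covers every compact subset of $\Omega\smallsetminus\{v,w\}$.

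The step I expect to be the main obstacle is the last one, and within it the extraction of the a priori bound on $\lambda^\delta$: unlike at $v$, nothing pins the size of the observable near the ``passive'' branching point $w$, so the only available leverage is the comparison of the definite logarithmic profile of the branching kernel $\rG^\delta_{[w]}$ against the hypothesis $M^\delta=O(1)$. This is also the place where the subharmonicity of $H^\delta$ on $\Gamma^{\bullet,\delta}$ (Proposition~\ref{prop:H-sub}) and the boundary-modification trick (Remark~\ref{rem:bdry-trick}), both encoded in the maximum/minimum principle of Lemma~\ref{lem:maxH-principle}, enter essentially; everything else (the near-$v$ analysis and the passage between $X^\delta$, $F^\delta$ and $H^\delta$) is routine given the tools already set up.
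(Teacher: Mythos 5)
Your treatment of the region away from $v,w$, and of the neighbourhood of $v$, matches the paper's proof. The genuine gap is in the argument used to bound $\lambda^\delta:=X^\delta_{[\Omega^\delta;v,w]}(c_0)$ near $w$. The quadratic inequality you write down does not close, because the cross term $H[\widehat X^\delta,\rG^\delta_{[w]}]$ is of the \emph{same} logarithmic order as the diagonal term $(\lambda^\delta)^2 H_{\rG^\delta_{[w]}}$. Indeed, what Proposition~\ref{prop:regularity} actually yields from $H_{\widehat X^\delta}=O((\lambda^\delta)^2+1)$ on $B(w,r_0')$ is the pointwise bound $|\widehat F^\delta(z)|=O((|\lambda^\delta|+1)\,|z-w|^{-1/2})$ — one can only apply the regularity estimate on a disc avoiding the branch point $w$, so its radius is forced to be comparable to $|z-w|$. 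Combined with $|\cG^\delta_{[w]}(z)|=O(|z-w|^{-1/2})$ from Theorem~\ref{thm:G-asymp-mass}, the increments of the cross term accumulate to $O((|\lambda^\delta|+1)\log(r_0'/r_0''))$ along a radial path from $\partial B(w,r_0')$ to $\partial B(w,r_0'')$. Your inequality then has the form $(\lambda^\delta)^2\big(\tfrac1\pi-4C_2\big)\log(r_0'/r_0'')\le O\big((\lambda^\delta)^2+1\big)$, with $C_2$ an absolute constant coming from the a priori regularity theory which is not known to be below $(4\pi)^{-1}$; sending $r_0''\to0$ therefore does not produce $\lambda^\delta=O(1)$. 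The improved uniform bound $\widehat F^\delta=O(|\lambda^\delta|+1)$ that would make the cross term lower order is precisely what becomes available only \emph{after passing to the continuum limit} (via the removable singularity of the harmonic primitive, as in the proof of Theorem~\ref{thm:conv-Fwv-crit}), and is not provided at the discrete level by the tools set up in the paper.

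The paper establishes $X^\delta_{[\Omega^\delta;v,w]}(c_0)=O(1)$ by a much shorter, genuinely different route, which is in fact the paper's advertised technical innovation. Instead of $\rG^\delta_{[w]}$, it pairs $X^\delta_{[\Omega^\delta;v,w]}$ (which near $w$ branches over the \emph{white} vertex $w\in\Gamma^{\circ,\delta}$) with the kernel $\rG^\delta_{[v(c_0)]}$, branching over the \emph{adjacent black} vertex $v(c_0)\in\Gamma^{\bullet,\delta}$. The two spinors then live on ``slightly different'' double covers in precisely the setting of Lemma~\ref{lem:XG-monodromy}: the function $H\big[X^\delta_{[\Omega^\delta;v,w]},\rG^\delta_{[v(c_0)]}\big]$ has additive monodromy $2X^\delta_{[\Omega^\delta;v,w]}(c_0)\rG^\delta_{[v(c_0)]}(c_0)=2X^\delta_{[\Omega^\delta;v,w]}(c_0)$ around $c_0$ (using $\rG^\delta_{[v(c_0)]}(c_0)=1$ from \eqref{eq:G(c)=pi/2}), and this monodromy is computed as the discrete contour integral $\tfrac12\oint^{[\delta]}_{|z-w|=2r_0}\Im\big[F^\delta_{[\Omega^\delta;v,w]}(z)\cG^\delta_{[v(c_0)]}(z)dz\big]$, which is manifestly $O(1)$ because both factors are $O(1)$ on $\partial B(w,2r_0)$ by your step~1 and Theorem~\ref{thm:G-asymp-mass}. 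No decomposition of $H^\delta$ and no delicate comparison of absolute constants is needed. Once $X^\delta_{[\Omega^\delta;v,w]}(c_0)=O(1)$ is in hand, the remainder of your near-$w$ analysis (subtract $X^\delta_{[\Omega^\delta;v,w]}(c_0)\rG^\delta_{[w]}$, invoke Remark~\ref{rem:maxH-principle} and Proposition~\ref{prop:regularity}) coincides with the paper's.
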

\begin{proof} The a priori regularity estimates of s-holomorphic functions via the associated functions $H_F$ (see Proposition~\ref{prop:regularity}), in particular, imply that
\[
F^\delta_{[\Omega^\delta;v,w]}=O(1)\ \ \text{as}\ \ \delta\to 0\ \ \text{uniformly~on~compact~subsets~of}\ \Omega(r_0).
\]
Thus, to prove~\eqref{eq:F=O(1)} we need to control the behavior of~$F^\delta$ near~$v$ and~$w$.

Consider now the explicit full-plane kernel~$\rG^\delta_{[v]}$ discussed in Section~\ref{sub:kernels-def} and a spinor (defined, e.\,g., in a $3r_0$-vicinity of~$v$)
\begin{equation}
\label{eq:X-dag-def}
X^{\delta,\dagger}(\cdot)\ :=\ X^\delta_{[\Omega^\delta;v,w]}(\cdot)-\rG^\delta_{[v]}(\cdot)
\end{equation}
and let~$F^{\delta,\dagger}$ and~$H^{\delta,\dagger}$ be constructed from~$X^{\delta,\dagger}$ via~\eqref{eq:mass-hol} and~\eqref{eq:HX-def}, respectively. Due to~\eqref{eq:X(cuv)=1} and~\eqref{eq:G(c)=pi/2}, we have~$X^{\delta,\dagger}(c_{uv})=0$ and hence the function~$H^{\delta,\dagger}$ satisfies \emph{both} the maximum and the minimum principle near~$v$ (see Remark~\ref{rem:maxH-principle}). Also, we know that~$F^{\delta,\dagger}=O(1)$ and hence~$H^{\delta,\dagger}=O(1)$ near the circle \mbox{$\{z:|z-w|=3r_0\}$}, { for an appropriate choice of the additive constants in the definition of~$H^{\delta,\dagger}$.} Therefore,
\[
H^{\delta,\dagger}=O(1)\ \ \text{as}\ \ \delta\to 0\ \ \text{uniformly in the \emph{disc}}~B(v,3r_0):=\{z:|z-v|< 2r_0\}.
\]
It follows from Proposition~\ref{prop:regularity} that
\[
F^{\delta,\dagger}(z)=O(|z-v|^{-\frac{1}{2}})\ \ \text{and so}\ \ F^\delta_{[\Omega^\delta,v,w]}(z)=O(|z-v|^{-\frac12})\ \ \text{for}\ z\in B(v,2r_0)
\]
due to the explicit asymptotics of~$\cG^\delta_{[v]}$ (see Theorem~\ref{thm:G-asymp-mass}). In particular, $F^\delta$ remain uniformly bounded as~$\delta\to 0$ on compact subsets of~$\Omega\smallsetminus (\{v\}\cup B(w,r_0))$.

A similar though slightly more involved argument can be applied near the second branching point~$w\in \Gamma^{\circ,\delta}$; a complication is caused by the fact that now we do not have a prescribed value similar to~\eqref{eq:X(cuv)=1} near~$w$. However, it is not hard to see that these values remain uniformly bounded as~$\delta\to 0$. Indeed, denote by $c_w$ one of the corners adjacent to~$w$ and consider the function~$H\big[X^\delta_{[\Omega^\delta;v,w]}\,,\rG^\delta_{[v(c_w)]}\big]$. By Lemma~\ref{lem:XG-monodromy}, this function has the additive monodromy
\begin{align*}
2X^\delta_{[\Omega^\delta;v,w]}(c_w)\rG^\delta_{[v(c_w)]}(c_w)\ &=\ { 2X^\delta_{[\Omega^\delta;v,w]}(c_w)}\\
&=\ \frac{1}{2}\oint^{[\delta]}_{z:|z-w|=2r_0}\Im\big[ F^\delta_{[\Omega^\delta;v,w]}(z)\cG^\delta_{[v(c_w)]}dz\big].
\end{align*}
Since the (complex-valued) kernels~$\cG^\delta_{[v(c_w)]}$ remain uniformly bounded at a definite distance from~$w$ as~$\delta\to 0$ (see Theorem~\ref{thm:G-asymp-mass}), we obtain the uniform estimate
\begin{equation}
\label{eq:X(cw)=O(1)}
X^\delta_{[\Omega^\delta;v,w]}(c_w)\ =\ O(1)\ \ \text{as}\ \ \delta\to 0\,.
\end{equation}
We can now repeat the arguments given above considering the spinor
\[
X^\delta_{[\Omega^\delta;v,w]}(\cdot)-X^\delta_{[\Omega^\delta;v,w]}(c_w)\cdot \rG^\delta_{[w]}(\cdot)
\]
near the point~$w$ to prove that $F^\delta_{[\Omega^\delta;v,w]}(z)=O(|z-w|^{-\frac12})$ for~\mbox{$z\in B(w,2r_0)$.}
\end{proof}
Let us for a while take for granted the assumption $M^\delta=O(1)$ made in Proposition~\ref{prop:F=O(1)}. It follows from the estimate~\eqref{eq:F=O(1)} and from the a priori regularity of s-holomorphic functions (see Proposition~\ref{prop:regularity}) that the functions $F^\delta_{[\Omega^\delta;v,w]}$ are also equicontinuous on compact subsets of~$\Omega\smallsetminus\{v,w\}$. Thus, one can apply the Arzel\`a--Ascoli theorem and find a subsequential limit
\begin{equation}
\label{eq:F-subseq-lim}
F^\delta_{[\Omega^\delta;v,w]}(z)\ \to\ g_{[\Omega;v,w]}(z)\ \ \text{as}\ \ \delta=\delta_k\to 0,
\end{equation}
where the convergence is uniform on compact subsets of~$\Omega\smallsetminus\{v,w\}$. Trivially, each such a subsequential limit~$g_{[\Omega;v,w]}$ is a spinor branching over~$v$ and~$w$. Moreover,~$g_{[\Omega;v,w]}$ is holomorphic due to Corollary~\ref{cor:mass-hol}. Let
\begin{equation}
\label{eq:h-subseq}
\textstyle h\ :=\ \frac{1}{2}\int\Im[(g_{[\Omega;v,w]}(z))^2dz];
\end{equation}
note that~$h$ is a harmonic function in the punctured domain~$\Omega\smallsetminus\{v,w\}$ defined up to an additive constant.

As functions~$H^\delta$ satisfy the Dirichlet boundary conditions (see Remarks~\ref{rem:H-Dirichlet-bc} and Remark~\ref{rem:bdry-trick}), it is natural to expect that the same holds for their subsequential limits~\eqref{eq:h-subseq}. In the critical case~$m=0$, one can prove this fact without assuming that the boundary~$\partial\Omega$ is smooth by using, e.\,g., the techniques developed in~\cite[Section~6]{ChelkakSmirnov2}. However, we prefer to quote a more straightforward argument { suggested by S.\,C.\,Park} in~\cite[Proposition~22]{park2018massive}, which works in $C^1$-smooth domains only but instead has a great advantage of admitting a straightforward generalization to the~$m<0$ case.
{ Let us emphasize that we will also rely upon Proposition~\ref{prop:bc-are-OK} when discussing the massive setup in Section~\ref{sub:conv-mass}; namely, in the proof of Theorem~\ref{thm:conv-omega-mass}.}

\begin{proposition} \label{prop:bc-are-OK}
{ Let~$m\le 0$} and assume that the boundary of~$\Omega$ is $C^1$-smooth and that~$\Omega^\delta$ approximate~$\Omega$ in the Hausdorff sense so that~$\mathrm{dist}(\partial\Omega^\delta;\partial\Omega)=O(\delta)$ as~$\delta\to 0$. Provided that the estimate~$M^\delta=O(1)$ holds, the following are fulfilled:

\smallskip

\noindent (i) the unform bound~\eqref{eq:F=O(1)} holds up to the boundary of discrete domains~$\partial\Omega^\delta$.

\smallskip

\noindent (ii) for each subsequential limit~$g_{[\Omega;v,w]}$, the function~$h$ defined by~\eqref{eq:h-subseq} is continuous in~$\overline{\Omega}\smallsetminus\{v,w\}$ and satisfies Dirichlet boundary conditions at~$\partial\Omega$;

\smallskip

\noindent (iii) moreover,~$g_{[\Omega;v,w]}$ is continuous up to the boundary of~$\Omega$ and satisfies Riemann-type boundary conditions~$\Im[g_{[\Omega;v,w]}(\zeta)(\tau(\zeta))^{1/2}]=0$ for all~$\zeta\in \partial\Omega$, where~$\tau(\zeta)$ denotes the tangent vector to~$\partial\Omega$ at the point~$\zeta$ oriented counterclockwise.
\end{proposition}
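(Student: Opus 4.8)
The plan is to follow the argument of \cite[Proposition~22]{park2018massive}, adapted to isoradial grids, exploiting the subharmonicity of $H^{(m),\delta}$ on $\Gamma^{\bullet,\delta}$ (Proposition~\ref{prop:H-sub}) together with the boundary modification trick (Remark~\ref{rem:bdry-trick}) which ensures $H^\delta$ has genuine Dirichlet boundary conditions on \emph{both} colors after a local modification of $\Lambda^\delta$ near $\partial\Omega^\delta$ that stays within the isoradial family and preserves the $\BAP$ assumption. First I would fix a boundary point $\zeta_0\in\partial\Omega$ and use the $C^1$-smoothness to straighten the boundary: after a conformal map (or simply locally, since $\partial\Omega$ is $C^1$) one may assume that near $\zeta_0$ the domain looks like the upper half-plane, so that the inner normal is essentially vertical and the relevant half-rhombi near $\partial\Omega^\delta$ have a controlled geometry. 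The key input is that, after the modification, $H^\delta$ is subharmonic on the interior black vertices, vanishes on the boundary (both colors), and is bounded by $M^\delta=O(1)$ on $\Omega(r_0)$; by the maximum principle applied to $H^\delta$ together with a suitable barrier (a discrete harmonic/superharmonic comparison function in a half-disc neighborhood of $\zeta_0$, e.g. one built from the discrete Poisson kernel or simply $\mathrm{cst}\cdot\mathrm{dist}(\cdot,\partial\Omega)$), one gets the one-sided bound $H^\delta\le C\,\mathrm{dist}(\cdot,\partial\Omega^\delta)$ near $\zeta_0$; the matching lower bound near the boundary is obtained from the uniform RSW-type estimates (Section~\ref{sub:RSW}) or, more simply in the critical case, from the known boundary behavior of $H^\delta$ together with the minimum principle on white vertices. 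This yields part (ii): any subsequential limit $h$ is continuous up to $\partial\Omega$ with $h\equiv 0$ there.

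Next, for part (i), I would observe that the uniform bound on $F^\delta_{[\Omega^\delta;v,w]}$ up to $\partial\Omega^\delta$ follows from the a priori regularity of (massive) s-holomorphic functions, Proposition~\ref{prop:regularity}, once we know $H^\delta$ is uniformly bounded in a full \emph{neighborhood} of each boundary point (not just in the interior). The bound $0\le H^\delta\le C\,\mathrm{dist}(\cdot,\partial\Omega^\delta)$ just established gives exactly this, so Proposition~\ref{prop:regularity} (applied on half-discs centered at boundary points, using the boundary-modified grid) delivers $F^\delta=O(1)$ and equicontinuity up to the boundary; alternatively one can invoke the s-embedding regularity of \cite{chelkak2020ising} via Proposition~\ref{prop:Fm=FS}, since s-embeddings of the modified grid are still proper with uniformly nondegenerate quads. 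Then the Arzel\`a--Ascoli extraction~\eqref{eq:F-subseq-lim} can be upgraded to uniform convergence on compact subsets of $\overline\Omega\smallsetminus\{v,w\}$, so $g_{[\Omega;v,w]}$ is continuous up to $\partial\Omega$ and $h=\tfrac12\int\Im[(g(z))^2dz]$ is its continuous harmonic primitive with $h|_{\partial\Omega}=0$.

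Finally, for part (iii), I would deduce the Riemann-type boundary condition $\Im[g_{[\Omega;v,w]}(\zeta)(\tau(\zeta))^{1/2}]=0$ from the Dirichlet condition on $h$ by the standard local argument: if $h$ vanishes identically on a $C^1$-arc of $\partial\Omega$, then on that arc $\Im[(g(z))^2\,dz]=dh$ restricted to the boundary is zero, i.e. $g(\zeta)^2$ is parallel to $\overline{\tau(\zeta)}/|\tau(\zeta)|$, which is precisely $\Im[g(\zeta)(\tau(\zeta))^{1/2}]=0$; continuity of $g$ up to $\partial\Omega$ (just proven) makes this rigorous pointwise. The discrete counterpart of this implication — that the discrete contour integral of $\Im[(F^\delta)^2 dz]$ along a boundary arc is $o(1)$ because $H^\delta$ is $O(\delta)$ there, hence the limiting form is a genuine boundary-parallelism condition — can be phrased directly in terms of the one-step increments of Lemma~\ref{lem:H=intImF2}, the correction factors $\cos\thetaa_z/\cos\thetag_z$, $\sin\thetaa_z/\sin\thetag_z$ being $1+O(\delta)$ under $\BAP$. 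I expect the main obstacle to be the construction and control of the discrete barrier near $\partial\Omega$ uniformly in $\delta$ and in the (arbitrary, $\BAP$-satisfying) lattice $\Lambda^\delta$ — in particular making sure the boundary-modification trick does not spoil $\BAP$ and that subharmonicity survives it, which is exactly the content of Remark~\ref{rem:bdry-trick} and is the place where the sign $m\le 0$ is essential; the $C^1$-smoothness of $\partial\Omega$ is what lets us avoid the more delicate rough-boundary analysis of \cite{ChelkakSmirnov2}.
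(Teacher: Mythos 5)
Your sketch of parts (i) and (ii) is broadly in the spirit of the paper's argument (subharmonicity of $H^\delta$ on $\Gamma^{\bullet,\delta}$ via Proposition~\ref{prop:H-sub}, the boundary modification trick of Remark~\ref{rem:bdry-trick}, and comparison with discrete harmonic measure to get $H^\delta(v)=O(\delta)$ at near-to-boundary black vertices). Two points of your plan are, however, imprecise. First, Proposition~\ref{prop:regularity} is an \emph{interior} estimate, so invoking it ``on half-discs centered at boundary points'' to get boundary equicontinuity of $F^\delta$ is not directly available; the paper instead reads off $F^\delta=O(1)$ on $\partial\Omega^\delta$ from $H^\delta(v)=O(\delta)$ there (since $\delta\,|F^\delta|^2$ is comparable to one-step increments of $H^\delta$), and then uses the quantitative discrete maximum principle for $|F^\delta|$ from Lemma~\ref{lem:max-principle} to propagate $F^\delta=O(1)$ up to the boundary; continuity of $g$ up to $\partial\Omega$ is then deduced in continuum from $\Delta h=O(1)$ and $C^1$-smoothness of $\partial\Omega$ by elliptic regularity, not from boundary equicontinuity of the discrete observables. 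Second, RSW estimates play no role here; the lower bound on $H^\delta$ near the boundary comes simply from the fact that the increments in~\eqref{eq:HX-def} are nonnegative.

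The genuine gap is in part (iii). From $h|_{\partial\Omega}\equiv0$ and $dh=\tfrac12\Im[g^2\,dz]$ you obtain that the tangential derivative of $h$ vanishes, i.e.\ $\Im[g(\zeta)^2\tau(\zeta)]=0$: $g(\zeta)^2\tau(\zeta)$ is \emph{real}. This is strictly weaker than the Riemann-type condition $\Im[g(\zeta)\tau(\zeta)^{1/2}]=0$, which asserts the sign $g(\zeta)^2\tau(\zeta)\ge0$; the other sign corresponds to the ``wrong'' boundary condition $\Re[g(\zeta)\tau(\zeta)^{1/2}]=0$ and would destroy the uniqueness argument of Remark~\ref{rem:f-uniq}(ii). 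Writing out the inner normal derivative ($n=i\tau$) gives $\partial_n h=\tfrac12\Re[g(\zeta)^2\tau(\zeta)]$, so the sign must be fixed by showing $\partial_n h\ge0$. This is not automatic from the Dirichlet condition; it is exactly what the discrete integration-by-parts argument of \cite[Remark~6.3]{ChelkakSmirnov2} supplies, and the paper emphasizes (quoting \cite[Proposition~22]{park2018massive}) that this argument relies only on the sub-harmonicity of $H^\delta$ on $\Gamma^{\bullet,\delta}$ and hence on the assumption $m\le0$. Your claim that $\Im[g^2\tau]=0$ is ``precisely'' $\Im[g\tau^{1/2}]=0$ is incorrect; the sign-control step must be supplied.
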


\begin{proof} Recall that the functions~$H^\delta$ are \emph{sub}-harmonic on~$\Gamma^{\bullet,\delta}$ due to Proposition~\ref{prop:H-sub} (or directly due to~\cite[Propostion~3.6]{ChelkakSmirnov2} if~$m=0$) and that this property remains true if one performs the `boundary modification trick' from~\cite[Section~3.6]{ChelkakSmirnov2}; see also Remark~\ref{rem:bdry-trick} above. Therefore, a comparison with the discrete harmonic measure and the estimate~$M^\delta=O(1)$ imply that~$H^\delta(v)=O(\delta)$ at near-to-boundary vertices~$v\in\Gamma^{\bullet,\delta}$ and hence~$F^\delta_{[\Omega^\delta;v,w]}(z)=O(1)$ for all~$z\in\partial\Omega^\delta$.

The discrete maximum principle for~$|F^\delta|$ (which holds up to a universal multiplicative constant due to Lemma~\ref{lem:max-principle} or directly due to results of~\cite[Section~2.5]{chelkak2020ising} if~$m=0$) together with the uniform estimate~\eqref{eq:F=O(1)} in the bulk of~$\Omega^\delta$ imply that
\[
F^\delta_{[\Omega^\delta;v,w]}=O(1)\ \ \text{as}\ \ \delta\to 0\ \ \text{uniformly in}\ \ \Omega^\delta\smallsetminus(B(v,r_0)\cup B(w,r_0)),
\]
including at the points close to the boundary~$\partial\Omega^\delta$.

Now one easily sees that each subsequential limit~\eqref{eq:F-subseq-lim} is uniformly bounded up to the boundary of~$\Omega$ (i.\,e., on compact subsets of~$\overline{\Omega}\smallsetminus\{v,w\}$). In particular, the primitive~$h=\frac12\int\Im[(g_{[\Omega;v,w]}(z))2dz]$ is continuous in~$\overline{\Omega}\smallsetminus\{v,w\}$ and satisfies the Dirichlet boundary conditions at~$\partial\Omega$.

Moreover, $\Delta h=4m^2|g_{[\Omega;u,v]}|=O(1)$ up to the boundary of a $C^1$-smooth domain~$\Omega$. Due to standard estimates, this implies that~$h$ is continuously differentiable and hence~$g_{[\Omega;u,v]}$ is continuous \emph{up to the boundary of~$\Omega$,} which allows one to speak about its boundary values. Finally, as pointed out in the proof of~\cite[Proposition~22]{park2018massive}, the discrete integration by parts argument used in~\cite[Remark~6.3]{ChelkakSmirnov2} only relies upon the sub-harmonicity of function~$H^\delta$ on~$\Gamma^{\bullet,\delta}$ and thus can be applied verbatim to control the sign of the normal derivative of~$h$ provided that~$m\le 0$.
\end{proof}

We are now in the position to prove the main result of this section. Note that we do \emph{not} assume the bound~$M^\delta=O(1)$ as~$\delta\to 0$ anymore.
\begin{theorem} \label{thm:conv-Fwv-crit} The following holds uniformly on compact subsets of~$\Omega\smallsetminus\{v,w\}$:
\[
F^\delta_{[\Omega^\delta;v,w]}(z)\ \to\ (\tfrac2\pi)^{\frac 12}f_{[\Omega;v,w]}(z)\ \ \text{as}\ \ \delta\to 0.
\]
Moreover, this convergence is also uniform with respect to positions of~$v$ and~$w$ provided that $v,w,z$ stay at definite distance from each other and from~$\partial\Omega$.
\end{theorem}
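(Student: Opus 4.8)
The plan is to deduce the theorem from the a priori estimate $M^\delta=O(1)$, which I address last as it is the crux of the argument. \emph{Granted $M^\delta=O(1)$}, Proposition~\ref{prop:F=O(1)} together with the equicontinuity furnished by Proposition~\ref{prop:regularity} makes the family $\{F^\delta_{[\Omega^\delta;v,w]}\}$ precompact, for the topology of uniform convergence on compact subsets of $\overline\Omega\smallsetminus\{v,w\}$ (up to the boundary thanks to Proposition~\ref{prop:bc-are-OK}(i)). Along any subsequence one extracts a limit $g_{[\Omega;v,w]}$, which is a holomorphic spinor branching over $v$ and $w$ by Corollary~\ref{cor:mass-hol} and which satisfies the Riemann-type boundary condition $\Im[g_{[\Omega;v,w]}(\zeta)\tau(\zeta)^{1/2}]=0$ on $\partial\Omega$ by Proposition~\ref{prop:bc-are-OK}(iii). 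It then remains to identify its singularities at $v$ and $w$ and to invoke the uniqueness statement of Remark~\ref{rem:f-uniq}(ii).

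For the behaviour near $v$ I would use the exact normalization~\eqref{eq:X(cuv)=1} together with the kernel of Theorem~\ref{thm:G-asymp-mass}, in its critical form~\eqref{eq:G-asymp-crit}: the remainder spinor $X^{\delta,\dagger}=X^\delta_{[\Omega^\delta;v,w]}-\rG^\delta_{[v]}$ satisfies $X^{\delta,\dagger}(c_{uv})=0$ by~\eqref{eq:G(c)=pi/2}, hence $H^{\delta,\dagger}$ obeys \emph{both} the maximum and the minimum principle near $v$ (Remark~\ref{rem:maxH-principle}); being also $O(1)$ on the surrounding circle, $H^{\delta,\dagger}=O(1)$ on the whole disc around $v$, and its subsequential limit is a bounded harmonic function with a removable singularity at $v$. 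Since a holomorphic spinor whose square admits a single-valued holomorphic primitive near $v$ must itself vanish there, the limit of $F^{\delta,\dagger}$ is $O(|z-v|^{1/2})$, so $g_{[\Omega;v,w]}(z)=(\tfrac2\pi)^{1/2}\bigl(e^{-i\pi/4}(z-v)^{-1/2}+O(|z-v|^{1/2})\bigr)$. Near $w$ the same scheme applies after subtracting $X^\delta_{[\Omega^\delta;v,w]}(c_w)\cdot\rG^\delta_{[w]}$, using~\eqref{eq:G(c)=pi/2} and the monodromy computation of Proposition~\ref{prop:F=O(1)} (via Lemma~\ref{lem:XG-monodromy}) which provides the bound $X^\delta_{[\Omega^\delta;v,w]}(c_w)=O(1)$; this yields $g_{[\Omega;v,w]}(z)=\cB\,(\tfrac2\pi)^{1/2}e^{i\pi/4}(z-w)^{-1/2}+O(|z-w|^{1/2})$ with $\cB:=\lim|X^\delta_{[\Omega^\delta;v,w]}(c_w)|\ge0$. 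Comparing with~\eqref{eq:f-asymp-near-v}--\eqref{eq:f-asymp-near-w}, every subsequential limit equals $(\tfrac2\pi)^{1/2}f_{[\Omega;v,w]}$ by Remark~\ref{rem:f-uniq}(ii), so the full limit exists.

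The main obstacle is the bound $M^\delta=O(1)$, which I would establish by the renormalization argument of~\cite{ChelkakHonglerIzyurov}, now streamlined by the kernels of Theorem~\ref{thm:G-asymp-mass}. Suppose $M^{\delta_k}\to\infty$ and set $\widehat F^\delta:=(M^\delta)^{-1/2}F^\delta_{[\Omega^\delta;v,w]}$, $\widehat H^\delta:=(M^\delta)^{-1}H^\delta$, so that $\max_{\Omega(r_0)}|\widehat H^\delta|=1$ and $\widehat M^\delta=O(1)$; the associated spinor satisfies $\widehat X^\delta(c_{uv})=(M^\delta)^{-1/2}\to0$ and $\widehat X^\delta(c_w)=O(1)$. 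Running the local analysis of the previous paragraph with these normalizations, any subsequential limit $\widehat g$ has \emph{no} singularity at $v$ (the $(z-v)^{-1/2}$-coefficient tends to $0$) and at worst a square-root singularity at $w$ whose squared coefficient is proportional to $\widehat\cB^2\ge0$; moreover $\widehat g$ satisfies the Riemann-type condition on $\partial\Omega$ and its primitive $\widehat h$ satisfies Dirichlet boundary conditions. The Riemann condition forces the outward normal derivative $\partial_n\widehat h=-\tfrac12|\widehat g\,\tau^{1/2}|^2\le0$ on $\partial\Omega$, while the logarithmic singularity at $w$ (if present) makes $\widehat h\le0$ inside with $\widehat h|_{\partial\Omega}=0$; Green's formula then forces $\widehat h\equiv0$, hence $\widehat g\equiv0$ (cf.\ Remark~\ref{rem:f-uniq}(ii)). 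But $\widehat F^\delta\to\widehat g$ and $\widehat H^\delta\to\widehat h$ uniformly on the compact set $\overline{\Omega(r_0)}$, again using Proposition~\ref{prop:bc-are-OK}(i), which contradicts $\max_{\Omega(r_0)}|\widehat H^\delta|=1$. Hence $M^\delta=O(1)$.

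Finally, uniformity in the positions of $v$ and $w$ follows because every estimate used above --- the asymptotics of Theorem~\ref{thm:G-asymp-mass}, the regularity of Proposition~\ref{prop:regularity}, the bounds of Propositions~\ref{prop:F=O(1)} and~\ref{prop:bc-are-OK}, and the discrete maximum principle of Lemma~\ref{lem:max-principle} --- is uniform as long as $v,w,z$ and $\partial\Omega$ stay mutually separated, and the continuum solution $f_{[\Omega;v,w]}$ depends continuously on $(v,w)$; a routine compactness argument over the relevant parameter range upgrades the pointwise convergence to the claimed uniform one. The genuinely delicate point is the renormalization step for $M^\delta=O(1)$ and, inside it, the identification of the local behaviour of the renormalized limit at the two branch points --- where the explicit kernels $\cG^\delta_{[v]}$, $\cG^\delta_{[w]}$ and the two-sided maximum principle of Remark~\ref{rem:maxH-principle} do all the work, bypassing the `$z^{1/2}$-type' kernels used in~\cite{ChelkakHonglerIzyurov}.
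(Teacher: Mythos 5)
Your proposal is correct and follows essentially the same route as the paper's proof: first establish convergence of subsequential limits to the continuum BVP solution assuming $M^\delta=O(1)$, using Propositions~\ref{prop:F=O(1)},~\ref{prop:regularity},~\ref{prop:bc-are-OK}, the subtraction of the explicit kernel $\rG^\delta_{[v]}$ together with Remark~\ref{rem:maxH-principle} and the monodromy bound $X^\delta(c_w)=O(1)$ near the branch points, and Remark~\ref{rem:f-uniq}(ii) for uniqueness; then rule out $M^\delta\to\infty$ by the same renormalization argument, observing that the renormalized limit solves a homogeneous BVP and must vanish. One small remark: your citation of Proposition~\ref{prop:bc-are-OK}(i) (rather than Proposition~\ref{prop:F=O(1)}) for the up-to-the-boundary bound on $\widehat F^\delta$ in the contradiction step is in fact the more accurate reference.
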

\begin{proof} We first prove the required result provided that~$M^\delta=O(1)$ as~$\delta\to 0$ and then rule out the impossible scenario~$M^\delta\to\infty$ as~$\delta=\delta_k\to\infty$.

If~$M^\delta=O(1)$ as~$\delta\to 0$ then we only need to prove that each subsequential limit~$g_{[\Omega;v,w]}$ of s-holomorphic observables~$F^\delta_{[\Omega^\delta;v,w]}$ solves the same boundary value problem as~$f_{ [\Omega;v,w]}$, up to the multiple~$(2/\pi)^{1/2}$ in the asymptotics~\eqref{eq:f-asymp-near-v}. (Recall that the existence of these subsequential limits~$g_{[\Omega;v,w]}$ follows from Proposition~\ref{prop:F=O(1)} and the Arzel\`a--Ascoli theorem.) Proposition~\ref{prop:bc-are-OK} guarantees that~$g_{[\Omega;v,w]}$ satisfies the required Riemann-type boundary conditions at~$\partial\Omega$. Therefore, it remains to analyze the asymptotics of~$g_{[\Omega;v,w]}(z)$ as $z\to v$ and as~$z\to w$.

It follows from the convergence results for s-holomorphic spinors~$F^\delta_{[\Omega^\delta;v,w]}$ and~$\cG^\delta_{[v]}$ that the functions~$H^{\delta,\dagger}$ (constructed from the fermionic observable~\eqref{eq:X-dag-def} in a usual way) converge to a function
\[
\textstyle h^\dagger\ :=\ \frac{1}{2}\int\Im [(g_{[\Omega;v,w]}(z)-\big(\tfrac2\pi\big)^{\frac12}e^{-i\frac\pi4}(z-v)^{-\frac 12})^2dz]
\]
on compact subsets of a \emph{punctured} disc $B(v,2r_0)\smallsetminus\{w\}$. However, the functions~$H^{\delta,\dagger}$ remain uniformly bounded in the whole disc~$B(v,2r_0)$ due to the discrete maximum principle. Therefore, the \emph{harmonic} function~$h^\dagger$ has a removable singularity at~$v$ and
\[
g_{[\Omega;v,w]}(z)-\big(\tfrac2\pi\big)^{\frac12}e^{-i\frac\pi4}(z-v)^{-\frac 12}\ =\ O(|z-v|^{\frac 12})\ \ \text{as}\ \ z\to v
\]
(the right-hand side automatically improves from~$O(1)$ to~$O(|z-v|^{\frac 12})$ since~$g_{[\Omega;v,w]}$ has a square-root-type branching at~$v$). A similar argument applies near the second branching point~$w$. More precisely, using the estimate~\eqref{eq:X(cw)=O(1)} and passing to a subsequence once more, we can assume that
\begin{equation}
\label{eq:X(cw)->B}
X^\delta_{[\Omega^\delta;v,w]}(c_w)\ \to\ B\in\R\ \ \text{as}\ \ \delta=\delta_k\to 0.
\end{equation}
Then, the same argument as above implies that
\[
g_{[\Omega;v,w]}(z) - B\cdot \big(\tfrac2\pi\big)^{\frac12}e^{i\frac\pi4}(z-w)^{-\frac 12}\ =\ O(|z-w|^{\frac12})\ \ \text{as}\ \ z\to w.
\]

Let us now rule out the scenario when~$M^\delta\to\infty$ as~$\delta=\delta_k\to 0$. In this hypothetical situation one can consider the re-scaled observables
\[
\widetilde{F}{}^\delta_{[\Omega^\delta;v,w]}\ :=\ (M^\delta)^{-\frac{1}{2}}\cdot F^\delta_{[\Omega^\delta;v,w]}
\]
and repeat the arguments given above relying upon the identity~$\max_{\Omega(r_0)}|\widetilde{H}{}^\delta|=1$ for the corresponding functions~$\widetilde{H}{}^\delta$. Each subsequential limit~$\widetilde{g}_{[\Omega;v,w]}$ obtained in this way solves the same boundary value problem as~$f_{[\Omega;v,w]}$ except that
\[
\widetilde{g}_{[\Omega;v,w]}(z)\ =\ O(|z-v|^{\frac12})\ \ \text{as}\ \ z\to v
\]
since the re-scaled functions~$(M^\delta)^{-\frac 12}\cG^\delta_{[v]}$ vanish (on compact subsets of~$\C\smallsetminus\{v\}$) in the limit~$\delta\to 0$. Similarly to Remark~\ref{rem:f-uniq}(ii), this boundary value problem does not admit a non-trivial solution, i.\,e.,~$\widetilde{g}_{[\Omega;v,w]}(z)=0$ for all~$z\in\Omega$. However, together with the uniform estimate $\widetilde{F}{}^\delta_{ [\Omega^\delta;v,w]}=O(1)$ near the boundary of~$\Omega^\delta$ provided by Proposition~\ref{prop:F=O(1)}, this implies that
\[
\widetilde{H}{}^\delta\ \to\ 0\ \ \text{as}\ \ \delta\to 0\ \ \text{on compact subsets of$\ \ \overline{\Omega}\smallsetminus\{v,w\}$},
\]
which contradicts to the normalization $\max_{\Omega(r_0)}|\widetilde{H}{}^\delta|=1$.

Finally, note that we never used the fact that the positions of points~$v^\delta=v$ and~$w^\delta=w$ are fixed; in all the arguments given above it is sufficient to assume that~$u^\delta\to u$ and $w^\delta\to w$ as $\delta\to 0$. By compactness, this implies that all the convergence statements discussed above are uniform in~$u$ and $w$ provided that these points stay at definite distance from each other and from~$\partial\Omega$.
\end{proof}

\subsection{Critical model: convergence of ratios of spin-spin correlations} \label{sub:conv-crit}
We now discuss several corollaries of Theorem~\ref{thm:conv-Fwv-crit} following the scheme designed in~\cite{ChelkakHonglerIzyurov}. However, let us note that we give more straightforward proofs than those from~\cite{ChelkakHonglerIzyurov}.
The following result generalizes~\cite[Theorem~1.7]{ChelkakHonglerIzyurov} to isoradial grids.
\begin{corollary} \label{cor:conv-to-B-crit}
In the same setup as above, let~$b\in \Omega^{\bullet,\delta}$ be adjacent to~$w\in\Omega^{\circ,\delta}$ and recall that~$v\sim u$. Then,
\begin{equation}
\label{eq:conv-to-B-crit}
\frac{\E^\wired_{\Omega^\delta}[\mu_v\mu_b]}{\E^\wired_{\Omega^\delta}[\sigma_u\sigma_w]}\ =\ \frac{\E^\free_{\Omega^{*,\delta}}\,[\sigma_v\sigma_b]}{\E^\wired_{\Omega^\delta}[\sigma_u\sigma_w]}\ \to\ \cB_\Omega(v,w)\ \ \text{as}\ \ \delta\to 0\,.
\end{equation}
\end{corollary}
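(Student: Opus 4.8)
The plan is to extract the coefficient $\cB_\Omega(v,w)$ from the limiting spinor $f_{[\Omega;v,w]}$ by specializing the convergence of fermionic observables in Theorem~\ref{thm:conv-Fwv-crit} to the corner next to $w$, and to recognize that the left-hand side of~\eqref{eq:conv-to-B-crit} is precisely the value of the discrete observable $X^\delta_{[\Omega^\delta;v,w]}$ at that corner. First I would recall that, by the Kadanoff--Ceva formalism, the Kramers--Wannier duality~\eqref{eq:KW-duality} gives the first equality $\E^\wired_{\Omega^\delta}[\mu_v\mu_b]=\E^\free_{\Omega^{*,\delta}}[\sigma_v\sigma_b]$ directly (this is the $m=n=2$ case of~\eqref{eq:KW-duality} with the roles of spins and disorders swapped, and no sign subtleties arise as noted in Remark~\ref{rem:mu-sigma-spinor}). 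Then I would identify a corner $c_w\in\Upsilon^\times_{[v,w]}(\Omega^\delta)$ adjacent to both $w$ and $b$, and use the propagation equation~\eqref{eq:3-terms} together with $\chi_{c_w}=\mu_{v(c_w)}\sigma_{u(c_w)}=\mu_b\sigma_w$ to write
\[
\E^\wired_{\Omega^\delta}[\mu_v\mu_b]\ =\ \E^\wired_{\Omega^\delta}[\chi_{c_w}\mu_v\sigma_w\cdot\sigma_w]\ =\ \E^\wired_{\Omega^\delta}[\chi_{c_w}\mu_v\sigma_w]\,,
\]
using $\sigma_w^2=1$; hence the left-hand ratio in~\eqref{eq:conv-to-B-crit} equals $X^\delta_{[\Omega^\delta;v,w]}(c_w)$ by definition~\eqref{eq:Xvw-def}.

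Next I would invoke Theorem~\ref{thm:conv-Fwv-crit}: the complex observables $F^\delta_{[\Omega^\delta;v,w]}$ converge to $(2/\pi)^{1/2}f_{[\Omega;v,w]}$ uniformly on compact subsets of $\Omega\smallsetminus\{v,w\}$, and this forces convergence of the real spinors $\eta_c X^\delta_{[\Omega^\delta;v,w]}(c)$ on corners $c$ at a \emph{fixed} distance from $w$. To reach the corner $c_w$ itself, which lies within $O(\delta)$ of the branch point $w$, I would use the explicit branching kernel $\rG^\delta_{[w]}$ of Theorem~\ref{thm:G-asymp-mass} exactly as in the proof of Proposition~\ref{prop:F=O(1)}: from the monodromy identity of Lemma~\ref{lem:XG-monodromy},
\[
2X^\delta_{[\Omega^\delta;v,w]}(c_w)\ =\ \tfrac12\oint^{[\delta]}_{|z-w|=2r_0}\Im\big[F^\delta_{[\Omega^\delta;v,w]}(z)\,\cG^\delta_{[v(c_w)]}(z)\,dz\big]\,,
\]
and passing to the limit using the convergence $F^\delta_{[\Omega^\delta;v,w]}\to(2/\pi)^{1/2}f_{[\Omega;v,w]}$ and the asymptotics $\cG^\delta_{[v(c_w)]}(z)\to e^{i\pi/4}(2/\pi)^{1/2}(z-w)^{-1/2}$ from~\eqref{eq:G-asymp-crit}, the contour integral becomes a residue-type computation. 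Feeding in the local expansion~\eqref{eq:f-asymp-near-w}, namely $f_{[\Omega;v,w]}(z)=e^{i\pi/4}\cB_\Omega(v,w)(z-w)^{-1/2}+O(|z-w|^{1/2})$, the only surviving term of $\Im[f_{[\Omega;v,w]}(z)\cdot e^{i\pi/4}(z-w)^{-1/2}dz]$ along the circle is the one with integrand $\propto(z-w)^{-1}$, which integrates to a nonzero multiple of $\cB_\Omega(v,w)$; tracking the constants $(2/\pi)^{1/2}\cdot(2/\pi)^{1/2}=2/\pi$ and the phases $e^{i\pi/4}$ should produce exactly $2X^\delta_{[\Omega^\delta;v,w]}(c_w)\to 2\cB_\Omega(v,w)$.

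The main obstacle I anticipate is bookkeeping rather than conceptual: one must carefully fix the lift of $c_w$ on the relevant double cover so that the convention~\eqref{eq:Uu=Uv-convention} holds, check that $\rG^\delta_{[v(c_w)]}(c_w)=1$ (from~\eqref{eq:G(c)=pi/2}) so that the monodromy really reproduces $X^\delta_{[\Omega^\delta;v,w]}(c_w)$ with the right sign, and verify that the uniform-in-$(v,w,z)$ control from Theorem~\ref{thm:conv-Fwv-crit} transfers to uniform convergence of the contour integral (this is immediate once $r_0$ is chosen small enough and $v,w$ stay at definite distance from each other and from $\partial\Omega$). A minor secondary point: one should note that $\cB_\Omega(v,w)\ge0$ by the sign convention of Remark~\ref{rem:f-uniq}(i), so that the limit in~\eqref{eq:conv-to-B-crit}, which is manifestly nonnegative as a ratio of spin-spin correlations (both positive by Griffiths' inequality), is consistent. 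Everything here is ``local analysis near $w$'' and extends verbatim to $n>2$ and, after substituting the massive kernels of Theorem~\ref{thm:G-asymp-mass}, to $m\ne0$, which is why the same argument underlies Corollary~\ref{cor:conv-to-A-crit} and its massive counterpart.
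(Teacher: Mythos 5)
Your proposal is essentially the paper's own argument: identify the ratio with $X^\delta_{[\Omega^\delta;v,w]}(c_w)$ via Kramers--Wannier duality (the $m=2$, $n=0$ instance of~\eqref{eq:KW-duality}, not $m=n=2$ as you wrote, though this is immaterial), then extract $\cB_\Omega(v,w)$ from the monodromy formula $2X^\delta_{[\Omega^\delta;v,w]}(c_w)=2X^\delta_{[\Omega^\delta;v,w]}(c_w)\rG^\delta_{[b]}(c_w)=\tfrac12\oint^{[\delta]}\Im[F^\delta_{[\Omega^\delta;v,w]}\,\cG^\delta_{[b]}\,dz]$ and the asymptotics~\eqref{eq:f-asymp-near-w}. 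The paper in fact presents this exact computation as an \emph{alternative} proof; its primary route is even shorter: the convergence $X^\delta_{[\Omega^\delta;v,w]}(c_w)\to B$ along subsequences is already established in~\eqref{eq:X(cw)->B} inside the proof of Theorem~\ref{thm:conv-Fwv-crit}, and the uniqueness of the solution to the boundary value problem forces every such $B$ to equal $\cB_\Omega(v,w)$, so no further contour-integral computation is needed.

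One small but consequential slip to fix: since $v(c_w)=b\in\Gamma^{\bullet,\delta}$, the asymptotics~\eqref{eq:G-asymp-crit} give $\cG^\delta_{[b]}(z)\to e^{-i\pi/4}(2/\pi)^{1/2}(z-w)^{-1/2}$ with phase $e^{-i\pi/4}$, not $e^{i\pi/4}$ as you wrote. With your sign the product of phases is $e^{i\pi/2}=i$ and the leading term $\Im[i\,\cB_\Omega(z-w)^{-1}dz]$ integrates to zero around the circle; with the correct phase the two factors cancel, $\Im[\cB_\Omega(z-w)^{-1}dz]$ integrates to $2\pi\cB_\Omega$, and the bookkeeping closes. (You also write ``the explicit branching kernel $\rG^\delta_{[w]}$'' in the prose before correctly using $\cG^\delta_{[v(c_w)]}=\cG^\delta_{[b]}$ in the display; the kernel must branch at $b$, not $w$, so that its product with $X^\delta_{[\Omega^\delta;v,w]}$ yields the desired monodromy around $c_w$.)
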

\begin{remark} Recall that, according to the Kramers--Wannier duality, the disorder-disorder correlator~$\E^\wired_{\Omega^\delta}[\mu_v\mu_b]$ is equal to the spin-spin correlation $\E^\free_{\Omega^{*,\delta}}[\sigma_v\sigma_b]$ in the \emph{dual} Ising model on~$\Omega^{*,\delta}$ with free boundary conditions.
\end{remark}
\begin{proof} Let~$c_w\in\Upsilon^\times_{[v,w]}(\Omega^\delta)$ be adjacent to both~$w$ and~$b$. By definition of the observable~$X^\delta_{[\Omega^\delta;v,w]}$ (see~\eqref{eq:Xvw-def}), we have ${\E^\wired_{\Omega^\delta}[\mu_v\mu_b]}/{\E^\wired_{\Omega^\delta}[\sigma_u\sigma_w]}=\pm X^\delta_{[\Omega^\delta;v,w]}(c_w)$. In fact, the convergence of the values $X^\delta_{[\Omega^\delta;v,w]}(c_w)$ has been already implicitly proven along the proof of Theorem~\ref{thm:conv-Fwv-crit}. Indeed, each subsequential limit~$B$ in~\eqref{eq:X(cw)->B} has to be equal to~$\cB_\Omega(v,w)$ since the (a priori, unknown) coefficient in the asymptotics~\eqref{eq:f-asymp-near-w} is uniquely determined by the boundary problem itself.

Alternatively, one can consider the function~$H\big[X^\delta_{[\Omega^\delta;v,w]},\rG^\delta_{[b]}\big]$ defined in a vicinity of~$w$ and note that its additive monodromy around~$c_w$ is equal to
\begin{align*}
{ 2X^\delta_{[\Omega^\delta;v,w]}(c_w)\ }&=\ 2X^\delta_{[\Omega^\delta;v,w]}(c_w)\rG^\delta_{[b]}(c_w)\\
& =\ \frac{1}{2}\oint^{[\delta]}_{z:|z-w|=2r_0}\Im\big[F^\delta_{[\Omega^\delta;v,w]}(z)\cG^\delta_{[b]}(z)dz\big];
\end{align*}
see Lemma~\ref{lem:XG-monodromy}. Using the convergence of~$F^\delta_{[\Omega^\delta;v,w]}(z)$ and~$\cG^\delta_{[b]}(z)$ as~$\delta\to 0$ near the circle~$\{z:|z-w|=2r_0\}$ we easily see that
\begin{align*}
{ X^\delta_{[\Omega^\delta;v,w]}(c_w)\ }\to\ { \frac{1}{2\pi}}\oint^{[\delta]}_{z:|z-w|=2r_0}\Im\big[f_{[\Omega;v,w]}(z)(z-w)^{-\frac12}dz\big]\ =\ \pm \cB_{\Omega}(v,w),
\end{align*}
where we used the asymptotics~\eqref{eq:f-asymp-near-w} in the last equation. (The potential ambiguity in the sign is caused by the fact that we work with spinors branching over~$w$ rather than with single-valued holomorphic functions.) The $\pm$ sign can be easily fixed by noting that both~${\E^\wired_{\Omega^\delta}[\mu_v\mu_{b}]}/{\E^\wired_{\Omega^\delta}[\sigma_u\sigma_w]}$ and~$\cB_\Omega(v,w)$ are positive quantities.
\end{proof}
Following~\cite{ChelkakHonglerIzyurov}, let us introduce the quantity~$\cA_\Omega(v,w)\in\C$ as the sub-leading coefficient in the asymptotics of the spinor~$f_{[\Omega;v,w]}$ at the point~$z=v$:
\begin{equation}
\label{eq:cA-def}
f_{[\Omega;v,w]}(z)\ =\ e^{-\frac\pi 4}(z-v)^{-\frac 12}\cdot\big(1+2\cA_\Omega(v,w)(z-v)+O(|z-v|^2)\big).
\end{equation}
Further, define the two-point spin correlation function $\langle\sigma_{u_1}\sigma_{u_2}\rangle^\wired_\Omega$ as the exponential of the primitive of the following (closed) differential form:
\begin{equation}
\label{eq:sigma-sigma-def}
\langle\sigma_{u_1}\sigma_{u_2}\rangle^\wired_\Omega\ :=\ \exp \int^{(u_1,u_2)}\Re\big[\cA_\Omega(v,w)dv+\cA_\Omega(w,v)dw],
\end{equation}
normalized so that~$\langle\sigma_{u_1}\sigma_{u_2}\rangle^\wired_\Omega\sim |u_2-u_1|^{-\frac 14}$ as~$u_2\to u_1$. For simply connected domains~$\Omega$, the function~$\langle\sigma_{u_1}\sigma_{u_2}\rangle^\wired_\Omega$ can be written explicitly (e.\,g., see~\cite[Eq.~(1.2)]{ChelkakHonglerIzyurov}); it is also worth noting that the existence of such a primitive can be deduced from the convergence results discussed in Corollaries~\ref{cor:conv-to-A-crit},~\ref{cor:u'w'/uw-conv-crit} and Theorem~\ref{thm:spin-univ-crit} below. Also, let us define
\begin{equation}
\label{eq:sigma-sigma-free-def}
\langle\sigma_{u_1}\sigma_{u_2}\rangle^\free_\Omega\ :=\ \cB_\Omega(u_1,u_2)\cdot \langle\sigma_{u_1}\sigma_{u_2}\rangle^\wired_\Omega\,.
\end{equation}
It is not hard to deduce from~\eqref{eq:fvw-conf-cov} that, for conformal maps~$\varphi:\Omega\to\Omega'$,
\begin{equation}
\label{eq:sigma-sigma-cov}
\langle\sigma_{u_1}\sigma_{u_2}\rangle^\mathfrak{b}_\Omega\ =\ \langle\sigma_{\varphi(u_1)}\sigma_{\varphi(u_2)}\rangle^\mathfrak{b}_{\Omega'}\cdot |\varphi'(u_1)|^{\frac18}|\varphi'(u_2)|^{\frac18}.
\end{equation}
for both wired ($\mathfrak{b}=\wired$) and free ($\mathfrak{b}=\free$) boundary conditions.

The following result generalizes~\cite[Theorem~1.5]{ChelkakHonglerIzyurov} to the isoradial setup; it is worth noting that the proof given below considerably simplifies the arguments used in~\cite{ChelkakHonglerIzyurov} even in the square grid case~$\Lambda^\delta=\delta\mathbb Z^2$.

\begin{corollary}\label{cor:conv-to-A-crit} In the same setup as above, let~$u_0,u_1\in \Gamma^{\circ,\delta}$ be two neighboring `white' vertices adjacent to~$v\in \Gamma^{\bullet,\delta}$. Then, the following asymptotics hold:
\[
\frac{\E^\wired_{\Omega^\delta}[\sigma_{u_1}\sigma_w]}{\E^\wired_{\Omega^\delta}[\sigma_{u_0}\sigma_w]}\ =\ 1+\Re\big[(u_1-u_0)\cdot \cA_\Omega(v,w)\big]+o(\delta),
\]
where the error term is uniform with respect to~$v,w\in\Omega$ provided that they remain at a definite distance from each other and from~$\partial\Omega$.
\end{corollary}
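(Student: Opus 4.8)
The plan is to express the ratio $\E^\wired_{\Omega^\delta}[\sigma_{u_1}\sigma_w]/\E^\wired_{\Omega^\delta}[\sigma_{u_0}\sigma_w]$ directly in terms of values of the fermionic observable $X^\delta_{[\Omega^\delta;v,w]}$ at corners adjacent to $v$, and then invoke Theorem~\ref{thm:conv-Fwv-crit} together with the definition~\eqref{eq:cA-def} of $\cA_\Omega(v,w)$. The key combinatorial input is that, by definition~\eqref{eq:Xvw-def}, for any corner $c$ adjacent to $v$ the value $X^\delta_{[\Omega^\delta;v,w]}(c)$ equals $\E^\wired_{\Omega^\delta}[\chi_c\mu_v\sigma_w]/\E^\wired_{\Omega^\delta}[\sigma_u\sigma_w]$, and placing $\chi_c=\mu_{v(c)}\sigma_{u(c)}$ at the corner between $v$ and one of its white neighbors $u_q$ collapses the disorder insertion: $\chi_{c_{u_qv}}\mu_v=\sigma_{u_q}$ inside correlators (the two $\mu_v$'s cancel), so $X^\delta_{[\Omega^\delta;v,w]}(c_{u_qv})=\E^\wired_{\Omega^\delta}[\sigma_{u_q}\sigma_w]/\E^\wired_{\Omega^\delta}[\sigma_{u_0}\sigma_w]$ once we normalize by the same denominator. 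Thus the desired ratio is exactly $X^\delta_{[\Omega^\delta;v,w]}(c_{u_1v})$ (up to the sign/spinor bookkeeping on $\Upsilon^\times_{[v,w]}$), and $X^\delta_{[\Omega^\delta;v,w]}(c_{u_0v})=1$ by~\eqref{eq:X(cuv)=1}.

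The second step is to extract the $o(\delta)$ expansion of $X^\delta_{[\Omega^\delta;v,w]}(c_{u_1v})$ from the convergence $F^\delta_{[\Omega^\delta;v,w]}\to(\tfrac2\pi)^{\frac12}f_{[\Omega;v,w]}$. Here I would use the relation between the real-valued Kadanoff--Ceva spinor $X$ and the complex-valued s-holomorphic $F$ at a quad $z$ neighboring $v$: from Definition~\ref{def:shol-def-mass}, $\delta^{-1/2}\eta_c X(c)=\Pr[F(z);\widehat\eta_{c,z}\R]$, so differences of $X$ at two corners adjacent to $v$ are governed by the local behavior of $F$ near $v$. Combining the asymptotics~\eqref{eq:f-asymp-near-v}--\eqref{eq:cA-def} of $f_{[\Omega;v,w]}$ near $v$ (which encodes precisely the subleading coefficient $\cA_\Omega(v,w)$), the factor $(\tfrac2\pi)^{1/2}$ cancels against the leading singular term (whose coefficient is fixed so that $X(c_{u_0v})=1$), and a short computation with $\eta_c=\varsigma e^{-\frac i2\arg(v-u(c))}$ and $(z-v)^{-1/2}(1+2\cA_\Omega(z-v))$ gives that the increment $X(c_{u_1v})-X(c_{u_0v})=X(c_{u_1v})-1$ equals $\Re[(u_1-u_0)\cA_\Omega(v,w)]+o(\delta)$. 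The discrete-to-continuum matching requires that $|u_1-u_0|=O(\delta)$ (which holds under $\BAP$) and that the error in Theorem~\ref{thm:conv-Fwv-crit} improves to $o(\delta)$-precision at the level of corner-differences; this follows because the convergence $F^\delta\to f$ holds together with Hölder equicontinuity (Proposition~\ref{prop:regularity}) and, more to the point, because one can apply Theorem~\ref{thm:conv-Fwv-crit} with $v$ itself moving — the uniformity in the position of $v$ is exactly what upgrades a $C^0$ statement about $F$ to a $C^1$-type statement about $X$ near $v$. Concretely I would compare $X^\delta$ near $v$ to the explicit kernel $\rG^\delta_{[v]}$ of Theorem~\ref{thm:G-asymp-mass}, whose own subleading term is known, and read off $\cA_\Omega(v,w)$ from the regular part of $X^\delta-\rG^\delta_{[v]}$, which converges by the removable-singularity argument already used in the proof of Theorem~\ref{thm:conv-Fwv-crit}.

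The uniformity of the error term in $v,w$ is then automatic from the corresponding uniformity statement in Theorem~\ref{thm:conv-Fwv-crit}: all estimates are local near $v$ and depend only on $\mathrm{dist}(v,w)$, $\mathrm{dist}(v,\partial\Omega)$ and $\theta_0$, so a standard compactness argument over the admissible configurations of $(v,w)$ closes this point. The main obstacle I anticipate is precisely the passage from the $\delta^0$-order convergence of $F^\delta$ to the $\delta^1$-order expansion of the ratio of spin correlations: one must argue that the difference of two nearby corner values of $X^\delta$, which are each $O(1)$, has a limit of its \emph{rescaled} version $\delta^{-1}(X(c_{u_1v})-X(c_{u_0v}))$ equal to a directional derivative of the limiting holomorphic spinor. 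This is the place where the a priori Lipschitz (not merely Hölder) regularity of massive s-holomorphic functions — available via~\cite[Section~4.2]{Park2021Fermionic} or via the s-embedding framework, cf.\ Proposition~\ref{prop:regularity} — together with the explicit $O(\delta^2)$-accurate asymptotics of $\cG^\delta_{[v]}$ in Theorem~\ref{thm:G-asymp-mass} does the work: subtracting $\rG^\delta_{[v]}$ removes the singular part exactly, and what remains is a genuinely Lipschitz discrete holomorphic function whose increments converge to those of its continuous limit. Everything else is routine algebra with the spinor $\eta_c$ and the identification $\chi_{c_{u_qv}}\mu_v=\sigma_{u_q}$.
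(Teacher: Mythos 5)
Your first step — reducing the ratio to the increment $X^\delta_{[\Omega^\delta;v,w]}(c_1)-X^\delta_{[\Omega^\delta;v,w]}(c_0)$ via $\chi_{c_{u_qv}}\mu_v=\sigma_{u_q}$ and the normalization~\eqref{eq:X(cuv)=1} — is exactly the paper's. The second step is where there is a genuine gap. You propose to subtract $\rG^\delta_{[v]}$ from $X^\delta$ and read off $\cA_\Omega(v,w)$ from the increments of the remainder near $v$, on the grounds that this remainder is ``a genuinely Lipschitz discrete holomorphic function whose increments converge to those of its continuous limit.'' But the continuous remainder is
\[
g^\dagger(z)\ =\ \bigl(\tfrac2\pi\bigr)^{\!\frac12}\Bigl(f_{[\Omega;v,w]}(z)-e^{-i\frac{\pi}{4}}(z-v)^{-\frac12}\Bigr)\ =\ 2\bigl(\tfrac2\pi\bigr)^{\!\frac12}e^{-i\frac{\pi}{4}}\cA_\Omega(v,w)\,(z-v)^{\frac12}+O(|z-v|^{\frac32}),
\]
which is only H\"older-$\frac12$ at $v$, not Lipschitz: its derivative blows up like $(z-v)^{-1/2}$. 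Correspondingly $F^{\delta,\dagger}$ is not uniformly Lipschitz near $v$, and the subsequential convergence established in the proof of Theorem~\ref{thm:conv-Fwv-crit} holds only on compact subsets of $\Omega\smallsetminus\{v,w\}$ — it gives no direct control of $F^{\delta,\dagger}$ at the quad adjacent to $v$, which is what you would need to evaluate $X^{\delta,\dagger}(c_1)$. Uniform H\"older bounds plus pointwise convergence away from $v$ do not, by themselves, yield the $\delta^{-1}$-rescaled limit of $X^{\delta,\dagger}(c_1)-X^{\delta,\dagger}(c_0)$.

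The paper sidesteps the whole local analysis by applying the monodromy Lemma~\ref{lem:XG-monodromy} with the \emph{difference} kernel $\cG^\delta_{[u_1]}-\cG^\delta_{[u_0]}$:
\[
X^\delta(c_1)-X^\delta(c_0)\ =\ \tfrac{1}{4}\oint^{[\delta]}_{|z-v|=2r_0}\Im\bigl[F^\delta_{[\Omega^\delta;v,w]}(z)\,(\cG^\delta_{[u_1]}(z)-\cG^\delta_{[u_0]}(z))\,dz\bigr].
\]
This expresses the corner increment as an integral at \emph{macroscopic} distance from $v$, where the convergence $F^\delta\to(\tfrac2\pi)^{1/2}f_{[\Omega;v,w]}$ is available. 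Crucially, the asymptotics~\eqref{eq:G-asymp-crit} have no $O(\delta)$ term, so the difference kernel is $\tfrac12(u_1-u_0)e^{i\frac\pi4}(\tfrac2\pi)^{1/2}(z-v)^{-3/2}+O(\delta^2)$, already of order $\delta$ on the integration contour. Substituting the expansion~\eqref{eq:cA-def} of $f_{[\Omega;v,w]}$ and evaluating the residue then gives $\Re[(u_1-u_0)\cA_\Omega(v,w)]+o(\delta)$ in one stroke. So the missing ingredient in your argument is precisely this Cauchy-type representation with the explicit difference kernel; your acknowledgment that the ``$\delta^0$ to $\delta^1$ upgrade'' is the main obstacle is well-founded, but the Lipschitz-regularity mechanism you invoke to close it does not actually apply to the function you are looking at.
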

\begin{proof} Let~$v(c_{0,1})=v$,~$u(c_{0,1})=u_{0,1}$, and assume that~$c_{0,1}\in\Upsilon^\times_{[v;w]}(\Omega^\delta)$ are chosen to be adjacent on the double cover. Consider the observable~\eqref{eq:Xvw-def} with~$u=u_0$ and note that
\[
\frac{\E^\wired_{\Omega^\delta}[\sigma_{u_1}\sigma_w]}{\E^\wired_{\Omega^\delta}[\sigma_{u_0}\sigma_w]}-1\ =\ X^\delta_{[\Omega^\delta;v,w]}(c_1)-X^\delta_{[\Omega^\delta;v,w]}(c_0).
\]
Using Lemma~\ref{lem:XG-monodromy} as in the proof of Corollary~\ref{cor:conv-to-B-crit} we see that
\begin{align*}
X^\delta_{[\Omega^\delta;v,w]}(c_1)&-X^\delta_{[\Omega^\delta;v,w]}(c_0)\\
& =\ {\frac{1}{4}}\oint^{[\delta]}_{z:|z-v|=2r_0}\Im \big[ F^\delta_{[\Omega^\delta;v,w]}(z)\cdot (\cG^\delta_{[u_1]}(z)-\cG^\delta_{[u_0]}(z))dz\big].
\end{align*}
Note that the asymptotics~\eqref{eq:G-asymp-crit} contain no $O(\delta)$ term. Therefore,
\[
\cG^\delta_{[u_1]}(z)-\cG^\delta_{[u_0]}(z)\ =\ \tfrac{1}{2}(u_1-u_0)\cdot e^{i\frac\pi4}{ \big(\tfrac{2}{\pi}\big)^{\frac{1}{2}}}(z-v)^{-\frac32}+O(\delta^2)\ \ \text{as}\ \ \delta\to 0.
\]
Since we also know that~$F^\delta_{[\Omega^\delta;v,w]}(z)\to \big(\tfrac{2}{\pi}\big)^{\frac12}f_{[\Omega;v,w]}(z)$ as~$\delta\to 0$ (uniformly near the circle~$\{z:|z-v|=2r_0\}$, see Theorem~\ref{thm:conv-Fwv-crit}), this gives
\begin{align*}
\frac{\E^\wired_{\Omega^\delta}[\sigma_{u_1}\sigma_w]}{\E^\wired_{\Omega^\delta}[\sigma_{u_0}\sigma_w]}-1\
&=\ \frac{1}{4\pi}\oint_{z:|z-v|=2r_0}\Im\biggl[(u_1-u_0)\cdot\frac{e^{i\frac\pi 4}f_{[\Omega;v,w]}(z)dz}{(z-v)^{\frac32}}\biggr]+o(\delta)\\[2pt]
&=\ \Re\big[(u_1-u_0)\cdot \cA_\Omega(v,w)\big]+o(\delta),
\end{align*}
where we used definition~\eqref{eq:cA-def} of the coefficient~$\cA_\Omega(v,w)$ in the last equation.
\end{proof}

\begin{corollary} \label{cor:u'w'/uw-conv-crit}
In the same setup as above, the following holds:
\[
\frac{\E^\wired_{\Omega^\delta}[\sigma_{u'}\sigma_{w'}]}{\E^\wired_{\Omega^\delta}[\sigma_u\sigma_w]}\ \to\ \frac{\langle\sigma_{u'}\sigma_{w'}\rangle^\wired_\Omega}{\langle\sigma_u\sigma_w\rangle^\wired_\Omega}\ \ \text{as}\ \ \delta\to 0,
\]
uniformly with respect to~$u,u',w,w'\in\Omega$ provided that all these four points stay at a definite distance~$\rho>0$ from~$\partial\Omega$ and that~$|u-w|\ge \rho$ and~$|u'-w'|\ge \rho$.
\end{corollary}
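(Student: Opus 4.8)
The strategy is to reduce the statement to the already-established convergence of ratios of the form
\[
\frac{\E^\wired_{\Omega^\delta}[\sigma_{u_1}\sigma_w]}{\E^\wired_{\Omega^\delta}[\sigma_{u_0}\sigma_w]}\ =\ 1+\Re\big[(u_1-u_0)\cdot\cA_\Omega(v,w)\big]+o(\delta)
\]
from Corollary~\ref{cor:conv-to-A-crit}, by moving the two marked points one lattice step at a time. Concretely, one first fixes a lattice path $\gamma_1^\delta$ on~$\Gamma^{\circ,\delta}$ from~$u$ to~$u'$ staying at distance at least~$\tfrac12\rho$ from~$\partial\Omega$ and at distance at least~$\tfrac12\rho$ from~$w$, and similarly a path~$\gamma_2^\delta$ from~$w$ to~$w'$; here one uses that $\BAP$ guarantees the graph distance is comparable to the Euclidean one, so the number of steps in each path is~$O(\delta^{-1})$ with an implicit constant depending only on~$\Omega$,~$\rho$, and~$\theta_0$. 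Writing the full ratio $\E^\wired_{\Omega^\delta}[\sigma_{u'}\sigma_{w'}]/\E^\wired_{\Omega^\delta}[\sigma_u\sigma_w]$ as a telescoping product over the edges of $\gamma_1^\delta$ (with $w$ kept fixed) followed by the edges of $\gamma_2^\delta$ (with $u'$ kept fixed), and taking logarithms, one gets a Riemann sum
\[
\log\frac{\E^\wired_{\Omega^\delta}[\sigma_{u'}\sigma_{w'}]}{\E^\wired_{\Omega^\delta}[\sigma_u\sigma_w]}\ =\ \sum_{\text{edges of }\gamma_1^\delta}\Re[\,\cA_\Omega(v_j,w)\,(u_{j+1}-u_j)\,]\ +\ \sum_{\text{edges of }\gamma_2^\delta}\Re[\,\cA_\Omega(v'_j,u')\,(w_{j+1}-w_j)\,]\ +\ o(1),
\]
where $v_j\sim u_j,u_{j+1}$ and $v'_j\sim w_j,w_{j+1}$ are the intermediate black vertices. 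The errors from the individual steps are each~$o(\delta)$ and, crucially, uniform along the path (since the relevant points stay at definite distance from each other and from~$\partial\Omega$, which is exactly the uniformity asserted in Corollary~\ref{cor:conv-to-A-crit}); there are~$O(\delta^{-1})$ of them, so they sum to~$o(1)$.

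\textbf{Passing to the limit.} The next step is to recognise the two Riemann sums as approximations of the contour integrals $\int_{u}^{u'}\Re[\cA_\Omega(\zeta,w)\,d\zeta]$ and $\int_{w}^{w'}\Re[\cA_\Omega(\zeta,u')\,d\zeta]$. For this one needs continuity — indeed local Lipschitz continuity — of $\zeta\mapsto\cA_\Omega(\zeta,w)$ in both variables, away from the diagonal and from~$\partial\Omega$; this follows from the uniform-in-position convergence in Theorem~\ref{thm:conv-Fwv-crit} together with the definition~\eqref{eq:cA-def} of~$\cA_\Omega$ as a subleading Taylor coefficient of~$f_{[\Omega;v,w]}$, extracted by a Cauchy-type contour integral over a fixed small circle, which depends smoothly on the parameters. (One should note that~$\cA_\Omega(v,w)$ as it stands is defined for~$v$ a black vertex and~$w$ a white vertex of the specific grid; in the continuum it is simply the coefficient in the expansion of the continuous spinor $f_{[\Omega;\cdot,\cdot]}$, which makes sense for arbitrary distinct interior points, and the discrete quantities converge to it.) Then the Riemann sums converge, and one obtains
\[
\lim_{\delta\to0}\log\frac{\E^\wired_{\Omega^\delta}[\sigma_{u'}\sigma_{w'}]}{\E^\wired_{\Omega^\delta}[\sigma_u\sigma_w]}\ =\ \int_{u}^{u'}\Re[\cA_\Omega(\zeta,w)\,d\zeta]\ +\ \int_{w}^{w'}\Re[\cA_\Omega(\zeta,u')\,d\zeta].
\]
By the closedness of the form~$\Re[\cA_\Omega(v,w)dv+\cA_\Omega(w,v)dw]$ underlying the definition~\eqref{eq:sigma-sigma-def}, this equals $\log\big(\langle\sigma_{u'}\sigma_{w'}\rangle^\wired_\Omega/\langle\sigma_u\sigma_w\rangle^\wired_\Omega\big)$, which is the claim; exponentiating finishes the proof.

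\textbf{Main obstacle.} The delicate point is not any single step but the \emph{uniformity} needed to control the accumulation of~$O(\delta^{-1})$ error terms. One must be sure that the $o(\delta)$ in Corollary~\ref{cor:conv-to-A-crit} is genuinely uniform over all the intermediate configurations $(v_j,w)$ and $(v'_j,u')$ arising along the chosen paths — i.e.\ it depends only on the distances $\rho$ and on~$\theta_0$, not on the particular points. This is precisely the content of the last sentence of Corollary~\ref{cor:conv-to-A-crit} (itself inherited from the uniform version of Theorem~\ref{thm:conv-Fwv-crit}), so it is available; but care is required to choose the paths~$\gamma_1^\delta,\gamma_2^\delta$ so that at \emph{every} step all the active points (the moving point, the stationary spin, the nearby black vertex) stay at a definite distance from each other and from~$\partial\Omega$, which is possible because $u,u',w,w'$ do so by hypothesis and one may route the paths through a slightly smaller sub-domain. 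A secondary, purely bookkeeping, subtlety is to keep track of the sign/branch conventions for the spinor observables so that the increments of~$\log$ of the correlation ratio are correctly identified with $\Re[(u_{j+1}-u_j)\cA_\Omega(v_j,w)]$ rather than its negative; this is handled exactly as in the proof of Corollary~\ref{cor:conv-to-A-crit}.
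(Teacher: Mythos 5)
Your proposal is correct and follows essentially the same route as the paper: take logarithms of the telescoping product, apply the uniform asymptotics of Corollary~\ref{cor:conv-to-A-crit} at each of the $O(\delta^{-1})$ lattice steps, and recognize the resulting Riemann sum as the contour integral entering the definition~\eqref{eq:sigma-sigma-def}. The paper's own proof is terse and simply says to ``sum these expressions along appropriate paths''; you supply the same argument with the bookkeeping (choice of paths inside a subdomain, the $o(\delta)\cdot O(\delta^{-1})=o(1)$ error accumulation, and the continuity of $\cA_\Omega$ needed to pass from Riemann sums to integrals) made explicit, which is consistent with the intended reading.
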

\begin{proof} Note that the asymptotics provided by Corollary~\ref{cor:conv-to-A-crit} can be equivalently written as
\[
\log \frac{\E^\wired_{\Omega^\delta}[\sigma_{u_1}\sigma_w]}{\E^\wired_{\Omega^\delta}[\sigma_{u_0}\sigma_w]}\ =\ \Re\big[(u_1-u_0)\cdot \cA_\Omega(v,w)\big]+o(\delta).
\]
Summing these expression along appropriate paths connecting~$u$ to~$u'$ and~$w$ to~$w'$ inside~$\Omega^\delta$ and using definition~\eqref{eq:sigma-sigma-def} of the continuous correlation functions~$\langle\sigma_u\sigma_w\rangle^\wired_\Omega$ one easily gets the result.
\end{proof}

\subsection{Critical model: universality of the full-plane spin-spin correlations} \label{sub:univ-crit}
In this section we consider the critical Ising model on infinite isoradial grids~$\Lambda$ with~$\delta=1$ and prove that the two-point correlations~$\E_{\Lambda}[\sigma_u\sigma_w]$ have \emph{universal} (i.\,e., independent of~$\Lambda$) rotationally invariant asymptotics as~$|u-w|\to \infty$.
\begin{theorem} \label{thm:spin-univ-crit}
Let~$\mathcal{C}_\sigma=2^{\frac 16}e^{\frac32\zeta'(-1)}$. The following asymptotics hold:
\begin{equation}
\label{eq:spin-univ-crit}
\E_{\Lambda}[\sigma_u\sigma_w]\ \sim\ \mathcal{C}_\sigma^2\cdot |u-w|^{-\frac14}\ \ \ \text{as}\ \ |u-w|\to\infty,
\end{equation}
uniformly with respect to~$\Lambda$ provided that it satisfies the property~$\BAP$.
\end{theorem}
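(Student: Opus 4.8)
The plan is to combine the uniform RSW bounds of Section~\ref{sub:RSW}, the convergence of ratios of spin correlations in smooth bounded domains (Corollaries~\ref{cor:conv-to-A-crit} and~\ref{cor:u'w'/uw-conv-crit}), the ``star extension'' gluing of Section~\ref{sub:star-ext}, and the classical computation of the constant on the square grid. Since at criticality the Z-invariant weights depend only on the geometry of the rhombic tiling, the model is exactly scale invariant: $\E_\Lambda[\sigma_u\sigma_w]$ at Euclidean distance $D=|u-w|$ equals the correlation on the rescaled grid $D^{-1}\Lambda$ of mesh $\delta=D^{-1}$ between two points at unit distance. I will first show that $D^{\frac14}\E_\Lambda[\sigma_u\sigma_w]$ has a finite positive limit $L(\Lambda)$ as $D\to\infty$, the same in all directions; then that $L(\Lambda)=L(\mathbb Z^2)$ for every $\Lambda$; and finally invoke the known value $L(\mathbb Z^2)=\mathcal C_\sigma^2$.

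\textbf{Existence and rotational invariance of the limit.} Fix $\varepsilon>0$ and let $A=A(\varepsilon)$ be as in~\eqref{eq:RSW-sigma-sigma} (with $m=0$). For $D=|u-w|$, the estimate~\eqref{eq:RSW-sigma-sigma} sandwiches $\E_\Lambda[\sigma_u\sigma_w]$ between $(1-\varepsilon)\E^\wired_{\Lambda_{AD}(u)}[\sigma_u\sigma_w]$ and $\E^\wired_{\Lambda_{AD}(u)}[\sigma_u\sigma_w]$, uniformly over $\Lambda$ and over the positions of $u,w$. Rescaling by $D^{-1}$, the right-hand side becomes $\E^\wired_{\Omega^\delta}[\sigma_{u^\delta}\sigma_{w^\delta}]$ with $\delta=D^{-1}$, where $\Omega:=B(0,A)$, $\Omega^\delta\subset\Lambda^\delta:=D^{-1}\Lambda$, $u^\delta\to 0$ and $w^\delta\to w_0$ with $|w_0|=1$. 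Introducing auxiliary points $\tilde u,\tilde w$ near $0$ at a small fixed distance $\rho$, Corollary~\ref{cor:u'w'/uw-conv-crit} gives, uniformly,
\[
\frac{\E^\wired_{\Omega^\delta}[\sigma_{u^\delta}\sigma_{w^\delta}]}{\E^\wired_{\Omega^\delta}[\sigma_{\tilde u}\sigma_{\tilde w}]}\ \longrightarrow\ \frac{\langle\sigma_{0}\sigma_{w_0}\rangle^\wired_{B(0,A)}}{\langle\sigma_{\tilde u}\sigma_{\tilde w}\rangle^\wired_{B(0,A)}}=:r(A,\rho)\qquad(\delta\to0).
\]
Applying~\eqref{eq:RSW-sigma-sigma} once more to $\E^\wired_{\Omega^\delta}[\sigma_{\tilde u}\sigma_{\tilde w}]$ (a correlation at distance $\rho D$, whose $A\rho D$-neighbourhood is a pure piece of $\Lambda$ once $\rho<A^{-1}$) and unwinding the rescaling yields, for all large $D$ and any $u',w'$ with $|u'-w'|=\rho D$, the self-consistency relation $\E_\Lambda[\sigma_u\sigma_w]/\E_\Lambda[\sigma_{u'}\sigma_{w'}]=(1+O(\varepsilon))\,r(A,\rho)\,(1+o_D(1))$. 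By the conformal covariance~\eqref{eq:sigma-sigma-cov} of $\langle\cdot\rangle^\wired_\Omega$ and the normalization $\langle\sigma_{u_1}\sigma_{u_2}\rangle^\wired_\Omega\sim|u_2-u_1|^{-\frac14}$ as $u_2\to u_1$, one computes $r(A,\rho)\to\rho^{\frac14}$ as $A\to\infty$. Letting $D\to\infty$ and then $\varepsilon\to0$ (so $A\to\infty$), we obtain $\E_\Lambda[\sigma_u\sigma_w]/\E_\Lambda[\sigma_{u'}\sigma_{w'}]\to\rho^{\frac14}$ whenever $|u'-w'|=\rho|u-w|\to\infty$. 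Together with the a priori two-sided polynomial bounds $c\le D^{\frac14}\E_\Lambda[\sigma_u\sigma_w]\le C$, uniform in $\Lambda$, furnished by RSW and FKG, a standard subadditivity-type argument shows that $D^{\frac14}\E_\Lambda[\sigma_u\sigma_w]$ converges to a finite positive limit $L(\Lambda)$; since nothing above depends on the direction of $u-w$, this limit is direction-independent.

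\textbf{Universality of the limit and the value of the constant.} Fix $\Lambda$ and apply the star extension of Section~\ref{sub:star-ext} to a box of size $R$; gluing a rectangular wedge of the result to $\mathbb Z^2$, we obtain a single isoradial grid $\widehat\Lambda$ (still satisfying $\BAP$) which contains, within mutual distance $O(R)$, a box of $\Lambda$ of size $\asymp R$ and a box of $\mathbb Z^2$ of size $\asymp R$. Rescaling by $R^{-1}$ turns $\widehat\Lambda$ into a discrete approximation, of mesh $\delta=R^{-1}\to0$, of a \emph{fixed} smooth domain $\Omega$ in which a fixed sub-region $U_\Lambda$ carries $R^{-1}\Lambda$ and another fixed sub-region $U_{\mathbb Z}$ carries $R^{-1}\mathbb Z^2$. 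For a small fixed $\ell>0$, take $u_1,w_1\in U_\Lambda$ and $u_2,w_2\in U_{\mathbb Z}$, each pair at distance $\ell$; Corollary~\ref{cor:u'w'/uw-conv-crit} gives $\E^\wired_{\Omega^\delta}[\sigma_{u_1}\sigma_{w_1}]/\E^\wired_{\Omega^\delta}[\sigma_{u_2}\sigma_{w_2}]\to\langle\sigma_{u_1}\sigma_{w_1}\rangle^\wired_\Omega/\langle\sigma_{u_2}\sigma_{w_2}\rangle^\wired_\Omega=:\kappa(\ell)$, a continuum ratio independent of the lattice, while~\eqref{eq:RSW-sigma-sigma} identifies $\E^\wired_{\Omega^\delta}[\sigma_{u_1}\sigma_{w_1}]$ and $\E^\wired_{\Omega^\delta}[\sigma_{u_2}\sigma_{w_2}]$ with the full-plane correlations on $\Lambda$ and $\mathbb Z^2$ at distance $\ell R$, up to $(1+O(\varepsilon))$. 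Hence, by the previous step, $L(\Lambda)/L(\mathbb Z^2)=\kappa(\ell)\,(1+O(\varepsilon))$ for all admissible $\ell$ and $\varepsilon$. Finally, $\langle\sigma_{u_1}\sigma_{w_1}\rangle^\wired_\Omega\sim\ell^{-\frac14}\sim\langle\sigma_{u_2}\sigma_{w_2}\rangle^\wired_\Omega$ as $\ell\to0$ forces $\kappa(\ell)\to1$; sending $\ell\to0$ and then $\varepsilon\to0$ gives $L(\Lambda)=L(\mathbb Z^2)$. The value $L(\mathbb Z^2)=\mathcal C_\sigma^2$ with $\mathcal C_\sigma=2^{\frac16}e^{\frac32\zeta'(-1)}$ is the classical Onsager--Yang/Wu result, most conveniently read off on $\mathbb Z^2$ from the diagonal spin--spin correlations (cf.\ Remark~\ref{rem:reduction-to-square} and~\cite{mccoy2014two,chelkak_Hongler_Mahfouf,Chelkak2016ECM}). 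This proves~\eqref{eq:spin-univ-crit}.

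\textbf{Main obstacle.} The delicate point is purely one of bookkeeping: the RSW error $\varepsilon$ (with $A=A(\varepsilon)$), the discrete-to-continuum error $\delta=D^{-1}\to0$, and the growing-domain error $A\to\infty$ (resp.\ $\ell\to0$) must be arranged in a consistent order of limits. This is feasible precisely because the bounds~\eqref{eq:RSW-sigma-sigma} and the convergence statements of Corollaries~\ref{cor:conv-to-A-crit}--\ref{cor:u'w'/uw-conv-crit} are \emph{uniform} in $\Lambda$ (subject to $\BAP$) and in the positions of the points, and because RSW supplies the uniform a priori bounds needed to upgrade the self-consistency relation to genuine convergence. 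Everything else is an assembly of results established earlier in the paper; the one genuinely external input is the value of the constant on the square lattice.
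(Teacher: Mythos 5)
Your overall strategy --- RSW sandwiching, the ratio-convergence of Corollary~\ref{cor:u'w'/uw-conv-crit}, the star extension gluing, and the known value of the constant on $\mathbb{Z}^2$ --- is exactly the set of tools the paper uses, and the second half of your argument (comparing $\Lambda$ with $\mathbb{Z}^2$ inside a single domain $\Omega^\delta$ built from the star extension) is essentially the paper's argument. However, there is a genuine gap in your first step.

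You claim that the ratio convergence $\E_\Lambda[\sigma_u\sigma_w]/\E_\Lambda[\sigma_{u'}\sigma_{w'}]\to\rho^{1/4}$ (whenever $|u'-w'|=\rho|u-w|\to\infty$), combined with the a priori two-sided bounds, upgrades to convergence of $\phi(D):=D^{1/4}\E_\Lambda[\sigma_u\sigma_w]$ via ``a standard subadditivity-type argument''. There is no subadditivity structure here, and the conclusion does not follow: the ratio convergence is precisely the statement $\phi(D)/\phi(\rho D)\to 1$ for each $\rho\in(0,1)$ (uniformly over compacts of $\rho$), which, together with $c\le\phi\le C$, does \emph{not} force $\phi$ to have a limit. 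For instance, $\phi(D)=2+\sin\bigl(\sqrt{\log D}\,\bigr)$ satisfies $\phi(D)/\phi(\rho D)=1+O(1/\sqrt{\log D})$ uniformly over $\rho$ in compacts, is bounded between $1$ and $3$, and does not converge. Since your second step then invokes the value $L(\Lambda)$, the gap propagates and the argument as written is incomplete.

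The paper avoids this difficulty by not attempting to establish abstract existence of the limit at all. Instead, it compares $\E_\Lambda[\sigma_u\sigma_w]$ at scale $D$ directly with $\E_{\Lambda^{\mathrm{rect}}}[\cdot]$ and ultimately $\E_{\mathbb{Z}^2}[\cdot]$ at the same scale, via the star extension and Corollary~\ref{cor:u'w'/uw-conv-crit}. Because the asymptotics are \emph{explicitly known} on $\mathbb{Z}^2$ --- from Wu's diagonal computation, propagated to all directions by precisely the ratio argument you describe, where existence of the limit along the diagonal is not an issue since it is explicit --- one obtains
\[
\E_{\Lambda}[\sigma_u\sigma_w]\ =\ \mathcal{C}_\sigma^2\cdot|u-w|^{-\frac14}\cdot\bigl(1+O(\varepsilon)+o_{|u-w|\to\infty}(1)\bigr),
\]
and concludes by first shrinking $\varepsilon$ and then growing $|u-w|$. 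Your second step is easily repaired along these lines: rewrite it so as to estimate $\E_\Lambda$ in terms of the known $\E_{\mathbb{Z}^2}$ asymptotics directly, rather than in terms of the not-yet-defined $L(\Lambda)$. A further small efficiency in the paper is the choice of $\Omega$ as a disc centered at the midpoint of the two point-pairs, so that the continuum ratio $\langle\sigma_\rho\sigma_{-\rho}\rangle_\Omega^\wired/\langle\sigma_{z+\rho}\sigma_{z-\rho}\rangle_\Omega^\wired$ equals $1$ \emph{exactly}, by the conformal covariance~\eqref{eq:sigma-sigma-cov} and the point-reflection symmetry of the disc; this removes the need for the auxiliary limit $\ell\to0$ at the end of your argument.
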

\begin{proof} The proof is similar to the proof of Proposition~\ref{prop:E[sigma-sigma]=cst} for the subcritical model, see also Remark~\ref{rem:reduction-to-square}. More precisely, it goes in the following three steps:
\renewcommand\theenumi{\roman{enumi}}
\begin{enumerate}
\item prove that the asymptotics~\eqref{eq:spin-univ-crit} holds if~$\Lambda=\mathbb{Z}^2$ is the square grid;
\item prove that~\eqref{eq:spin-univ-crit} holds uniformly over the class of rectangular grids~$\Lambda^\mathrm{rect}$;
\item prove that the same asymptotics is fulfilled for all isoradial grids.
\end{enumerate}
The proofs of steps (ii) and (iii) are similar to each other and are based upon gluing large pieces of~$\Lambda^\mathrm{rect}$ and~$\mathbb Z^2$ in step (ii), and the `star extension' construction discussed in Section~\ref{sub:star-ext}, which allows to glue a large piece of~$\Lambda$ with a piece of a rectangular grid in step (iii). Thus, we only discuss steps (i) and (iii) below.

Step (i) follows from (ia) the explicit computation of `diagonal' correlations on the square grid due to Wu~\cite[Section~XI.4]{mccoy2014two}, see also~\cite[Appendix]{chelkak_Hongler_Mahfouf} for a short proof of this result; and (ib) the fact that these asymptotics are rotationally invariant,] see~\cite[Remark~2.6]{ChelkakHonglerIzyurov} (and~\cite{pinson} were this result was proven by other techniques).

To prove (iib), let us fix~$\varepsilon>0$ and let~$A=A(\varepsilon)\gg 1$ be chosen so that the RSW estimates~\eqref{eq:RSW-sigma-sigma} hold. Denote~$\rho=\rho(\varepsilon):=1/A(\varepsilon)$ and  $\delta:=2\rho\cdot |u-w|^{-1}$. Let the scaled grid~$\delta\mathbb{Z}^2$ be also shifted so that~$u$ and~$w$ become the { (approximations of)} points~$\pm \alpha \rho$ on~$\delta\mathbb{Z}^2$, where~$|\alpha|=1$. Denote by~$\mathbb{D}^\delta$ the discretization of the unit disc~$\mathbb{D}:=\{z:|z|<1\}$ on~$\delta\mathbb{Z}^2$. It follows from~\eqref{eq:RSW-sigma-sigma} that
\[
\frac{\E_{\mathbb{D}^\delta}^\wired[\sigma_{\alpha\rho}\sigma_{-\alpha\rho}]} {\E_{\delta\mathbb{Z}^2}[\sigma_{\alpha\rho}\sigma_{-\alpha\rho}]}=1+O(\varepsilon),\ \ \text{uniformly in~$\delta$.}
\]
At the same time, Corollary~\eqref{cor:u'w'/uw-conv-crit} and the rotational invariance of continuous correlation functions~$\langle\sigma_{\alpha\rho}\sigma_{-\alpha\rho}\rangle_{\mathbb D}^\wired=\langle\sigma_{\rho}\sigma_{-\rho}\rangle_{\mathbb D}^\wired$ imply that
\[
\frac{\E_{\mathbb{D}^\delta}^\wired[\sigma_{\alpha\rho}\sigma_{-\alpha\rho}]}
{\E_{\mathbb{D}^\delta}^\wired[\sigma_{\rho}\sigma_{-\rho}]}=1+o_{\delta\to 0}(1)\ \ \text{for each fixed~$\rho(\varepsilon)$.}
\]
Using~\eqref{eq:RSW-sigma-sigma} once again to compare~$\E_{\mathbb{D}^\delta}^\wired[\sigma_{\rho}\sigma_{-\rho}]$ and~$\E_{\delta\mathbb{Z}^2}[\sigma_{\rho}\sigma_{-\rho}]$ we see that
\[
\frac{\E_{\delta\mathbb{Z}^2}[\sigma_{\alpha\rho}\sigma_{-\alpha\rho}]}
{\E_{\delta\mathbb{Z}^2}[\sigma_{\rho}\sigma_{-\rho}]}\ =\ 1+O(\varepsilon)+o_{\delta\to 0}(1)
\]
Choosing first~$\varepsilon$ small enough and then~$|u-w|=2\rho(\varepsilon)\cdot \delta^{-1}$ big enough one obtains the required rotational invariance property (ib).

We now discuss step (iii), recall that step (ii) can be done following exactly the same lines by replacing the pair $\Lambda$, $\Lambda^\mathrm{rect}$ of isoradial grids by $\Lambda^\mathrm{rect}$ and~$\delta\mathbb{Z}^2$ in the argument given below; cf. Remark~\ref{rem:reduction-to-square}. Let~$A=A(\varepsilon)\gg 1$, $\rho=\rho(\varepsilon):=1/A(\varepsilon)$ and $\delta:=2\rho\cdot |u-w|^{-1}$ be chosen as above. Also, let~$\Lambda^\delta$ be the grid~$\Lambda$ scaled by the factor~$\delta$, rotated and shifted so that~$u$ and~$w$ become the points~$\pm \rho$ on~$\Lambda^\delta$.

Further, let~$\Lambda^\delta_1=\Lambda^\delta_1(0)$ be the discretization of the box $[-1,1]\times[-1,1]$ on~$\Lambda^\delta$ and denote by \mbox{$\Lambda^{\star,\delta}:=[\Lambda^\delta_1]^\star$} the `star extension' of this box (see Section~\ref{sub:star-ext}). Recall that~$\Lambda^{\star,\delta}$ contains pieces of rectangular lattices located at $O(1)$ distance { (uniformly in~$\delta$)} from the origin. Therefore, there we can find a point $z\in\C$ such that \mbox{$|z|\le C=C(\theta_0)$} and { that} the box~$\Lambda^{\star,\delta}_1(z)$ of size~$2\times 2$ centered at~$z$ is a subset of a rectangular grid~$\Lambda^{\mathrm{rect},\delta}$.

Finally, denote by~$\Omega\subset\C$ the disc of radius~$C+1$ centered at~$\frac{1}{2}z$. Corollary~\ref{cor:u'w'/uw-conv-crit} implies the convergence
\[
\frac{\E^\wired_{\Omega^\delta}[\sigma_\rho\sigma_{-\rho}]}{\E^\wired_{\Omega^\delta}[\sigma_{z+\rho}\sigma_{z-\rho}]}\ \to\ \frac{\langle \sigma_{\rho}\sigma_{-\rho}\rangle_\Omega^\wired}{\langle \sigma_{z+\rho}\sigma_{z-\rho}\rangle_\Omega^\wired}\ =\
1\ \ \text{as}\ \ \delta\to 0,
\]
where~$\Omega^\delta$ stands for the discretization of the disc~$\Omega$ on the modified grid~$\Lambda^{\star,\delta}$.

At the same time, the RSW estimates~\eqref{eq:RSW-sigma-sigma} give
\begin{align*}
\E^\wired_{\Omega^\delta}[\sigma_{\rho}\sigma_{-\rho}]\ &\le\ \E^\wired_{\Lambda^\delta_1(0)}[\sigma_{\rho}\sigma_{-\rho}]\ \le\ (1\!-\!\varepsilon)^{-1}\cdot \E_{\Lambda^\delta}[\sigma_{\rho}\sigma_{-\rho}],\\
\E^\wired_{\Omega^\delta}[\sigma_{\rho}\sigma_{-\rho}]\ &\ge\ (1-\varepsilon)\cdot \E^\wired_{\Lambda^\delta_1(0)}[\sigma_{\rho}\sigma_{-\rho}]\ \ge\ (1\!-\!\varepsilon)\cdot \E_{\Lambda^\delta}[\sigma_{\rho}\sigma_{-\rho}]
\end{align*}
and similarly for~$\E_{\Omega^\delta}^\wired[\sigma_{z-\rho}\sigma_{z+\rho}]$ and~$\E_{\Lambda^{\mathrm{rect},\delta}}[\sigma_{z+\rho}\sigma_{z-\rho}]$. Therefore,
\[
\frac{\E_{\Lambda^\delta}[\sigma_{\rho}\sigma_{-\rho}]}{\E_{\Lambda^{\mathrm{rect},\delta}}[\sigma_{z+\rho}\sigma_{z-\rho}]}\ =\ 1+O(\varepsilon)+o_{\delta\to 0}(1),
\]
where the error term~$O(\varepsilon)$ is uniform in~$\delta$. Moreover, it follows from step (ii) applied to the re-scaled grid~$\delta^{-1}\Lambda^{\mathrm{rect},\delta}$ with mesh size~$1$ that
\[
\E_{\Lambda^{\mathrm{rect},\delta}}[\sigma_{z+\rho}\sigma_{z-\rho}]\ =\ \mathcal{C}_\sigma^2\cdot (2\rho\delta^{-1})^{-\frac14} \cdot (1+o_{\delta\to 0}(1)).
\]
Since~$\delta=2\rho\cdot|u-w|^{-1}$, we arrive at the asymptotics
\[
\E_{\Lambda}[\sigma_u\sigma_w]\ =\ \E_{\Lambda^\delta}[\sigma_\rho\sigma_{-\rho}]\ =\ \mathcal{C}_\sigma^2\cdot|u-w|^{-\frac14}\cdot (1+O(\varepsilon)+o_{|u-w|\to\infty}(1)),
\]
with the uniform (in $u,w$) error term~$O(\varepsilon)$. By first choosing~$\varepsilon$ small enough and then~$|u-w|$ large enough one obtains the required asymptotics~\eqref{eq:spin-univ-crit}.
\end{proof}

It is easy to see that Theorem~\ref{thm:spin-univ-crit} allows to pass from the convergence of \emph{ratios} of two-point spin correlations in finite domains discussed in Corollary~\ref{cor:u'w'/uw-conv-crit} to the convergence of these correlations themselves; see also Remark~\ref{rem:CHI-is-OK} below.

\begin{corollary} \label{cor:conv-omega-crit}
Let~$\Omega\subset\C$ be a bounded simply connected domain with~$C^1$-smooth boundary and assume that~$u,w\in\Omega$, $u\ne w$. Let isoradial grids~$\Lambda^\delta$ satisfy the property~$\BAP$ and discrete domains~$\Omega^\delta\subset\Gamma^\delta$ approximate~$\Omega$ in the Hausdorff sense so that~$\mathrm{dist}(\partial\Omega^\delta;\partial\Omega)=O(\delta)$. Then,
\[
\begin{array}{lcl}
\delta^{-\frac14}\E_{\Omega^\delta}^\wired[\sigma_u\sigma_w]&\to&\mathcal{C}_\sigma^2\cdot\langle\sigma_u\sigma_w\rangle^\wired_{\Omega}\,,\\[2pt] \delta^{-\frac14}\E_{\Omega^\delta}^\free[\sigma_u\sigma_w]&\to& \mathcal{C}_\sigma^2\cdot\langle\sigma_u\sigma_w\rangle^\free_{\Omega}
\end{array}\quad \text{as}\ \ \delta\to 0.
\]
\end{corollary}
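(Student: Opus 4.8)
The plan is to bootstrap from the already-established convergence of \emph{ratios} of spin--spin correlations (Corollary~\ref{cor:u'w'/uw-conv-crit}) together with the universality of the full-plane asymptotics (Theorem~\ref{thm:spin-univ-crit}); the only genuinely new point is to pin down the overall multiplicative constant, and for this one compares a finite-domain correlation \emph{at an intermediate scale} with the full-plane one via the uniform RSW estimates.

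First I would treat the wired case. Fix a point $p_0$ in the bulk of $\Omega$ and, for each small $r>0$, choose $u_r,w_r\in\Gamma^{\circ,\delta}$ approximating two points at mutual distance $r$ that stay at distance comparable to $\mathrm{dist}(p_0,\partial\Omega)$ from $\partial\Omega$. Applying the uniform RSW bound \eqref{eq:RSW-sigma-sigma} at $m=0$ (rescaled to mesh $\delta$) together with the monotonicity of connection probabilities in the domain, one gets, for $r$ small enough that the box of size $O(A(\varepsilon)r)$ around $u_r$ fits inside $\Omega^\delta$,
\[
\E^\wired_{\Omega^\delta}[\sigma_{u_r}\sigma_{w_r}]\ =\ (1+O(\varepsilon))\cdot\E_{\Lambda^\delta}[\sigma_{u_r}\sigma_{w_r}]\,,\qquad \text{uniformly in }\delta.
\]
Since $|u_r-w_r|/\delta=r/\delta\to\infty$, the uniformity in Theorem~\ref{thm:spin-univ-crit} (applied to the mesh-$1$ grid obtained by rescaling $\Lambda^\delta$ by $1/\delta$) gives $\E_{\Lambda^\delta}[\sigma_{u_r}\sigma_{w_r}]=\mathcal{C}_\sigma^2\,\delta^{1/4}r^{-1/4}(1+o_{\delta\to0}(1))$, whence
\[
\delta^{-\frac14}\E^\wired_{\Omega^\delta}[\sigma_{u_r}\sigma_{w_r}]\ =\ \mathcal{C}_\sigma^2\,r^{-\frac14}(1+O(\varepsilon))(1+o_{\delta\to0}(1))\,.
\]
On the other hand, Corollary~\ref{cor:u'w'/uw-conv-crit} (with $\rho$ taken smaller than $r$, than $|u-w|$, and than the distances of $u,w,u_r,w_r$ to $\partial\Omega$) yields $\E^\wired_{\Omega^\delta}[\sigma_u\sigma_w]/\E^\wired_{\Omega^\delta}[\sigma_{u_r}\sigma_{w_r}]\to\langle\sigma_u\sigma_w\rangle^\wired_\Omega/\langle\sigma_{u_r}\sigma_{w_r}\rangle^\wired_\Omega$ as $\delta\to0$. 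Multiplying the two relations sandwiches $\limsup_\delta$ and $\liminf_\delta$ of $\delta^{-1/4}\E^\wired_{\Omega^\delta}[\sigma_u\sigma_w]$, for each fixed small $r$, between $\mathcal{C}_\sigma^2\,r^{-1/4}(1\pm C\varepsilon(r))\cdot\langle\sigma_u\sigma_w\rangle^\wired_\Omega/\langle\sigma_{u_r}\sigma_{w_r}\rangle^\wired_\Omega$. Letting $r\to0$ and using the normalization $\langle\sigma_{u_r}\sigma_{w_r}\rangle^\wired_\Omega\sim r^{-1/4}$ built into \eqref{eq:sigma-sigma-def} (and $\varepsilon(r)\to0$) forces both bounds to $\mathcal{C}_\sigma^2\langle\sigma_u\sigma_w\rangle^\wired_\Omega$, which is the wired statement.

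For the free case I would pass to the dual model. By Kramers--Wannier \eqref{eq:KW-duality} one has $\E^\free_{\Omega^{*,\delta}}[\sigma_v\sigma_b]=\E^\wired_{\Omega^\delta}[\mu_v\mu_b]$, and Corollary~\ref{cor:conv-to-B-crit} gives $\E^\wired_{\Omega^\delta}[\mu_v\mu_b]/\E^\wired_{\Omega^\delta}[\sigma_u\sigma_w]\to\cB_\Omega(v,w)$ for $v\sim u$, $b\sim w$. Combining this with the wired convergence already proven and with \eqref{eq:sigma-sigma-free-def} yields $\delta^{-1/4}\E^\free_{\Omega^{*,\delta}}[\sigma_v\sigma_b]\to\mathcal{C}_\sigma^2\,\cB_\Omega(v,w)\langle\sigma_u\sigma_w\rangle^\wired_\Omega=\mathcal{C}_\sigma^2\langle\sigma_u\sigma_w\rangle^\free_\Omega$. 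Since the dual grid $\Gamma^{\bullet,\delta}$ is again isoradial and satisfies $\BAP$, and $\Omega^{*,\delta}$ is a Hausdorff approximation of $\Omega$ with the same $C^1$ boundary, relabelling $\Gamma^{\bullet,\delta}\leftrightarrow\Gamma^{\circ,\delta}$ turns this into the asserted convergence of $\delta^{-1/4}\E^\free_{\Omega^\delta}[\sigma_u\sigma_w]$ for an arbitrary isoradial $\Omega^\delta\subset\Gamma^\delta$; here one uses that $\cB_\Omega$ and $\langle\cdot\rangle^{\wired}_\Omega,\langle\cdot\rangle^{\free}_\Omega$ are purely continuum objects, independent of the lattice.

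The routine parts are the monotonicity/RSW comparison and the matching of the various $o(1)$ terms. The step requiring the most care is the interchange of the limits $\delta\to0$ and $r\to0$: the RSW error $O(\varepsilon(r))$ must be kept genuinely uniform in $\delta$ so that it becomes the \emph{outer} $r\to0$ limit, and the $o_{\delta\to0}(1)$ coming from Theorem~\ref{thm:spin-univ-crit} and from Corollary~\ref{cor:u'w'/uw-conv-crit} must be exhausted \emph{before} sending $r\to0$; the $\limsup/\liminf$ sandwiching described above is precisely what makes this rigorous without ever needing a joint limit.
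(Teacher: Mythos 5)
Your proposal is correct and follows essentially the same route as the paper's own proof: the wired case is obtained by the same chain of RSW comparison (finite domain vs. full plane at a small intermediate scale), Theorem~\ref{thm:spin-univ-crit} for the full-plane asymptotics, Corollary~\ref{cor:u'w'/uw-conv-crit} for the ratio, and the normalization $\langle\sigma_{u_1}\sigma_{u_2}\rangle^\wired_\Omega\sim|u_2-u_1|^{-1/4}$, exhausting $\delta$ before sending the intermediate scale to zero. The free case is likewise handled in both proofs via Kramers--Wannier, Corollary~\ref{cor:conv-to-B-crit}, and the wired result applied to the dual isoradial grid; the only differences are cosmetic (your $u_r,w_r$ near a reference point versus the paper's choice $u,u+\rho$, and your explicit $\limsup/\liminf$ sandwich versus the paper's ``first $\rho$ small, then $\delta$ small'' phrasing).
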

\begin{proof} Let~$\rho>0$ be small enough. It follows from Corollary~\ref{cor:u'w'/uw-conv-crit} that
\[
\frac{\E^\wired_{\Omega^\delta}[\sigma_u\sigma_w]}{\E^\wired_{\Omega^\delta}[\sigma_u\sigma_{u+\rho}]}\cdot \frac{\langle\sigma_u\sigma_{u+\rho}\rangle^\wired_\Omega}{\langle\sigma_u\sigma_w\rangle^\wired_\Omega}\ =\ 1+o_{\delta\to 0}(1)\ \ \text{for each fixed~$\rho>0$.}
\]
Recall that the multiplicative normalization of the continuous correlation functions~\eqref{eq:sigma-sigma-def} is fixed so that
\[
\langle\sigma_u\sigma_{u+\rho}\rangle^\wired_\Omega\ =\ \rho^{-\frac14}\cdot(1+o_{\rho\to 0}(1)).
\]
At the same time, the RSW estimates~\eqref{eq:RSW-sigma-sigma} imply that
\[
\frac{\E^\wired_{\Omega^\delta}[\sigma_u\sigma_{u+\rho}]}{\E_{\Lambda^\delta}[\sigma_u\sigma_{u+\rho}]}\ =\ 1+o_{\rho\to 0}(1)\ \ \text{uniformly in~$\delta\le \delta_0(\rho)$.}
\]
Finally, due to Theorem~\ref{thm:spin-univ-crit} we have the asymptotics
\[
\E_{\Lambda^\delta}[\sigma_u\sigma_{u+\rho}]\ =\ \mathcal{C}_\sigma^2\cdot (\delta^{-1}\rho)^{-\frac14}\cdot (1+o_{\delta\to 0}(1))\ \ \text{for each fixed~$\rho>0$.}
\]
Combining these asymptotics together, first choosing~$\rho$ small enough, and then~$\delta$ small enough we obtain the required convergence result for~$\delta^{-\frac14}\E_{\Omega^\delta}^\wired[\sigma_u\sigma_w]$.

Recall that the continuous correlation functions $\langle\sigma_u\sigma_w\rangle_\Omega^\free$ are defined by~\eqref{eq:sigma-sigma-free-def}. The convergence of spin-spin correlations with free boundary conditions follows from the Kramers--Wannier duality, Corollary~\ref{cor:conv-to-B-crit}, and the convergence of spin-spin correlations in the dual isoradial model with wired boundary conditions.
\end{proof}

\begin{remark}
\label{rem:CHI-is-OK} (i) Recall that we used the smoothness assumption on~$\partial\Omega$ only in Proposition~\ref{prop:bc-are-OK}, which claims the correct boundary conditions of subsequential limits~\eqref{eq:F-subseq-lim} of discrete fermionic observables. In the critical setup, this smoothness assumption can be dropped, e.\,g., by using techniques from~\cite[Section~6]{ChelkakSmirnov2} instead.

\smallskip

\noindent (ii) The simplification/generalization of the proofs from~\cite{ChelkakHonglerIzyurov} discussed above is not restricted to the two-point correlations and applies to all results of that paper. In particular, for all~$n\ge 1$ the convergence
\[
\delta^{-\frac{n}8}\E_{\Omega^\delta}^\wired[\sigma_{u_1}\ldots\sigma_{u_n}]\ \to\ \mathcal{C}_\sigma^n\cdot\langle\sigma_{u_1}\ldots\sigma_{u_n}\rangle_\Omega^\wired \ \ \text{as}\ \ \delta\to 0
\]
holds uniformly over the class of isoradial discretizations~$\Omega^\delta\subset\Gamma^\delta$, at least provided that $\Lambda^\delta$ satisfy the uniformly bounded angles property~$\BAP$.

\smallskip

\noindent (iii) We refer the interested reader to a recent paper~\cite{CHI_Mixed} for a discussion of further generalizations, e.\,g., (a) convergence results for more involved correlations and (b) techniques that can be used to control the boundary values of fermionic observables.
\end{remark}

\subsection{Massive model: definitions of correlation functions in continuum} \label{sub:spin-mass-def} We now move to the analysis of the massive model. In this section we { define} relevant two-point spin correlation functions \emph{in continuum}, assuming that~$m<0$ and that $\Omega$ is a $C^1$-smooth domain. Note that, contrary to the conformally invariant case~$m=0$ discussed above, this smoothness assumption plays a much more important role in what follows. In particular, it greatly simplifies the discussion of the \emph{uniqueness} of solutions of the corresponding boundary value problems (which can be avoided if~$m=0$ due to the conformal covariance identity~\eqref{eq:fvw-conf-cov}). We refer the interested reader to a recent paper~\cite{Park2021Fermionic} by S.~C.~Park, where certain techniques allowing to treat Riemann-type boundary value problems for massive fermionic observables in rough domains~$\Omega$ were developed.

The definitions given below generally follow those from~\cite{park2018massive}. However, note that we avoid the explicit use of Bessel functions in order to keep the discussion as light as possible; we refer the interested reader to~\cite{park2018massive} for more information on massive holomorphic functions and their singularities. It is worth noting that below we use a slightly different notation~$\cA^{(m)}_\Omega(v,w)$ instead of~$\cA_\Omega(v,w\,|\,m)$ etc used in~\cite{park2018massive}.

Given~$m<0$, let us denote by~$f^{(m)}_{[\Omega;v,w]}$ a spinor on the double cover $[\Omega;v,w]$ of a smooth domain~$\Omega$ ramified over $v$ and $w$ such that
\begin{itemize}
\item $f^{(m)}_{[\Omega;v,w]}$ satisfies the massive holomorphicity equation~\eqref{eq:mass-hol} in~$\Omega\smallsetminus\{v,w\}$;
\smallskip
\item $f^{(m)}_{[\Omega;v,w]}$ is continuous in~$\overline{\Omega}\smallsetminus\{v,w\}$ and satisfies the Riemann-type boundary conditions $\Im[f^{(m)}_{[\Omega;v,w]}(\zeta)(\tau(\zeta))^{1/2}]=0$ for all~$\zeta\in\partial\Omega$, where~$\tau(\zeta)$ denotes the tangent vector to~$\partial\Omega$ at the point~$\zeta$ oriented counterclockwise;
\smallskip
\item the following asymptotics near the branching points~$v,w$ hold:
\begin{align}
\label{eq:f-near-v-mass}
\qquad  f^{(m)}_{[\Omega;v,w]}(z)\ &=\ e^{-i\frac\pi 4}\frac{e^{2m|z-v|}}{\sqrt{z-v}}+O(|z-v|^{\frac12}),\qquad z\to v,\\
\qquad  f^{(m)}_{[\Omega;v,w]}(z)\ &=\ e^{i\frac\pi 4}\cB^{(m)}_\Omega(v,w)\cdot \frac{e^{-2m|z-w|}}{\sqrt{z-w}}+O(|z-w|^{\frac12}),\quad z\to w,
\label{eq:f-near-w-mass}
\end{align}
where~$\cB^{(m)}_\Omega(v,w)\in\R$ is an (a priori unknown) coefficient defined up to the sign, which we fix by requiring that~$\cB^{(m)}_\Omega(v,w)\ge 0$.
\end{itemize}

Recall that the massive holomorphicity condition~$\overline{\partial}f+im\overline{f}=0$ implies that the primitive $h:=\int\Im[(f(z))^2dz]$ is well-defined and that~$\Delta h:=-8m|f|^2$. In particular, $h$ is sub-harmonic provided that~$m<0$. Thus, the argument given in Remark~\ref{rem:f-uniq}(ii) applies and gives the \emph{uniqueness} of a solution~$f^{(m)}_{[\Omega;v,w]}$ to this boundary value problem; see~\cite[Proposition~17]{park2018massive}. However, contrary to the critical case, for~$m<0$ one cannot claim the \emph{existence} of this solution via the conformal covariance and an explicit formula in a reference domain. Instead,~$f^{(m)}_{[\Omega;v,w]}$ can be obtained as a subsequential limit of discrete observables; see Theorem~\ref{thm:conv-Fwv-mass} below.
\begin{lemma}\label{lem:B->1} Let~$\Omega$ be a smooth simply connected domain, $m\le 0$. Then, one has
\begin{equation}
\label{eq:B->1-mass}
\cB^{(m)}_\Omega(v,w)\to 1\ \ \text{as}\ \ w\to v,\quad \text{for each\ \ $v\in\Omega$.}
\end{equation}
\end{lemma}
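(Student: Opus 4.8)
\textbf{Proof proposal for Lemma~\ref{lem:B->1}.} The plan is to compare the massive holomorphic spinor $f^{(m)}_{[\Omega;v,w]}$ with the explicit full-plane kernels $\cG^{(m)}_{[v]}$ and $\cG^{(m)}_{[w]}$ of Theorem~\ref{thm:G-asymp-mass} (in their continuum form, i.\,e., $e^{-i\frac\pi4}(\tfrac2\pi)^{1/2}z^{-1/2}e^{2m|z|}$ and $e^{i\frac\pi4}(\tfrac2\pi)^{1/2}z^{-1/2}e^{-2m|z|}$ up to a translation), reconstructing the coefficient $\cB^{(m)}_\Omega(v,w)$ via a contour integral. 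Concretely, fix $v\in\Omega$ and let $w\to v$. Choose a small radius $r>0$ (with $|v-w|\ll r$ and $\overline{B(v,2r)}\subset\Omega$) and write, using the residue-type extraction at the branching point $w$ analogous to the computation in the proof of Corollary~\ref{cor:conv-to-B-crit},
\[
\cB^{(m)}_\Omega(v,w)\ =\ \pm\frac{1}{2\pi}\oint_{|z-w|=r}\Im\big[f^{(m)}_{[\Omega;v,w]}(z)\cdot g_{[w]}(z)\,dz\big],
\]
where $g_{[w]}(z)=(\tfrac\pi2)^{1/2}e^{i\frac\pi4}(z-w)^{-1/2}e^{-2m|z-w|}$ is the continuum kernel normalized so that the asymptotics~\eqref{eq:f-near-w-mass} extracts $\cB^{(m)}_\Omega$; the sign is fixed by $\cB^{(m)}_\Omega\ge 0$. (One has to check that $f^{(m)}_{[\Omega;v,w]}\cdot g_{[w]}$ is a genuine single-valued function on $B(v,2r)\smallsetminus\{v,w\}$, because the branchings over $v$ and over $w$ compensate, so that the contour may be deformed.)

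The next step is to identify the $w\to v$ limit of the spinor $f^{(m)}_{[\Omega;v,w]}$ itself, locally near $v$. The natural guess is that, as $w\to v$ with both singularities merging, $f^{(m)}_{[\Omega;v,w]}$ converges (uniformly on an annulus $\{r_1\le|z-v|\le 2r\}$ for fixed $r$) to the spinor $e^{-i\frac\pi4}(z-v)^{-1/2}e^{2m|z-v|}$, i.\,e., the local kernel at $v$ with \emph{no} remaining singularity at the merged point; equivalently, the ``$\sigma\sigma$'' configuration degenerates to a trivial one. This can be proven by a normal-families argument: the functions $h_w:=\int\Im[(f^{(m)}_{[\Omega;v,w]}(z))^2dz]$ are subharmonic (Remark~\ref{rem:Delta-h}), vanish on $\partial\Omega$ by the Riemann-type boundary conditions, and near $v$ and $w$ behave like $-\log|z-v|$ and $+\log|z-w|$ respectively; as $w\to v$ these cancel, so the family $h_w$ is locally uniformly bounded away from $v$. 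Extract a subsequential limit $f$ (massive holomorphic, correct boundary values, and near $v$ a singularity of the form $e^{-i\frac\pi4}(z-v)^{-1/2}e^{2m|z-v|}+O(|z-v|^{1/2})$ with no other singularity); the uniqueness argument of Remark~\ref{rem:f-uniq}(ii), which applies verbatim for $m\le 0$ since the relevant primitive is still subharmonic, forces $f$ to be independent of the subsequence, and (by the same uniqueness applied to a one-point configuration, which is rotationally symmetric around $v$ in the full-plane model and pinned by the boundary problem) identifies it with the full-plane kernel $\cG^{(m)}_{[v]}$ restricted to $\Omega$ --- or, more robustly, one simply notes that whatever $f$ is, its behaviour near $v$ is $e^{-i\frac\pi4}(z-v)^{-1/2}e^{2m|z-v|}(1+O(|z-v|))$, which is all that is needed below.

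Finally, combine the two steps. Evaluate the contour integral formula for $\cB^{(m)}_\Omega(v,w)$ on a circle $|z-w|=r$ with $r$ fixed and $w\to v$; on this circle $f^{(m)}_{[\Omega;v,w]}(z)\to f(z)$ and $g_{[w]}(z)\to (\tfrac\pi2)^{1/2}e^{i\frac\pi4}(z-v)^{-1/2}e^{-2m|z-v|}=:g_{[v]}(z)$ uniformly, hence
\[
\cB^{(m)}_\Omega(v,w)\ \longrightarrow\ \pm\frac{1}{2\pi}\oint_{|z-v|=r}\Im\big[f(z)\,g_{[v]}(z)\,dz\big].
\]
Because $f(z)=e^{-i\frac\pi4}(z-v)^{-1/2}e^{2m|z-v|}+O(|z-v|^{1/2})$ near $v$, the product $f(z)g_{[v]}(z)=(\tfrac\pi2)^{1/2}\,(z-v)^{-1}+(\text{bounded})$, and the residue computation (exactly as in the critical case, where the $e^{\pm2m|z-v|}$ factors cancel in the product) gives the value $1$ for the right-hand side, with the sign automatically $+$ by positivity. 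This yields $\cB^{(m)}_\Omega(v,w)\to 1$. The main obstacle I expect is step two --- rigorously controlling the $w\to v$ degeneration of $f^{(m)}_{[\Omega;v,w]}$, i.\,e., showing that the two merging logarithmic singularities of $h_w$ genuinely cancel and leave a locally bounded limit rather than producing an extra $\log$-type divergence or escaping to infinity; this is where the subharmonicity of $h_w$, the Dirichlet boundary condition, and a maximum-principle / harmonic-measure comparison on a fixed annulus around $v$ do the work, and one must be careful that the error terms $O(|z-v|^{1/2})$, $O(|z-w|^{1/2})$ in the asymptotics are uniform as $w\to v$.
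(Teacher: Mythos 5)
Your strategy is genuinely different from the paper's, which simply defers to S.\,C.\,Park's Lemmas~28--29 and is a \emph{comparison} argument: $\cB^{(m)}_\Omega(v,w)$ is controlled by the critical coefficient $\cB^{(0)}_\Omega(v,w)$, which is explicit by conformal covariance and visibly tends to $1$. Your proposal instead attempts a direct $w\to v$ compactness/degeneration argument. Unfortunately, as it stands it has several gaps.

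\textbf{Step 1 (contour formula) does not work as written.} First, the kernel is wrong: with $g_{[w]}(z)=\mathrm{cst}\cdot e^{i\frac{\pi}{4}}(z-w)^{-1/2}e^{-2m|z-w|}$, the product with $f^{(m)}_{[\Omega;v,w]}(z)\sim e^{i\frac\pi4}\cB(z-w)^{-1/2}e^{-2m|z-w|}$ is $\propto i\cB(z-w)^{-1}e^{-4m|z-w|}$, and $\oint_{|z-w|=r}\Im\bigl[\,i(z-w)^{-1}dz\,\bigr]=0$; you pick up no residue at all. The kernel that extracts $\cB$ is the one matching the black neighbour $b\sim w$, i.\,e., $e^{-i\frac\pi4}(z-w)^{-1/2}e^{+2m|z-w|}$ (this is the continuum limit of $\cG^{(m)}_{[b]}$ used in Corollary~\ref{cor:conv-to-B-crit} and Theorem~\ref{thm:conv-Fwv-mass}). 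Second, and more fundamentally, the contour $|z-w|=r$ with $r\gg|v-w|$ encircles $v$ as well. The product $f^{(m)}_{[\Omega;v,w]}\cdot g_{[w]}$ is \emph{not} single-valued on $B(v,2r)\smallsetminus\{v,w\}$: the branchings at $w$ do cancel, but the branching of $f^{(m)}_{[\Omega;v,w]}$ at $v$ has nothing to cancel against, so the product is still a spinor around $v$. The monodromy identity (Lemma~\ref{lem:XG-monodromy}) is derived precisely for a small contour encircling $w$ but \emph{not} $v$; that contour necessarily shrinks as $w\to v$, which is why one cannot pass to the $w\to v$ limit on a fixed circle the way you propose.

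\textbf{Step 2 (degeneration) is circular.} You need to know that the subsequential limit $f_\infty$ has singularity $e^{-i\frac\pi4}(z-v)^{-1/2}e^{2m|z-v|}(1+O(|z-v|))$ with coefficient exactly $1$. But the asymptotics~\eqref{eq:f-asymp-near-v} is only guaranteed for each fixed $w$, on a $|w-v|$-dependent neighbourhood of $v$; as $w\to v$ this range shrinks to nothing. Tracking the logarithmic singularities of $h_w$: the coefficient at $v$ is $-1$ and at $w$ is $+\cB^2_w$, and in the limit they merge into a singularity of weight $\cB^2_\infty-1$ at $v$, so $f_\infty$ will have a $(z-v)^{-1/2}$ singularity with coefficient proportional to $\sqrt{1-\cB^2_\infty}$ --- exactly the unknown you are trying to determine. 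The ``one-point'' boundary value problem for a spinor branching only at $v$ is \emph{homogeneous}: the uniqueness argument of Remark~\ref{rem:f-uniq}(ii) pins down the solution only up to a scalar multiple, and the Green-formula sign argument gives $\cB_w\le 1$ (a useful one-sided bound) but nothing forcing the coefficient to be $1$. You flag this as ``the main obstacle'' yourself, but no mechanism to close it is offered; without extra input there is simply no contradiction to rule out $\cB_\infty<1$.

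\textbf{What's missing.} The extra input has to come from somewhere that fixes the scale, and the paper's route (and Park's) is to compare with the explicit critical solution on small scales, where the mass is a negligible perturbation. A self-contained degeneration argument would need a quantitative version of this comparison (a uniform-in-$w$ expansion of $f^{(m)}_{[\Omega;v,w]}$ near $v$, or equivalently, a ``matching'' of the inner region at scale $|w-v|$ against the conformally covariant critical spinor). Your proposal does not supply this, so the lemma is not proven as stated.
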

\begin{proof} Let us emphasize that this fact is \emph{not} trivial if~$m\ne 0$ because of the lack of conformal invariance and explicit formulas. However, one can analyze the behavior of the coefficient~$\cB^{(m)}_\Omega(v,w)$ by comparing it with~$\cB_\Omega(v,w)=\cB^{(0)}_\Omega(v,w)$, which is explicit. We refer the reader to~\cite[Lemmas~28 and~29]{park2018massive} for the proof of~\eqref{eq:B->1-mass}.
\end{proof}

We are now in the position to define the coefficients~$\cA^{(m)}_\Omega(v,w)\in\C$. Similarly to~\eqref{eq:cA-def}, they are introduced by requiring that
\begin{equation}
\label{eq:cA-def-mass}
f^{(m)}_{[\Omega;v,w]}(z)\ =\ e^{-i\frac\pi 4}\frac{e^{2m|z-v|}}{(z-v)^{\frac12}}\cdot\big(1+2\cA^{(m)}_\Omega(v,w)\cdot(z-v)+O(|z-v|^2)\big)
\end{equation}
as~$z\to v$. Let us emphasize that the \emph{existence} of such an asymptotic expansion for~$m<0$ is much less trivial than~\eqref{eq:cA-def} for~$m=0$. As discussed, e.\,g., in~\cite{park2018massive} (see Eq.~(2.16) and~(2.17) of that paper), spinor solutions to the massive holomorphicity equation~\eqref{eq:mass-hol} considered near their branching points admit expansions via half-integer modified Bessel functions~$e^{\pm i\nu\arg z}I_\nu(2|mz|)$. In their turn, the function~$I_\nu(r)$ are power series in~$r$; see~\cite[Eq.~10.25.2]{NIST:DLMF}. It remains to check that the last factor in the expansion~\eqref{eq:cA-def-mass} cannot contain other linear terms than~$\mathrm{cst}\cdot(z-v)$, which we leave to the reader as a simple exercise.

The next step is to define for~$m<0$, similarly to~\eqref{eq:sigma-sigma-def} and~\eqref{eq:sigma-sigma-free-def},
\begin{align}
\label{eq:sigma-sigma-def-mass}
\langle\sigma_{u_1}\sigma_{u_2}\rangle_\Omega^{(m),\wired}\ &:=\ \exp\int^{(u_1,u_2)}\Re\big[\cA^{(m)}_\Omega(v,w)dv+\cA^{(m)}_\Omega(w,v)dw\big],\\
\langle\sigma_{u_1}\sigma_{u_2}\rangle^{(-m),\free}_\Omega\ &:=\ \cB^{(m)}_\Omega(u_1,u_2)\cdot \langle\sigma_{u_1}\sigma_{u_2}\rangle^{(m),\wired}_\Omega\,.
\label{eq:sigma-sigma-free-def-mass}
\end{align}
Again, a priori it is not obvious that the differential form in~\eqref{eq:sigma-sigma-def-mass} is exact. However, this can be immediately derived from an analogue of Corollary~\ref{cor:conv-to-A-crit} in the massive case; see Corollary~\ref{cor:conv-to-A-mass} below. Still, there remains a question of fixing the multiplicative normalization in~\eqref{eq:sigma-sigma-def-mass}, which we do by requiring that
\begin{equation}
\label{eq:sigma-sigma-norm-mass}
\langle\sigma_{u_1}\sigma_{u_2}\rangle_\Omega^{(m),\wired}\ \sim\ |u_2-u_1|^{-\frac14}\ \sim\ \langle\sigma_{u_1}\sigma_{u_2}\rangle_\Omega^{(-m),\free}\ \ \text{as}\ \ u_2\to u_1.
\end{equation}
The existence of such a normalization (for~$m<0$) follows from the monotonicity of spin-spin correlations in discrete
\[
\E^{(m),\wired}_{\Omega^\delta}[\sigma_{u_1}\sigma_{u_2}] \,\ge\, \E^{\wired}_{\Omega^\delta}[\sigma_{u_1}\sigma_{u_2}]\,,\qquad \E^{(-m),\free}_{\Omega^{*,\delta}}[\sigma_{v_1}\sigma_{v_2}]\,\le\,\E^{\free}_{\Omega^{*,\delta}}[\sigma_{v_1}\sigma_{v_2}]\,,
\]
convergence results~\eqref{eq:conv-to-B-mass},~\eqref{eq:conv-to-B-crit}, and from Lemma~\ref{lem:B->1} (applied for both~$m<0$ and~$m=0$); see the proof of Theorem~\ref{thm:conv-omega-mass}.

\smallskip

We define the full-plane correlation functions as
\[
\langle\sigma_{u_1}\sigma_{u_2}\rangle_\C^{(m)}\,:=\, \lim\nolimits_{\Omega\uparrow\C}\,\langle\sigma_{u_1}\sigma_{u_2}\rangle_\Omega^{(m),\wired},\quad \langle\sigma_{u_1}\sigma_{u_2}\rangle_\C^{(-m)}\,:=\, \lim\nolimits_{\Omega\uparrow\C}\,\langle\sigma_{u_1}\sigma_{u_2}\rangle_\Omega^{(-m),\free},
\]
where the limits are taken along arbitrary sequences of smooth domains exhausting the complex plane. These limits exist and are non-trivial since for $\Omega\subset\Omega'$ one has
\begin{align*}
\langle\sigma_{u_1}\sigma_{u_2}\rangle_\Omega^{(m),\wired}\ \ge\ \langle\sigma_{u_1}\sigma_{u_2}\rangle_{\Omega'}^{(m),\wired}\ &\ge\ \langle\sigma_{u_1}\sigma_{u_2}\rangle_{\C}\ =\ |u_2-u_1|^{-\frac14},\\
\langle\sigma_{u_1}\sigma_{u_2}\rangle_\Omega^{(-m),\free}\ \le\ \langle\sigma_{u_1}\sigma_{u_2}\rangle_{\Omega'}^{(-m),\free}\ &\le\ \langle\sigma_{u_1}\sigma_{u_2}\rangle_{\C}\ =\ |u_2-u_1|^{-\frac14}
\end{align*}
due to Theorem~\ref{thm:conv-omega-mass} and similar inequalities in discrete. The rotational invariance of the full-plane correlation functions
\[
\langle\sigma_{u_1}\sigma_{u_2}\rangle_\C^{(m)}\ =\ \Xi(|u_2-u_1|,m),\ \ m\in\R,
\]
follows from the rotational invariance of finite-volume correlation functions~\eqref{eq:sigma-sigma-def-mass}, \eqref{eq:sigma-sigma-free-def-mass} and of the corresponding boundary value problems in large discs. Finally, we have $\Xi(r,m)\sim r^{-\frac 14}$ as~$r\to 0$ due to~\eqref{eq:sigma-sigma-norm-mass} and the aforementioned monotonicity with respect to~$\Omega$, which allows to pass to the limit~$\Omega\uparrow\C$ in these asymptotics.

\subsection{Massive model: convergence results} \label{sub:conv-mass} We now discuss generalizations of the results from Section~\ref{sub:conv-crit} to the massive setup. Throughout this section we assume that~$\Omega\subset\C$ is a $C^1$-smooth bounded simply connected domain and~$m<0$.

As in Section~\ref{sub:conv-crit}, let~$v,w$ be distinct inner points of~$\Omega$; recall that we use the same notation for their discrete approximations. Also, let~$u=u^\delta\in\Omega^{\circ,\delta}$ be such that~$u\sim v=v^\delta\in \Omega^{\bullet,\delta}$ and~$b=b^\delta\in\Omega^{\bullet,\delta}$ be such that~$b\sim w=w^\delta\in \Omega^{\circ,\delta}$. Consider the normalized real-valued fermionic observable
\begin{equation}
\label{eq:Xvw-def-mass}
X^{(m),\delta}_{[\Omega^\delta;v,w]}(c)\ :=\ \frac{\E^{(m),\wired}_{\Omega^\delta}[\chi_{c}\mu_{v}\sigma_{w}]}{\E^{(m),\wired}_{\Omega^\delta}[\sigma_{u}\sigma_{w}]}, \quad c\in\Upsilon^\times_{[v,w]}(\Omega^\delta),
\end{equation}
and let~$F^{(m),\delta}_{[\Omega^\delta;v,w]}$ be the corresponding massive s-holomorphic spinor in~$\Omega^\delta$ branching over~$v$ and~$w$; see Definition~\ref{def:shol-def-mass}.

\begin{theorem} \label{thm:conv-Fwv-mass} Let~$m<0$ and~$\Omega$ be a $C^1$-smooth bounded simply connected domain. For each $v,w\in\Omega$, $v\ne w$, the following holds uniformly on compact subsets of~$\Omega\smallsetminus\{v,w\}$:
\begin{equation}
\label{eq:fvw-conv-mass}
F^{(m),\delta}_{[\Omega^\delta;v,w]}(z)\ \to\ (\tfrac2\pi)^{\frac 12}f^{(m)}_{[\Omega;v,w]}(z)\ \ \text{as}\ \ \delta\to 0,
\end{equation}
where a massive holomorphic spinor~$f^{(m)}_{[\Omega;v,w]}$ solves the boundary value problem described in Section~\ref{sub:spin-mass-def}. The convergence is also uniform with respect to~$v,w$ provided that $v,w,z$ stay at definite distance from each other and from~$\partial\Omega$.
Moreover,
\begin{equation}
\label{eq:conv-to-B-mass}
\frac{\E^{(m),\wired}_{\Omega^\delta}[\,\mu_{v}\mu_b\,]}{\E^{(m),\wired}_{\Omega^\delta}[\sigma_{u}\sigma_{w}]}\ =\ \frac{\E^{(-m),\free}_{\Omega^{*,\delta}}[\sigma_{v}\sigma_b]}{\E^{(m),\wired}_{\Omega^\delta}[\sigma_{u}\sigma_{w}]}\ \to\ \cB^{(m)}_\Omega(v,w)\quad \text{as}\ \ \delta\to 0.
\end{equation}
\end{theorem}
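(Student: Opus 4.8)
The strategy is to run the critical-case argument of Section~\ref{sub:conv-crit} (notably Theorem~\ref{thm:conv-Fwv-crit} and Corollary~\ref{cor:conv-to-B-crit}) essentially verbatim, substituting the massive ingredients that have been prepared earlier: the massive a priori regularity (Proposition~\ref{prop:regularity}), the fact that subsequential limits satisfy the Dirac equation (Corollary~\ref{cor:mass-hol}), the massive discrete maximum principle (Lemma~\ref{lem:max-principle}), the subharmonicity of $H^{(m),\delta}$ on $\Gamma^{\bullet,\delta}$ for $m\le 0$ (Proposition~\ref{prop:H-sub}) together with the boundary modification trick (Remark~\ref{rem:bdry-trick}), the boundary-conditions result Proposition~\ref{prop:bc-are-OK} (which was already stated for $m\le 0$), and the massive branching kernels $\rG^{(m),\delta}_{[w]}$ with asymptotics from Theorem~\ref{thm:G-asymp-mass}. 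First I would record that $F^{(m),\delta}_{[\Omega^\delta;v,w]}$ is a massive s-holomorphic spinor on the double cover $\Upsilon^\times_{[v,w]}(\Omega^\delta)$ branching over $v$ and $w$, that $X^{(m),\delta}_{[\Omega^\delta;v,w]}(c_{uv})=1$, and that $H^{(m),\delta}$ (with the additive constant fixed by the Dirichlet condition) is subharmonic on $\Gamma^{\bullet,\delta}$; then set $M^\delta:=\max_{\Omega(r_0)}|H^{(m),\delta}|$ as in~\eqref{eq:MdOd-def}.

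\textbf{Bulk and near-branching bounds.} Assuming $M^\delta=O(1)$, the a priori regularity of massive s-holomorphic functions (Proposition~\ref{prop:regularity}) gives $F^{(m),\delta}_{[\Omega^\delta;v,w]}=O(1)$ uniformly on compact subsets of $\Omega(r_0)$. To control $F^{(m),\delta}$ near $v$, consider $X^{(m),\delta,\dagger}:=X^{(m),\delta}_{[\Omega^\delta;v,w]}-\rG^{(m),\delta}_{[v]}$, which vanishes at $c_{uv}$ by~\eqref{eq:G(c)=pi/2} and~\eqref{eq:X(cuv)=1}; by Remark~\ref{rem:maxH-principle} the associated function $H^{(m),\delta,\dagger}$ satisfies both maximum and minimum principles near $v$, so it is $O(1)$ in a disc around $v$, and Proposition~\ref{prop:regularity} together with the explicit asymptotics of $\cG^{(m),\delta}_{[v]}$ from Theorem~\ref{thm:G-asymp-mass} yields $F^{(m),\delta}_{[\Omega^\delta;v,w]}(z)=O(|z-v|^{-1/2})$. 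Near $w$ one first bounds the prescribed-type value $X^{(m),\delta}_{[\Omega^\delta;v,w]}(c_w)=O(1)$ by applying the monodromy formula~\eqref{eq:XG-monodromy} of Lemma~\ref{lem:XG-monodromy} with $\rG^{(m),\delta}_{[v(c_w)]}$ along a contour of radius $2r_0$ (the kernel being uniformly bounded there by Theorem~\ref{thm:G-asymp-mass}), and then repeats the $\dagger$-argument with the spinor $X^{(m),\delta}_{[\Omega^\delta;v,w]}-X^{(m),\delta}_{[\Omega^\delta;v,w]}(c_w)\rG^{(m),\delta}_{[w]}$. Proposition~\ref{prop:bc-are-OK}, applicable since $m\le 0$, then propagates these bounds up to $\partial\Omega^\delta$.

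\textbf{Passing to the limit and identifying it.} With uniform bounds and Hölder equicontinuity in place, Arzel\`a--Ascoli produces subsequential limits $g_{[\Omega;v,w]}$, which are massive holomorphic by Corollary~\ref{cor:mass-hol} and satisfy the Riemann-type boundary conditions by Proposition~\ref{prop:bc-are-OK}(iii). For the local behaviour at $v$: the functions $H^{(m),\delta,\dagger}$ converge to $h^\dagger:=\tfrac12\int\Im[(g_{[\Omega;v,w]}(z)-(\tfrac2\pi)^{1/2}e^{-i\pi/4}e^{2m|z-v|}(z-v)^{-1/2})^2dz]$ on a punctured disc, but remain uniformly bounded on the full disc by the massive maximum principle (Lemma~\ref{lem:max-principle}), so the singularity at $v$ is removable and the desired asymptotics~\eqref{eq:f-near-v-mass} follows (the error improving from $O(1)$ to $O(|z-v|^{1/2})$ by the square-root branching); here one uses that the massive kernel asymptotics of Theorem~\ref{thm:G-asymp-mass} have \emph{no} $O(\delta)$ correction, so the leading profile is exactly $e^{-i\pi/4}(\tfrac2\pi)^{1/2}e^{2m|z-v|}(z-v)^{-1/2}$. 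The same reasoning at $w$, after extracting a further subsequence so that $X^{(m),\delta}_{[\Omega^\delta;v,w]}(c_w)\to B\in\R$, gives~\eqref{eq:f-near-w-mass} with coefficient $B\cdot(\tfrac2\pi)^{1/2}$. By the uniqueness of the massive boundary value problem (Remark~\ref{rem:f-uniq}(ii), whose proof applies since $h$ is subharmonic for $m\le 0$; cf.~\cite[Proposition~17]{park2018massive}), $g_{[\Omega;v,w]}=(\tfrac2\pi)^{1/2}f^{(m)}_{[\Omega;v,w]}$ and $B=\cB^{(m)}_\Omega(v,w)$, which also proves~\eqref{eq:conv-to-B-mass} once the sign is fixed by positivity of both $\E^{(-m),\free}_{\Omega^{*,\delta}}[\sigma_v\sigma_b]/\E^{(m),\wired}_{\Omega^\delta}[\sigma_u\sigma_w]$ (via Kramers--Wannier, identity~\eqref{eq:KW-duality}) and $\cB^{(m)}_\Omega(v,w)$. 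Since all subsequential limits coincide, the full convergence~\eqref{eq:fvw-conv-mass} holds; uniformity in $v,w$ follows by compactness exactly as in Theorem~\ref{thm:conv-Fwv-crit}. Finally, the scenario $M^\delta\to\infty$ is excluded by the same renormalization argument as in the critical case: the rescaled observables $(M^\delta)^{-1/2}F^{(m),\delta}_{[\Omega^\delta;v,w]}$ would converge to a solution of the homogeneous massive boundary value problem with no singularity at $v$ (the rescaled kernels vanishing in the limit), hence to $0$ by uniqueness, contradicting $\max_{\Omega(r_0)}|\widetilde H^{(m),\delta}|=1$.

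\textbf{Main obstacle.} The genuinely new difficulties compared with the critical case are already encapsulated in the cited results: the subharmonicity of the discrete primitive $H^{(m),\delta}$ for $m<0$ (Proposition~\ref{prop:H-sub}) and the correct boundary behaviour of subsequential limits in $C^1$-smooth domains (Proposition~\ref{prop:bc-are-OK}), which is why we are restricted to smooth $\Omega$ here, and the construction plus $O(\delta^2)$-type control of the massive branching kernels $\rG^{(m),\delta}_{[w]}$ (Theorem~\ref{thm:G-asymp-mass}, proved in Section~\ref{sec:asymptotics}). Given those inputs, the remaining work is to check that the absence of conformal invariance causes no trouble — it does not, because the uniqueness argument of Remark~\ref{rem:f-uniq}(ii) only uses subharmonicity of $h$ — and that the $O(\delta)$-free form of the kernel asymptotics is indeed what makes the removable-singularity step go through cleanly; both are straightforward adaptations. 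So the expository burden is mainly bookkeeping, and the one place requiring genuine care is ensuring that the two local analyses (at $v$ via the prescribed value, at $w$ via the monodromy estimate~\eqref{eq:X(cw)=O(1)}) are carried out with the massive kernels rather than the pure $z^{-1/2}$ ones.
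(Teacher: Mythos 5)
Your proposal follows essentially the same route as the paper's proof: run the critical-case argument of Theorem~\ref{thm:conv-Fwv-crit} with the massive ingredients (Propositions~\ref{prop:regularity},~\ref{prop:H-sub},~\ref{prop:bc-are-OK}, Corollary~\ref{cor:mass-hol}, Lemmas~\ref{lem:max-principle},~\ref{lem:XG-monodromy}, Theorem~\ref{thm:G-asymp-mass}) substituted, identify subsequential limits via uniqueness of the boundary value problem, and rule out $M^\delta\to\infty$ by renormalization. The one place where you gloss over a genuine subtlety is the improvement of the error near the branching points from $O(1)$ to $O(|z-v|^{1/2})$ (and likewise at $w$): you write that this happens ``by the square-root branching,'' which is the critical-case reasoning — for holomorphic spinors it follows from the Laurent expansion — but for massive holomorphic spinors this does not come for free. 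One must first get $O(|z-v|^{-1/2})$ from the a priori estimate of $g$ via $h^\dagger=O(1)$ and then apply Bers' similarity principle (or an explicit Bessel expansion) to upgrade to $O(|z-v|^{1/2})$. The paper explicitly flags this point in the remark immediately following Theorem~\ref{thm:conv-Fwv-mass}, citing~\cite[Lemma~14]{park2018massive}. Without it, the removable-singularity step does not yield the asserted form~\eqref{eq:f-near-v-mass},~\eqref{eq:f-near-w-mass}. Your parenthetical claim that the kernel asymptotics in Theorem~\ref{thm:G-asymp-mass} contain no $O(\delta)$ correction is also imprecise — that statement is true for the real-valued spinor $\rG^{(m),\delta}_{[w]}$, whereas $\cG^{(m),\delta}_{[w]}$ does carry a $\delta\cdot R^{(m)}$ term — but this is harmless here since only leading-order convergence enters the removable-singularity step.

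On~\eqref{eq:conv-to-B-mass}, you derive $B=\cB^{(m)}_\Omega(v,w)$ from the uniqueness of the boundary value problem, which is the same mechanism the paper uses in the critical Corollary~\ref{cor:conv-to-B-crit}; the paper's proof of Theorem~\ref{thm:conv-Fwv-mass} instead spells out the contour-integral computation of $X^{(m),\delta}_{[\Omega^\delta;v,w]}(c_w)$ directly (paying attention to the $\delta$-independent $O(r_0)$ error), but both routes are valid and you effectively have both available. Modulo filling in the Bers' similarity principle step, your write-up is correct.
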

\begin{proof}
The proof of~\eqref{eq:fvw-conv-mass} repeats the proof of Theorem~\ref{thm:conv-Fwv-crit}, including that of Proposition~\ref{prop:bc-are-OK}, which goes through for $C^1$-smooth domains and $m<0$ as pointed out by Park in~\cite[Proposition~22]{park2018massive} in the square grid context $\Lambda^\delta=\delta\mathbb{Z}^2$. Working with irregular isoradial grids $\Lambda^\delta$ instead, it is worth noting that
\begin{itemize}
\item the a priori regularity estimates of massive s-holomorphic functions~$F^{(m),\delta}$ via the corresponding functions~$H^\delta$ are given by Proposition~\ref{prop:regularity};
\item sub-sequential limits of massive s-holomorphic functions satisfy the massive holomorphicity equation~\eqref{eq:mass-hol} due to Corollary~\ref{cor:mass-hol};
\item the proof of Proposition~\ref{prop:bc-are-OK} relies upon
\begin{itemize}
\item the sub-harmonicity of the function $H^{\bullet,\delta}$ discussed in Proposition~\ref{prop:H-sub} (see also Remark~\ref{rem:bdry-trick});
\item the maximum principle for the function~$|F^{(m),\delta}|$, which holds true (up to a multiplicative constant) due to Lemma~\ref{lem:max-principle}.
\end{itemize}
\end{itemize}
The only small difference with the critical case~$m=0$ is the analysis near the branching point~$w$ and the proof of~\eqref{eq:conv-to-B-mass}. Recall that the asymptotics of explicit full-plane kernels~$\cG^{(m),\delta}_{[w]}$ are given in Theorem~\ref{thm:G-asymp-mass}. Let~$c_w$ be adjacent to both~$w$ and~$b$. Then, we can still use Lemma~\ref{lem:XG-monodromy} to see that
\begin{equation}
\label{eq:x-X(cw)=}
{ 2X^{(m),\delta}_{[\Omega^\delta;v,w]}(c_w)\ }=\ \frac12\oint^{[(m),\delta]}_{z:|z-w|=2r_0}\Im\big[F^{(m),\delta}_{[\Omega^\delta;v,w]}(z)\cG^{(m),\delta}_{[b]}(z)dz\big],
\end{equation}
where the discrete contour integral in the right-hand side is understood in the sense of Remark~\ref{rem:H=intImF2}. (In particular, note that one can easily estimate the value $X^{(m),\delta}(c_w)$ via the values of~$F^{(m),\delta}$ near the contour $\{z:|z-w|=2r_0\}$; in other words, the proof of convergence~\eqref{eq:fvw-conv-mass} literally mimics that of Theorem~\ref{thm:conv-Fwv-crit}.)

In order to prove the convergence~\eqref{eq:conv-to-B-mass} note that, for each fixed~$r_0>0$, the discrete contour integral in the identity~\eqref{eq:x-X(cw)=} converges as~$\delta\to 0$:
\begin{align*}
{ X^{(m),\delta}_{[\Omega^\delta;v,w]}(c_w)\ }\ \mathop{\to}\limits_{\delta\to 0}\ \ &{ \frac{1}{2\pi}}\oint_{z:|z-w|=2r_0}\Im\biggl[f^{(m)}_{[\Omega;v,w]}(z)\cdot e^{-i\frac\pi4}\frac{e^{2m|z-w|}}{\sqrt{z-w}}dz\biggr]\\
&= { \frac{1}{2\pi}}\oint_{z:|z-w|=2r_0}\Im\biggl[\biggl(\pm \frac{\cB^{(m)}_\Omega(v,w)}{z-w}+O(1)\biggr)dz\biggr]\\[2pt] &=\ { \pm\cB^{(m)}_\Omega(v,w)}+O(r_0),
\end{align*}
where we used the asymptotic expansion~\eqref{eq:f-near-w-mass}. Though, as compared to the case~$m=0$, we do not immediately rule out a possible additional error term~$O(r_0)$ in the right-hand side, this term -- if exists -- has to be independent of~$\delta$. Choosing first~$r_0$ small enough and then $\delta$ small enough one obtains the convergence
\[
X^{(m),\delta}_{[\Omega^\delta;v,w]}(c_w)\ =\ \frac{\E^{(m),\wired}_{\Omega^\delta}[\,\mu_{v}\mu_b\,]}{\E^{(m),\wired}_{\Omega^\delta}[\sigma_{u}\sigma_{w}]}\ \to\ \cB^{(m)}_\Omega(v,w)\ \ \text{as}\ \ \delta\to 0,
\]
(the $\pm$ sign is fixed by the fact that both sides of~\eqref{eq:conv-to-B-mass} are positive quantities).
\end{proof}
\begin{remark} A careful reader could have noticed that the proof of Theorem~\ref{thm:conv-Fwv-mass} also relies upon the following fact: if $f^{(m)}$ is a massive holomorphic spinor in a punctured vicinity of a branching point~$w$ (or, similarly,~$v$), then the boundedness of the function~$h:=\int\Im[(f^{(m)}(z))^2dz]$ near $w$ implies that~$f^{(m)}(z)=O(|z-w|^{\frac 12})$. To prove this fact, one can, e.\,g., first argue that $f^{(m)}(z)=O(|z-w|^{-\frac12})$ as~$z\to w$ due to standard a priori estimates of~$f^{(m)}$ via $h=O(1)$, and then improve this estimate to~$O(|z-w|^{\frac 12})$ by using Bers' similarity principle (e.\,g., see~\cite[Lemma~14]{park2018massive}).
\end{remark}

Similarly to the critical case, it is not hard to deduce from Theorem~\ref{thm:conv-Fwv-mass} the following analogues of Corollary~\ref{cor:conv-to-A-crit} and Corollary~\ref{cor:u'w'/uw-conv-crit}.
\begin{corollary} \label{cor:conv-to-A-mass}
(i) In the same setup as above, let~$u_{0,1}\in \Gamma^\delta$ be two neighboring `white' vertices adjacent to~$v\in \Gamma^{\bullet,\delta}$. Then, the following asymptotics hold:
\begin{equation}
\label{eq:conv-to-A-mass}
\frac{\E^{(m),\wired}_{\Omega^\delta}[\sigma_{u_1}\sigma_w]}{\E^{(m),\wired}_{\Omega^\delta}[\sigma_{u_0}\sigma_w]}\ =\ 1+\Re\big[(u_1-u_0)\cdot \cA^{(m)}_\Omega(v,w)\big]+o(\delta)\ \ \text{as}\ \ \delta\to 0,
\end{equation}
where the error term is uniform with respect to~$v,w\in\Omega$ provided that they remain at a definite distance from each other and from~$\partial\Omega$.

\smallskip

\noindent (ii) Let $\rho>0$ and $u_1,u_2,u'_1,u'_2\in\Omega$ be such that $|u_2-u_1|\ge \rho$, $|u'_2-u'_1|\ge \rho$ and assume also that all these four points are at least~$\rho$-away from~$\partial\Omega$. Then,
\[
\log \frac{\E^{(m),\wired}_{\Omega^\delta}[\sigma_{u'_1}\sigma_{u'_2}]}{\E^{(m),\wired}_{\Omega^\delta}[\sigma_{u_1}\sigma_{u_2}]}\ \to\ \int_{(u_1,u_2)}^{(u'_1,u'_2)}\Re\big[\,\cA^{(m)}_\Omega(v,w)dv+\cA^{(m)}_\Omega(w,v)dw\,\big] \ \ \text{as}\ \ \delta\to 0,
\]
where the integral can be computed along any smooth path $(v_t,w_t)\in\Omega\times\Omega$ such that, for all~$t$, the points~$v_t,w_t$ stay at distance at least~$\rho$ from each other and from~$\partial\Omega$.
\end{corollary}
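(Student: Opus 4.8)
The plan is to follow the proof of Corollary~\ref{cor:conv-to-A-crit} (and of Corollary~\ref{cor:u'w'/uw-conv-crit}) essentially verbatim, replacing the critical ingredients by their massive counterparts: the convergence $F^{(m),\delta}_{[\Omega^\delta;v,w]}\to(\tfrac2\pi)^{\frac12}f^{(m)}_{[\Omega;v,w]}$ from Theorem~\ref{thm:conv-Fwv-mass} in place of Theorem~\ref{thm:conv-Fwv-crit}, and the asymptotics of the branching kernels $\cG^{(m),\delta}_{[u]}$ from Theorem~\ref{thm:G-asymp-mass} in place of~\eqref{eq:G-asymp-crit}. For part~(i), I would normalize the observable~\eqref{eq:Xvw-def-mass} by taking $u=u_0$, so that $X^{(m),\delta}_{[\Omega^\delta;v,w]}(c_0)=1$ and
\[
\frac{\E^{(m),\wired}_{\Omega^\delta}[\sigma_{u_1}\sigma_w]}{\E^{(m),\wired}_{\Omega^\delta}[\sigma_{u_0}\sigma_w]}-1\ =\ X^{(m),\delta}_{[\Omega^\delta;v,w]}(c_1)-X^{(m),\delta}_{[\Omega^\delta;v,w]}(c_0),
\]
where $c_0,c_1$ are the corners between $v$ and $u_0,u_1$ respectively, chosen as neighbours on $\Upsilon^\times_{[v,w]}(\Omega^\delta)$. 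Arguing as in the proofs of Corollaries~\ref{cor:conv-to-B-crit} and~\ref{cor:conv-to-A-crit}, Lemma~\ref{lem:XG-monodromy} applied to the pairs $(X^{(m),\delta}_{[\Omega^\delta;v,w]},\rG^{(m),\delta}_{[u_j]})$ together with $\rG^{(m),\delta}_{[u_j]}(c_j)=1$ from~\eqref{eq:G(c)=pi/2} expresses this difference as the discrete contour integral $\tfrac14\oint^{[(m),\delta]}_{z:|z-v|=2r_0}\Im[F^{(m),\delta}_{[\Omega^\delta;v,w]}(z)\,(\cG^{(m),\delta}_{[u_1]}(z)-\cG^{(m),\delta}_{[u_0]}(z))\,dz]$, which is independent of the (small) radius $r_0$ as long as $B(v,2r_0)\subset\Omega\smallsetminus\{w\}$.

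Next I would pass to the limit $\delta\to0$ in this integral. By Theorem~\ref{thm:G-asymp-mass},
\[
\cG^{(m),\delta}_{[u_1]}(z)-\cG^{(m),\delta}_{[u_0]}(z)\ =\ e^{i\frac\pi4}(\tfrac{2}{\pi})^{\frac12}\Big[\tfrac{e^{-2m|z-u_1|}}{\sqrt{z-u_1}}-\tfrac{e^{-2m|z-u_0|}}{\sqrt{z-u_0}}\Big]+\delta\big(R^{(m)}_{\,\circ}(z,u_1)-R^{(m)}_{\,\circ}(z,u_0)\big)+O(\delta^2).
\]
Here lies the only genuine novelty compared to the critical argument: unlike~\eqref{eq:G-asymp-crit}, the massive asymptotics carry a correction of order~$\delta$, but since $|u_1-u_0|=O(\delta)$ and $R^{(m)}_{\,\circ}$ is uniformly Lipschitz in its second argument, this correction contributes only $\delta\cdot O(\delta)=O(\delta^2)$. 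Taylor-expanding the bracket in $u$ around $v$ (again with $O(\delta^2)$ error) and invoking the uniform convergence $F^{(m),\delta}_{[\Omega^\delta;v,w]}\to(\tfrac2\pi)^{\frac12}f^{(m)}_{[\Omega;v,w]}$ near the circle $|z-v|=2r_0$ from Theorem~\ref{thm:conv-Fwv-mass}, the discrete integral converges to a continuous contour integral proportional to $(u_1-u_0)$; evaluating it via the asymptotic expansion~\eqref{eq:cA-def-mass} of $f^{(m)}_{[\Omega;v,w]}$ at $v$ yields $\Re[(u_1-u_0)\cdot\cA^{(m)}_\Omega(v,w)]$, exactly by the definition of $\cA^{(m)}_\Omega(v,w)$. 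One small point to keep track of here is that the radial factor $e^{2m|z-v|}=1+O(|z-v|)$ near $v$ perturbs the extracted coefficient only by an $r_0$-dependent but $\delta$-independent amount, which is removed by first shrinking $r_0$ and then sending $\delta\to0$; the uniformity in $v,w$ is inherited from the uniformity statements in Theorems~\ref{thm:conv-Fwv-mass} and~\ref{thm:G-asymp-mass}.

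Part~(ii) then follows by summation, exactly as Corollary~\ref{cor:u'w'/uw-conv-crit} follows from Corollary~\ref{cor:conv-to-A-crit}. Rewriting~\eqref{eq:conv-to-A-mass} as $\log(\E^{(m),\wired}_{\Omega^\delta}[\sigma_{u_1}\sigma_w]/\E^{(m),\wired}_{\Omega^\delta}[\sigma_{u_0}\sigma_w])=\Re[(u_1-u_0)\cA^{(m)}_\Omega(v,w)]+o(\delta)$, I would move $u_1$ to $u'_1$ and then $u_2$ to $u'_2$ along nearest-neighbour paths in $\Gamma^{\circ,\delta}$ that keep the two moving points at distance $\ge\rho$ from each other and from $\partial\Omega$, and add up the $O(\delta^{-1})$ increments: the main terms form a Riemann sum converging to $\int\Re[\cA^{(m)}_\Omega(v,w)\,dv+\cA^{(m)}_\Omega(w,v)\,dw]$ (the continuity of $\cA^{(m)}_\Omega$ in its arguments, needed for this convergence, is a consequence of the uniform convergence in Theorem~\ref{thm:conv-Fwv-mass}), while the accumulated error is $O(\delta^{-1})\cdot o(\delta)=o(1)$ thanks to the uniformity in part~(i). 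Path-independence of the limiting integral is automatic since the left-hand side does not depend on the path; in particular this shows that the differential form in~\eqref{eq:sigma-sigma-def-mass} is exact, as claimed in Section~\ref{sub:spin-mass-def}. I expect the \emph{main obstacle} to be merely the careful handling of the $O(\delta)$ correction term in the kernel asymptotics (which has no analogue in the critical case) and the bookkeeping of the radial exponentials near the branch point; everything else transfers mechanically from the critical proof.
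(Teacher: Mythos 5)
Your proposal follows the paper's own proof essentially verbatim: the same monodromy identity from Lemma~\ref{lem:XG-monodromy}, the same kernel asymptotics from Theorem~\ref{thm:G-asymp-mass} with the Lipschitz control of the $O(\delta)$-correction, the same convergence input from Theorem~\ref{thm:conv-Fwv-mass} combined with the expansion~\eqref{eq:cA-def-mass}, the same ``shrink $r_0$ first, then $\delta\to0$'' ordering for the $\delta$-independent errors, and the same summation along discrete paths for part (ii). One small inaccuracy: the radial factors $e^{\pm 2m|z-v|}$ coming from $F^{(m),\delta}$ and from the kernel difference actually cancel \emph{exactly} after multiplication, so the $r_0$-dependent error you need to control does not originate there but rather from the $O(r_0^2)$ remainder in~\eqref{eq:cA-def-mass} and from the non-meromorphic term $2m\Re[(z-v)(\overline{u}_1-\overline{u}_0)]/|z-v|$ (whose contour integral vanishes by symmetry) --- but this does not affect the validity of the argument.
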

\begin{proof} (i) Similarly to the proof of Corollary~\ref{cor:conv-to-A-crit} and the proof of Theorem~\ref{thm:conv-Fwv-mass}(ii), for each~$r_0>0$ we can write the identity
\begin{align*}
\frac{\E^{(m),\wired}_{\Omega^\delta}[\sigma_{u_1}\sigma_w]}{\E^{(m),\wired}_{\Omega^\delta}[\sigma_{u_0}\sigma_w]}-1\ &=\ X^{(m),\delta}_{[\Omega^\delta;v,w]}(c_1)-X^{(m),\delta}_{[\Omega^\delta;v,w]}(c_0)\\
& =\ { \frac{1}{4}}\oint_{z:|z-v|=2r_0}^{[(m),\delta]}\Im \big[ F^{(m),\delta}_{[\Omega^\delta;v,w]}(z)\cdot (\cG^{(m),\delta}_{[u_1]}(z)-\cG^{(m),\delta}_{[u_0]}(z))dz\big]\,,
\end{align*}
where~$c_{0,1}$ is adjacent to both~$v\in\Omega^{\bullet,\delta}$ and~$u_{0,1}\in\Omega^{\circ,\delta}$, and the discrete contour integral in the right-hand side is understood in the sense of Remark~\ref{rem:H=intImF2}.
Since~$R(z,u)$ in the asymptotics given in Theorem~\ref{thm:G-asymp-mass} is a Lipshitz function of the second argument, for each fixed~$r_0>0$ we have a (uniform in~$z$) asymptotics
\begin{align*}
\cG^{(m),\delta}_{[u_1]}(z)\ -\ \cG^{(m),\delta}_{[u_0]}(z)\ =\ e^{i\frac\pi 4}{ \biggl(\frac{2}{\pi}\biggr)^{\!\!\frac{1}{2}}} \biggl[\frac{e^{-2m|z-u_1|}}{\sqrt{z-u_1}}-\frac{e^{-2m|z-u_0|}}{\sqrt{z-u_0}}\biggr]&+{ O(\delta^2)}\\
=\ e^{i\frac\pi4}{ \biggl(\frac{2}{\pi}\biggr)^{\!\!\frac{1}{2}}}\frac{e^{-2m|z-v|}}{\sqrt{z-v}}\cdot\biggl[\frac{u_1-u_0}{2(z-v)}+ \frac{2m\Re[(z-v)(\overline{u}_1-\overline{u}_0)]}{|z-v|}\biggr]&+{ O(\delta^2)}\,.
\end{align*}
At the same time, Theorem~\ref{thm:conv-Fwv-mass}(i) and the definition~\eqref{eq:cA-def-mass} 
imply that
\[
F^\delta_{[\Omega^\delta;v,w]}(z)\ =\ e^{-i\frac{\pi}{4}}\biggl(\frac{2}{\pi}\biggr)^{\!\!\frac{1}{2}}\frac{e^{2m|z-v|}}{\sqrt{z-v}}\cdot \big[1+2\cA^{(m)}_\Omega(v,w)(z-v)+O(r_0^2)\big]+o_{\delta\to 0}(1),
\]
uniformly in~$z$, where the error term~$O(r_0^2)$ does not depend on~$\delta$.

It is easy to see that the contour integrals $\oint\ldots dz$ of functions $(z-v)^{-2}$, $|z-v|^{-1}$ and $|z-v|(z-v)^{-2}$ along a circle centered at~$v$ vanish. Therefore,
\begin{align*}
\delta^{-1}\cdot\biggl[&\frac{\E^\wired_{\Omega^\delta}[\sigma_{u_1}\sigma_w]}{\E^\wired_{\Omega^\delta}[\sigma_{u_0}\sigma_w]}-1\biggr]\\
 &=\ \frac{1}{2\pi}\oint_{z:|z-v|=2r_0}\Im\biggl[\biggl(\frac{\delta^{-1}(u_1\!-\!u_0)\cA^{(m)}_\Omega(v,w)}{z-v}+O(1)\biggr)dz\biggr]+o_{\delta\to 0}(1)\\[2pt]
&=\ \Re\big[\delta^{-1}(u_1\!-\!u_0)\cA^{(m)}_\Omega(v,w)\big]+o_{\delta\to 0}(1)
\end{align*}
since the~$O(1)$ term in the integrand gives a contribution~$O(r_0)$, which does not depend on~$\delta$ and can be made as small as needed before choosing~$\delta$ small enough.

\smallskip

\noindent (ii) The proof repeats the proof of Corollary~\ref{cor:u'w'/uw-conv-crit} and boils down to multiplying the asymptotics~\eqref{eq:conv-to-A-mass} along discrete paths going from $(u_1,u_2)$ to $(u_1',u_2')$.
\end{proof}

The next theorem, in particular, provides an analogue of Corollary~\ref{cor:conv-omega-crit} in the massive case. Note that we change the order of statements as compared to Section~\ref{sub:conv-crit} and prove this result \emph{before} the analogue of Theorem~\ref{thm:spin-univ-crit}. This shortcut is possible due to the fact that we can now use the critical model in order to control the multiplicative normalization of spin-spin correlations in~$\Omega^\delta$ and do not need to consider the full-plane limit first (as it was in the proof of Corollary~\ref{cor:conv-omega-crit}).

\begin{theorem} \label{thm:conv-omega-mass} (i) Let~$m<0$ and~$\Omega$ be a $C^1$-smooth bounded simply connected domain. Then,
one can define a function~$\langle\sigma_{u_1}\sigma_{u_2}\rangle_\Omega^{(m),\wired}$ according to~\eqref{eq:cA-def-mass} such that the asymptotics~$\langle\sigma_{u_1}\sigma_{u_2}\rangle_\Omega^{(m),\wired}\sim |u_2-u_1|^{-\frac 14}$ as~$u_2\to u_1$ holds for each~$u_1\in\Omega$.

\smallskip

\noindent (ii) Let discrete domains~$\Omega^\delta\subset\Gamma^\delta$ approximate~$\Omega$ in the Hausdorff sense so that $\mathrm{dist}(\partial\Omega^\delta;\partial\Omega)=O(\delta)$. Then, for each~$u,w\in\Omega$, $u_1\ne u_2$, one has the convergence
\[
\begin{array}{lcl}
\delta^{-\frac14}\E_{\Omega^\delta}^{(m),\wired}[\sigma_u\sigma_w]&\to&\mathcal{C}_\sigma^2\cdot\langle\sigma_u\sigma_w\rangle^{(m),\wired}_{\Omega}\,,\\[2pt] \delta^{-\frac14}\E_{\Omega^\delta}^{(-m),\free}[\sigma_u\sigma_w]&\to& \mathcal{C}_\sigma^2\cdot\langle\sigma_u\sigma_w\rangle^{(-m),\free}_{\Omega}
\end{array}\quad \text{as}\ \ \delta\to 0,
\]
where the limits (continuous correlation functions) are defined by~\eqref{eq:sigma-sigma-def-mass}--\eqref{eq:sigma-sigma-norm-mass}, and the universal constant~$\mathcal{C}_\sigma=2^{\frac 16}e^{\frac 32\zeta'(-1)}$ does not depend neither on~$m$ nor on~$\Lambda^\delta$.
\end{theorem}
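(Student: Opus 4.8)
The plan is to follow the scheme of the critical case (Corollary~\ref{cor:conv-omega-crit}) using the massive analogues already established, the one genuinely new point being that a full-plane universality result is not yet available for $m<0$, so the overall constant $\mathcal{C}_\sigma$ must be transferred from the critical model. First I would record that Corollary~\ref{cor:conv-to-A-mass}(ii) gives, for all admissible configurations of points, the convergence of the ratios $\E^{(m),\wired}_{\Omega^\delta}[\sigma_{u'_1}\sigma_{u'_2}]/\E^{(m),\wired}_{\Omega^\delta}[\sigma_{u_1}\sigma_{u_2}]$ to the exponential of the path integral on the right-hand side of~\eqref{eq:sigma-sigma-def-mass}. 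Since the left-hand side is manifestly path-independent, this integral is path-independent too, the differential form in~\eqref{eq:sigma-sigma-def-mass} is exact, and hence $\langle\sigma_{u_1}\sigma_{u_2}\rangle^{(m),\wired}_\Omega$ is well defined up to one positive multiplicative constant, with the limiting ratio above equal to the corresponding ratio of continuous correlations.

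Next I would run a two-sided comparison with the critical model. Using the Kramers--Wannier identity from~\eqref{eq:conv-to-B-mass} to write $\E^{(m),\wired}_{\Omega^\delta}[\sigma_u\sigma_w]=\big(X^{(m),\delta}_{[\Omega^\delta;v,w]}(c_w)\big)^{-1}\,\E^{(-m),\free}_{\Omega^{*,\delta}}[\sigma_v\sigma_b]$ together with the monotonicity in $m$ recorded in Section~\ref{sub:spin-mass-def} (namely $\E^{\wired}_{\Omega^\delta}[\sigma_u\sigma_w]\le\E^{(m),\wired}_{\Omega^\delta}[\sigma_u\sigma_w]$ and $\E^{(-m),\free}_{\Omega^{*,\delta}}[\sigma_v\sigma_b]\le\E^{\free}_{\Omega^{*,\delta}}[\sigma_v\sigma_b]=\E^{\wired}_{\Omega^\delta}[\mu_v\mu_b]$ for $m<0$), I would sandwich
\[
\E^{\wired}_{\Omega^\delta}[\sigma_u\sigma_w]\ \le\ \E^{(m),\wired}_{\Omega^\delta}[\sigma_u\sigma_w]\ \le\ \big(X^{(m),\delta}_{[\Omega^\delta;v,w]}(c_w)\big)^{-1}\cdot\E^{\wired}_{\Omega^\delta}[\mu_v\mu_b].
\]
Multiplying by $\delta^{-1/4}$ and letting $\delta\to0$, Corollary~\ref{cor:conv-omega-crit} (for both wired and free boundary conditions) and the convergence $X^{(m),\delta}_{[\Omega^\delta;v,w]}(c_w)\to\cB^{(m)}_\Omega(v,w)$ from~\eqref{eq:conv-to-B-mass} yield
\[
\mathcal{C}_\sigma^2\langle\sigma_u\sigma_w\rangle^{\wired}_\Omega\ \le\ \liminf_{\delta\to0}\delta^{-\frac14}\E^{(m),\wired}_{\Omega^\delta}[\sigma_u\sigma_w]\ \le\ \limsup_{\delta\to0}\delta^{-\frac14}\E^{(m),\wired}_{\Omega^\delta}[\sigma_u\sigma_w]\ \le\ \frac{\cB^{(0)}_\Omega(v,w)}{\cB^{(m)}_\Omega(v,w)}\,\mathcal{C}_\sigma^2\langle\sigma_u\sigma_w\rangle^{\wired}_\Omega.
\]

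The key step is then to upgrade this to an actual limit. Writing $L^-(w)$ and $L^+(w)$ for the lower and upper limits above and invoking the ratio convergence from the first paragraph, one sees that $L^-(w)$ and $L^+(w)$ depend on $w$ through the \emph{same} positive factor, so the quotient $L^+(w)/L^-(w)$ does not depend on $w$. Letting $w\to v$ (equivalently $\rho:=|w-v|\to0$), Lemma~\ref{lem:B->1} gives $\cB^{(0)}_\Omega(v,w),\cB^{(m)}_\Omega(v,w)\to1$, so the two-sided bound forces $L^+(w)/L^-(w)\to1$; being constant, it equals $1$, and therefore $\delta^{-1/4}\E^{(m),\wired}_{\Omega^\delta}[\sigma_u\sigma_w]$ converges. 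Fixing the free multiplicative constant in~\eqref{eq:sigma-sigma-def-mass} so that this limit equals $\mathcal{C}_\sigma^2\langle\sigma_u\sigma_w\rangle^{(m),\wired}_\Omega$ (consistent by the ratio convergence) proves part~(ii) for wired boundary conditions; reading the same two-sided bound as $\rho\to0$, and using that the critical $\langle\sigma_u\sigma_w\rangle^{\wired}_\Omega\sim|w-u|^{-1/4}$, gives $\langle\sigma_u\sigma_w\rangle^{(m),\wired}_\Omega\sim|w-u|^{-1/4}$, which is part~(i). The free-boundary statement follows from the Kramers--Wannier duality $\E^{(-m),\free}_{\Omega^{*,\delta}}[\sigma_v\sigma_b]=\E^{(m),\wired}_{\Omega^\delta}[\mu_v\mu_b]$, the convergence~\eqref{eq:conv-to-B-mass}, and the definition~\eqref{eq:sigma-sigma-free-def-mass}, exactly as in Corollary~\ref{cor:conv-omega-crit}.

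The main obstacle is precisely this normalization step: unlike at criticality there is neither an explicit reference domain nor (yet) a full-plane result to pin down the constant, so $\mathcal{C}_\sigma$ has to be imported from the critical model. The mechanism above does this by squeezing the massive spin correlation between the critical wired spin correlation and the critical free (disorder) correlation — whose ratio is $\cB^{(0)}_\Omega/\cB^{(m)}_\Omega\to1$ only in the coincidence limit — and then combining this (non-sharp) squeeze with the exact ratio convergence of Corollary~\ref{cor:conv-to-A-mass}(ii) to turn a two-sided estimate into a genuine limit and simultaneously fix the normalization of $\langle\,\cdot\,\rangle^{(m),\wired}_\Omega$. Everything else is a routine transcription of the critical arguments, using Proposition~\ref{prop:regularity}, Corollary~\ref{cor:mass-hol}, Proposition~\ref{prop:H-sub}, Lemma~\ref{lem:max-principle} and Theorem~\ref{thm:conv-Fwv-mass} in place of their $m=0$ counterparts.
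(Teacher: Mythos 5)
Your proposal is correct and uses essentially the same approach as the paper: the squeeze between the critical wired spin correlation and the critical disorder correlation obtained from Kramers--Wannier duality and monotonicity in the interaction parameters, with the outer bounds converging by the critical universality (Corollary~\ref{cor:conv-omega-crit}) and the gap closed in the coincidence limit via Lemma~\ref{lem:B->1} together with the ratio convergence of Corollary~\ref{cor:conv-to-A-mass}(ii). The only difference is presentational: the paper introduces an auxiliary close pair $(u',w')$ and a primitive $\cR_\Omega^{(m)}=\exp\int(\cL^{(m)}_\Omega-\cL_\Omega)$ continuous up to the diagonal, whereas you observe directly that the quotient $\limsup/\liminf$ of $\delta^{-1/4}\E^{(m),\wired}_{\Omega^\delta}[\sigma_u\sigma_w]$ is independent of the pair of insertion points and hence forced to equal $1$ by the $w\to u$ limit — the underlying mechanism (and the role of each cited lemma) is identical.
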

\begin{proof} (i) It immediately follows from Corollary~\ref{cor:conv-to-A-mass} that the differential form
\[
\cL_\Omega^{(m)}(v,w)\ :=\ \Re\big[\cA^{(m)}_\Omega(v,w)dv+\cA^{(m)}_\Omega(w,v)dw\big],
\]
defined on the set~$\{(v,w)\in\Omega\times\Omega:v\ne w\}$, is exact. Now note that
\begin{equation}
\label{eq:x-m-vs-0}
1\,\le\,\frac{\E_{\Omega^\delta}^{(m),\wired}[\sigma_{u_1}\sigma_{u_2}]}{\E_{\Omega^\delta}^{\wired}[\sigma_{u_1}\sigma_{u_2}]}\,\le\,
\frac{\E_{\Omega^\delta}^{(m),\wired}[\sigma_{u_1}\sigma_{u_2}]}{\E_{\Omega^{*,\delta}}^{(-m),\free}[\sigma_{v_1}\sigma_{v_2}]}\cdot \frac{\E_{\Omega^{*,\delta}}^{\free}[\sigma_{v_1}\sigma_{v_2}]}{\E_{\Omega^\delta}^{\wired}[\sigma_{u_1}\sigma_{u_2}]}\, \mathop{\to}\limits_{\delta\to 0}\,\frac{\cB_\Omega(u_1,u_2)}{\cB^{(m)}_\Omega(u_1,u_2)}
\end{equation}
due to the monotonicity of spin-spin correlations with respect to the interaction parameters, Theorem~\ref{thm:conv-Fwv-mass}(ii) and Corollary~\ref{cor:conv-to-B-crit}. Together with Lemma~\ref{lem:B->1} this gives the estimate
\[
\limsup_{\delta\to0}\biggl|\,\log \frac{\E_{\Omega^\delta}^{(m),\wired}[\sigma_{u'_1}\sigma_{u'_2}]}{\E_{\Omega^\delta}^{(m),\wired}[\sigma_{u_1}\sigma_{u_2}]} -\log \frac{\E_{\Omega^\delta}^{\wired}[\sigma_{u'_1}\sigma_{u'_2}]}{\E_{\Omega^\delta}^{\wired}[\sigma_{u_1}\sigma_{u_2}]}\,\biggr|\ =\ o_{u_2\to u_1}(1)+o_{u'_2\to u'_1}(1)
\]
and hence, by passing to the limit~$\delta\to 0$ and applying Corollary~\ref{cor:conv-to-A-mass}(ii) and Corollary~\ref{cor:conv-to-A-crit}, we have
\[
\int_{(u_1,u_2)}^{(u_1',u_2')} \big(\cL_\Omega^{(m)}(v,w)-\cL_\Omega(v,w)\big)\ =\ o_{u_2\to u_1}(1)+o_{u'_2\to u'_1}(1),
\]
where~$\cL_\Omega(v,w):=\cL_\Omega^{(0)}(v,w)=d\langle\sigma_v\sigma_w\rangle_\Omega^\wired$. Therefore, one can choose a primitive
\[
\textstyle \cR_\Omega^{(m)}\ :=\ { \exp}\int\big(\cL_\Omega^{(m)}-\cL_\Omega\big)
\]
so that it is continuous up to the diagonal~$u_1=u_2$ and, moreover,~$\cR_\Omega^{(m)}(u,u)=1$ for all~$u\in\Omega$. We can now define~$\langle\sigma_{u_1}\sigma_{u_2}\rangle_\Omega^{(m),\wired}:=\cR_\Omega^{(m)}(u_1,u_2)\cdot \langle\sigma_{u_1}\sigma_{u_2}\rangle_\Omega^\wired$.

\smallskip

\noindent (ii) Given~$u,w\in\Omega$, let us also consider another pair points~$u',w'\in\Omega$ such that the distance~$|w'-u'|$ is small. Combining Corollary~\ref{cor:conv-to-A-mass} and the estimate~\eqref{eq:x-m-vs-0} applied to~$u'$ and~$w'$ we see that
\begin{align*}
\frac{\langle\sigma_u\sigma_w\rangle^{(m),\wired}_{\Omega}}{\langle\sigma_{u'}\sigma_{w'}\rangle^{(m),\wired}_{\Omega}}\ &\le\ \liminf_{\delta\to 0}\frac{\E^{(m),\wired}_{\Omega^\delta}[\sigma_u\sigma_w]}{\E^\wired_{\Omega^\delta}[\sigma_{u'}\sigma_{w'}]}\\
&\le\ \limsup_{\delta\to 0}\frac{\E^{(m),\wired}_{\Omega^\delta}[\sigma_u\sigma_w]}{\E^\wired_{\Omega^\delta}[\sigma_{u'}\sigma_{w'}]}\ \le\ \frac{\langle\sigma_u\sigma_w\rangle^{(m),\wired}_{\Omega}}{\langle\sigma_{u'}\sigma_{w'}\rangle^{(m),\wired}_{\Omega}}\cdot (1+o_{w'\to u'}(1))
\end{align*}
Using the convergence $\delta^{-\frac 14}\E^\wired_{\Omega^\delta}[\sigma_{u'}\sigma_{w'}]\to \mathcal{C}_\sigma^2\cdot \langle\sigma_{u'}\sigma_{w'}\rangle_\Omega^\wired$ (see Corollary~\ref{cor:conv-omega-crit}) and the fact that~$\langle\sigma_{u'}\sigma_{w'}\rangle_\Omega^{(m),\wired}\sim \langle\sigma_{u'}\sigma_{w'}\rangle_\Omega^\wired$ as~$w'\to u'$, the previous estimate can be written as
\begin{align*}
\mathcal{C}_\sigma^2\cdot \langle\sigma_u\sigma_w\rangle^{(m),\wired}_{\Omega}\cdot (1-o_{w'\to u'}(1))\ &\le\ \liminf_{\delta\to 0}\delta^{-\frac14}{\E^{(m),\wired}_{\Omega^\delta}[\sigma_u\sigma_w]}\\
\le\ \limsup_{\delta\to 0}\delta^{-\frac14}{\E^{(m),\wired}_{\Omega^\delta}[\sigma_u\sigma_w]}\ &\le\ \mathcal{C}_\sigma^2\cdot \langle\sigma_u\sigma_w\rangle^{(m),\wired}_{\Omega}\cdot (1+o_{w'\to u'}(1)).
\end{align*}
By choosing first~$w'$ close enough to~$u'$ and then~$\delta$ small enough this implies the convergence~$\delta^{-\frac14}{\E^{(m),\wired}_{\Omega^\delta}[\sigma_u\sigma_w]}\to\mathcal{C}_\sigma^2\cdot \langle\sigma_u\sigma_w\rangle^{(m),\wired}_{\Omega}$.

Finally, a similar result for~$\delta^{-\frac14}{\E^{(-m),\free}_{\Omega^\delta}[\sigma_u\sigma_w]}$ follows from the Kramers--Wannier duality, convergence~\eqref{eq:conv-to-B-mass}, and the convergence of spin-spin correlations in the dual isoradial model with wired boundary conditions.
\end{proof}

We conclude this section by deducing the convergence of the spin-spin correlations in the full plane from Theorem~\ref{thm:conv-omega-mass}.

\begin{corollary}\label{cor:spin-univ-mass} Let~$u,w\in\C$ and~$m\in\R\smallsetminus\{0\}$. Then, uniformly with respect to isoradial grids $\Lambda^\delta$ satisfying the uniformly bounded angles property~$\BAP$, we have the convergence
\[
\delta^{-\frac14 }\E_{\Lambda^\delta}^{(m)}[\sigma_u\sigma_w]\ \to\ \mathcal{C}_\sigma^2\cdot \Xi(|u-w|,m)\ \ \text{as}\ \ \delta\to 0,
\]
where
\[
\Xi(|u-w|,m)\ =\ \langle\sigma_u\sigma_w\rangle^{(m)}_\C\ =\ \begin{cases}\lim_{\Omega\uparrow\C}\langle\sigma_u\sigma_w\rangle^{(m),\wired}_\Omega &  \text{if~$m<0$},\\[2pt]
\lim_{\Omega\uparrow\C}\langle\sigma_u\sigma_w\rangle^{(m),\free}_\Omega & \text{if~$m>0$}.
\end{cases}
\]
\end{corollary}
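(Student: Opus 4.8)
The plan is to transfer the statement of Theorem~\ref{thm:conv-omega-mass} from large bounded domains to the full plane, using the same RSW-and-gluing machinery that appears in the proof of Theorem~\ref{thm:spin-univ-crit}, but now in the massive regime $\qtoem$, $\delta\to 0$. I will treat the case $m<0$ first and then recover $m>0$ by Kramers--Wannier duality.

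First I would fix $m<0$ and $u,w\in\C$, and set $D:=|u-w|$. Because the relation between $\delta$ and the elliptic parameter is $\qtoem$, working on the rescaled grid $\Lambda^\delta$ with $\delta\to 0$ corresponds exactly to working on a fixed grid $\Lambda$ with $q\to 0$; so it suffices to rescale and study $\delta^{-\frac14}\E^{(m)}_{\Lambda^\delta}[\sigma_u\sigma_w]$ directly. By Remark~\ref{rem:RSW-free-circuits} we have uniform RSW-type estimates for \emph{both} primary and dual crossings on scales $R\asymp 1$ in the massive regime; this yields, exactly as in~\eqref{eq:RSW-sigma-sigma}, that for every $\varepsilon>0$ there is $A=A(\varepsilon)\gg 1$ such that for all small $\delta$
\[
(1-\varepsilon)\cdot\E^{(m),\wired}_{\Lambda^\delta_{AD}(u)}[\sigma_u\sigma_w]\ \le\ \E^{(m)}_{\Lambda^\delta}[\sigma_u\sigma_w]\ \le\ \E^{(m),\wired}_{\Lambda^\delta_{AD}(u)}[\sigma_u\sigma_w]\,.
\]
This reduces the full-plane correlation to a correlation in the bounded box $\Lambda^\delta_{AD}(u)$, which is a discrete approximation of the disc (or square) $B(u,AD)$. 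Here is where I invoke Theorem~\ref{thm:conv-omega-mass}(ii): $\delta^{-\frac14}\E^{(m),\wired}_{\Lambda^\delta_{AD}(u)}[\sigma_u\sigma_w]\to\mathcal{C}_\sigma^2\cdot\langle\sigma_u\sigma_w\rangle^{(m),\wired}_{B(u,AD)}$ as $\delta\to 0$. Combining this with the sandwich above and then letting $\varepsilon\to0$ (equivalently $A\to\infty$), together with the monotonicity $\langle\sigma_{u_1}\sigma_{u_2}\rangle^{(m),\wired}_\Omega\ge\langle\sigma_{u_1}\sigma_{u_2}\rangle^{(m),\wired}_{\Omega'}$ for $\Omega\subset\Omega'$ and the existence of the decreasing limit $\langle\sigma_u\sigma_w\rangle^{(m)}_\C=\lim_{\Omega\uparrow\C}\langle\sigma_u\sigma_w\rangle^{(m),\wired}_\Omega$ established in Section~\ref{sub:spin-mass-def}, I obtain $\delta^{-\frac14}\E^{(m)}_{\Lambda^\delta}[\sigma_u\sigma_w]\to\mathcal{C}_\sigma^2\cdot\langle\sigma_u\sigma_w\rangle^{(m)}_\C$. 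The subtlety is to interchange the $\delta\to 0$ limit with the $A\to\infty$ limit: this is handled in the usual way, by first choosing $\varepsilon$ (hence $A$) to make the RSW error small, then choosing $\delta$ small for the given $A$; uniformity in $\Lambda^\delta$ of both the RSW bounds (Remark~\ref{rem:RSW-free-circuits}) and the convergence in Theorem~\ref{thm:conv-omega-mass} makes the whole argument uniform over the $\BAP$ class.

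For $m>0$ the model on $\Gamma^{\circ,\delta}$ is super-critical, so one passes to the dual model on $\Gamma^{\bullet,\delta}$, which is sub-critical with mass $-m<0$; by the Kramers--Wannier duality~\eqref{eq:KW-duality} the spin-spin correlation $\E^{(m)}_{\Gamma^{\circ,\delta}}[\sigma_u\sigma_w]$ equals a disorder-disorder correlation in the dual model, and using the free-boundary analogue of Theorem~\ref{thm:conv-omega-mass}(ii) — i.e. the convergence $\delta^{-\frac14}\E^{(-m),\free}_{\Omega^\delta}[\sigma_u\sigma_w]\to\mathcal{C}_\sigma^2\cdot\langle\sigma_u\sigma_w\rangle^{(-m),\free}_\Omega$, together with the dual-circuit RSW bounds and the increasing limit $\langle\sigma_u\sigma_w\rangle^{(m),\free}_\C=\lim_{\Omega\uparrow\C}\langle\sigma_u\sigma_w\rangle^{(m),\free}_\Omega$ — gives the claimed convergence with $\Xi(|u-w|,m)=\langle\sigma_u\sigma_w\rangle^{(m),\free}_\C$. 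Finally, the rotational invariance of $\Xi(\cdot,m)$ (so that it genuinely depends only on $|u-w|$) and the asymptotics $\Xi(r,m)\sim r^{-1/4}$ as $r\to0$ have already been recorded in Section~\ref{sub:spin-mass-def}: they follow from the rotational invariance of the finite-volume continuous correlation functions $\langle\sigma_{u_1}\sigma_{u_2}\rangle^{(m),\wired}_\Omega$ in large discs (which in turn comes from the rotational invariance of the underlying boundary value problem for $f^{(m)}_{[\Omega;v,w]}$) and from the normalization~\eqref{eq:sigma-sigma-norm-mass} combined with monotonicity in $\Omega$. The independence of $\mathcal{C}_\sigma$ from both $m$ and $\Lambda^\delta$ is inherited directly from Theorem~\ref{thm:conv-omega-mass}.

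I expect the main obstacle to be purely organizational rather than conceptual: one must be careful that in the massive regime the box size $AD$ over which RSW is applied is of order $1$ (fixed), so that the deep results of~\cite{DuGaPe-near-crit,Park2021Fermionic} on massive RSW at scales $R\asymp 1$ actually apply with uniformity in $\delta$ and in the lattice; and one must verify that Theorem~\ref{thm:conv-omega-mass}, proven for $C^1$-smooth domains, indeed covers discrete approximations of a square box or disc with $\mathrm{dist}(\partial\Omega^\delta;\partial\Omega)=O(\delta)$, which is the case. No new discrete-complex-analysis input is needed at this stage — everything rests on Theorem~\ref{thm:conv-omega-mass}, the massive RSW bounds, FKG, and the elementary monotonicity/existence facts about the continuum correlation functions established earlier.
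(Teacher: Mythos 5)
Your proposal is correct and follows essentially the same route as the paper: massive RSW to sandwich the full-plane correlation by a finite-domain one, Theorem~\ref{thm:conv-omega-mass}(ii) for the finite domain, the monotone $\Omega\uparrow\C$ limit in continuum, and the order of limits ``first $R$ (or $A$) large, then $\delta$ small.'' Two small caveats: first, you should run the argument with a \emph{disc} $B^\delta_R$ rather than the square box $\Lambda^\delta_{AD}(u)$, because Theorem~\ref{thm:conv-omega-mass} is stated for $C^1$-smooth domains; the RSW sandwich works for discs just as well (wired circuits in annuli). Second, for $m>0$ your detour through Kramers--Wannier and ``disorder--disorder'' is unnecessary and slightly muddles the notation ($\E^{(-m),\free}$ vs.\ $\E^{(m),\free}$): the full-plane correlation for $m>0$ is approximated by the \emph{free}-boundary spin--spin correlation in $B^\delta_R$ (via dual-wired RSW circuits, cf.\ Remark~\ref{rem:RSW-free-circuits}), and Theorem~\ref{thm:conv-omega-mass}(ii) already records the convergence $\delta^{-\frac14}\E^{(m),\free}_{B^\delta_R}[\sigma_u\sigma_w]\to\mathcal{C}_\sigma^2\langle\sigma_u\sigma_w\rangle^{(m),\free}_{B_R}$, which is all you need.
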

\begin{proof} Let~$B^\delta_R\subset\Lambda^\delta$ denote the discretization of the disc~$B_R:=B(\frac{1}{2}(u+w),R)\subset\C$. For~$m<0$, it follows from the RSW estimates~\eqref{eq:RSW-sigma-sigma} that
\[
\E^{(m)}_{\Lambda^\delta}[\sigma_u\sigma_w]\ =\ \E^{(m),\wired}_{B^\delta_R}[\sigma_u\sigma_w]\cdot (1+o_{R\to\infty}(1))\ \ \text{as}\ \ R\to\infty,
\]
uniformly in~$\delta$. By definition of the infinite-volume correlations in continuum,
\[
\langle\sigma_u\sigma_w\rangle^{(m)}_\C\ =\ \langle\sigma_u\sigma_w\rangle^{(m)}_{B_R}\cdot (1+o_{R\to\infty}(1)).
\]
Finally, for each fixed~$R$, Theorem~\ref{thm:conv-omega-mass}(ii) provides the asymptotics
\[
\delta^{-\frac 14}\E^{(m),\wired}_{B^\delta_R}[\sigma_u\sigma_w]\ =\ \mathcal{C}_\sigma^2\cdot\langle\sigma_u\sigma_w\rangle^{(m)}_{B_R}\cdot (1+o_{\delta\to 0}(1)).
\]
Choosing first~$R$ big enough and then $\delta$ small enough we obtain the required convergence of the infinite-volume correlations in the case~$m<0$.

The proof for~$m>0$ is similar and relies upon the convergence of the finite-volume correlations
$\delta^{-\frac 14}\E^{(m),\free}_{B^\delta_R}[\sigma_u\sigma_w]$, which is also given by Theorem~\ref{thm:conv-omega-mass}(ii).
\end{proof}

\section{Construction and asymptotic analysis of full-plane kernels}\label{sec:asymptotics}

In this section, we construct and analyze massive s-holomorphic functions
\begin{itemize}
\item $\cF_\rr,\cF_\ri$ (discrete analogues of~$e^{-2m\Im z}$ and~$ie^{2m\Im z}$; see Theorem~\ref{thm:F1Fi-asymp}), which were used to construct an s-embedding in Section~\ref{sub:s-emb};

\smallskip

\item $\cG_{[v]}$, $\cG_{[u]}$ (discrete analogues of~$e^{\mp i\pi/4}\cdot z^{-1/2}e^{\pm 2m|z|}$; see Theorem~\ref{thm:G-asymp-mass}), which are the main tool used in our paper;

\smallskip

\item massive Cauchy kernels~$\cG_{(a)}$, where~$a\in\Upsilon^\times$ is a (lift onto~$\Upsilon^\times$ of a) given edge of~$\Lambda$; see Section~\ref{sub: def_kernels} and, in particular, Remark~\ref{rem: Gaa-def} for more details.
\end{itemize}
Although not strictly necessary for the present paper, the latter kernel~$\cG_{(a)}$ can be used to establish the regularity of massive s-holomorphic functions (see \cite{Park2021Fermionic} and the proof of Proposition~\ref{prop:regularity}) and also to prove the convergence of energy correlations; we thus include its construction and analysis for reference purposes.

For shortness,
\emph{from now onwards we omit the superscripts~$(m),\delta$ in the notation.}
Also, in this section we prefer to work with $\qtoem>0$ in order to keep the moduli $k,K(k),K'(k)$ real; for $q<0$ it suffices to apply the Kramers--Wannier duality.

{
\begin{remark} \label{rem:duality-asymp}
Recall that this duality amounts to exchanging the lattices \mbox{$\Gamma^{\bullet,\delta}\leftrightarrow\Gamma^{\circ,\delta}$} and, simultaneously, changing the sign of~$q$ and~$m$. In order to keep the definition of~$\eta_c$ and of massive s-holomorphic functions invariant under this procedure, one also needs to simultaneously replace the global prefactor $\varsigma$ in~\eqref{eq:eta-c-def} by~$\pm i\varsigma$; note that this also leaves the equation \mbox{$\overline\partial f+ \varsigma^2m\overline{f}=0$} unchanged. In order to keep this dependence on~$\varsigma$ transparent, we do \emph{not} rely upon the explicit convention~$\varsigma=e^{i\frac\pi 4}$ in what follows and formulate all results in a slightly more invariant way.
\end{remark}}

The results given below are formulated in terms of real-valued spinors $\cX_\rr$, $\cX_\ri$, $\rG_{[v]}$, $\rG_{[u]}$, $\rG_{(a)}$ satisfying the three-terms identity~\eqref{eq:3-terms} rather than in terms of massive s-holomorphic functions~$\cF_\rr$, $\cF_\ri$, $\cG_{[v]}$, $\cG_{[u]}$, $\cG_{(a)}$ themselves; recall that the correspondence between the two is provided by Definition~\ref{def:shol-def-mass}.

\subsection{Discrete exponentials}\label{sub:discrete-exp}

We heavily rely upon the existence
of particular solutions to the three-terms equation~\eqref{eq:3-terms} on isoradial
graphs, the \emph{discrete exponentials}. At criticality, they were first
introduced by Mercat~\cite{mercat2001discrete} and Kenyon~\cite{kenyon2002laplacian}, and for~$q\ne 0$ by Boutillier, de Tili\`ere and Raschel~\cite{boutillier2017z}.
In this section, we review their construction, with slight modifications
made in order to fit our setup.

Given~$c\in\Upsilon^\times$ (i.\,e., a lift onto the double cover of the mid-point of an edge $(u(c)v(c))$ of the rhombic lattice~$\Lambda$ with~$u(c)\in\Gamma^\circ$ and $v(c)\in\Gamma=\Gamma^\bullet$), let
\[
\ea{\alpha}_c\ :=\ \arg(v(c)-u(c)).
\]
Note that these angles are typically denoted by~$\overline{\alpha}_c$ in~\cite{boutillier2017z,boutillier2019z,deTiliere-Zinv}, similarly to the notation~$\overline\theta_e$ for the half-angles of rhombi that we used above. However, we prefer to simply write~$\alpha_c$ and~$\theta_e:=\overline{\theta}_e$ throughout this section in order not to create a confusion with the complex conjugation. At the same time, we will use the notation
\begin{equation}
\label{eq:thetae-thetag}
\alphae_c:=\tfrac{2K}{\pi}\alpha_c\qquad \text{and}\qquad \thetae_e:=\tfrac{2K}{\pi}\theta_e
\end{equation}
for the ``elliptic'' quantities that are denoted by~$\alpha_c$ and~$\theta_e$ in~\cite{boutillier2017z,boutillier2019z,deTiliere-Zinv}.

We start by recalling the construction of discrete exponentials in the \emph{critical} Ising model, i.\,e., if~$m=0$ and~$\thetaa_e=\theta_e=\thetae_e$. Given~$\lambda\in\C\smallsetminus\{0\}$, we set
\begin{align}
\dexp{\lambda}{v(c)}c
\ :&=\ { \overline{\varsigma}}\eta_{c}\cdot(2\lambda)^{-1/2}\cdot(1+\lambda\cdot(v(c)-c)) \notag\\
&=\ (2e^{i\ea{\alpha}_{c}}\lambda)^{-1/2}\cdot(1+\tfrac{\lambda}{2}(v(c)-u(c))),\label{eq: defecrit1}\\
\dexp{\lambda}{u(c)}c
\ :&=\ {\overline\varsigma}\eta_{c}\cdot(2\lambda)^{-1/2}\cdot(1+\lambda\cdot(u(c)-c)) \notag \\
&=\ (2e^{i\ea{\alpha}_{c}}\lambda)^{-1/2}\cdot(1-\tfrac{\lambda}{2}(v(c)-u(c))),\label{eq: defecrit2}
\end{align}
and $\dexp{\lambda}c{v(c)}:=\dexp{\lambda}{v(c)}c^{-1}$, $\dexp{\lambda}c{u(c)}:=\dexp{\lambda}{u(c)}c^{-1}$, where the additional factors~${ \overline\varsigma}\eta_{c}$ and $(2\lambda)^{-1/2}$ are introduced to fit the forthcoming definition outside of criticality; see~(\ref{eq: defexpk}--\ref{eq: defexpk2}) and Remark~\ref{rem: exp_critical_massive}.
We then define
\[
\dexp{\lambda}x{x_{0}}:=\prod_{k=1}^{N}\dexp{\lambda}{x_{k}}{x_{k-1}},\quad x,x_{0}\in\Lambda\cup\Upsilon^{\times},
\]
the product is taken over an arbitrary path $x_{0}\sim x_{1}\sim\dots\sim x_{N}=x$, where $x_{k}$
and $x_{k-1}$ are adjacent points of $\Upsilon^{\times}$ and $\Lambda$ (or vice versa).
Note that, using the path of just two steps, we get
\[
\dexp{\lambda}{v(c)}{u(c)}\ =\ \frac{1+\tfrac{\lambda}{2}(v(c)-u(c))}{1-\tfrac{\lambda}{2}(v(c)-u(c))}\,,
\]
recovering the usual definition of discrete exponentials on $\Lambda$
as in \cite{kenyon2002laplacian}. This also shows that $\dexp{\lambda}x{x_{0}}$
does not depend on the path chosen. In particular,
\begin{itemize}
\item for fixed~$\lambda$ and~$x_0$, the discrete exponential $\dexp\lambda{\cdot}{x_0}$ is a well-defined function on~$\Lambda$ and a well-defined spinor on~$\Upsilon^\times$;
\item for fixed~$x,x_0$, the discrete exponential $\dexp{\cdot}x{x_0}$ is a well-defined function of $\lambda\in\C\smallsetminus\{0\}$ if both~$x,x_{0}\in\Lambda$ or both~$x,x_{0}\in\Upsilon^{\times}$, and is a spinor branching over $\lambda=0$ if one of~$x,x_{0}$ belongs to~$\Lambda$ and the other to~$\Upsilon^{\times}$.
\end{itemize}

\begin{lemma}
\noindent \label{lem: prop_exp_critical}For each~$x_{0}\in\Lambda\cup\Upsilon^{\times}$
and~$\lambda\in\C\smallsetminus\{0\}$, the complex-valued spinor $\dexp{\lambda}{\cdot}{x_{0}}:\Upsilon^{\times}\to\C$
satisfies the three-terms identity~\eqref{eq:3-terms} with~$\thetaa_e={ \theta_e}=\thetae_e$.
\end{lemma}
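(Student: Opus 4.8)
The plan is to verify the three-terms identity~\eqref{eq:3-terms} directly at a single quad $z=(v_0u_0v_1u_1)$, exploiting the fact that $\dexp{\lambda}{\cdot}{x_0}$ is by construction path-independent, hence its value at any corner $c\sim z$ can be computed by going through $z$ itself. First I would fix a quad $z$ and recall that $\dexp{\lambda}{c_{pq}}{x_0}=\dexp{\lambda}{c_{pq}}{z}\cdot\dexp{\lambda}{z}{x_0}$, so the common factor $\dexp{\lambda}{z}{x_0}$ divides out and it suffices to check the identity for the four local quantities $\dexp{\lambda}{c_{pq}}{z}$, $p,q\in\{0,1\}$. Using the two-step definitions~\eqref{eq: defecrit1}--\eqref{eq: defecrit2} (with $u(c),v(c)$ replaced by the appropriate vertices of $z$ and $c$ replaced by the relevant corner), one writes each $\dexp{\lambda}{c_{pq}}{z}$ explicitly in terms of $\lambda$, the complex coordinates $v_p-u_q$, and the prefactor $(2e^{i\alpha_{c_{pq}}}\lambda)^{-1/2}$.

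Next I would substitute these explicit expressions into~\eqref{eq:3-terms}, i.e.\ into $X(\ccc{00})=X(\ccc{01})\cos\theta_z+X(\ccc{10})\sin\theta_z$, and reduce it to an algebraic identity in $\lambda$. The geometry of the rhombus enters through $v_1-v_0$, $u_1-u_0$, the half-angle $\theta_z$, and the relations between the four corner angles $\alpha_{c_{pq}}$; concretely $e^{i\alpha_{c_{pq}}}$ can be expressed via a common direction and $\pm\theta_z$, so that the square-root prefactors $(2e^{i\alpha_{c_{pq}}}\lambda)^{-1/2}$ combine with $\cos\theta_z,\sin\theta_z$ in the expected way. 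After clearing the common $(2\lambda)^{-1/2}$ factor and the branch of the square root (which is where the spinor nature is used — two lifts of the same corner differ by a sign, consistently with the $(2e^{i\alpha}\lambda)^{-1/2}$ convention), the identity becomes a polynomial relation of degree one in $\lambda$ whose constant and linear coefficients each reduce to elementary rhombus identities such as $v_1-v_0=u_0-u_1$ (up to the appropriate orientation) and the law of cosines/sines in the rhombus. I expect this verification to be a short computation once the bookkeeping of angles and branches is set up correctly.

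The main obstacle, as usual with these discrete-exponential computations, is precisely that bookkeeping: one must be scrupulous about (i) the orientation conventions at the quad $z$ (which corner is $\ccc{01}$ versus $\ccc{10}$, and the corresponding sign in going around $z$), matching Fig.~\ref{fig:notation}; (ii) the consistent choice of the branch of $(2e^{i\alpha_c}\lambda)^{-1/2}$ across the four corners adjacent to $z$, so that $\dexp{\lambda}{\cdot}{z}$ is genuinely a spinor on $\Upsilon^\times$ and not merely multivalued in an uncontrolled way; and (iii) the fact that $\eta_c$ itself is a spinor, so the factor $\overline{\varsigma}\eta_c$ hidden in~\eqref{eq: defecrit1}--\eqref{eq: defecrit2} already carries a half-turn phase that must be tracked. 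Since these sign/branch choices are exactly the ones fixed so as to be compatible with the off-critical definition~(\ref{eq: defexpk}--\ref{eq: defexpk2}) and with Remark~\ref{rem: exp_critical_massive}, the cleanest route is to phrase the whole verification invariantly (not committing to $\varsigma=e^{i\pi/4}$, per Remark~\ref{rem:duality-asymp}), reducing~\eqref{eq:3-terms} to the pair of scalar identities coming from the $\lambda^0$ and $\lambda^1$ terms and checking each against the defining geometry of the rhombus.
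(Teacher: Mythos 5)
Your overall plan — normalize the four corner values at a quad by a common factor, reduce the propagation equation to a degree-one polynomial identity in $\lambda$, and check the $\lambda^0$ and $\lambda^1$ coefficients against elementary trigonometry on the rhombus — is exactly the calculation the paper's ``straightforward to check'' hides, and the final identity in the paper is indeed verified coefficient by coefficient.

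However, the normalization step you propose, ``compute each $\dexp{\lambda}{c_{pq}}{x_0}$ by going through $z$ itself and factor out $\dexp{\lambda}{z}{x_0}$,'' does not make sense as stated, and not merely for a bookkeeping reason. The quad $z$ belongs to $\diamondsuit$, not to $\Lambda\cup\Upsilon^\times$, so $\dexp{\lambda}{c_{pq}}{z}$ is simply undefined: the one-step factors~\eqref{eq: defecrit1}--\eqref{eq: defecrit2} are only given for steps between a corner and its two adjacent $\Lambda$-vertices, and the definition of $\dexp{\lambda}{x}{x_0}$ for general endpoints is by products along paths in the graph $\Lambda\cup\Upsilon^\times$. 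Worse, even if one tried to \emph{extend} the definition to include $z$, it would be obstructed precisely by the spinor structure you are trying to use: $\Upsilon^\times$ branches around every quad, so the monodromy of $\dexp{\lambda}{\cdot}{x_0}$ around $z$ is $-1$, and there is no consistent way to ``expand from $z$'' to all four corners $c_{pq}$ on a single sheet. This is not a sign you can absorb into a branch choice uniformly for $p,q\in\{0,1\}$.

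The correct (and the paper's) normalization is to write $X_{01}/X_{00}$ via the two-step path $c_{00}\to v_0\to c_{01}$ and $X_{10}/X_{00}$ via $c_{00}\to u_0\to c_{10}$; both paths stay in $\Lambda\cup\Upsilon^\times$ and only cross one edge of the quad, so the resulting ratios of factors~\eqref{eq: defecrit1}--\eqref{eq: defecrit2} are well defined. From there, using $\alpha_{01}=\alpha_{00}-2\theta_e$ and $\alpha_{10}=\alpha_{00}+\pi-2\theta_e$, one lands on the single scalar identity
\[
1+\tfrac{\lambda\delta}{2}e^{i(\alpha_{00}-2\theta_e)}\ =\ e^{-i\theta_e}\cos\theta_e\cdot\big(1+\tfrac{\lambda\delta}{2}e^{i\alpha_{00}}\big) \ +\ ie^{-i\theta_e}\sin\theta_e\cdot\big(1-\tfrac{\lambda\delta}{2}e^{i\alpha_{00}}\big),
\]
whose $\lambda^0$ and $\lambda^1$ parts you would then check, exactly as you describe. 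So: replace the intermediate point $z$ by one of the shared $\Lambda$-vertices $v_0$, $u_0$, and the rest of your outline goes through unchanged.
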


\begin{proof}
\noindent Denoting $X_{pq}:=\dexp{\lambda}{c_{pq}}{x_{0}}$ and $\eang{pq}:=\eang{c_{pq}}$ (see Fig.~\ref{fig:notation}),
we have
\[
X_{01}=X_{00}\cdot\frac{e^{-\frac{i\ea{\alpha}_\scc{00}}{2}}\cdot(1+\tfrac{\lambda\delta}{2}e^{i\ea{\alpha}_{00}})} {e^{-\frac{i\ea{\alpha}_{01}}{2}}\cdot(1+\tfrac{\lambda\delta}{2}e^{i\ea{\alpha}_{01}})};\quad X_{10}=X_{00}\cdot\frac{e^{-\frac{i\ea{\alpha}_{00}}{2}}\cdot(1-\tfrac{\lambda\delta}{2}e^{i\ea{\alpha}_{00}})} {e^{-\frac{i\ea{\alpha}_{10}}{2}}\cdot(1-\tfrac{\lambda\delta}{2}e^{i\ea{\alpha}_{10}})}.
\]
Taking into account that $\alpha_{01}=\alpha_{00}-2{ \theta_e}$ and $\alpha_{10}=\alpha_{00}+\pi-2{ \theta_e}$, the required identity~\eqref{eq:3-terms} boils down to an elementary identity
\[
1+\tfrac{\lambda\delta}{2}e^{i(\eang{00}-2{ \theta_e})}\ =\ e^{-i{ \theta_e}}\cdot(1+\tfrac{\lambda\delta}{2}e^{i\eang{00}})\cdot\cos{ \theta_e} \ +\ ie^{-i{ \theta_e}}\cdot(1-\tfrac{\lambda\delta}{2}e^{i\eang{00}})\cdot\sin{ \theta_e},
\]
which is straightforward to check.
\end{proof}

We now proceed to defining the discrete exponentials { outside} criticality, following~\cite{boutillier2017z,boutillier2019z,deTiliere-Zinv}; recall that in this section we assume that~$q>0$ and hence~$k,k'\in(0,1)$ and the complete elliptic integrals of the first kind~$K(k),K'(k)$ are real; the opposite case follows from the Kramers--Wannier duality.

Throughout this section to each ``elliptic'' variable (e.\,g., $\ellip{\nu}\in\T(k)$)
that can be an argument of an elliptic function corresponds a ``Euclidean''
variable $\ea{\nu}$ that can be plugged into a trigonometric function and vice versa;
the relation is (cf.~\eqref{eq:thetae-thetag})
\[
\textstyle \ellip{\nu}\ =\ \frac{2K}{\pi}\ea{\nu}.
\]

Given a value~$\frac{1}{2}\ellip{\mu}\in\T(k):=\C/(4K\Z+4iK'\Z)$ and $c\in\Upsilon^{\times}$, define
\begin{align}
\dexpk k{\ellip{\mu}}{v(c)}c\  & :=\ { (k')^{\frac14}}\cdot\sd{\tfrac{1}{2}(\ellip{\mu}-\ellipa c)}k,\label{eq: defexpk}\\
\dexpk k{\ellip{\mu}}{u(c)}c\  & :=\ { -i(k')^{-\frac14}}\cdot \cd{\tfrac{1}{2}(\ellip{\mu}-\ellipa c)}k.\label{eq: defexpk2}
\end{align}
Recall that~$\sd{\ellip w+2K}k=-\sd{\ellip w}k$ and $\cd{\ellip w+2K}k=-\cd{\ellip w}k$,
hence the quantities~(\ref{eq: defexpk}--\ref{eq: defexpk2}) change the sign under the transform $\ea{\alpha}_{c}\mapsto\ea{\alpha}_{c}+2\pi$. This is why they are defined for $c\in\Upsilon^{\times}$ (and not for $c\in\Upsilon$), similarly to~(\ref{eq: defecrit1}--\ref{eq: defecrit2}). { Let us also emphasize the fact that the functions~(\ref{eq: defexpk}--\ref{eq: defexpk2}) of~$\frac{1}{2}\mu\in\T(k)$ are \emph{not} well-defined on the torus~$\mu\in\T(k)$ itself, rather being \emph{spinors} on~$\T(k)$; see~(\ref{eq: exp_per_1}--\ref{eq: exp_per_3}).}

As in the critical case, we extend the definition of $\dexpk k{\ellip{\mu}}x{x_{0}}$
to arbitrary $x,x_{0}\in\Lambda\cup\Upsilon^{\times}$ by multiplying
along paths. In particular, this gives
\begin{equation}
\dexpk k{\ellip{\mu}}{v(c)}{u(c)}\ =\ i\sqrt{k'}\cdot\sc{\tfrac{1}{2}(\ellip{\mu}-\ellipa c)}k.\label{eq: exp_vu}
\end{equation}
as in \cite[Eq.15]{boutillier2017z}. Since $\sc{w+2K}k=\sc wk$,
the latter expression is actually independent on the choice of the lift of $c$ onto~$\Upsilon^\times$.
Moreover, the identity
\[
\sc{\ellip w+K}k\sc{\ellip w}k\,=\,{ -(k')^{-1}}
\]
(see \cite[Eq.~22.4.3]{NIST:DLMF}) guarantees that multiplying (\ref{eq: exp_vu}) around a quad
yields $1$. This proves that $\dexpk k{\ellip{\mu}}x{x_{0}}$ is independent
of the choice of the path.

\smallskip{}

For a fixed $x_{0}$ and a fixed $c\in\Upsilon^{\times}$, the periodicity
properties of the discrete exponential $\mathbf{e}(\cdot):=\dexpk k{\cdot}c{x_{0}}$,
easily read off \cite[Eq.~22.4.1]{NIST:DLMF}, are as follows:
\begin{align}
\mathbf{e}(\ellip{\mu}+4K)=\mathbf{e}(\ellip{\mu})&=  \mathbf{e}(\ellip{\mu}+4iK')\hskip 8pt \qquad \text{if }x_{0}=c_0\in\Upsilon^{\times};\label{eq: exp_per_1}\\
-\mathbf{e}(\ellip{\mu}+4K)=\mathbf{e}(\ellip{\mu})&= \mathbf{e}(\ellip{\mu}+4iK')\hskip 8pt \qquad \text{if }x_{0}=u\in\Gamma^{\circ};\label{eq: exp_per_2}\\
-\mathbf{e}(\ellip{\mu}+4K)=\mathbf{e}(\ellip{\mu})&=  -\mathbf{e}(\ellip{\mu}+4iK') \qquad \text{if }x_{0}=v\in\Gamma^{\bullet}.\label{eq: exp_per_3}
\end{align}
\begin{lemma}[{\cite[Proposition 36]{boutillier2019z}}]
\label{lem: prop_exp_massive}
The discrete exponentials $c\mapsto\dexpk k{\ellip{\mu}}c{x_{0}}$
satisfy the propagation equation~\eqref{eq:3-terms} on~$\Upsilon^\times$ provided that~$\sin\thetaa_e=\sc{\thetae_e}k$.
\end{lemma}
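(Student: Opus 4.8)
The plan is to reduce Lemma~\ref{lem: prop_exp_massive} to a purely local, single-quad computation, exactly as in the critical case (Lemma~\ref{lem: prop_exp_critical}), and then to verify the resulting elliptic identity. First I would note that since $\dexpk k{\ellip\mu}{\cdot}{x_0}$ is defined by multiplying the one-step factors~(\ref{eq: defexpk}--\ref{eq: defexpk2}) along paths and, by the discussion preceding the lemma, is path-independent, it suffices to check the three-terms identity~\eqref{eq:3-terms} for the spinor $X(c):=\dexpk k{\ellip\mu}{c}{v(c_{00})}$ at a single quad $z=(v_0u_0v_1u_1)$, with the notation $c_{00},c_{01},c_{10}$ of Fig.~\ref{fig:notation}. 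Writing $X_{pq}:=X(c_{pq})$ and using $v(c_{00})=v(c_{10})=v_0$, $u(c_{00})=u(c_{01})=u_0$ together with~(\ref{eq: defexpk}--\ref{eq: defexpk2}) and~\eqref{eq: exp_vu}, all three values become explicit rational expressions in Jacobi elliptic functions of the half-arguments $\tfrac12(\ellip\mu-\ellipa{c_{pq}})$. Recalling $\ang{c_{01}}=\ang{c_{00}}-2\theta_z$ and $\ang{c_{10}}=\ang{c_{00}}+\pi-2\theta_z$, hence $\ellipa{c_{01}}=\ellipa{c_{00}}-2\thetae_z$ and $\ellipa{c_{10}}=\ellipa{c_{00}}+2K-2\thetae_z$ (using $\tfrac{2K}{\pi}\cdot\pi=2K$), the identity to be proven takes the form of an addition-type formula for $\sd{}{}$ and $\cd{}{}$ with the coefficients $\cos\thetaa_z=\cn{\thetae_z}k$ and $\sin\thetaa_z=\sn{\thetae_z}k$, where we use $\sin\thetaa_e=\sc{\thetae_e}k$, i.e.\ $\cos\thetaa_e=\nc{\thetae_e}k$ together with the normalization $\cos\thetaa_e\cdot(\text{something})$; more precisely it is cleaner to clear denominators and reduce to polynomial identities in $\sn{}{},\cn{}{},\dn{}{}$.

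Second, I would carry out the reduction to a standard elliptic identity. Setting $\ellip w:=\tfrac12(\ellip\mu-\ellipa{c_{00}})$ and $t:=\thetae_z$, the claim becomes
\[
\sd{\ellip w-t}k\ =\ \cos\thetaa_z\cdot\nc{t}k\cdot\text{(ratio)}\ +\ \sin\thetaa_z\cdot(\ldots),
\]
which after multiplying out is equivalent to a trigonometric/elliptic addition identity of the type $\sd{\ellip w - t}{k}$ expressed through $\sd{\ellip w}{k}$, $\cd{\ellip w}{k}$ and functions of $t$ alone. The natural tool is the addition theorems \cite[Eq.~22.8]{NIST:DLMF} for $\sn,\cn,\dn$, from which the formulas for $\sd=\sn/\dn$ and $\cd=\cn/\dn$ follow, combined with $\sc{}{}=\sn{}{}/\cn{}{}$ and the Pythagorean relations $\sn^2+\cn^2=1$, $\dn^2+k^2\sn^2=1$. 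The coefficients in~\eqref{eq:3-terms} must be matched using $\sin\thetaa_e=\sc{\thetae_e}{k}$ as hypothesized, so that $\cos\thetaa_e=\dc{\cdot}{}$-type expression up to the prefactor; here the roles of the $(k')^{\pm1/4}$ normalizations in~(\ref{eq: defexpk}--\ref{eq: defexpk2}) are precisely what make the cross term come out right. Since \cite[Proposition~36]{boutillier2019z} already records this statement, I would either cite it directly or reproduce the one-line elliptic verification; in the latter case a short remark that, as $k\to0$, the functions $\sd{}{}\to\sin$, $\cd{}{}\to\cos$, $\sc{}{}\to\tan$ and the identity degenerates to the one checked in Lemma~\ref{lem: prop_exp_critical} would serve as a useful sanity check.

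Third, I would address the spinor bookkeeping: the one-step factors~(\ref{eq: defexpk}--\ref{eq: defexpk2}) change sign under $\ang{c}\mapsto\ang{c}+2\pi$, i.e.\ $\ellipa c\mapsto\ellipa c+4K$, which is exactly why $X$ lives on $\Upsilon^\times$ and not on $\Upsilon$; one must check that in~\eqref{eq:3-terms} the three corners $c_{00},c_{01},c_{10}$ are taken as neighbours on $\Upsilon^\times$, which is automatic from the path-multiplication definition and the fact (noted after~\eqref{eq: exp_vu}) that multiplying around a quad gives $+1$, so no spurious sign appears in the local relation. Finally, path-independence of $\dexpk k{\cdot}{\cdot}{x_0}$, established before the lemma via $\sc{\ellip w+K}k\sc{\ellip w}k=-(k')^{-1}$, upgrades the single-quad identity to the statement that $c\mapsto\dexpk k{\ellip\mu}c{x_0}$ satisfies~\eqref{eq:3-terms} everywhere on $\Upsilon^\times$, for any fixed base point $x_0$.

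The main obstacle is purely computational: correctly translating the geometric relations among the angles $\ang{c_{pq}}$ into shifts of the elliptic arguments (the $+2K$ coming from the extra $\pi$ in $\ang{c_{10}}$ is the subtle point, since $\sd{}{}$ and $\cd{}{}$ pick up signs under $+2K$), and then verifying that the elliptic addition formula produces exactly the coefficients $\cn{\thetae_z}k$ and $\sn{\thetae_z}k$ once $\sin\thetaa_z=\sc{\thetae_z}k$ is imposed — in particular keeping track of the $(k')^{\pm1/4}$ prefactors, whose cancellation across the three terms is what distinguishes the massive case from the critical one. There is no conceptual difficulty beyond this; the identity is an algebraic consequence of the classical Jacobi addition theorems.
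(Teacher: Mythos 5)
Your strategy is structurally the same as the paper's: reduce to a single quad, compute the ratios $X(c_{01})/X(c_{00})$ and $X(c_{10})/X(c_{00})$ from~(\ref{eq: defexpk}--\ref{eq: exp_vu}), translate the angle relations $\alpha_{01}=\alpha_{00}-2\theta_z$, $\alpha_{10}=\alpha_{00}+\pi-2\theta_z$ into shifts of the elliptic arguments (the extra $\pi$ becoming $+2K$, absorbed via $\cd{\ellip w-K}k=\sn{\ellip w}k$), and arrive at the identity
\[
\sn{\ellip w}k\ =\ \sd{\ellip w-\ellip\theta}k\,\cn{\ellip\theta}k\,\dn{\ellip w}k\ +\ \cd{\ellip w-\ellip\theta}k\,\sn{\ellip\theta}k,\qquad \ellip w:=\tfrac12(\ellip\mu-\ellip\alpha).
\]
Where you genuinely diverge is the verification of this identity. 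The paper treats both sides as elliptic functions and compares co-periodicity, poles and residues; you propose instead to expand with the difference theorems \cite[Eq.~22.8]{NIST:DLMF} and simplify algebraically. Both routes work, and yours is in fact a very short, self-contained computation: writing $\sd{}{}=\sn{}{}/\dn{}{}$, $\cd{}{}=\cn{}{}/\dn{}{}$ and clearing the common $\dn{\ellip w-\ellip\theta}k$ factor, the numerator collapses to $\sn{\ellip w}k\cdot\bigl(\dn{\ellip w}k\dn{\ellip\theta}k+k^2\sn{\ellip w}k\sn{\ellip\theta}k\cn{\ellip w}k\cn{\ellip\theta}k\bigr)$ using only $\sn^2+\cn^2=1$ and $\dn^2+k^2\sn^2=1$, which cancels the common denominator. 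The trade-off is elementariness versus the paper's more structural residue argument.

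One error in your write-up: you state ``$\sin\thetaa_e=\sc{\thetae_e}k$, i.e.\ $\cos\thetaa_e=\nc{\thetae_e}k$,'' which cannot hold since $\sc^2{\cdot}{k}+\nc^2{\cdot}{k}\neq 1$. What actually enters~\eqref{eq:3-terms}, and what the elliptic identity above requires, is $\sin\thetaa_z=\sn{\thetae_z}k$ and $\cos\thetaa_z=\cn{\thetae_z}k$, consistent with the parametrization~\eqref{eq: Z-inv}; the $\sc$ appearing in the lemma's hypothesis conflicts with~\eqref{eq: Z-inv} and with the identity the proof verifies. Your later hedge about ``a $\dc{\cdot}{}$-type expression up to the prefactor'' indicates you sensed this mismatch; the fix is to use the $\sn$-version and drop the attempt to reconcile it with $\sc$.
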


\begin{proof}
\noindent Similarly to the proof of Lemma \ref{lem: prop_exp_critical},
this boils down to checking the identity
\[
1\ =\ \frac{\sd{\frac{1}{2}(\ellip{\mu}-\ellip{\alpha}-2\ellip{\theta})}k}{\sd{\frac{1}{2}(\ellip\mu-\ellip\alpha)}k}\cn{\ellip\theta}k+\ \frac{\cd{\frac{1}{2}(\ellip\mu-\ellip\alpha-2\ellip\theta)}k}{\cd{\frac{1}{2}(\ellip\mu-\ellip\alpha-2\ellip K)}k}\sn{\ellip\theta}k.
\]
Denote~$\ellip w:=\frac{1}{2}(\ellip \mu-\ellip\alpha)$. Since $\cd{\ellip{w}-K}k=\sn {\ellip{w}}k$,
this is equivalent to
\[
\sn {\ellip{w}}k\ =\ \sd{{\ellip{w}}-\ellip\theta}k\cn{\ellip\theta}k\dn {\ellip{w}}k+\cd{{\ellip{w}}-\ellip\theta}k\sn{\ellip\theta}k.
\]
The latter identity follows from the fact that the two sides are co-periodic
(see \cite[Eq.~22.4.1]{NIST:DLMF}), and have the same poles and residues.
Namely, by \cite[Eq.~22.5.1]{NIST:DLMF}, the LHS has a pole at $iK'$
with residue $\frac{1}{k},$ while the RHS has a pole at $iK'$ with
residue $-i\cn{\theta}k\sd{iK'-\theta}k$, and at $K+iK'+\theta$
with residue
\[
-i(kk')^{-1}\cn{\theta}k\dn{K+iK'+\theta}k-k^{-1}\sn{\theta}k;
\]
the former expression equals to $\frac{1}{k}$ and
the latter to $0$ by \cite[Eq.~22.3.4]{NIST:DLMF}.
\end{proof}
\begin{remark}\label{rem: exp_critical_massive}
Note that our modified definition~(\ref{eq: defecrit1}--\ref{eq: defecrit2}) of discrete exponentials at criticality is nothing but a particular case of the more general elliptic construction~(\ref{eq: defexpk}--\ref{eq: defexpk2}) corresponding to~$k=0$, $k'=1$, $K=\frac{\pi}{2}$, $K'=+\infty$. More precisely, the limiting form of (\ref{eq: defexpk}--\ref{eq: defexpk2}) as~$\delta=1$ and~$k\to 0$ is
\begin{align*}
\dexpk 0{\mu}{v(c)}c\ & =\ \sin\tfrac{1}{2}(\ea{\mu}-\ea{\alpha}_{c})\ =\ \tfrac{1}{2i}e^{-\frac{i}{2}\alpha_{c}}e^{i\frac{\mu}{2}}\cdot (1-e^{-i\mu}e^{i\alpha_c}),\\
\dexpk 0{\mu}{u(c)}c\ & =\ -i\cdot\cos\tfrac{1}{2}(\ea{\mu}-\ea{\alpha}_{c})\ =\ \tfrac{1}{2i}e^{-\frac{i}{2}\alpha_{c}}e^{i\frac{\mu}{2}}\cdot (1+e^{-i\mu}e^{i\alpha_c}),
\end{align*}
which coincides with (\ref{eq: defecrit1}--\ref{eq: defecrit2}) if $e^{-i\mu}=-\frac{1}{2}\lambda$. Thus, we have the identity
\begin{equation}
\label{eq: exp_critical_massive}
\dexpk 0{\ea{\mu}}x{x_{0}}\ =\ \dexp{-2e^{-i\ea{\mu}}}x{x_{0}}\ \ \text{provided that}\ \ \delta=1.
\end{equation}
\end{remark}

\subsection{Definition and basic properties of the full-plane kernels} \label{sub: def_kernels}
In this section we introduce real-valued spinors
\begin{itemize}
\item $\cX_\rr$, $\cX_\ri$ defined on~$\Upsilon^\times$;

\smallskip
\item $\rG_{[u]}$, $\rG_{[v]}$ (where~$u\in\Gamma^\circ$ and~$v\in\Gamma^\bullet$) defined on~$\Upsilon^\times_{[u]}$ and~$\Upsilon^\times_{[v]}$, respectively;

\smallskip
\item and the massive Cauchy kernel~$\rG_{(a)}$, where $a\in\Upsilon^\times$, defined on the double cover~$\Upsilon^\times_{(a)}:=\Upsilon^\times_{[v(a),u(a)]}$, which can be naturally identified with~$\Upsilon^\times$ except the two lifts of the corner~$a$ itself; see { Fig.~\ref{fig:U=Ua} and} Remark~\ref{rem: Gaa-def} below.
\end{itemize}
These spinors satisfy (see Proposition~\ref{prop: kernels_values} below) the propagation equation~\eqref{eq:3-terms}, which allows one to construct massive s-holomorphic functions~$\cF_\rr$, $\cF_\ri$, $\cG_{[u]}$, $\cG_{[v]}$, and $\cG_{(a)}$ out of them via Definition~\ref{def:shol-def-mass}.

\smallskip

For~$w_1,w_2\in\C$ { such that~$w_1\ne w_2$}, denote
\[
\textstyle \ea{\phi}_{w_1w_2}:=\arg(w_{1}-w_{2}),\qquad \ellip{\phi}_{w_{1}w_{2}}:=\frac{2K}{\pi}\cdot\ea{\phi}_{w_{1}w_{2}}.
\]
Also, let~$o\in\Gamma^{\circ,\delta}$ be the closest to the origin vertex of~$\Gamma^{\circ,\delta}$. We define
\begin{align}
{ \delta^{-\frac 12}}\cX_\rr(c)\ & :=\ { -ik^{-1}}\cdot\dexpk k{{ \tfrac{4K}{\pi}\arg\varsigma}+2iK'}c{o}\,,\label{eq:defX1}\\
{ \delta^{-\frac 12}}\cX_\ri(c)\ & :=\ { -ik^{-1}}\cdot \dexpk k{{ \tfrac{4K}{\pi}\arg\varsigma-2K}+2iK'}c{o}\,;\label{eq:defXi}\\
\coruz uc\ & :=\ { -\frac{(k')^{\frac14}}{2\pi}}\int_{\ellip{\phi}_{cu}-2iK'}^{\ellip{\phi}_{cu}+2iK'}\dexpk k{\ellip{\mu}}cu\,d\ellip{\mu}\,,\label{eq: def_chi_mu}\\
\corvz vc\ & :=\ { -\frac{i(k')^{\frac14}}{2\pi}}\int_{\ellip{\phi}_{cv}-2K-2iK'}^{\ellip{\phi}_{cv}+2K+2iK'}\dexpk k{\ellip{\mu}}cv\,d\ellip{\mu}\,;\label{eq: def_chi_sigma}\\
\corzz{a}{c}\ & :=\ { \frac{i}{2\pi}}\int_{\ellip{\phi}_{ca}-2iK'}^{\ellip{\phi}_{ca}+2iK'}\frac{\dexpk k{\ellip{\mu}}ca}{\cd{\frac{1}{2}(\ellip{\mu}-\ellipa{a})}k\sn{\frac{1}{2}(\ellip{\mu}-\ellipa{a})}k}\,d\ellip{\mu}\,;\label{eq: def_chi_chi}
\end{align}
where the integrals in~(\ref{eq: def_chi_mu}--\ref{eq: def_chi_chi}) are computed along \emph{straight} segments: vertical in~\eqref{eq: def_chi_mu},~\eqref{eq: def_chi_chi} and `diagonal' in~\eqref{eq: def_chi_sigma}. Note that the integrands are meromorphic functions of~$\ellip\mu$, so one only has to specify the homotopy classes of the paths of integration with respect to the poles of discrete exponentials~$\dexpk{k}\cdot{c}{x_0}$, which belong to the set~$\Im\ellip\mu\in 4K'\mathbb{Z}$ due to~(\ref{eq: defexpk}--\ref{eq: exp_vu}).

It is not hard to see that symmetries of the elliptic functions involved imply that all the quantities~(\ref{eq:defX1}--\ref{eq: def_chi_chi}) are real. However, note that one can avoid checking this fact by taking the real part in the above definitions; this operation is obviously compatible with the propagation equation~\eqref{eq:3-terms} and with asymptotics~(\ref{eq: asymp_X1}--\ref{eq: asymp_chi_chi}) discussed below.

We begin by listing several important comments on the definitions~(\ref{eq: def_chi_mu}--\ref{eq: def_chi_mu}).

\begin{remark}
Because of (\ref{eq: exp_per_1}--\ref{eq: exp_per_3}) and \cite[Eq.~22.4(iii)]{NIST:DLMF}, each of the three integrals in~(\ref{eq: def_chi_mu}--\ref{eq: def_chi_chi})
is half the integral over the twice longer (vertical or `diagonal') segment, which can
be thought of as a period of the twice bigger torus $\ellip\mu\in \C/(8K\mathbb{Z}+8iK'\mathbb{Z})$ on which the integrand is naturally defined.
\end{remark}

\begin{remark} \label{rem:GuGv-def-onUuUv}
The fact that the spinors~$\rG_{[u]}(c)$, $\rG_{[v]}(c)$ are defined on~$\Upsilon^\times_{[u]}$, $\Upsilon^\times_{[v]}$, respectively (and not simply on~$\Upsilon^\times$) follows from the anti-periodicity~(\ref{eq: exp_per_2}-\ref{eq: exp_per_3}) of discrete exponentials in the horizontal direction and the fact that~$\ellip{\phi}_{cu}$ increases by $\frac{2K}\pi\cdot 2\pi = 4K$ when~$c$ makes a turn around~$u$; see also Proposition~\ref{prop: kernels_values} below.
\end{remark}

\begin{figure}
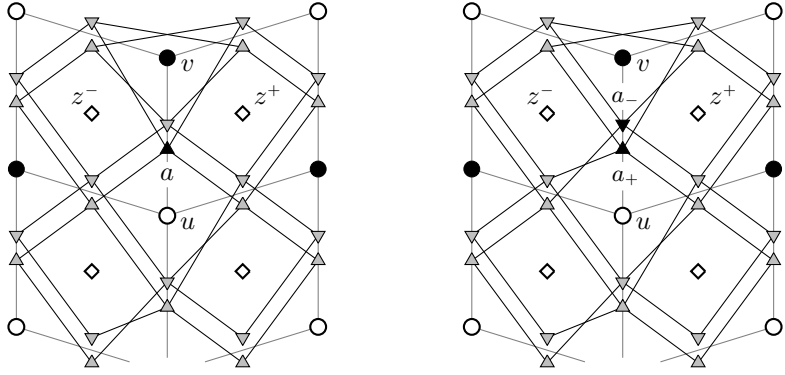

\begin{center}\begin{tikzpicture}[scale=0.21]
\input{BranchingO.txt}
\end{tikzpicture}
\hskip 48pt
\begin{tikzpicture}[scale=0.21]
\input{BranchingA.txt}
\end{tikzpicture}\end{center}
\caption{{ Given~$a\in\Upsilon^\times$ (shown as a black triangular node in the left picture), the double cover~$\Upsilon^\times_{(a)}=\Upsilon^\times_{[v(a),u(a)]}$ (shown on the right) can be identified with~$\Upsilon^\times$ except at the lifts of~$a$. We choose~$a_+\in\Upsilon^\times_{(a)}$ so that the branching structure of the two double covers around the quad~$z^+$ is the same, and similarly for~$a_-$ and~$z^-$.}}
\label{fig:U=Ua}
\end{figure}

\begin{remark} \label{rem: Gaa-def}
 Definition (\ref{eq: def_chi_chi}) of the kernel~$\rG_{(a)}(c)$ does not make sense for $c=a$;
 this corresponds to the fact that the double covers~$\Upsilon^\times$ and~$\Upsilon^\times_{(a)}$ have different branching structures near~$a$. Let~$a_+\in\Upsilon^{\times}_{(a)}$ (resp.,~$a_-$) be such that, if one identifies it with~$a\in\Upsilon^\times$, then the two double covers have the same structure around the quad~$z^\pm$ lying to the right (resp., to the left) of $(u(a)v(a))$ as in~\eqref{eq:Uu=Uv-convention}; { see Fig.~\ref{fig:U=Ua}.}  
  To define~$\rG_{(a)}(c)$ for~$c=a^\pm$ we choose~$\phi_{a_\pm a}$ so that
 \[
 \textstyle \phi_{a_+a}-\phi_{v(a)u(a)}\ \in\ (-\pi,0),
 \qquad \phi_{a_-a}-\phi_{v(a)u(a)}\ \in\ (0,\pi)
 \]
 and use the same definition~\eqref{eq: def_chi_chi} with~$\dexpk k{\ellip\mu}{a_\pm}a:=1$; it is easy to see that the result actually does not depend on the above choices of~$\ellip\phi_{a_+a}\in(\ellip{\alpha}_a-2K,\ellip{\alpha}_a)$ and $\ellip\phi_{a_-a}\in(\ellip\alpha_a,\ellip\alpha_a+2K)$. Note that $\corzz{a}{a_{+}}=-\corzz{a}{a_{-}}$ due to~\cite[Eq.~22.4(iii)]{NIST:DLMF}.
\end{remark}
\begin{proposition} \label{prop: kernels_values}
The spinors~(\ref{eq: def_chi_mu}--\ref{eq: def_chi_chi}) satisfy the propagation equation~\eqref{eq:3-terms} on~$\Upsilon^\times_{[u]}$, $\Upsilon^\times_{[v]}$ and~$\Upsilon^\times_{(a)}$, respectively; in the latter case one should use the value at~$a_+$ (resp., at~$a_-$) to recover the equation to the right (resp., to the left) of~$(u(a)v(a))$. Moreover, for all~$c\in\Upsilon^\times$ we have
\[
\coruz{u(c)}c=\corvz{v(c)}c=1\quad\text{and}\quad \corzz c{c_\pm}=\pm 1\,,
\]
independently of~$k$, with conventions $\frac{1}{2}\alpha_c=\frac{1}{2}\phi_{cu}=\frac{1}{2}(\phi_{cv}+\pi)$ used to specify one of the two layers of corners~$u\sim c\in\Upsilon^\times_{[u]}$ (resp., $v\sim c\in\Upsilon^\times_{[v]}$); cf.~Remark~\ref{rem:GuGv-def-onUuUv}.
\end{proposition}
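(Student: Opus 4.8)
The plan is to reduce everything to Lemma~\ref{lem: prop_exp_massive} together with linearity of the integral over~$\ellip\mu$. Each of the spinors~\eqref{eq: def_chi_mu}--\eqref{eq: def_chi_chi} is, up to a $c$-independent factor in the case of~$\corzz{a}{c}$, an integral of a discrete exponential $c\mapsto\dexpk{k}{\ellip\mu}{c}{x_0}$, so the three-terms identity~\eqref{eq:3-terms} should follow by integrating its pointwise counterpart over~$\ellip\mu$, while the normalization values should come out of an explicit evaluation of the resulting one-dimensional contour integrals of Jacobi functions.

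For the propagation equation I would fix a quad~$z$ with surrounding corners $c_{00},c_{01},c_{10}$ as in Fig.~\ref{fig:notation} and, in the case of~$\rG_{[u]}$, fix the branch vertex~$u$; the three corners then all lie on~$\Upsilon^\times_{[u]}$. For each~$\ellip\mu$ away from the poles of $\dexpk{k}{\ellip\mu}{\cdot}{u}$, which by~(\ref{eq: defexpk}--\ref{eq: exp_vu}) lie on the horizontal lines $\Im\ellip\mu\in 4K'\mathbb{Z}$, Lemma~\ref{lem: prop_exp_massive} gives
\[
\dexpk{k}{\ellip\mu}{c_{00}}{u}\ =\ \dexpk{k}{\ellip\mu}{c_{01}}{u}\cos\thetaa_z+\dexpk{k}{\ellip\mu}{c_{10}}{u}\sin\thetaa_z.
\]
The three values $\coruz{u}{c_{00}},\coruz{u}{c_{01}},\coruz{u}{c_{10}}$ are integrals over the vertical segments through $\ellip\phi_{c_{00}u},\ellip\phi_{c_{01}u},\ellip\phi_{c_{10}u}$, which differ only by the half-angles of~$z$; since each such segment stays within $|\Im\ellip\mu|<4K'$ and crosses the real axis at a non-pole point (the real poles of the integrands being at $\Re\ellip\mu=\ellipa{c}+2K$ mod~$4K$, away from $\ellip\phi_{cu}=\ellipa{c}$), Cauchy's theorem lets me deform all three onto one contour, and integrating the displayed identity over it yields the propagation equation for~$\coruz{u}{\cdot}$. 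The same argument with diagonal segments handles~$\corvz{v}{\cdot}$ on~$\Upsilon^\times_{[v]}$; for~$\corzz{a}{\cdot}$ on~$\Upsilon^\times_{(a)}$ the extra factor $(\cd{\tfrac12(\ellip\mu-\ellipa{a})}k\,\sn{\tfrac12(\ellip\mu-\ellipa{a})}k)^{-1}$ depends only on~$a$ and is carried along unchanged; and for the quads $z^\pm$ adjacent to~$(u(a)v(a))$, where~$\Upsilon^\times_{(a)}$ branches differently from~$\Upsilon^\times$, I would use the convention $\dexpk{k}{\ellip\mu}{a_\pm}{a}:=1$ of Remark~\ref{rem: Gaa-def}, which is consistent with the one-step case of Lemma~\ref{lem: prop_exp_massive} applied with base point $a\in\Upsilon^\times$, so that the integrated identity holds also at~$a_\pm$.

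For the normalizations I would use that, when~$c$ is adjacent to~$u(c)$, the path from~$u(c)$ to~$c$ is a single step, so $\dexpk{k}{\ellip\mu}{c}{u(c)}$ is the explicit ratio of Jacobi functions of~$\tfrac12(\ellip\mu-\ellipa{c})$ read off from~\eqref{eq: defexpk2}; with $\ellip\phi_{cu}=\ellipa{c}$ the substitution $\ellip w=\tfrac12(\ellip\mu-\ellipa{c})$ turns~\eqref{eq: def_chi_mu} into a contour integral of that function over a vertical period segment, which I would evaluate in closed form via its antiderivative (equivalently, Jacobi's imaginary transformation~\cite[\S22.6]{NIST:DLMF} together with $\operatorname{am}(K'\,|\,k')=\tfrac\pi2$); the prefactor in~\eqref{eq: def_chi_mu} is chosen precisely so that the outcome is~$1$ for every~$k$. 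The analogous computation gives $\corvz{v(c)}{c}=1$, and for $\corzz{c}{c_\pm}$ the convention $\dexpk{k}{\ellip\mu}{c_\pm}{c}:=1$ reduces~\eqref{eq: def_chi_chi} to $\tfrac{i}{2\pi}\int (\cd{\tfrac12(\ellip\mu-\ellipa{c})}k\,\sn{\tfrac12(\ellip\mu-\ellipa{c})}k)^{-1}\,d\ellip\mu$ over the appropriate period segment, which by~\cite[Eq.~22.4(iii)]{NIST:DLMF} is independent of the admissible choice of~$\ellip\phi_{c_\pm c}$ and evaluates to~$\pm1$ by a residue computation, the sign flip between~$c_+$ and~$c_-$ being the antiperiodicity already recorded in Remark~\ref{rem: Gaa-def}. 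Reality of all these quantities is automatic from the symmetries of the integrands, or can simply be imposed by taking real parts as noted after~\eqref{eq: def_chi_chi}.

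The conceptual content is thus Lemma~\ref{lem: prop_exp_massive} (and its critical companion Lemma~\ref{lem: prop_exp_critical}) plus linearity of the integral; I expect the real work — and the main potential for error — to be the bookkeeping of the homotopy classes of the integration contours as~$c$ moves (ensuring that no pole of the relevant discrete exponential is crossed when passing to a common contour, in particular near the branch points and near~$a$) and the extraction of the $k$-independent values~$1$ and~$\pm1$ from the Jacobi-function integrals.
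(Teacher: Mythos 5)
Your overall architecture is the same as the paper's: the propagation equation for the discrete exponentials (Lemma~\ref{lem: prop_exp_massive}) is pushed through the $\ellip\mu$-integral by a contour deformation argument, and the normalizations are obtained by evaluating explicit one-dimensional contour integrals of Jacobi functions when the base point is adjacent to the target corner. The paper treats the contour deformation exactly as you do, citing the Kenyon/BdTR arguments rather than spelling out the pole bookkeeping; your quick justification is serviceable, though your statement that the real poles of $\dexpk k{\ellip\mu}{c}{u}$ sit only at $\Re\ellip\mu=\ellipa c+2K$ (mod~$4K$) is accurate only for a single step — for general $c$ there are poles at $\ellipa e+2K$ for each intermediate edge $e$, which is why the minimal-path/$\phi^\Lambda_{cu}$ machinery is invoked.

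Where you genuinely differ from the paper is in how you extract the $k$-independent values. The paper's technique is to compute the antiderivative ($\ln(\operatorname{nc}+\operatorname{sc})$ for $\rG_{[u]}$, $\ln\operatorname{sc}$ for $\rG_{(a)}$), observe that the resulting value lies a priori in $1+2\mathbb{Z}$ (the branch of the logarithm leaves a $2\mathbb{Z}$ ambiguity), and then pin down the specific representative by continuity in $k$ together with an explicit evaluation at $k\to 0$. Your approach is more direct: for $\rG_{[u]}$ and $\rG_{[v]}$ you map the vertical period segment to a real segment via Jacobi's imaginary transformation, which turns the integral into $\int_0^{K(k')}\operatorname{dn}(t\,|\,k')\,dt=\operatorname{am}(K(k')\,|\,k')=\tfrac\pi2$, hence gives the exact value without any integer ambiguity to resolve; and for $\rG_{(a)}$ you compute the residue of $\operatorname{dc}\cdot\operatorname{ns}$ at the pole separating the two admissible contours and combine the resulting difference $\rG_{(a)}(c_+)-\rG_{(a)}(c_-)=\pm2$ with the antiperiodicity relation $\rG_{(a)}(c_+)=-\rG_{(a)}(c_-)$ to conclude $\rG_{(a)}(c_\pm)=\pm1$. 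Both routes are valid; yours avoids the ``$1+2\mathbb{Z}$ plus continuity plus $k\to0$'' step, which is a modest simplification, at the cost of relying on the transformation identities of~\cite[\S22.6]{NIST:DLMF} instead of just the $k\to0$ degeneration. Your parenthetical that the prefactor ``is chosen so that the outcome is~$1$'' is of course not itself a proof and should be read as motivation, not as a substitute for the computation you outline; apart from that, the proposal is sound.
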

\begin{proof} Due to Lemma~\ref{lem: prop_exp_massive}, the discrete exponentials~$c\mapsto \dexpk k{\ellip\mu}{c}x_0$ satisfy the equation~\eqref{eq:3-terms}. Therefore so do each of the functions~(\ref{eq: def_chi_mu}--\ref{eq: def_chi_chi}) provided that one can shift the contours of integration used to define its values around a given quad to the same position, not crossing the poles of the integrands. This follows from the same considerations as in~\cite[Theorem~4.2]{kenyon2002laplacian} and~\cite[Theorem~12]{boutillier2017z}.

For~$u=u(c)$, applying \cite[Eq.~22.14.7]{NIST:DLMF} we obtain the identity
\begin{align*}
\coruz{u(c)}c\ &=\ -\frac{i}{2\pi}\int_{\ellip\phi_{cu}-2iK'}^{\ellip\phi_{cu}+2iK'}\dc{\tfrac{1}{2}(\ellip{\mu}-\ellip\alpha_c)}k d\ellip{\mu}\\
&=\ -\frac{i}{2\pi}\int_{-2iK'}^{2iK'}\dc{\tfrac{1}{2}\ellip{\mu}}k d\ellip{\mu}
=-\frac{i}{\pi}\log\left(\nc{\tfrac12{\ellip{\mu}}}k+\sc{\tfrac12{\ellip{\mu}}}k\right)\Big|_{-2iK'}^{2iK'}\,. 
\end{align*}
Note that~$\nc{\pm iK'}k=0$ and~$\sc{\pm iK'}k=\pm i$; see \cite[Eq.~22.5.1, 22.4.3]{NIST:DLMF}. Thus, the value~$\coruz{u(c)}c$ belongs to the set~$1+2\mathbb{Z}$. Since it could only depend continuously on~$k$, in fact it does \emph{not} depend on~$k$ at all. The concrete value~$1$ of the answer can be justified, e.\,g., by considering the limit
\[
\int_{-2iK'}^{2iK'}\dc{\tfrac12{\ellip{\mu}}}k d\ellip{\mu}\ \to\ \int_{-i\infty}^{i\infty}\frac{dt}{\cos(t/2)}\ =\ 2\pi i\ \ \text{as}\ \ k\to 0\,.
\]
 The computation for $\corvz{v(c)}c$ is similar. For $\corzz c{c_{+}},$
we get by \cite[Eq.~22.13(i)]{NIST:DLMF}
\[
\frac{i}{2\pi}\int_{-0-2iK'}^{-0+2iK'}\dc{\tfrac12{\ellip{\mu}}}k \ns{\tfrac12{\ellip{\mu}}}k d\mu \ =\ \frac{i}{\pi}\log\sc{\tfrac12{\ellip{\mu}}}k\Big|_{-0-2iK'}^{-0+2iK'}\ \in\ 1+2\mathbb Z
\]
and, once again, the choice of the value $1$ can be justified by considering~$k\to 0$.
\end{proof}

\subsection{Asymptotics at criticality}\label{sub:kernels-crit}

Before analyzing the asymptotics of the full-plane kernels~(\ref{eq: asymp_chi_mu}--\ref{eq: asymp_chi_chi}) in the massive regime, let us first briefly discuss the degenerate case~$q=0$ and~$\delta=1$. Changing the variable~$\lambda:=-2e^{-i\mu}$ according to~\eqref{eq: exp_critical_massive} (see also~\cite[Remark~13]{boutillier2017z}), one sees that in this case~\eqref{eq: def_chi_mu} reads as
\[
\rG_{[u]}(c)\ =\ -\frac{i}{2\pi}\int_{(\overline{u}-\overline{c})\R_+}\!\frac{\dexp{\lambda}cu}{\lambda}\,d\lambda\
=\ -\frac{i}{2\pi}\int_{(\overline{u}-\overline{c})\R_+}\!\frac{\dexp{2\lambda}cu}{\lambda}\,d\lambda\,.
\]
The behaviour of these integrals for~$|c-u|\gg \delta=1$ is governed by the behavior of the integrands near~$0$ and~$\infty$ (e.\,g., see~\cite{kenyon2002laplacian} or~\cite[Appendix]{ChelkakSmirnov1}). Note that we have
\begin{align*}
\lambda^{-1}\dexp{2\lambda}{c}{u}\ &=\ \lambda^{-1}\dexp{2\lambda}{c}{u(c)}\cdot \dexp{2\lambda}{u(c)}{u}\\
&=\ 2\varsigma\overline\eta_c\cdot \lambda^{-\frac12}\cdot \exp\big[2(c-u)\lambda+O(\lambda^2)+O(|c-u|\lambda^3)\big]\ \ \text{as}\ \ \lambda\to 0.
\end{align*}
since $\dexp{2\lambda} vu = \exp\big[2\lambda(v-u)+O(\lambda^3)\big]$ for~$u\sim v$ and thus only the first factor contributes to the~$O(\lambda^2)$ term. A similar computation shows that
\[
\lambda^{-1}\dexp{2\lambda}{c}{u}\ =\ -2\overline{\varsigma}\eta_c\cdot \lambda^{-\frac32}\cdot \exp\big[2(\overline{c}-\overline{u})\lambda^{-1}+O(\lambda^{-2})+O(|c-u|\lambda^{-3})\big]
\]
as~$\lambda\to\infty$. Using the Laplace method as in~\cite{kenyon2002laplacian,ChelkakSmirnov1,DubedatDimersCR} one obtains asymptotics
\begin{align*}
\rG_{[u]}(c)\ &=\ -\tfrac{i}{\pi}\cdot \big[\varsigma\overline{\eta}_c\cdot (2(u-c))^{-\frac12}-\overline{\varsigma}\eta_c\cdot(2(\overline{u}-\overline{c}))^{-\frac12}\big]\cdot \Gamma(\tfrac12)+ O(|c-u|^{-\frac52})\\
&=\ \big(\tfrac2\pi\big)^{\frac12}\Re\big[\overline\eta_c\cdot\varsigma(c-u)^{-\frac12}\big]+O(|c-u|^{-\frac52})
\end{align*}
as~$|c-u|\to\infty$. After an appropriate scaling as~$\delta\to 0$, this reads as
\[
\cG_{[u]}(z)\ =\ \biggl(\frac2\pi\biggr)^{\!\!\frac 12}\cdot\frac{\varsigma}{\sqrt{z-u}}+O(\delta^2|z-u|^{-\frac 52}),\quad z\in\diamondsuit,
\]
in terms of the corresponding s-holomorphic functions~$\cG_{[u]}$ associated with~$\rG_{[u]}$ via Definition~\ref{def:shol-def-mass}; this is  nothing but asymptotics~\eqref{eq:G-asymp-crit} for~$w=u\in\Gamma^\circ$. A similar result for~$\rG_{[v]}$,  with the multiple~$-i\varsigma$ instead of~$\varsigma$, follows by the duality;  see Remark~\ref{rem:duality-asymp}.
\begin{remark} A similar analysis at criticality applies to discrete Cauchy kernels
\[
\rG_{(a)}(c)\ 
=\ \frac{2ie^{i\alpha_a}}{\pi}\int_{(\overline{a}-\overline{c})\R_+}\frac{\dexp{2\lambda}ca}{1-{\lambda^2}e^{2i\alpha_a}}\,d\lambda\,,
\]
a degenerate case of~\eqref{eq: def_chi_chi} for~$q=0$ and~$\delta=1$; see also~\eqref{eq: exp_critical_massive}. Using the Laplace method as above one gets the asymptotics
\begin{align*}
\rG_{(a)}(c)\ &=\ \tfrac{2i}{\pi}\cdot\big[e^{i\alpha_a}\eta_a\overline{\eta}_c\cdot(2(a-c))^{-1}-e^{-i\alpha_a}\overline\eta_a\eta_c\cdot (2(\overline{a}-\overline{c}))^{-1}\big]+O(|c-a|^{-3})\\
&=\ \tfrac 2\pi\cdot\Re\big[\overline{\eta}_c\cdot \overline{\eta}_a(-i\varsigma^2)(c-a)^{-1}\big]+O(|c-a|^{-3})
\end{align*}
as~$|c-a|\to\infty$ and~$\delta=1$. Again, this results extends to~$\delta\to 0$ by scaling arguments and reads as
\begin{equation}
\label{eq:Cauchy-crit-asymp}
{ \delta^{-\frac12}}\cG_{(a)}(z)\ =\ \frac{2}{\pi}\cdot \frac{(-i\varsigma^2)\cdot \overline{\eta}_a}{z-a}+O(\delta^2|z-a|^{-3}),\quad z\in\diamondsuit,
\end{equation}
in terms of the corresponding s-holomorphic functions; see Definition~\ref{def:shol-def-mass}.
\end{remark}

\subsection{Asymptotics and estimates of discrete exponentials} \label{sub:exp-asymp}
We start with giving a proof of the estimate~\eqref{eq:G-exp-decay-sub} for the sub-critical model (i.e.,~$k>0$ and~$\delta=1$ are fixed). We need to show that the full plane kernel~$\coruz uc$, $u\in\Gamma^\circ$, decays exponentially fast as \mbox{$|c-u|\to\infty$}; the required estimate~\eqref{eq:G-exp-decay-sub} follows by the duality. Here and below we rely upon the following fact (e.\,g., see~\cite[Lemma~17]{boutillier2011Locality}):
\begin{quote} Under condition~$\BAP$, given~$u$ and~$c$ one can find a (so-called \emph{minimal}) nearest-neighbor path $u=w_{0}\sim w_1\sim\ldots\sim w_n$ on $\Lambda$ with~$w_n\sim c$ such that all angles~$\phi_{w_{j+1}w_j}=\arg(w_{j+1}-w_j)$, as well as~$\phi_{cw_n}$, belong to a segment of length~$\pi-2\theta_0$ (which also contains the direction~$\phi_{cu}$). We denote by~$\phi^\Lambda_{cu}$ the midpoint of this segment.
\end{quote}

\begin{proof}[{\bf Proof of the estimate (\ref{eq:G-exp-decay-sub})}] By construction, the function~$\dexpk k{\cdot}cu$ does not have poles in the strip~$|\Re(\ellip\mu-{\ellip\phi}^\Lambda_{cu})|\le K-\ellip\theta_0$, which means that one can replace~$\ellip\phi_{cu}$ by~$\ellip\phi^\Lambda_{cu}$ in the definition~\eqref{eq: def_chi_mu} of~$\coruz uc$. Note that the function~$\sc{\cdot}k$ is analytic in the strip $|\Re \ellip z|\leq\frac{1}{2}K$ and periodic in the vertical direction, and that
\[
k'\cdot \big|\sc{\ellip z}k\big|^2\ =\ k'\cdot\big|\sc{\ellip z}k\sc{\ellip z\mp K}k\big|\ =\ 1\ \ \text{if}\ \ \re z=\pm \tfrac12 K
\]
due to~\cite[Eq.~22.4.3]{NIST:DLMF} and since $\ellip z\mp K=-\overline{\ellip z}$ if~$\re z=\pm\frac12 K$. Therefore, the maximum principle gives a simple estimate
\begin{equation}
\xi\ :=\sup_{\ellip z:|\re\ellip z|\leq\frac{1}{2}(K-\ellip\theta_0)}\big|i\sqrt{k'}\cdot\sc{\ellip z}k\big|<1.\label{eq:scsup}
\end{equation}
Due to~\eqref{eq: exp_vu}, we see that $\big|\dexpk k{\ellip\mu}{w_n}u\big|\le \xi^n$ if $\re\ellip\mu=\ellip\phi^\Lambda_{cu}$. Since~$|\phi_{cw_n}-\phi^\Lambda_{cu}|\le\frac{\pi}{2}$, the last factor~$\dexpk k{\ellip \mu}{c}{w_n}$ is uniformly bounded, which implies that the integrand in~\eqref{eq: def_chi_mu} is uniformly exponentially small as~$|c-u|\to\infty$.
\end{proof}

We now move on to the massive setup~$\qtoem\to 0$, $m>0$.
The asymptotic expansions of elliptic parameters via the nome~$q=\exp(-\pi K'/K)$ are, as per \cite[Eq.~22.2.1--2 and 20.2.1--4]{NIST:DLMF}:
\begin{equation}
\begin{array}{ll}
k=4q^{\frac{1}{2}}(1-4q+O(q^{2})),& K=\frac{\pi}{2}(1+4q+4q^{2}+O(q^{3})),\\[4pt]
k'=1-8q+O(q^{3}),& K'=-\frac{1}{2}\log q+O(|q\log q|).
\end{array}
\label{eq: K_k_kprime}
\end{equation}

Similarly to the critical case discussed in the previous section, the main contribution to the integrals~(\ref{eq: def_chi_mu}--\ref{eq: def_chi_chi}) comes from neighborhoods of endpoints of the corresponding segments. It is thus convenient to introduce a shifted variable
\[
\ellip\mup\ :=\ \ellip\mu-2iK';
\]
recall also that the periodicity~(\ref{eq: exp_per_1}--\ref{eq: exp_per_2}) imply that one can compute these integrals over segments lying in the strip~$|\Im\ellip\mup|\le 2K'$ instead of~$\Im\ellip\mup\in[-4K',0]$.

In terms of the variable~$\ellip\mup$, the definitions~(\ref{eq: defexpk}--\ref{eq: exp_vu}) read as (see~\cite[Eq.~22.4.3]{NIST:DLMF})
\begin{align*}
\dexpk k{\ellip{\mu}}{v(c)}c & \ =\ { (k')^{\frac 14}}\cdot \sd{\tfrac{1}{2}(\ellip{\mu}-\ellip\alpha_c)}k\ =\  ik^{-1}{ (k')^{\frac 14}}\cdot\nc{\tfrac{1}{2}(\ellip{\mup}-\ellip\alpha_c)}k,\\
\dexpk k{\ellip{\mu}}{u(c)}c & \ =\ -i{ (k')^{-\frac 14}}\cd{\tfrac{1}{2}(\ellip{\mu}-\ellip\alpha_c)}k\ =\  -ik^{-1}{ (k')^{-\frac 14}}\dc{\tfrac{1}{2}(\ellip{\mup}-\ellipa c)}k,
\end{align*}
which gives
\begin{equation}
\label{eq:x-dexpk-j+1,j=}
\dexpk k{\ellip\mu}{w_{j+1}}{w_j}\ =\ -\sqrt{k'}\cdot\nd{\tfrac{1}{2}(\ellip{\mup}-\ellip{\phi}_{w_{j+1}w_j})}k
\end{equation}
for all pairs of neighboring vertices~$w_j\sim w_{j+1}$ on $\Lambda$.

\smallskip

Denote
\[
\Pi_{q,\theta_0}\ :=\ \big\{z=\tfrac{\pi}{2K}\ellip z\in\C:\,|\Re\ellip{z}|\le \tfrac12K-\ellip\theta_0\,,\ \tfrac{1}{2} K'\le|\Im \ellip z|\le K'\big\}.
\]
In what follows, we will always have~$z=\frac{1}{2}(\nu-\phi_{**})$ for some~$\phi_{**}\in\R$. We also introduce a real variable~$y\in\R$ by requiring that
\[
\Im \ellip{z}\,=\,\tfrac{1}{2}\Im\ellip{\nu}\,=\,K'y\quad\Leftrightarrow \quad \Im z\,=\,\tfrac{1}{2}\Im\nu\,=\,(-\tfrac{1}{2}\log q)\cdot y\,.
\]

It follows from~\cite[Eq.~22.2.6 and Eq.~20.2.3--4]{NIST:DLMF} that
\[
\nd{\ellip z}k\ =\ \frac{1+2q+O(q^{4})}{1-2q+O(q^{4})}\cdot\frac{1-2q\cos(2\ea z)+O(q^2)}{1+2q\cos(2\ea z)+O(q^2)}\quad \text{if}\ \ |\Im \ellip{z}|\le K'
\]
since we have~$|\cos(2nz)|\le q^{-ny}$, which allows to bound by~$O(q^2)$ the higher (i.\,e., $n\ge 2$) terms in the expansions of the Jacobi theta functions~$\theta_{3,4}$. Therefore,
\begin{equation}
\label{eq:x-one-step-qeps}
\sqrt{k'}\,\nd{\ellip z}k\ =\ \exp\big[\!-4q\cos(2\ea z)+O(q^2)\,\big]\quad \text{if}\ \ z\in \Pi_{q,\theta_0}.
\end{equation}

Note that the map~$z\mapsto 2q\cos 2z=q\cdot(e^{2iz}+e^{-2iz})$ sends the region~$\Pi_{q,\theta_0}$
into the region $\{\zeta\in\C:|\arg\zeta|\leq\frac{\pi}{2}-\theta_0\,,\ q^{\frac 12}-q^{\frac 32}\leq |\zeta|\leq 1-q^2\}.$ As in the proof of the estimate~\eqref{eq:G-exp-decay-sub} discussed above, let us now consider a minimal path $u=w_0\sim w_1\sim \ldots \sim w_n\sim c$, where $n\asymp \delta^{-1}|c-u|$, and apply the estimate~\eqref{eq:x-one-step-qeps} to each of the factors~\eqref{eq:x-dexpk-j+1,j=}. We see that there exists a constant~$\mathrm{cst}(\theta_0)>0$ such that
\begin{equation}
\label{eq:stretched-exp}
\log \big|\dexpk k{\ellip\mu} cu\big|\ \le -\mathrm{cst}(\theta_0)\cdot q^{-\frac 12}\cdot m|c-u|
\end{equation}
as~$\qtoem\to 0$, for all~$\ellip\mu=\ellip\mup+2iK'$ such that
\[
\Re\ellip\mup=\ellip\phi^\Lambda_{cu}\quad \text{and}\quad K'\le |\Im\ellip\mup|\le 2K'.
\]
(Note also that the last factor~$\dexpk k{\ellip\mup+2iK'}c{w_n}$ is uniformly bounded for such~$\ellip\mup$.)

\smallskip

We now move to the analysis of discrete exponentials in the region~$|\Im\ellip\mup|\le K'$. Using~\cite[Eq.~22.2.6 and Eq.~20.2.3--4]{NIST:DLMF} as above, we see that
\begin{equation}
\nd{\ellip z}k\ =\ \frac{1+2q+O(q^{4})}{1-2q+O(q^{4})}\cdot\frac{1-2q\cos(2\ea z)+O(q^{4-2|y|})}{1+2q\cos(2\ea z)+O(q^{4-2|y|})}\label{eq: nd_bound_all}
\end{equation}
uniformly in the strip $|\im\ellip z|\leq \frac{1}{2}K'$ (i.\,e., $|y|\le\frac12$).
Since~$|q\cos(2z)|\le q^{1-|y|}$, this implies the asymptotics
\begin{equation}
\label{eq:x-dexp-uv-near-R}
\dexpk k{\ellip \mu}{w_{j+1}}{w_j}\ =\ -\exp\big[\!-4q\cos(\nu-\phi_{w_{k+1}w_j})+O(q^{3-3|y|})\,\big]\,,
\end{equation}
for all edges~$w_j\sim w_{j+1}$ of~$\Lambda$, uniformly in the strip~$|\Im\ellip z|\le\frac12K'$.

Let~$u=w_0\sim w_1\sim \ldots \sim w_{2n}=u(c)\sim c$, where~$w_j\in\Lambda$ and~$n=O(\delta^{-1}|c-u|)$ be a nearest-neighbor path going from~$u$ to~$c$. Using~\cite[Eq.~22.2.3--9 and 20.2.1--4]{NIST:DLMF} again, one also sees that
\[
\cd{\ellip z}k\ =\ \cos\left(\ea z\right)\cdot \exp\big[\,2q-2q\cos\left(2\ea z\right)+O(q^{2-|y|})\big],\qquad |\Im\ellip z|\le \tfrac{1}{2}K',
\]
since~$|q^{(1+\frac{1}{2})^{2}}\cos(3\ea z)/\cos\ea z|=O(q^{\frac{9}{4}-|y|})$ and thus the second (and further) terms in the expansion of the Jacobi theta function~$\theta_2$ are absorbed into the error term. 

It follows from geometric considerations that
\[
\textstyle \sum_{j=0}^{2n-1}\cos(\ea{\mup}-\ea{\phi}_{w_{j+1}w_{j}})+\tfrac{1}{2}\cos(\nu-\alpha_c)\ =\ \delta^{-1}|c-u|\cdot \cos(\ea{\mup}-\ea{\phi}_{cu})
\]
for all~$\nu\in\R$, which is therefore identically true for all~$\nu\in\C$. Multiplying asymptotics~\eqref{eq:x-dexp-uv-near-R} along the path~$u=w_0\sim w_1\sim\ldots\sim w_{2n}=u(c)$ and taking into account the last term
\begin{align*}
\dexpk k{\ellip \mu}{c}{u(c)}\ &=\ ik\cdot{ (k')^{\frac 14}}\cd{\tfrac{1}{2}(\ellip\nu-\ellip\alpha_c)}k\\
&=\ ik\cdot \cos(\tfrac{1}{2}(\nu-\alpha_c))\cdot\exp[-2q\cos(\nu-\alpha_c)+O(q^{2-|y|})]
\end{align*}
we conclude that
\begin{align}
\label{eq:dexpk-cu-asymp-near-R} \dexpk k{\ellip \mu}{c}{u} & \\
=\ ik\,\cdot&\cos(\tfrac{1}{2}(\nu-\alpha_c))\cdot \exp\big[\!-2m|c-u|\cos(\nu-\phi_{cu})+O(q^{2-3|y|})\,\big] \notag\\
=\ ik\,\cdot&\exp\big[\!-2m|c-u|\cos(\nu-\phi_{uc})\,\big]\cdot\big(\cos(\tfrac{1}{2}(\nu-\alpha_c))+O(q^{2}e^{\frac72|\Im\nu|})\big)\notag
\end{align}
as $\qtoem\to 0$, uniformly over~$|c-u|=O(1)$ and provided that~$\ellip\mu=\ellip\mup+2iK'$ is such that $\Im\ellip\mup=2K'y$ or, equivalently,~$\Im\nu=(-\log q)\cdot y$ with $|y|\le\frac{1}{2}$.

\begin{remark} \label{rem:dexpk-cc-asymp}
It is easy to see that the estimate~\eqref{eq:stretched-exp} and the asymptotics~\eqref{eq:dexpk-cu-asymp-near-R} remain true if one replaces~$u\in\Gamma^{\circ}$ by~$v\in\Gamma^{\bullet}$. However, a slightly more accurate consideration is required for the contribution of the first step~$a\sim u(a)=w_0$ to the integrand in the definition~\eqref{eq: def_chi_chi} of~$\cG_{(a)}$, $a\in\Upsilon^\times$. In this case we have
\begin{align*}
\frac{\dexpk k{\ellip\mu}{u(a)}a}{\cd{\frac12(\ellip\mu-\ellip\alpha_a)}k\sn{\frac12(\ellip\mu-\ellip\alpha_a)}k}\ &=\ -i(k')^{-\frac 14}\cdot\ns{\tfrac12(\ellip\mu-\ellip\alpha_a)}k\\
&=\ -ik(k')^{-\frac 12}\cdot (k')^{\frac 14}\cd{\tfrac12(\ellip\nu-\ellip\phi_{w_0a})}k
\end{align*}
due to~\cite[Eq.~22.4.3]{NIST:DLMF} and since~$\alpha_a=\phi_{v(a)a}=\phi_{u(a)a}-\pi$. This means that $\dexpk k{\ellip\mu}{c}{a}$ admits a similar asymptotics to~\eqref{eq:dexpk-cu-asymp-near-R} with the prefactor~$k^2(k')^{-\frac12}$ instead of~$ik$, an additional multiple~$\cos(\tfrac12(\nu-\phi_{u(a)a}))=\sin(\tfrac{1}{2}(\nu-\alpha_a))$ and with the error term $O(q^2e^{4|\Im\nu|})$ instead of~$O(q^2e^{\frac 72|\Im\nu|})$.
\end{remark}

\subsection{Asymptotics of the massive full-plane kernels~(\ref{eq:defX1}--\ref{eq: def_chi_chi})} \label{sub: massive_asymptotics_proof}
We are now ready to compute the required asymptotics of the spinors~$\cX_\rr$, $\cX_\ri$, $\rG_{u}$, $\rG_{[v]}$ and~$\rG_{(a)}$. In order to formulate the result for the latter one, denote
\[
f(z)\,:=\,(-i\varsigma^2)\cdot 4|m|e^{-i\arg z}K_{1}(2|mz|),\qquad f^{\star}(z)\,:=\,4imK_{0}(2|mz|),
\]
where~$K_1$ and~$K_0$ are modified Bessel functions of the second kind (see~\cite[Section~10.25]{NIST:DLMF}). Also, let
\begin{equation}
\label{eq:x-feta-def}
f^{[\eta]}(z)\ :=\ \tfrac{1}{2}[\,\overline{\eta}f(z)+\eta f^{\star}(z)\,]\ \ \text{for}\ \ \eta\in\C,
\end{equation}
this is a special solution of the massive holomorphicity equation
$\overline{\pa}f+\varsigma^2 m\overline{f}=0$ that behaves like a Cauchy kernel $(-i\varsigma^2)\overline{\eta}\cdot z^{-1}$ at the origin; cf.~\eqref{eq:Cauchy-crit-asymp}. Let~$m>0$ be fixed { and recall that~$K=K(k)=\frac{\pi}{2}(1+2m\delta+O(\delta^2))$ as~$\delta\to 0$; see~\eqref{eq: K_k_kprime}.} We claim that the following asymptotics hold as~$\qtoem\to 0$:
\begin{align}
{ \delta^{-\frac 12}}\cX_\rr(c)\ & =\ \re\left[\,\overline{\eta}_{c}\cdot e^{-2m\Re[\overline{\varsigma}^2(c-o)]}\,\right]+O(\delta^2)\,,\label{eq: asymp_X1}\\
{ \delta^{-\frac 12}}\cX_\ri(c)\ & =\ \re\left[\,\overline{\eta}_{c}\cdot ie^{2m\Re[\overline{\varsigma}^2(c-o)]}\,\right]+O(\delta^2)\,;\label{eq: asymp_Xi}\\
{ \delta^{-\frac 12}}\coruz uc\ & =\ { K^{-\frac 12}\cdot} \re\left[\,\overline{\eta}_{c}\cdot \varsigma\frac{e^{-2m|c-u|}}{\sqrt{c-u}}\,\right]+{ O(\delta^{2})\,,}\label{eq: asymp_chi_mu}\\
{ \delta^{-\frac 12}}\corvz vc\ & =\ { K^{-\frac 12}\cdot} \re\left[\,\overline{\eta}_{c}\cdot (-i\varsigma)\frac{e^{2m|c-v|}}{\sqrt{c-v}}\,\right]+{ O(\delta^{2})\,;}\label{eq: asymp_chi_sigma}\\
{ \delta^{-1}}\corzz{a}{c}\ & =\ { \frac 2\pi}\cdot\re\left[\,\overline{\eta}_{c}f^{[\eta_{a}]}(c-a)\,\right]+{ O(\delta^2)}\,,\label{eq: asymp_chi_chi}
\end{align}
where~(\ref{eq: asymp_X1}--\ref{eq: asymp_Xi}) are uniform on compact subsets and~(\ref{eq: asymp_chi_mu}--\ref{eq: asymp_chi_chi}) are uniform provided that $|c-w|=O(1)$ and~$|c-w|^{-1}=O(1)$, where~$w=u,v,a$, respectively.
{ More generally, the same asymptotics hold if~$\delta\to 0$ and~$q\to 0$ simultaneously so that~$m:=2q\delta^{-1}$ stays uniformly bounded away from~$0$ and~$\infty$.}


\begin{remark} Similar asymptotics for~$m<0$ follow by the duality, which amounts to exchanging the lattices~$\Gamma^\circ\leftrightarrow\Gamma^{\bullet}$, changing the sign of~$q$ and~$m$, and replacing~$\varsigma$ by~$\pm i\varsigma$; see Remark~\ref{rem:duality-asymp}. In particular, note that~(\ref{eq: asymp_X1}--\ref{eq: asymp_Xi}) provide, via Definition~\ref{eq:shol-def-mass}, a proof of Theorem~\ref{thm:F1Fi-asymp}. Similarly, (\ref{eq: asymp_chi_mu}--\ref{eq: asymp_chi_sigma}) and Definition~\ref{def:shol-def-mass} yield Theorem~\ref{thm:G-asymp-mass} since~$K=\frac{\pi}{2}+O(q)$ as~$q\to 0$.
\end{remark}

\begin{proof} [{\bf Proof of the asymptotics~(\ref{eq: asymp_X1}--\ref{eq: asymp_Xi})}] This immediately follows from definitions~(\ref{eq:defX1}--\ref{eq:defXi}) of the spinors~$\cX_\rr$, $\cX_\ri$ and asymptotics~\eqref{eq:dexpk-cu-asymp-near-R} of discrete exponentials. Indeed, for~$\nu \in\R$, note that
\[
\cos(\tfrac{1}{2}(\nu-\alpha_c))\,=\,\Re[\,\overline{\eta}_c\cdot \varsigma e^{-i\frac{\nu}{2}}\,]\quad \text{and}\quad
|c-o|\cos(\nu-\phi_{co})\,=\,\Re[\,e^{-i\nu}(c-o)\,]\,.
\]
Therefore,~\eqref{eq:dexpk-cu-asymp-near-R} directly yields~(\ref{eq: asymp_X1}--\ref{eq: asymp_Xi}).
\end{proof}

\begin{proof}[{\bf Proof of the asymptotics~(\ref{eq: asymp_chi_mu})}] Recall that, due to periodicity reasons, the integral in~(\ref{eq: def_chi_mu}) is equal to the integral over a shifted segment $\ellip{\mu}\in[\ellip{\phi}_{cu};\ellip{\phi}_{cu}+4iK'],$ i.\,e., $\nu\in[\ellip{\phi}_{cu}-2iK';\ellip{\phi}_{cu}+2iK']$. This contour can be deformed, without crossing the poles of the integrand, to the broken line passing through the points
\[
\ellip{\phi}^\Lambda_{cu}-2iK';\quad \ellip{\phi}_{cu}^\Lambda-iK';\quad \ellip{\phi}_{cu}-iK';\quad \ellip{\phi}_{cu}+iK';\quad \ellip{\phi}_{cu}^\Lambda+iK';\quad \ellip{\phi}_{cu}^\Lambda+2iK'.
\]
The contribution of the first and the last (vertical) segments to the integral~\eqref{eq: def_chi_mu} is stretch-exponentially small as~$q\to 0$ due to the estimate~\eqref{eq:stretched-exp}. The contribution of horizontal segments at height~$\pm iK'$ is also stretch-exponentially small due to the asymptotics~\eqref{eq:dexpk-cu-asymp-near-R} and since~$|\phi^\Lambda_{cu}-\phi_{cu}|\le \frac{\pi}{2}-\theta_0$.

Let $\rho_{cu}:=2m|c-u|$. On the middle segment
\[
\Re\ellip\nu\,=\,\ellip\phi_{cu},\qquad |\Im\ellip\nu|\,\le\,K'
\]
the asymptotics~\eqref{eq:dexpk-cu-asymp-near-R} reads as
\[
\dexpk k{\ellip \mu}{c}{u}\ =\ ik\cdot\exp\big[\!-\rho_{cu}\cosh(\Im\nu)\,]\cdot \big(\cos(\tfrac{1}{2}(\nu-\alpha_c))+O(q^{2}e^{\frac72|\Im\nu|})\big);
\]
{ Provided that~$\rho_{cu}\ge \mathrm{cst}>0$,} this yields the asymptotics
\[
\coruz uc\ =\ -\frac{(k')^{\frac 14}}{2\pi}\cdot ik\cdot \frac{2iK}{\pi}\cdot\biggl[\int_{-\infty}^{+\infty}e^{-\rho_{cu}\cosh t}\cos(\tfrac{1}{2}(it+\phi_{cu}-\alpha_c))\,dt+O(q^2)\biggr],
\]
where~$t=\Im\nu$ and we used the fact the integrand is stretch-exponentially small in~$q\to 0$ outside the segment~$|t|\le K'\sim -\frac12\log q$.

The leading term can be evaluated explicitly by writing
\[
\cos(\tfrac{1}{2}(it+\phi_{cu}-\alpha_c))\ =\ \cosh(\tfrac12t)\cos(\tfrac12(\phi_{cu}-\alpha_c))- i\sinh(\tfrac12t)\sin(\tfrac12(\phi_{cu}-\alpha_c))
\]
and using \cite[Eq.~10.32.9 and Eq.~10.39.2]{NIST:DLMF}, which gives
\begin{equation}\label{eq:x-leading-term-Gu}
\int_{-\infty}^{+\infty}e^{-\rho_{cu}\cosh t}\cosh(\tfrac{1}{2}t)\,dt\ =\ (2\pi/\rho_{cu})^{\frac 12}\cdot e^{-\rho_{cu}}.
\end{equation}
To complete the proof, it remains to note that (see~\eqref{eq: K_k_kprime})
\[
\frac{k\,(k')^{\frac 14}K}{\pi^2}\cdot \biggl(\frac \pi{m}\biggr)^{\!\frac 12}\,=\ \delta^{\frac{1}{2}}K^{-\frac{1}{2}}\cdot (1+O(q^2))\ \ \text{as}\ \ \textstyle \qtoem\to 0
\]
and that ${\cos(\tfrac 12(\phi_{cu}-\alpha_c))}\cdot |c-u|^{-\frac12}=\Re\big[e^{\frac{i}{2}\alpha_c}(c-u)^{-\frac 12}\big]=\Re[\,\overline{\eta}_c\cdot \varsigma\,(c-u)^{-\frac12}]$.
\end{proof}

\begin{proof}[{\bf Proof of the asymptotics~(\ref{eq: asymp_chi_sigma})}]
The treatment of $\corvz vc$ only differs in the computation of
the leading term, which we now perform; recall that the asymptotics~\eqref{eq:dexpk-cu-asymp-near-R} and the estimate~\eqref{eq:stretched-exp} remain true if one replaces~$u\in\Gamma^{\circ}$ by~$v\in\Gamma^{\bullet}$.
Instead of the integral
\[
\int_{\phi_{cu}-i\infty}^{\phi_{cu}+i\infty} e^{-\rho_{cu}\cos(\nu-\phi_{cu})}\cos(\tfrac{1}{2}(\nu-\alpha_c))d\nu
\]
that gave rise to the leading term in the asymptotics~\eqref{eq: asymp_chi_mu} we now need to consider a similar integral computed, e.\,g., along the broken line
\[
\nu\,\in\,[\phi_{cv}-i\infty\,;\phi_{cv}]\,\cup\,[\phi_{cv}\,;\phi_{cv}+2\pi]\,\cup\,[\phi_{cv}+2\pi\,;\phi_{cv}+i\infty]\,.
\]

We write this integral as the sum of (a) the integral computed over the vertical line~$[\phi_{cv}-i\infty\,;\phi_{cv}+i\infty]$ that was already evaluated in~\eqref{eq:x-leading-term-Gu}, and (b) a similar integral computed over the contour
\[
\nu=\phi_{cv}+it,\quad \text{where}\quad t\,\in\,\Gamma\ :=\ [+\infty\,;0]\,\cup\,[0\,;-2\pi i]\,\cup\,[-2\pi i\,;-2\pi i+\infty]\,;
\]
note that we have
\[
\cos(\tfrac12(\nu-\alpha_c))\ =\ \tfrac 12 (e^{-\frac t2}e^{\frac i2(\phi_{cv}-\alpha_c)}+e^{\frac t2}e^{-\frac i2(\phi_{cv}-\alpha_c)})\,.
\]
It follows from~\cite[Eq.~10.32.12 and~Eq.~10.39.1]{NIST:DLMF} that
\begin{align*}
\int_{\Gamma}e^{-\rho_{cv}\cosh t}e^{\,\mp \frac{t}{2}}dt\ &=\ \pm i\int_{+\infty+i\pi}^{+\infty-i\pi}e^{\,\rho_{cv}\cosh t}e^{\,\mp\frac{t}{2}}dt\\
&=\ \pm 2\pi\cdot  I_{\pm\frac 12}(\rho_{cv})\ =\ \pm (2\pi/\rho_{cv})^{\frac 12}\cdot (e^{\rho_{cv}}\mp e^{-\rho_{cv}})\,,
\end{align*}
which gives
\begin{align*}
\int_\Gamma e^{-\rho_{cv}\cosh t}&\cos(\tfrac12(\phi_{cv}+it-\alpha_c))dt\\
&=\ (2\pi/\rho_{cv})^{\frac 12}\cdot (ie^{\rho_{cv}}\sin(\tfrac 12(\phi_{cv}-\alpha_c))-e^{-\rho_{cv}}\cos(\tfrac12(\phi_{cv}-\alpha_c)))\,.
\end{align*}
The second term cancels out with the contribution (a) of the line~$\Re\nu=\phi_{cv}$. Taking into account the additional factor~$ i$ in~\eqref{eq: def_chi_sigma} as compared to~\eqref{eq: def_chi_mu}, we obtain the following expression for the leading term in the asymptotics~\eqref{eq: asymp_chi_sigma}:
\[
K^{-\frac 12}\cdot \sin(\tfrac 12({ \alpha_c-\phi_{cv}}))|c-v|^{-\frac 12}\cdot e^{2m|c-v|}+O(\delta^2)\,.
\]
It remains to note that
\[
\sin(\tfrac 12(\alpha_c-\phi_{cv}))|c-v|^{-\frac 12}\,=\,\Im\big[\varsigma\,\overline{\eta}_c\cdot(v-c)^{-\frac 12}\big]
\,=\,\Re\big[\overline{\eta}_c\cdot (-i\varsigma)(v-c)^{-\frac 12}\big].\qedhere
\]
\end{proof}

\begin{proof}[{\bf Proof of the asymptotics~(\ref{eq: asymp_chi_chi})}] Due to Remark~\ref{rem:dexpk-cc-asymp}, the integrand in the definition~\eqref{eq: def_chi_chi} behaves similarly to the integrand in~\eqref{eq: def_chi_mu}. Following the same lines as in the proof of~\eqref{eq: asymp_chi_mu} we obtain the asymptotics
\[
\rG_{(a)}(c)\ =\ \frac{i}{2\pi}\cdot k^2(k')^{-\frac12}\cdot \frac{2iK}{\pi}\cdot\biggl[\int_{-\infty}^{+\infty}e^{-\rho_{ca}\cosh t}\,V(t)dt+O(q^2)\biggr],
\]
where~$\rho_{ca}=2m|c-a|$ and
\begin{align*}
V(t)\ &=\ \sin(\tfrac12(\phi_{ca}+it-\alpha_a))\cdot \cos(\tfrac12(\phi_{ca}+it-\alpha_c))\\
&=\ \tfrac{1}{2}\cosh t\cdot \sin(\phi_{ca}-\tfrac{1}{2}(\alpha_a+\alpha_c))+\tfrac{i}{2}\sinh t\cdot \cos(\phi_{ca}-\tfrac{1}{2}(\alpha_a+\alpha_c))\\
&+\ \tfrac{1}{2}\sin(\tfrac{1}{2}(\alpha_c-\alpha_a)).
\end{align*}
Using~\cite[Eq.~10.32.9]{NIST:DLMF} we get
\begin{align*}
\int_{-\infty}^{\infty}e^{-\rho\cdot\cosh t}\,V(t)dt\ &=\ K_{1}(\rho_{ca})\sin(\phi_{ca}-\tfrac{1}{2}(\alpha_a+\alpha_c))+K_0(\rho_{ca})\sin(\tfrac{1}{2}(\alpha_c-\alpha_a))\\[-2pt]
&=\ -K_{1}(\rho_{ca})\cdot \Im[\,e^{-i\phi_{ca}}\cdot\varsigma^2\overline{\eta}_c\overline{\eta}_a\,]+K_0(\rho_{ca})\cdot \Im[\,\overline{\eta}_c\eta_a\,]\\
&=\ -\Re\big[\,\overline{\eta}_c\cdot(\overline{\eta}_a\cdot(-i\varsigma^2)e^{-i\phi_{ca}}K_1(\rho_{ca})+\eta_a\cdot iK_0(\rho_{ca})\big)\big]\\
&=\ -\Re\big[\,\overline{\eta}_c\cdot(2m)^{-1}f^{[\eta_a]}(2m|c-a|)\,],
\end{align*}
see definition~\eqref{eq:x-feta-def} of the massive Cauchy kernel~$f^{[\eta]}$. Finally, it easily follows from~\eqref{eq: K_k_kprime} that
\[
\frac{k^2(k')^{-\frac12}K}{\pi^2}\ =\ \frac{8q}{\pi}\cdot(1+O(q^2))\ =\ \frac{4m\delta}{\pi}\cdot(1+O(\delta^2)),
\]
which completes the computation.
\end{proof}


\end{document}